\newtheorem{thm}{Theorem}[section]
\newtheorem{defi}[thm]{Definition}
\newtheorem{lem}[thm]{Lemma}
\newtheorem{prop}[thm]{Proposition}
\newtheorem{cor}[thm]{Corollary}
\newtheorem{rem}[thm]{Remark}
\DeclareMathOperator{\Ker}{Ker}
\DeclareMathOperator{\id}{id}
\DeclareMathOperator{\tr}{tr}
\date{}
\begin{document}
\title[Quaternionic $k$-vector fields]{Quaternionic $k$-vector fields on quaternionic K\"{a}hler manifolds}

\author[T. Moriyama]{Takayuki MORIYAMA}
\address[T. Moriyama]{Department of Mathematics, Mie University, Kurimamachiya 1577 Tsu 514-8507, JAPAN}
\email{takayuki@edu.mie-u.ac.jp}

\author[T. Nitta]{Takashi NITTA}
\address[T. Nitta]{Department of Mathematics, Mie University, Kurimamachiya 1577 Tsu 514-8507, JAPAN}
\email{nitta@edu.mie-u.ac.jp}

\footnote{\textit{2020 Mathematics Subject Classification.} Primary 53C28; Secondary 53C26.}
\keywords{Quaternionic K\"{a}hler manifolds, twistor methods.}

\maketitle
\thispagestyle{empty}

\begin{abstract}
In this paper, we define a differential operator as a modified Dirac operator. 
Using the operator, we introduce a quaternionic $k$-vector field on a quaternionic K\"{a}hler manifold 
and show that any quaternionic $k$-vector field corresponds to a holomorphic $k$-vector field on the twistor space. 
We calculate the dimension of the space of quaternionic $k$-vector fields on $\mathbb{H}P^n$. 
\end{abstract}

\section{Introduction}
Let $(M, g)$ be a quaternionic K\"{a}hler manifold, that is, a $4n$-dimensional Riemannian manifold whose holonomy group is reduced to a subgroup of $\textrm{Sp}(n)\cdot \textrm{Sp}(1)$. 
Let $E$ and $H$ denote the associated bundles with the canonical representations of $\textrm{Sp}(n)$ and $\textrm{Sp}(1)$ on $\mathbb{C}^{2n}$ and $\mathbb{C}^2$, respectively. 
Then $TM\otimes \mathbb{C} =E\otimes_{\mathbb{C}} H$. 
The complex vector bundles $E, H$ admit symplectic structures and anti-$\mathbb{C}$-linear maps which square to $-\id $. 
The Levi-Civita connection of $(M,g)$ is decomposed into connections of $E$ and $H$. 
They induce the covariant derivative $\nabla :\Gamma(\wedge^k E\otimes S^kH) \to \Gamma(\wedge^k E\otimes  S^kH \otimes E^*\otimes H^*)$ 
where $\Gamma(\wedge^k E\otimes S^kH)$ means the space of smooth sections of $\wedge^k E\otimes S^kH$. 
The bundle $\wedge^k E\otimes S^kH \otimes E^*\otimes H^*$ is isomorphic to $\wedge^k E\otimes  E^* \otimes S^kH\otimes H$ 
by the symplectic structure of $H$. 
Moreover, $S^kH\otimes H\cong S^{k+1}H\oplus S^{k-1}H$ by the Clebsch-Gordan decomposition. 
Thus, the covariant derivative $\nabla$ is regarded as 
\[
\nabla : \Gamma(\wedge^k E\otimes S^kH) \to \Gamma(\wedge^k E\otimes E^* \otimes S^{k+1}H) \oplus \Gamma(\wedge^k E\otimes E^*\otimes S^{k-1}H). 
\]
Dirac operator $\mathfrak{D}_{\wedge^k E}$ is defined by the $\wedge^k E\otimes E^*\otimes S^{k+1}H$-part of $\nabla$ (c.f.~\cite{B}) : 
\[
\mathfrak{D}_{\wedge^k E}: \Gamma(\wedge^k E\otimes S^kH) \to \Gamma(\wedge^k E\otimes E^*\otimes S^{k+1}H)
\]
The trace of $(\otimes^k E) \otimes E^*$ induces the map $\tr : \wedge^k E \otimes E^*\to \wedge^{k-1} E$ 
by the restriction of $\otimes^k E\otimes E^*$ to $\wedge^k E\otimes E^*$. 
Let $(\wedge^k E\otimes E^*)_0$ denote the kernel of $\tr: \wedge^k E \otimes E^*\to \wedge^{k-1} E$. 
The bundle $\wedge^k E\otimes E^*$ is decomposed into $(\wedge^k E\otimes E^*)_0$ and $(\wedge^{k-1} E)\wedge \id_E$. 
We define an operator 
\[
\mathfrak{D}_{\wedge^k E}^0: \Gamma(\wedge^k E\otimes S^kH) \to \Gamma((\wedge^k E\otimes E^*)_0\otimes S^{k+1}H)
\]
as the $(\wedge^k E\otimes E^*)_0$-part of $\mathfrak{D}_{\wedge^k E}$. 
A section of $\wedge^k E\otimes S^kH$ is a $k$-vector field 
since $\wedge^k E\otimes S^kH$ is the subbundle of $\wedge^k TM\otimes \mathbb{C}$. 
\begin{defi}
{\rm 
A section $X$ of $\wedge^k E\otimes S^kH$ is a \textit{quaternionic $k$-vector field} on $M$ 
if $\mathfrak{D}_{\wedge^k E}^0(X)=0$ for $1\le k\le 2n-1$ and 
$\mathfrak{D}_{\wedge^{2n-1} E}\circ \tr\circ \mathfrak{D}_{\wedge^{2n} E}(X)=0$ for $k=2n$. 
}
\end{defi}
The twistor space $Z$ of $M$ is given by the projective bundle of the dual bundle $H^*$. 
It has a natural complex structure and a real structure. 
It is known that a section $X$ of $\wedge^k E\otimes S^kH$ with $\mathfrak{D}_{\wedge^k E}(X)=0$ is lifted to a holomorphic $k$-vector field on $Z$. 
However, any holomorphic $k$-vector field on $Z$ does not correspond to such a $k$-vector field on $M$. 
The following is the main theorem : 
\begin{thm}\label{s1t1}
Let $\mathcal{Q}(\wedge^k E\otimes S^kH)$ be the sheaf of quaternionic $k$-vector fields on $M$ and 
$\mathcal{O}(\wedge^k T^{1,0}Z)$ that of holomorphic $k$-vector fields on $Z$. 
Then, $H^0(\mathcal{Q}(\wedge^k E\otimes S^kH))$ is isomorphic to $H^0(\mathcal{O}(\wedge^k T^{1,0}Z))$. 
\end{thm}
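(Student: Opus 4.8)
The plan is to transport the problem down the twistor fibration $\pi\colon Z\to M$ and to compare two short exact sequences: one coming from the filtration of $\wedge^k T^{1,0}Z$, the other from the trace decomposition of $\mathfrak{D}_{\wedge^k E}$. First I would record the holomorphic structure of $Z$. Writing $\mathcal{O}(j)$ for the line bundle on $Z$ whose restriction to each fibre $\pi^{-1}(m)\cong\mathbb{P}^1$ has degree $j$, so that $\pi_*\mathcal{O}(j)\cong S^jH$ and $R^1\pi_*\mathcal{O}(j)=0$ for $j\ge -1$, the integrability of the twistor almost complex structure gives the exact sequence of holomorphic bundles
\[
0\longrightarrow \mathcal{O}(2)\longrightarrow T^{1,0}Z\longrightarrow \pi^*E\otimes\mathcal{O}(1)\longrightarrow 0 ,
\]
with $\mathcal{O}(2)$ the relative tangent bundle. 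Taking $k$-th exterior powers (and using $\det H\cong\mathbb{C}$) yields
\[
0\longrightarrow \pi^*\wedge^{k-1}E\otimes\mathcal{O}(k+1)\longrightarrow \wedge^k T^{1,0}Z\longrightarrow \pi^*\wedge^k E\otimes\mathcal{O}(k)\longrightarrow 0 .
\]
Since the fibrewise twists $k$ and $k+1$ are $\ge -1$, applying $\pi_*$ loses no cohomology along the fibres, and the identity $H^0(Z,\cdot)=H^0(M,\pi_*\cdot)$ turns this into a piece of the long exact sequence
\[
0\to H^0(\pi^*\wedge^{k-1}E\otimes\mathcal{O}(k+1))\to H^0(\wedge^k T^{1,0}Z)\to H^0(\pi^*\wedge^k E\otimes\mathcal{O}(k))\xrightarrow{\ \delta\ } H^1(\pi^*\wedge^{k-1}E\otimes\mathcal{O}(k+1)) .
\]

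Next I would identify the two end terms with kernels of operators on $M$. A fibrewise-holomorphic section of the quotient $\pi^*\wedge^k E\otimes\mathcal{O}(k)$ is, by $\pi_*\mathcal{O}(k)\cong S^kH$, a section of $\wedge^k E\otimes S^kH$, and it extends to a genuinely holomorphic section on $Z$ exactly when the horizontal component of $\bar\partial_Z$ vanishes; this component is the operator $\mathfrak{D}_{\wedge^k E}$, so $H^0(\pi^*\wedge^k E\otimes\mathcal{O}(k))\cong\ker\mathfrak{D}_{\wedge^k E}$. The quoted lifting result — that every $X$ with $\mathfrak{D}_{\wedge^k E}(X)=0$ lifts — is precisely the statement that $\delta=0$. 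The same analysis applied to the subbundle identifies $H^0(\pi^*\wedge^{k-1}E\otimes\mathcal{O}(k+1))$ with the kernel $\ker\mathfrak{D}'$ of the corresponding horizontal operator $\mathfrak{D}'$ on $\wedge^{k-1}E\otimes S^{k+1}H$. Thus the sequence collapses to the extension
\[
0\longrightarrow \ker\mathfrak{D}'\longrightarrow H^0(\wedge^k T^{1,0}Z)\longrightarrow \ker\mathfrak{D}_{\wedge^k E}\longrightarrow 0 .
\]

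On the manifold side I would use the splitting $\wedge^k E\otimes E^*=(\wedge^k E\otimes E^*)_0\oplus\wedge^{k-1}E$ to decompose $\mathfrak{D}_{\wedge^k E}=\mathfrak{D}^0_{\wedge^k E}\oplus(\tr\circ\mathfrak{D}_{\wedge^k E})$. For $1\le k\le 2n-1$ this should produce the exact sequence of solution spaces
\[
0\longrightarrow \ker\mathfrak{D}_{\wedge^k E}\longrightarrow \ker\mathfrak{D}^0_{\wedge^k E}\xrightarrow{\ \tr\circ\mathfrak{D}_{\wedge^k E}\ }\ker\mathfrak{D}'\longrightarrow 0 ,
\]
whose left kernel is $\{X:\mathfrak{D}^0_{\wedge^k E}(X)=0,\ \tr\circ\mathfrak{D}_{\wedge^k E}(X)=0\}=\ker\mathfrak{D}_{\wedge^k E}$. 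Then both $H^0(\wedge^k T^{1,0}Z)$ and $H^0(\mathcal{Q}(\wedge^k E\otimes S^kH))=\ker\mathfrak{D}^0_{\wedge^k E}$ are assembled from the same two spaces $\ker\mathfrak{D}_{\wedge^k E}$ and $\ker\mathfrak{D}'$, and matching the two realisations of $\ker\mathfrak{D}'$ — through $\tr\circ\mathfrak{D}_{\wedge^k E}$ on one side and the subbundle pushforward on the other — yields the asserted isomorphism. For $k=2n$ the bundle $\wedge^{2n}E$ is a line bundle and $\tr\colon\wedge^{2n}E\otimes E^*\to\wedge^{2n-1}E$ is an isomorphism, so $(\wedge^{2n}E\otimes E^*)_0=0$ and $\mathfrak{D}^0_{\wedge^{2n}E}$ vanishes identically; here the role of $\tr\circ\mathfrak{D}$ is played by the second-order operator $\mathfrak{D}_{\wedge^{2n-1}E}\circ\tr\circ\mathfrak{D}_{\wedge^{2n}E}$, and $\mathcal{Q}$ is by definition its kernel, so the same comparison applies with this operator replacing the first-order one.

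The main obstacle is the middle identification, namely that $\tr\circ\mathfrak{D}_{\wedge^k E}$ maps $\ker\mathfrak{D}^0_{\wedge^k E}$ onto $\ker\mathfrak{D}'$. The inclusion of the image into $\ker\mathfrak{D}'$ amounts to a compatibility identity $\mathfrak{D}'\circ\tr\circ\mathfrak{D}_{\wedge^k E}=0$ on $\ker\mathfrak{D}^0_{\wedge^k E}$, which I expect to follow from a curvature computation using that $M$ is quaternionic K\"ahler (hence Einstein with the curvature normalisation that makes the twistor structure integrable); the delicate point is surjectivity, which is exactly what forces one to work with the subbundle cohomology $\ker\mathfrak{D}'$ rather than with a naive trace image, and it is here that the Clebsch--Gordan splitting $S^kH\otimes H\cong S^{k+1}H\oplus S^{k-1}H$ and the Levi-Civita decomposition into the $E$- and $H$-connections must be tracked weight by weight in the fibre coordinate. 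The degenerate top case $k=2n$, where the first-order condition is vacuous and must be replaced by the second-order operator, is the most delicate part and should be carried through this fibre-degree bookkeeping separately.
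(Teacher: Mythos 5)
Your foundational exact sequence is backwards, and this is fatal to the whole architecture. The relative tangent bundle of $f:Z\to M$ is \emph{not} a holomorphic subbundle of $T^{1,0}Z$: the twistor projection is not a holomorphic map ($M$ is not a complex manifold), and the vertical line field varies anti-holomorphically. In the paper this is visible in the fact that $v$ is holomorphic only along the fibres, while it is the \emph{horizontal} distribution $\widehat{\mathcal{H}}^{1,0}=\Ker\eta$ that is a holomorphic subbundle, being the kernel of the holomorphic contact form of Proposition~\ref{s3.7p2}. Concretely, for $M=\mathbb{H}P^n$, $Z=\mathbb{C}P^{2n+1}$, the Euler sequence twisted by $\mathcal{O}(-2)$ gives $H^0(T\mathbb{C}P^{2n+1}\otimes\mathcal{O}(-2))=0$, so there is no nonzero holomorphic bundle map $\mathcal{O}(2)\to T^{1,0}Z$ at all; equivalently, at $k=1$ your sequence would embed $H^0(Z,\mathcal{O}(2))\cong S^2(\mathbb{C}^{2n+2})^*$ into $H^0(T^{1,0}Z)$ as everywhere-vertical holomorphic vector fields, whereas a linear field $z\mapsto Az$ tangent to every twistor line must satisfy $Az\in\langle z,jz\rangle_{\mathbb{C}}$ for all $z$, which forces $A\in\mathbb{C}\,\id$, i.e.\ the zero field on projective space. (You may be transplanting the sequence for the \emph{hyperk\"ahler} twistor fibration over $\mathbb{P}^1$, which is holomorphic; the quaternionic-K\"ahler fibration over $M$ is not.) The correct sequence has sub and quotient interchanged: $0\to f^{-1}(\wedge^kE)\otimes l^k\to\wedge^kT^{1,0}Z\to f^{-1}(\wedge^{k-1}E)\otimes l^{k+1}\to0$. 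This reversal is exactly the point of the paper: by Theorem~\ref{s5.6t1}, a holomorphic $(k,0)$-field whose $\wedge^k\widehat{\mathcal{H}}^{1,0}$-component vanishes is zero (wedging with $\widehat{\theta}_1$ is injective), so the vertical component is \emph{determined} by the horizontal one --- the opposite of your picture, in which $\ker\mathfrak{D}'$ sits inside $H^0(\wedge^kT^{1,0}Z)$ as purely vertical fields.

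Even after reversing the sequence, the two steps you flag as delicate are not technicalities; they are the theorem. With the correct orientation the long exact sequence reads
\[
0\to\Ker\mathfrak{D}_{\wedge^kE}\to H^0(\wedge^kT^{1,0}Z)\to\Ker\mathfrak{D}_{\wedge^{k-1}E}\xrightarrow{\ \delta\ }H^1\bigl(f^{-1}(\wedge^kE)\otimes l^k\bigr),
\]
and $\delta$, the cup product with the extension class of the contact sequence, has no reason to vanish; the content of Theorem~\ref{s5.8t1} is precisely that $\Ker\delta$ coincides with the image of $\tr\circ\mathfrak{D}_{\wedge^kE}$ on quaternionic fields. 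Your appeal to ``the quoted lifting result'' to kill $\delta$ only concerns sections annihilated by the \emph{full} operator $\mathfrak{D}_{\wedge^kE}$ (Corollary~\ref{s4.6c1}) and says nothing about the connecting map. For the same reason, your manifold-side surjectivity of $\tr\circ\mathfrak{D}_{\wedge^kE}\colon\Ker\mathfrak{D}^0_{\wedge^kE}\to\Ker\mathfrak{D}_{\wedge^{k-1}E}$ is unproven and fails whenever $\delta\ne0$, since its image is $\Ker\delta$. Finally, even granting both sequences, extensions of the same pair of spaces in opposite orders are not thereby canonically isomorphic --- one still has to construct the map. The paper does exactly that, bypassing cohomological bookkeeping: working on $P(H^*)$ it decomposes $X'=X'_h+Y_0\wedge v_0+Y_1\wedge v_1+Z_0\wedge v_0\wedge v_1$, computes $\nabla^{0,1}$ of the tautological forms (Proposition~\ref{s3.4p2}), shows holomorphy is equivalent to the horizontal datum $X$ being quaternionic with $Y_1$ the lift of $\tr\circ\mathfrak{D}_{\wedge^kE}(X)$, with $Y_0,Z_0$ then existing locally by the Dolbeault lemma (Proposition~\ref{s5.3p2}), and so obtains the explicit isomorphism $X\mapsto\widehat{X}_h^{k,0}+Y\wedge v$ of Theorem~\ref{s5.8t1}.
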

The space $\Gamma(\wedge^k E\otimes S^kH)$ admits a real structure $\tau$. 
Any $\tau$-invariant section of $\wedge^k E\otimes S^kH$ is a real $k$-vector field on $M$. 
On the other hand, the real structure of $Z$ induces the real structure $\widehat{\tau}$ on the space of holomorphic $k$-vector fields. 
We obtain the following theorem : 
\begin{thm}\label{s1t2}
The space $H^0(\mathcal{Q}(\wedge^k E\otimes S^kH))^{\tau}$ of quaternionic real $k$-vector fields on $M$ is isomorphic to 
the space $H^0(\mathcal{O}(\wedge^k T^{1,0}Z))^{\widehat{\tau}}$ of $\widehat{\tau}$-invariant holomorphic $k$-vector fields on $Z$. 
\end{thm}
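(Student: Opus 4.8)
The plan is to reduce Theorem \ref{s1t2} to an equivariance property of the isomorphism already produced in Theorem \ref{s1t1}, and then to pass to fixed points. Write $\Phi : H^0(\mathcal{Q}(\wedge^k E\otimes S^kH)) \to H^0(\mathcal{O}(\wedge^k T^{1,0}Z))$ for the $\mathbb{C}$-linear isomorphism of Theorem \ref{s1t1}. Since $\tau$ is an anti-$\mathbb{C}$-linear involution on $\Gamma(\wedge^k E\otimes S^kH)$ and $\widehat{\tau}$ is one on the space of holomorphic $k$-vector fields, the decisive point is the intertwining relation
\[
\Phi\circ \tau = \widehat{\tau}\circ \Phi .
\]
Granting this, a routine check shows $\Phi$ restricts to an $\mathbb{R}$-linear isomorphism between the real subspaces fixed by $\tau$ and by $\widehat{\tau}$: if $\tau X = X$ then $\widehat{\tau}\,\Phi(X)=\Phi\,\tau(X)=\Phi(X)$, and conversely any $\widehat{\tau}$-fixed $Y=\Phi(X)$ forces $\tau X=X$ by applying $\Phi^{-1}$. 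Hence $\Phi$ carries $H^0(\mathcal{Q}(\wedge^k E\otimes S^kH))^{\tau}$ isomorphically onto $H^0(\mathcal{O}(\wedge^k T^{1,0}Z))^{\widehat{\tau}}$, which is exactly the assertion, and everything reduces to the displayed equivariance.

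To establish it I would first recall the explicit shape of $\Phi$ built in the proof of Theorem \ref{s1t1}. A quaternionic $k$-vector field $X$ is lifted along the twistor projection $\pi : Z\to M$ by contracting the $S^kH$-factor with the $k$-th power of the tautological section of $\pi^*H^*$ that defines the point of $Z$, while the $\wedge^k E$-factor is transported to $\wedge^k T^{1,0}Z$ through the identification of the horizontal and vertical parts of $T^{1,0}Z$ with $\pi^*E$ and the tautological line. The real structure $\tau$ is induced by $j_E^{\wedge k}\otimes j_H^{S^k}$, where $j_E,j_H$ are the anti-$\mathbb{C}$-linear maps squaring to $-\id$; it is a genuine real structure because $j_E^{\wedge k}$ squares to $(-1)^k\id$ and $j_H^{S^k}$ to $(-1)^k\id$, so their product squares to $+1$. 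On the twistor side $\widehat{\tau}(V)=\overline{\sigma_{*}V}$, where $\sigma : Z\to Z$ is the antiholomorphic involution induced by the quaternionic structure $j_{H^*}$ on the fibers of $H^*$; this $\widehat{\tau}$ preserves holomorphicity precisely because $\sigma$ is antiholomorphic.

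The core computation is then to track these two operations through the construction of $\Phi$. Because $\sigma$ covers $\id_M$ and acts on each fiber $\mathbb{P}(H^*_m)\cong\mathbb{P}^1$ as the antipodal map determined by $j_{H^*}$, its differential identifies $\pi^*E$ and the tautological line over $z$ with their counterparts over $\sigma(z)$ exactly through $j_E$ and $j_H$. Thus pulling back $\Phi(X)$ by $\sigma$ and conjugating replaces the tautological section by its $j_{H^*}$-image and the $\wedge^k E$-factor by its $j_E^{\wedge k}$-image; matching these against the evaluation formula for $\Phi(\tau X)$ yields $\widehat{\tau}(\Phi(X))=\Phi(\tau X)$ fibrewise, hence globally. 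Along the way one must confirm that $\tau$ preserves the defining equations of quaternionic $k$-vector fields, i.e. that $\tau$ commutes with $\mathfrak{D}^0_{\wedge^k E}$, and for $k=2n$ with $\mathfrak{D}_{\wedge^{2n-1}E}\circ\tr\circ\mathfrak{D}_{\wedge^{2n}E}$; this follows from the reality of the Levi-Civita connection together with the compatibility of $j_H$ with the Clebsch--Gordan splitting $S^kH\otimes H\cong S^{k+1}H\oplus S^{k-1}H$, so that $\tau$ genuinely acts on $H^0(\mathcal{Q}(\wedge^k E\otimes S^kH))$.

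The step I expect to be most delicate is the sign and normalization bookkeeping in this equivariance check. Since $\sigma$ is antiholomorphic and fibrewise fixed-point-free, the interplay among complex conjugation, the twist $\mathcal{O}(k)$ carried by $S^kH$ along the fibers, and the powers of $j_{H^*}$ must be arranged so that $\overline{\sigma_{*}\Phi(X)}$ equals $\Phi(\tau X)$ on the nose and not merely up to a scalar. Verifying that the $(-1)^k$ from $\wedge^k E$ and the $(-1)^k$ from $S^kH$ combine to $+1$ in exactly the manner making $\widehat{\tau}$ compatible with $\Phi$ is the heart of the argument; once it is in place, the passage to fixed points described in the first paragraph completes the proof.
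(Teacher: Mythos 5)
Your proposal is correct and follows essentially the same route as the paper: the paper likewise establishes that the isomorphism of Theorem \ref{s1t1} intertwines $\tau$ and $\widehat{\tau}$ — by proving real ($\tau$-equivariant) versions of each lifting step (Propositions \ref{s5.9p1}--\ref{s5.9p3}), using that $\tau$ commutes with $\mathfrak{D}_{\wedge^k E}$ and $\tr$, that $\widehat{\tau}(v)=v$, and that $\widehat{\tau}$ preserves the horizontal/vertical decomposition — and then passes to the fixed-point sets exactly as you describe.
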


This paper is organized as follows. 
In Section 2, we prepare some fundamental facts about quaternionic vector spaces and quaternionic K\"{a}hler manifolds. 
In Section 3 and 4, we see some theorems in twistor theory from the view point of the principal bundle of $H^*$ and the connection. 
In Section 5, we introduce quaternionic sections on quaternionic K\"{a}hler manifolds 
as sections of $\wedge^kE\otimes S^mH$ satisfying some differential equations. 
In the final section, we provide the definition of quaternionic $k$-vector fields. 
We show Theorem~\ref{s1t1} and Theorem~\ref{s1t2} (see Theorem~\ref{s5.8t1} and Theorem~\ref{s5.9t1}). 
As an example, we consider the $n$-dimensional quaternionic projective space $\mathbb{H}P^n$ 
and compute the dimension of the space of quaternionic $k$-vector fields. 

\section{Preliminaries}
\subsection{The $n$-dimensional quaternionic vector space $\mathbb{H}^n$}\label{s2.1} 
Let $\mathbb{H}$ be Hamilton's quaternionic number field $\mathbb{R}+i\mathbb{R}+j\mathbb{R}+k\mathbb{R}$. 
We define endomorphisms $i_-, j_-, k_-$ of $\mathbb{H}^n$ as the right action of $i, j, k$ on $\mathbb{H}^n$, respectively. 
The product $\textrm{Sp}(n)\times \textrm{Sp}(1)$ acts on $\mathbb{H}^n$ by 
$(A,q)\cdot \xi=A\xi \bar{q}$ for $(A,q)\in \textrm{Sp}(n)\times \textrm{Sp}(1)$ and $\xi \in \mathbb{H}^n$. 
Since $(-1,-1)$ acts as the identity, 
the quotient $\textrm{Sp}(n)\cdot \textrm{Sp}(1)=\textrm{Sp}(n)\times \textrm{Sp}(1)/\mathbb{Z}_2$ acts on $\mathbb{H}^n$. 
The action of $\textrm{Sp}(n)\cdot \textrm{Sp}(1)$ on $\mathbb{H}^n$ preserves 
the subspace $\left\langle i_-, j_-, k_- \right\rangle$ of $\textrm{End}_{\mathbb{R}}(\mathbb{H}^{n})$. 
Under the identification $\mathbb{H}^n\cong \mathbb{R}^{4n}$, 
we denote by $I_-, J_-, K_-$ the endomorphisms of $\mathbb{R}^{4n}$ corresponding to $i_-, j_-, k_-$, respectively. 
Then $\textrm{Sp}(n)\cdot \textrm{Sp}(1)$ is identified with a subgroup of $\textrm{SO}(4n)$ which preserves $\left\langle I_-, J_-, K_- \right\rangle$. 
We denote by $Q_0$ the subspace $\left\langle I_-, J_-, K_- \right\rangle$. 
In particular, $Sp(n)$ is considered as a subgroup of $SO(4n)$ preserving each $I_-, J_-, K_-$.

\subsection{Quaternionic K\"{a}hler manifolds}\label{s2.5} 
Let $(M,g)$ be a Riemannian manifold of dimension $4n$. 
A subbundle $Q$ of $\textrm{End}(TM)$ is called an \textit{almost quaternionic structure} 
if there exists a local basis $I, J, K$ of $Q$ such that $I^2=J^2=K^2=-\id$ and $K=IJ$. 
A pair $(Q, g)$ is an \textit{almost quaternionic Hermitian structure} 
if any section $\varphi$ of $Q$ satisfies $g(\varphi X, Y)+g(X,\varphi Y)=0$ for $X, Y\in TM$. 
For $n\ge 2$, if the Levi-Civita connection $\nabla$ preserves $Q$, 
then $(Q, g)$ is called a \textit{quaternionic K\"{a}hler structure}, and $(M, Q, g)$ a \textit{quaternionic K\"{a}hler manifold}. 
A Riemannian manifold is a quaternionic K\"{a}hler manifold if and only if 
the holonomy group is reduced to a subgroup of $\textrm{Sp}(n)\cdot \textrm{Sp}(1)$. 
Alekseevskii~\cite{A} shows that a quaternionic K\"{a}hler manifold is Einstein and the curvature of $Q$ is described by the scalar curvature (we also refer to~\cite{I, S1}). 
For $n=1$, since $\textrm{Sp}(1)\cdot \textrm{Sp}(1)$ is $SO(4)$, 
a manifold satisfying the above condition is just an oriented Riemannian manifold. 
A $4$-dimensional oriented Riemannian manifold $M$ is said to be a \textit{quaternionic K\"{a}hler manifold} 
if it is Einstein and self-dual. 
The scalar curvature of a quaternionic K\"{a}hler manifold $M$ vanishes if and only if 
the holonomy group is reduced to $\textrm{Sp}(n)$, that is, $M$ is a hyperk\"ahler manifold. 

The frame bundle of a quaternionic K\"{a}hler manifold $M$ is reduced to a principal $\textrm{Sp}(n)\cdot \textrm{Sp}(1)$-bundle $F$. 
The Levi-Civita connection $\nabla$ induces a connection of $F$. 
The bundle $F$ is lifted to a principal $\textrm{Sp}(n)\times \textrm{Sp}(1)$-bundle $\widetilde{F}$, locally. 
Given a representation $V$ of $\textrm{Sp}(n)\times \textrm{Sp}(1)$, 
we construct an associated bundle $\widetilde{F}\times _{\textrm{Sp}(n)\times \textrm{Sp}(1)} V$ with the induced connection. 
The bundle is globally defined on $M$ if either $\widetilde{F}$ exists globally 
or $(-1,-1)\in \textrm{Sp}(n)\times \textrm{Sp}(1)$ acts as the identity in the representation. 
The obstruction to the global existence of $\widetilde{F}$ is provided by a cohomology class $\varepsilon\in H^2(M,\mathbb{Z}_2)$ introduced by Marchiafava-Romani~\cite{MR}. 
In $n$ is odd, the class $\varepsilon$ is given by the second Stiefel-Whitney class $w_2$. 

The symplectic group $\textrm{Sp}(n)$ acts on the right $\mathbb{H}$-module $\mathbb{H}^n$ by 
$A\xi$ for $A\in \textrm{Sp}(n)$ and $\xi \in \mathbb{H}^n$. 
On the other hand, $\textrm{Sp}(1)$ has an action on the left $\mathbb{H}$-module $\mathbb{H}$ by 
$\xi \bar{q}$ for $q\in \textrm{Sp}(1)$ and $\xi \in \mathbb{H}$. 
Let $E$, $H$ denote the associated bundles with the representations $\textrm{Sp}(n)$, $\textrm{Sp}(1)$ on $\mathbb{H}^n$, $\mathbb{H}$, respectively. 
Then $E$ is the right $\mathbb{H}$-module bundle and $H$ is the left $\mathbb{H}$-module bundle. 
The dual representations of $\textrm{Sp}(n)$ and $\textrm{Sp}(1)$ induce the left $\mathbb{H}$-module bundle $E^*$ and the right $\mathbb{H}$-module bundle $H^*$. 
Then 
\[
TM=E\otimes_{\mathbb{H}} H, \quad T^*M=H^*\otimes_{\mathbb{H}} E^*.
\]
The $\mathbb{H}$-bundles $E, H$ are regarded as the $\mathbb{C}$-vector bundles with anti $\mathbb{C}$-linear maps $J_E$, $J_H$ satisfying $J_E^2=-\id_E, J_H^2=-\id_H$. 
Then there exist symplectic structures $\omega_E$, $\omega_H$ on $E$, $H$ 
which are compatible with $J_E, J_H$, respectively. 
As the same manner, $E^*$ and $H^*$ are $\mathbb{C}$-vector bundles with anti $\mathbb{C}$-linear maps $J_{E^*}$, $J_{H^*}$. 
Then $J_E^*(\alpha)=-\overline{\alpha \circ J_E}$, $J_H^*(\beta)=-\overline{\beta \circ J_H}$ for $\alpha \in E^*, \beta \in H^*$. 
The correspondences $\xi\mapsto \omega_E(\cdot ,\xi)$, $u\mapsto \omega_H(\cdot ,u)$ provide 
the $\mathbb{C}$-isomorphisms $E \cong E^*$, $H \cong H^*$, which are denoted by $\omega_E^{\sharp}$, $\omega_H^{\sharp}$. 
The tensor space $(\otimes^pE)\otimes (\otimes^q H)\otimes (\otimes^{q'} H^*)\otimes (\otimes^{p'} E^*)$ is globally defined if $p+p'+q+q'$ is even. 
Then $J_E^p \otimes J_H^q\otimes J_{H^*}^{q'} \otimes J_{E^*}^{p'}$ is a real structure on the tensor space. 
In particular, $J_E\otimes J_H$ and $J_{H^*}\otimes J_{E^*}$ are real structures on $E\otimes_{\mathbb{C}} H$ and $H^*\otimes_{\mathbb{C}} E^*$. 
The real forms of $J_E\otimes J_H$ and $J_{H^*}\otimes J_{E^*}$ are $TM$ and $T^*M$, respectively. 
Hence 
\[
TM\otimes \mathbb{C} =E\otimes_{\mathbb{C}} H, \quad T^*M\otimes \mathbb{C}=H^*\otimes_{\mathbb{C}} E^*.
\]
The tensor product $\omega_E\otimes \omega_H$ is the complexification of the Riemannian metric $g$. 
The technique is called \textit{$EH$-formalism}, which was introduced by Salamon in~\cite{S1}.

\subsection{The twistor space}\label{s2.6}
The quaternionic structure $Q$ is an associated bundle with the representation of the action of $\textrm{Sp}(n)\cdot \textrm{Sp}(1)$ on $Q_0$. 
The representation is reduced to that of $\textrm{Sp}(1)$ on the subspace $\left\langle i, j, k \right\rangle$ of $\textrm{End}_{\mathbb{H}}(\mathbb{H})$, 
where $i, j, k$ act on a left $\mathbb{H}$-module $\mathbb{H}$ by the right multiplication. 
Hence, $Q$ is considered as a subbundle of the real vector bundle $\textrm{End}_{\mathbb{H}}(H)$. 
We identify $\textrm{End}_{\mathbb{H}}(H)$ with the real form of $\textrm{End}_{\mathbb{C}}(H)=H\otimes_{\mathbb{C}}H^*$ with respect to $J_H\otimes J_{H^*}$. 
Then $Q$ is contained in $\textrm{End}_{\mathbb{C}}(H)$. 
Let $u$ be an $\mathbb{H}$-frame of $H$. 
We define local sections $I, J, K$ of $\textrm{End}_{\mathbb{H}}(H)$ as 
$I(hu)=hiu,\ J(hu)=hju,\ K(hu)=hku$ for any $h\in \mathbb{H}$. 
Then $\{I, J, K\}$ is a local basis of $Q$ and represented by elements
\begin{equation}\label{s2eq3}
I=i(u\otimes u^*-ju\otimes (ju)^*),\quad J=ju\otimes u^*-u\otimes (ju)^*,\quad K=i(ju\otimes u^*+u\otimes (ju)^*)
\end{equation}
of $\textrm{End}_{\mathbb{C}}(H)$ for the $\mathbb{C}$-frame $\{u, ju\}$ of $H$. 
Let $Z$ be a sphere bundle 
\[
Z=\{aI+bJ+cK \in Q \mid a^2+b^2+c^2=1 \}
\]
over $M$. Let $f:Z \to M$ denote the projection. 
The bundle $Z$ is called a \textit{twistor space} of the quaternionic K\"{a}hler manifold $M$. 
Let $I'$ be an element of $Z$. 
We set $x$ as the point $f(I')$ of $M$. 
There exists an element $(u')^*$ of $H^*_x$ such that $I'=i(u'\otimes (u')^*-ju'\otimes (ju')^*)$. 
The element $I'$ is extended to a complex structure of $T_xM$ by the decomposition $TM\otimes \mathbb{C}=E\otimes H$. 
It follows from $(ju')^*=-(u')^*j$ that 
\begin{equation*}\label{s2eq5}
\wedge ^{1,0}T_x^*M=E_x^*\otimes \left< (u')^*\right>_{\mathbb{C}},\quad \wedge ^{0,1}T_x^*M=E_x^*\otimes \left< (u')^*j \right>_{\mathbb{C}}. 
\end{equation*}
Thus any element of $Z$ defines a complex structure of the tangent space of $M$. 
Let $p: P(H^*)\to M$ be a frame bundle of $H^*$, whose fiber consists of right $\mathbb{H}$-bases of $H^*$. 
Then $P(H^*)$ is a principal $\textrm{GL}(1,\mathbb{H})$-bundle by the right action. 
The twistor space $Z$ is regarded as the quotient space $P(H^*)/\textrm{GL}(1,\mathbb{C})$. 
We denote by $\pi: P(H^*)\to Z$ the quotient map and regard $P(H^*)$ as a principal $\textrm{GL}(1,\mathbb{C})$-bundle over $Z$. 
By the definition, the twistor space $Z$ is a $\mathbb{C}P^1$-bundle over $M$.

\section{The principal bundle $P(H^*)$}
Let $(M,g)$ be a quaternionic K\"{a}hler manifold of dimension $4n$. 
The frame bundle of $H^*$ is the principal $\textrm{GL}(1,\mathbb{H})$-bundle $p:P(H^*)\to M$. 
Let $\mathcal{A}^q(\wedge^kE\otimes S^mH)$ denote the sheaf of $\wedge^kE\otimes S^mH$-valued smooth $q$-forms on $M$. 

\subsection{Lift of $\mathcal{A}^q(\wedge^kE\otimes S^mH)$ to $P(H^*)$}
In this section, the bundles $H$ and $H^*$ are regarded as bundles of the left $\mathbb{C}$-module and the right $\mathbb{C}$-module, respectively. 
We denote the complex representation $\rho$ of $\textrm{GL}(1,\mathbb{H})$ on $\mathbb{H}$ 
by $\rho(a)h=ah$ for $a\in \textrm{GL}(1,\mathbb{H})$ and $h\in \mathbb{H}$. 
The dual representation $\rho^*$ of $\rho$ is given by $\rho^*(a)h=h a^{-1}$ for $a\in \textrm{GL}(1,\mathbb{H})$ and $h\in \mathbb{H}$. 
Then $H$ is the associated bundle $P(H^*)\times_{\rho^*}\mathbb{H}$ with the representation $\rho^*$. 
Let $S^m\mathbb{H}$ be the $\mathbb{C}$-vector space $S^m\mathbb{C}^{2}$. 
We denote by $s^m\rho^*$ the representation of $\textrm{GL}(1,\mathbb{H})$ on the $m$-th symmetric tensor $S^m\mathbb{H}$ of the $\mathbb{C}$-vector space $\mathbb{H}$ induced by $\rho^*$. 
The bundle $S^mH$ over $M$ is given by the associated bundle $S^mH=P(H^*)\times_{s^m\rho^*}S^m\mathbb{H}$ with the representation $s^m\rho^*$. 
In the case $m=0$, $\rho_0^*=\id_{\mathbb{C}}$ and $S^0H=\underline{\mathbb{C}}$. 

The point $u^*\in P(H^*)$ induces the point $u$ of $P(H)$ by taking the dual $\mathbb{H}$-basis of $H$ at $p(u^*)$. 
The $\mathbb{H}$-basis $u$ provides the $\mathbb{C}$-basis $\{u, ju\}$ of the $\mathbb{C}$-vector bundle $H$. 
Thus, any element $u$ of $P(H)$ is regarded as a $\mathbb{C}$-isomorphism $u : \mathbb{H} \to H_{p(u)}$, 
and it induces a $\mathbb{C}$-isomorphism $S^m\mathbb{H} \to S^mH_{p(u)}$, which is denoted by $u$ for simplicity. 
For a section $\xi$ of $S^mH\to M$, a section $\widetilde{\xi}$ of the trivial bundle $S^m\mathbb{H} \to P(H^*)$ is given by 
$\widetilde{\xi}_{u^*}=u^{-1}\xi(p(u^*))$ at $u^*\in P(H^*)$. 
Then $(R_a)^*\widetilde{\xi}=(s^m\rho^*)(a^{-1})\widetilde{\xi}$ for $a\in \textrm{GL}(1,\mathbb{H})$. 
We write $\rho_m(a)$ as $\rho_m(a)=(s^m\rho^*)(a^{-1})$ for $a\in \textrm{GL}(1,\mathbb{H})$. 
Conversely, any section $\xi$ of $S^mH\to M$ is induced by 
a section $\widetilde{\xi}$ of $S^m\mathbb{H}\to P(H^*)$ such that $(R_a)^*\widetilde{\xi}=\rho_m(a)\widetilde{\xi}$ for any $a\in \textrm{GL}(1,\mathbb{H})$. 

Let $\mathcal{A}^q(S^m\mathbb{H})$ and $\mathcal{A}^q_{P(H^*)}(S^m\mathbb{H})$ be the sheaf of $S^m\mathbb{H}$-valued smooth $q$-forms on $M$ and $P(H^*)$, respectively. 
We consider the pull-back $p^*\mathcal{A}^q(S^m\mathbb{H})$ as the subsheaf of $\mathcal{A}^q_{P(H^*)}(S^m\mathbb{H})$. 
If $\widetilde{\xi}$ is an element of the inverse image $p^{-1}p_*(p^*\mathcal{A}^q(S^m\mathbb{H}))$ of the direct image of $p^*\mathcal{A}^q(S^m\mathbb{H})$, 
then $(R_a)^*\widetilde{\xi}$ is in $p^{-1}p_*(p^*\mathcal{A}^q(S^m\mathbb{H}))$ for any $a\in \textrm{GL}(1,\mathbb{H})$. 
We define a sheaf $\widetilde{\mathcal{A}}^q_m(S^m\mathbb{H})$ by 
\[
\widetilde{\mathcal{A}}^q_m(S^m\mathbb{H})=\{\widetilde{\xi}\in p^{-1}p_*(p^*\mathcal{A}^q(S^m\mathbb{H})) \mid (R_a)^*\widetilde{\xi}=\rho_m(a)\widetilde{\xi},\ \forall a\in \textrm{GL}(1,\mathbb{H})\}.
\]
The sheaf $\widetilde{\mathcal{A}}^q_0(S^0\mathbb{H})$ is just that of pull-back of smooth $q$-forms on $M$ by $p$. 
We denote $\widetilde{\mathcal{A}}^q_0(S^0\mathbb{H})$ by $\widetilde{\mathcal{A}}^q$ for simplicity. 
In particular, $\widetilde{\mathcal{A}}^0$ is the sheaf of smooth functions on $P(H^*)$ which are constant along each fiber of $p$. 
Then 
\[
\widetilde{\mathcal{A}}^q_m(S^m\mathbb{H})=\widetilde{\mathcal{A}}^q\otimes_{\widetilde{\mathcal{A}}^0} \widetilde{\mathcal{A}}^0_m(S^m\mathbb{H}).
\] 
An element $\xi \in \mathcal{A}^q(S^mH)$ induces $\widetilde{\xi}\in \widetilde{\mathcal{A}}^q_m(S^m\mathbb{H})$ by 
\[
\widetilde{\xi}_{u^*}=u^{-1}(p^*\xi)_{u^*}
\]
at each point $u^*\in P(H^*)$. 
The Levi-Civita connection of $M$ induces a connection $\nabla$ on $H$, 
and the covariant exterior derivative $d^{\nabla}: \mathcal{A}^q(S^mH)\to \mathcal{A}^{q+1}(S^mH)$. 
The tangent space $T_{u^*}P(H^*)$ is decomposed into the horizontal space $\widetilde{\mathcal{H}}_{u^*}$ and 
the vertical space $\widetilde{\mathcal{V}}_{u^*}$ at $u^*\in P(H^*)$. 
We define $d_{\widetilde{\mathcal{H}}}: \widetilde{\mathcal{A}}^q_m(S^m\mathbb{H})\to \widetilde{\mathcal{A}}^{q+1}_m(S^m\mathbb{H})$ 
by the exterior derivative restricted to the horizontal $\widetilde{\mathcal{H}}$. 
The sheaf $\mathcal{A}^q(S^mH)$ is isomorphic to $\widetilde{\mathcal{A}}^q_m(S^m\mathbb{H})$ by the correspondence $\xi \mapsto \widetilde{\xi}$. 
Moreover, $\widetilde{d^{\nabla}\xi}=d_{\widetilde{\mathcal{H}}}\widetilde{\xi}$ for any $\xi\in \mathcal{A}^q(S^mH)$ (c.f. Chapter II, \S 5 in \cite{KN1}). 

We fix a point $u_0^*$ of $P(H^*)$. 
The complex coordinate $(z,w)$ of the fiber is given by $u_0^*(z+jw)$. 
A function $f$ on $P(H^*)$ is \textit{a polynomial of degree $(m-i,i)$ along fiber} 
if $f(u_0^*(z+jw))$ is a polynomial of $z,w,\bar{z},\bar{w}$ of degree $m$ such that 
$(R_c)^*f=c^{m-i}\bar{c}^{i}f$ for $c\in \textrm{GL}(1,\mathbb{C})$. 
We denote by $\widetilde{\mathcal{A}}^0_{(m-i,i)}$ the sheaf of elements of $p^{-1}p_*\mathcal{A}^0_{P(H^*)}(\mathbb{C})$ which are polynomials of degree $(m-i,i)$ along fiber on $P(H^*)$. 
We also define a sheaf $\widetilde{\mathcal{A}}^q_{(m-i,i)}$ as 
\[
\widetilde{\mathcal{A}}^q_{(m-i,i)}=\widetilde{\mathcal{A}}^q \otimes_{\widetilde{\mathcal{A}}^0} \widetilde{\mathcal{A}}^0_{(m-i,i)}. 
\]

Let $a_1 a_2 \cdots a_m$ denote the symmetrization $\frac{1}{m!}\sum_{\sigma\in S_m}a_{\sigma(1)}\otimes \cdots \otimes a_{\sigma(m)}$ 
of $a_1\otimes \cdots \otimes a_m  \in \otimes^m \mathbb{H}$ where $S_m$ is the symmetric group of degree $m$. 
The set $\{1^m, 1^{m-1} j, 1^{m-2} j^2,\dots, j^m \}$ is a $\mathbb{C}$-basis of the vector space $S^m\mathbb{H}$. 
An $S^m\mathbb{H}$-valued $q$-form $\widetilde{\xi}$ on $P(H^*)$ is given by 
\[
\widetilde{\xi}=\widetilde{\xi}_0 1^m+\widetilde{\xi}_1 1^{m-1} j+\widetilde{\xi}_2 1^{m-2} j^2+ \dots+ \widetilde{\xi}_m j^m
\]
for $q$-forms $\widetilde{\xi}_0,\dots,\widetilde{\xi}_m$ on $P(H^*)$. 
\begin{prop}\label{s3.1p2} 
If $\widetilde{\xi}$ is in $\widetilde{\mathcal{A}}^q_m(S^m\mathbb{H})$, then $\widetilde{\xi}_0 \in \widetilde{\mathcal{A}}^q_{(m,0)}$. 
Conversely, for $\widetilde{\xi}_0 \in \widetilde{\mathcal{A}}^q_{(m,0)}$, 
there exists a unique section $\widetilde{\xi} \in \widetilde{\mathcal{A}}^q_m(S^m\mathbb{H})$ in which the coefficient of $1^m$ is $\widetilde{\xi}_0$. 
\end{prop}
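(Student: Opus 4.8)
The plan is to reduce at once to the case $q=0$ and then to read the statement off an explicit description of $\rho_m$ along a fibre. The excerpt records $\widetilde{\mathcal{A}}^q_m(S^m\mathbb{H})=\widetilde{\mathcal{A}}^q\otimes_{\widetilde{\mathcal{A}}^0}\widetilde{\mathcal{A}}^0_m(S^m\mathbb{H})$ and, by definition, $\widetilde{\mathcal{A}}^q_{(m,0)}=\widetilde{\mathcal{A}}^q\otimes_{\widetilde{\mathcal{A}}^0}\widetilde{\mathcal{A}}^0_{(m,0)}$. The projection $\widetilde{\xi}\mapsto\widetilde{\xi}_0$ onto the coefficient of $1^m$ touches only the $S^m\mathbb{H}$-factor and is $\widetilde{\mathcal{A}}^0$-linear, so it extends over these tensor products. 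Hence it suffices to prove the function statement, namely that $\widetilde{\xi}\mapsto\widetilde{\xi}_0$ is a sheaf isomorphism $\widetilde{\mathcal{A}}^0_m(S^m\mathbb{H})\xrightarrow{\ \sim\ }\widetilde{\mathcal{A}}^0_{(m,0)}$, after which tensoring with $\widetilde{\mathcal{A}}^q$ over $\widetilde{\mathcal{A}}^0$ gives the general case.

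For the function case I would work on a single fibre. Fix $u_0^*$ and write a fibre point as $u_0^*a$ with $a=z+jw$, so $(z,w)$ is the coordinate of the definition. Evaluating the equivariance $(R_a)^*\widetilde{\xi}=\rho_m(a)\widetilde{\xi}$ at $u_0^*$ gives $\widetilde{\xi}(u_0^*a)=\rho_m(a)v_0$ with $v_0:=\widetilde{\xi}(u_0^*)$; this is consistent with the right action because $\rho_m(a)=(s^m\rho^*)(a^{-1})$ is an anti-homomorphism, matching $R_{ab}=R_b\circ R_a$. Thus the entire restriction of $\widetilde{\xi}$ to a fibre is determined by the single vector $v_0\in S^m\mathbb{H}$. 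I would then compute $\rho_1(a)\colon h\mapsto ha$ on the basis $\{1,j\}$: using $jw=\bar{w}j$ and $j^2=-1$ one finds $1\mapsto z\cdot 1+\bar{w}\cdot j$ and $j\mapsto -w\cdot 1+\bar{z}\cdot j$. Taking the $m$-th symmetric power and extracting the coefficient of $1^m$, the image of $1^{m-l}j^l$ contributes $(-1)^l z^{m-l}w^l$, so that
\[
\widetilde{\xi}_0(u_0^*a)=\sum_{l=0}^m (-1)^l (v_0)_l\, z^{m-l}w^l,\qquad v_0=\sum_{l=0}^m (v_0)_l\,1^{m-l}j^l .
\]
No $\bar{z},\bar{w}$ occur — precisely because the holomorphic entries $z,-w$ sit in the $1$-slot of $\rho_1(a)$ — so $\widetilde{\xi}_0$ is a homogeneous polynomial of bidegree $(m,0)$, establishing the forward direction $\widetilde{\xi}_0\in\widetilde{\mathcal{A}}^0_{(m,0)}$.

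For the converse the displayed formula is the whole point: $v_0\mapsto\widetilde{\xi}_0|_{\mathrm{fibre}}$ is linear and, in the bases $\{1^{m-l}j^l\}$ and $\{z^{m-l}w^l\}$, diagonal with nonzero entries $(-1)^l$, hence a bijection of $S^m\mathbb{H}$ onto the $(m+1)$-dimensional space of bidegree-$(m,0)$ fibre polynomials. Given $\widetilde{\xi}_0\in\widetilde{\mathcal{A}}^0_{(m,0)}$ I would invert this map fibrewise to obtain $v_0$ and set $\widetilde{\xi}(u_0^*a):=\rho_m(a)v_0$; by construction its $1^m$-coefficient is $\widetilde{\xi}_0$, it is $\rho_m$-equivariant, and it is the unique such extension since $v_0$ is forced by $\widetilde{\xi}_0$. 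Smoothness over $M$ follows because the inversion is a fixed fibrewise linear isomorphism between $S^mH$ and the bundle of bidegree-$(m,0)$ fibre polynomials, applied to a smooth section.

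I expect the only real friction to be algebraic bookkeeping: getting the conjugations and signs in $\rho_1(a)$ right from the noncommutative relations, confirming the anti-homomorphism convention so that $\widetilde{\xi}(u_0^*a)=\rho_m(a)v_0$ is consistent under the right action, and checking that the reconstructed $\widetilde{\xi}$ is equivariant under all of $\textrm{GL}(1,\mathbb{H})$ (not merely the $\textrm{GL}(1,\mathbb{C})$ that already forces the $c^m$ scaling of $\widetilde{\xi}_0$) and independent of the auxiliary base point $u_0^*$ used to set up the fibre coordinates.
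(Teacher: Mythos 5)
Your proposal is correct and follows essentially the same route as the paper: reduce to $q=0$, use equivariance to express the fibre restriction as $\widetilde{\xi}(u_0^*a)=\rho_m(a)\widetilde{\xi}(u_0^*)$, compute the $1^m$-coefficient of $\rho_m(a)(1^{m-l}j^l)$ to be $(-1)^l z^{m-l}w^l$ (the paper's $p_{l0}$), and invert this diagonal correspondence for the converse and uniqueness. The only cosmetic differences are that you justify the reduction to $q=0$ via the tensor-product definition and compute only the needed column of the transition matrix, where the paper records the full matrix $(p_{ii'})$ (which it reuses afterwards to conclude $\widetilde{\xi}_i\in\widetilde{\mathcal{A}}^q_{(m-i,i)}$).
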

\begin{proof} 
It suffices to show the case $q=0$. 
Let $\widetilde{\xi}$ be an $S^m\mathbb{H}$-valued function on $P(H^*)$. 
We fix a point $u_0^*$ of $P(H^*)$. 
The complex coordinate $(z,w)$ of the fiber is given by $u_0^*(z+jw)$. 
If we take $a=z+jw$, then there exists 
\[
\left(
\begin{array}{cccc}
(1a)^m \\
(1a)^{m-1}ja\\
\vdots \\
(ja)^m
\end{array}
\right)
=
\left(
\begin{array}{cccc}
p_{00} &p_{01} & \cdots&p_{0m}  \\
p_{10} &p_{11} & \cdots& p_{1m} \\
\vdots &\vdots &\ddots&\vdots  \\
p_{m0} &p_{1m}&\cdots &p_{mm} 
\end{array}
\right)
\left(
\begin{array}{cccc}
1^m \\
1^{m-1}j \\
\vdots \\
j^m
\end{array}
\right)
\]
where $p_{ij}$ is a polynomial of $z,w,\bar{z},\bar{w}$ of degree $m$. 
Each component $p_{ii'}$ is a polynomial of degree $(m-i',i')$ for $i,i'=0,1,\dots,m$. 
In particular, $p_{i0}=(-1)^iz^{m-i}w^i$ for $i=0,1,\dots,m$. 
It follows from $(R_a)^*\widetilde{\xi}=\rho_m(a)\widetilde{\xi}$ that 
$\widetilde{\xi}_0(u_0^*a)=\sum_{i=0}^m \widetilde{\xi}_i(u_0^*)p_{i0}=\sum_{i=0}^m (-1)^i\widetilde{\xi}_i(u_0^*)z^{m-i}w^i$. 
Conversely, $\widetilde{\xi}_0(u_0^*a)=\sum_{i=0}^m c_{i}z^{m-i}w^i$ for complex number $c_0,c_1,\dots, c_m$. 
We define an $S^m\mathbb{H}$-valued function $\widetilde{\xi}$ on $P(H^*)$ as 
$\widetilde{\xi}_{u_0^*a}=\sum_{i=0}^{m}(-1)^i c_i a^{m-i} (ja)^i$ for any $a\in \textrm{GL}(1,\mathbb{H})$. 
Then $\widetilde{\xi}$ is in $\widetilde{\mathcal{A}}^q_m(S^m\mathbb{H})$. 
By the definition, the coefficient of $1^m$ in $\widetilde{\xi}_{u_0^*a}$ is $\sum_{i=0}^m (-1)^i c_ip_{i0}=\sum_{i=0}^m c_iz^{m-i}w^i=\widetilde{\xi}_0$, 
and hence it completes the proof
\end{proof}
The coefficient $\widetilde{\xi}_{i}$ of $\widetilde{\xi}$ is in $\widetilde{\mathcal{A}}^q_{(m-i,i)}$. 
Proposition~\ref{s3.1p2} implies the following  
\begin{cor}\label{s3.1c1}
The sheaf $\mathcal{A}^q(S^mH)$ is isomorphic to $\widetilde{\mathcal{A}}^q_{(m,0)}$ 
by the correspondence $\xi \mapsto \widetilde{\xi}_0$. 
Moreover, $(\widetilde{d^{\nabla}\xi})_0=d_{\widetilde{\mathcal{H}}}\widetilde{\xi}_0$ for any $\xi\in \mathcal{A}^q(S^mH)$. $\hfill\Box$
\end{cor}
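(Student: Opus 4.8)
The plan is to deduce the corollary by composing the two identifications already in hand and then reading off a single coefficient.

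First I would establish the isomorphism of sheaves. Proposition~\ref{s3.1p1} provides the isomorphism $\mathcal{A}^q(S^mH)\cong \widetilde{\mathcal{A}}^q_m(S^m\mathbb{H})$ given by $\xi\mapsto\widetilde{\xi}$. Proposition~\ref{s3.1p2} shows that the coefficient map $\widetilde{\xi}\mapsto\widetilde{\xi}_0$ sends $\widetilde{\mathcal{A}}^q_m(S^m\mathbb{H})$ into $\widetilde{\mathcal{A}}^q_{(m,0)}$, while its converse half exhibits, for every $\widetilde{\xi}_0\in\widetilde{\mathcal{A}}^q_{(m,0)}$, a unique preimage in $\widetilde{\mathcal{A}}^q_m(S^m\mathbb{H})$. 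Together these two assertions say precisely that $\widetilde{\xi}\mapsto\widetilde{\xi}_0$ is a bijection, and in fact an isomorphism of sheaves of $\widetilde{\mathcal{A}}^0$-modules. Composing the two maps yields the isomorphism $\xi\mapsto\widetilde{\xi}_0$ asserted in the statement.

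Next I would verify the derivative formula. By Proposition~\ref{s3.1p1} we have the identity $\widetilde{d^{\nabla}\xi}=d_{\widetilde{\mathcal{H}}}\widetilde{\xi}$ in $\widetilde{\mathcal{A}}^{q+1}_m(S^m\mathbb{H})$. Expanding in the fixed basis as $\widetilde{\xi}=\sum_{i=0}^m\widetilde{\xi}_i\,1^{m-i}j^i$, and observing that $1^m,1^{m-1}j,\dots,j^m$ are constant elements of the fixed vector space $S^m\mathbb{H}$ (the fibre of a trivial bundle over $P(H^*)$), the horizontal exterior derivative acts coefficientwise: $d_{\widetilde{\mathcal{H}}}\widetilde{\xi}=\sum_{i=0}^m(d_{\widetilde{\mathcal{H}}}\widetilde{\xi}_i)\,1^{m-i}j^i$. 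Hence the coefficient of $1^m$ on the right-hand side is exactly $d_{\widetilde{\mathcal{H}}}\widetilde{\xi}_0$, while by definition the coefficient of $1^m$ on the left-hand side is $(\widetilde{d^{\nabla}\xi})_0$. Equating the $1^m$-coefficients of the two sides gives $(\widetilde{d^{\nabla}\xi})_0=d_{\widetilde{\mathcal{H}}}\widetilde{\xi}_0$.

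Everything here is essentially formal, so there is no serious obstacle; the single point deserving care is that extracting the $1^m$-coefficient commutes with $d_{\widetilde{\mathcal{H}}}$. This commutation is exactly the coefficientwise action recorded above, which holds because the basis $\{1^m,\dots,j^m\}$ is independent of the point of $P(H^*)$; were these basis elements to vary along the fibre, cross terms would appear and the formula would fail. Thus, once the constancy of the basis is noted, the corollary follows immediately from Propositions~\ref{s3.1p1} and~\ref{s3.1p2}.
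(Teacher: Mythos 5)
Your proposal is correct and follows exactly the route the paper intends: the corollary is stated as an immediate consequence of Propositions~\ref{s3.1p1} and~\ref{s3.1p2}, namely composing the isomorphism $\xi\mapsto\widetilde{\xi}$ with the coefficient bijection $\widetilde{\xi}\mapsto\widetilde{\xi}_0$ and extracting the $1^m$-coefficient from $\widetilde{d^{\nabla}\xi}=d_{\widetilde{\mathcal{H}}}\widetilde{\xi}$. Your added observation that $d_{\widetilde{\mathcal{H}}}$ acts coefficientwise because the basis $\{1^{m-i}j^i\}$ of $S^m\mathbb{H}$ is constant along $P(H^*)$ is precisely the detail the paper leaves implicit.
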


We extend the above argument of $\mathcal{A}^q(S^mH)$ to $\mathcal{A}^q(\wedge^k E\otimes S^mH)$ as follows. 
Let $\widetilde{\mathcal{A}}^q(\wedge^k E)$ denote the sheaf of pull-back of $\wedge^k E$-valued smooth $q$-forms on $M$ by $p$. 
We define $\widetilde{\mathcal{A}}^q_m(\wedge^k E\otimes S^m\mathbb{H})$ and $\widetilde{\mathcal{A}}^q_{(m-i,i)}(\wedge^k E)$ as the sheaves 
\[
\widetilde{\mathcal{A}}^q_m(\wedge^k E\otimes S^m\mathbb{H})=\widetilde{\mathcal{A}}^q(\wedge^k E)\otimes_{\widetilde{\mathcal{A}}^0} \widetilde{\mathcal{A}}^0_m(S^m\mathbb{H}) 
\]
and 
\begin{equation*}\label{s3.1eq5}
\widetilde{\mathcal{A}}^q_{(m-i,i)}(\wedge^k E)=\widetilde{\mathcal{A}}^q(\wedge^k E)\otimes_{\widetilde{\mathcal{A}}^0} \widetilde{\mathcal{A}}^0_{(m-i,i)}. 
\end{equation*}
Any element $\widetilde{\xi}$ of $\widetilde{\mathcal{A}}^q_m(\wedge^k E\otimes S^m\mathbb{H})$ is written by 
\begin{equation}\label{s3.1eq6}
\widetilde{\xi}=\widetilde{\xi}_0 1^m+\widetilde{\xi}_1 1^{m-1} j+\widetilde{\xi}_2 1^{m-2} j^2+ \dots+ \widetilde{\xi}_m j^m
\end{equation}
for $p^{-1}(\wedge^k E)$-valued 1-forms $\widetilde{\xi}_0,\dots,\widetilde{\xi}_m$. 
The Levi-Civita connection induces connections of $E$, $H$ and the covariant exterior derivative $d^{\nabla}: \mathcal{A}^q(\wedge^k E\otimes S^mH)\to \mathcal{A}^{k+1}(\wedge^k E\otimes S^mH)$. 
Let $\widetilde{\mathcal{H}}$ be the horizontal subbundle of $TP(H^*)$. 
By the same argument in Proposition \ref{s3.1p2} and Corollary \ref{s3.1c1}, we obtain  
\begin{cor} \label{s3.1c2}
(i) $\widetilde{\mathcal{A}}^q_m(\wedge^k E\otimes S^m\mathbb{H})\cong \widetilde{\mathcal{A}}^q_{(m,0)}(\wedge^k E)$ by $\widetilde{\xi} \mapsto \widetilde{\xi}_0$. 
Moreover, $(d_{\widetilde{\mathcal{H}}}\widetilde{\xi})_0=d_{\widetilde{\mathcal{H}}}\widetilde{\xi}_0$ for any $\widetilde{\xi}\in \widetilde{\mathcal{A}}^q_m(\wedge^k E\otimes S^m\mathbb{H})$. \\
(ii) $\mathcal{A}^q(\wedge^k E\otimes S^mH)\cong \widetilde{\mathcal{A}}^q_{(m,0)}(\wedge^k E)$ by $\xi \mapsto \widetilde{\xi}_0$. 
Moreover, $(\widetilde{d^{\nabla}\xi})_0=d_{\widetilde{\mathcal{H}}}\widetilde{\xi}_0$ for any $\xi\in \mathcal{A}^q(\wedge^k E\otimes S^mH)$. $\hfill\Box$ 
\end{cor}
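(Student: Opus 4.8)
The plan is to bootstrap the two scalar results, Proposition~\ref{s3.1p2} and Corollary~\ref{s3.1c1}, to the $\wedge^k E$-twisted setting by exploiting the fact that the right $\textrm{GL}(1,\mathbb{H})$-action $R_a$ on $P(H^*)$ covers the identity on $M$ and therefore acts trivially on the pulled-back factor $\widetilde{\mathcal{A}}^q(\wedge^k E)$; all of the equivariance is carried by the $S^m\mathbb{H}$-factor exactly as before.

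For part (i), I would first restate the $q=0$ content of Proposition~\ref{s3.1p2} as a sheaf isomorphism $\widetilde{\mathcal{A}}^0_m(S^m\mathbb{H})\cong \widetilde{\mathcal{A}}^0_{(m,0)}$ of $\widetilde{\mathcal{A}}^0$-modules, given by $\widetilde{\xi}\mapsto \widetilde{\xi}_0$. Tensoring both sides over $\widetilde{\mathcal{A}}^0$ with $\widetilde{\mathcal{A}}^q(\wedge^k E)$ and using the defining formula for $\widetilde{\mathcal{A}}^q_m(\wedge^k E\otimes S^m\mathbb{H})$ together with~(\ref{s3.1eq5}) yields the desired isomorphism $\widetilde{\mathcal{A}}^q_m(\wedge^k E\otimes S^m\mathbb{H})\cong \widetilde{\mathcal{A}}^q_{(m,0)}(\wedge^k E)$. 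Under the expansion~(\ref{s3.1eq6}) this map is precisely extraction of the coefficient $\widetilde{\xi}_0$ of $1^m$, because the equivariance constraint $(R_a)^*\widetilde{\xi}=\rho_m(a)\widetilde{\xi}$ reproduces, component by component, the same triangular polynomial relations $\widetilde{\xi}_{i'}=\sum_{i}\widetilde{\xi}_i p_{ii'}$ as in the proof of Proposition~\ref{s3.1p2}, now with $\wedge^k E$-valued coefficients in place of scalars.

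For the derivative identity in (i), the key observation is that $d_{\widetilde{\mathcal{H}}}$ is by definition the horizontal restriction of the covariant exterior derivative $d^E$ of $p^{-1}(\wedge^k E)$, so it differentiates only the $\wedge^k E$-valued coefficients while the fixed $\mathbb{C}$-basis $\{1^m, 1^{m-1}j,\dots, j^m\}$ of the trivial bundle $S^m\mathbb{H}$ is horizontally constant. Hence $d_{\widetilde{\mathcal{H}}}$ acts coefficient-wise in this basis, giving $(d_{\widetilde{\mathcal{H}}}\widetilde{\xi})_i=d_{\widetilde{\mathcal{H}}}\widetilde{\xi}_i$ for every $i$, and in particular $(d_{\widetilde{\mathcal{H}}}\widetilde{\xi})_0=d_{\widetilde{\mathcal{H}}}\widetilde{\xi}_0$.

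Part (ii) then follows by composition: the correspondence $\xi\mapsto \widetilde{\xi}$ established just before the corollary gives $\mathcal{A}^q(\wedge^k E\otimes S^mH)\cong \widetilde{\mathcal{A}}^q_m(\wedge^k E\otimes S^m\mathbb{H})$, and composing with the isomorphism of (i) identifies $\mathcal{A}^q(\wedge^k E\otimes S^mH)$ with $\widetilde{\mathcal{A}}^q_{(m,0)}(\wedge^k E)$ via $\xi\mapsto \widetilde{\xi}_0$. The derivative statement $(\widetilde{d^{\nabla}\xi})_0=d_{\widetilde{\mathcal{H}}}\widetilde{\xi}_0$ is obtained by first invoking the $\wedge^k E$-analogue of Proposition~\ref{s3.1p1}, namely $\widetilde{d^{\nabla}\xi}=d_{\widetilde{\mathcal{H}}}\widetilde{\xi}$, and then taking the $1^m$-coefficient and applying the coefficient-wise property from (i). The only point requiring genuine verification, rather than formal transport from the scalar case, is that the $R_a$-equivariance does not couple the $E$-directions into the polynomial bookkeeping; this is exactly where the triviality of the $\textrm{GL}(1,\mathbb{H})$-action on the pulled-back bundle $\wedge^k E$ is used, and I expect it to be the main, though mild, obstacle.
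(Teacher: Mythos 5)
Your proposal is correct and follows essentially the same route as the paper, which proves this corollary simply by rerunning the argument of Proposition~\ref{s3.1p2} and Corollary~\ref{s3.1c1} with $\wedge^k E$-valued coefficients; your tensoring of the scalar isomorphism over $\widetilde{\mathcal{A}}^0$ with $\widetilde{\mathcal{A}}^q(\wedge^k E)$, together with the observations that $(R_a)^*$ acts trivially on pulled-back $\wedge^k E$-valued forms and that $d_{\widetilde{\mathcal{H}}}$ acts coefficient-wise in the constant basis $\{1^{m-i}j^i\}$, is exactly the content of that implicit argument.
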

For $\xi\in \mathcal{A}^q(\wedge^k E\otimes S^mH)$, 
the element $\widetilde{\xi}_0\in \widetilde{\mathcal{A}}^q_{(m,0)}(\wedge^k E)$ is said to be a \textit{lift to $P(H^*)$}.

\subsection{Real structures on $P(H^*)$}\label{s3.2} 
We define an anti-$\mathbb{C}$-linear map $\tau : \mathcal{A}^q(\wedge^kE\otimes S^mH) \to \mathcal{A}^q(\wedge^kE\otimes S^mH)$ 
by 
\begin{equation*}\label{s3.1.5eq0}
\tau(\xi)=\sum_i (J_E^k\otimes J_H^m)(v_i)\otimes \overline{\alpha^i}
\end{equation*}
for $\xi=\sum_i v_i\otimes \alpha^i$ where $\{v_i\}$ is a frame of $\wedge^kE\otimes S^mH$ and $\alpha^i$ is a $q$-form. 
We denote by $\mathcal{A}^q(\wedge^kE\otimes S^mH)^{\tau}$ the sheaf of $\tau$-invariant elements of $\mathcal{A}^q(\wedge^kE\otimes S^mH)$. 
The map $\tau$ is a real structure of $\mathcal{A}^q(\wedge^kE \otimes S^mH)$ if $k+m$ is even. 
Especially, in the case $k=m=1$, $\tau$ is the complex conjugate on $\mathcal{A}^q(TM\otimes \mathbb{C})$. 

Let $\mathcal{A}^q_{P(H^*)}(\wedge^k E)$ denote the inverse image $p^{-1}p_*\mathcal{A}^q_{P(H^*)}(p^{-1}(\wedge^k E))$ of the direct image of the sheaf $\mathcal{A}^q_{P(H^*)}(p^{-1}(\wedge^k E))$. 
If we take a frame $\{e_i\}$ of $\wedge^kE$, 
then any element $\beta$ of $\mathcal{A}^q_{P(H^*)}(\wedge^k E)$ is written by $\beta=\sum_i e_i\otimes \alpha^i$ for some $q$-forms $\alpha^i$. 
An endomorphism $\widetilde{\tau}$ of $\mathcal{A}^q_{P(H^*)}(\wedge^k E)$ is defined as 
$\widetilde{\tau}(\beta)=\sum_i J_E^k(e_i)\otimes \overline{R_j^*\alpha_i}$. 
We also denote $\widetilde{\tau}(\beta)$ by $J_E^k \overline{R_j^*\beta}$. 
Moreover, we extend the endomorphism $\widetilde{\tau}$ to that of $\mathcal{A}^q_{P(H^*)}(\wedge^k E \otimes S^m\mathbb{H})$ 
by 
\begin{equation*}\label{s3.1.5eq1}
\widetilde{\tau}(\beta \otimes 1^{m-i}j^i)=J_E^k\overline{R_j^*\beta}\otimes 1^{m-i}j^i
\end{equation*}
for $\beta \otimes 1^{m-i}j^i\in \mathcal{A}^q_{P(H^*)}(\wedge^k E \otimes S^m\mathbb{H})$. 
The map $\widetilde{\tau}$ is anti-$\mathbb{C}$-linear and $\widetilde{\tau}^2=(-1)^kR^*_{-1}$. 
\begin{prop}\label{s3.1.5p1} 
The map $\widetilde{\tau}$ defines endomorphisms of $\widetilde{\mathcal{A}}^q_m(\wedge^k E \otimes S^m\mathbb{H})$ and $\widetilde{\mathcal{A}}^q_{(m-i,i)}(\wedge^k E)$ 
such that $\widetilde{\tau}(\widetilde{\xi})=\widetilde{\tau(\xi)}$ and $\widetilde{\tau}(\widetilde{\xi}_i)=\widetilde{\tau(\xi)}_i$ for $\xi \in \mathcal{A}^q(\wedge^k E \otimes S^mH)$. 
Moreover, $\widetilde{\tau}$ induces real structures on $\widetilde{\mathcal{A}}^q_m(\wedge^k E \otimes S^m\mathbb{H})$ and $\widetilde{\mathcal{A}}^q_{(m-i,i)}(\wedge^k E)$ 
if $k+m$ is even. 
\end{prop}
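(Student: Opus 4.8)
The plan is to reduce the whole statement to a single identity on the $H$-factor and then read off every assertion. Since $\widetilde{\mathcal{A}}^q_m(\wedge^k E\otimes S^m\mathbb{H})=\widetilde{\mathcal{A}}^q(\wedge^k E)\otimes_{\widetilde{\mathcal{A}}^0}\widetilde{\mathcal{A}}^0_m(S^m\mathbb{H})$, by $\widetilde{\mathcal{A}}^0$-linearity it suffices to treat a decomposable $\xi=e\otimes\sigma\otimes\alpha$ with $e$ a local frame of $\wedge^k E$, $\sigma$ a section of $S^mH$ and $\alpha$ a $q$-form, and indeed only the case $q=0$, the form factor being inert under the bundle operations. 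For such $\xi$ one has $\widetilde{\xi}_{u^*}=(p^*e)\otimes u^{-1}\sigma\otimes(p^*\alpha)$, and writing $u^{-1}\sigma=\sum_i c_i\,1^{m-i}j^i$ gives the coefficients $\widetilde{\xi}_i=(p^*e)\otimes(c_i\,p^*\alpha)$.

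First I would dispose of the $E$-factor and the form factor. Because $J_E^k$ acts on the pulled-back frame by $J_E^k(p^*e)=p^*(J_E^k e)$, and because $p\circ R_j=p$ forces $R_j^*(p^*\alpha)=p^*\alpha$ and hence $\overline{R_j^*(p^*\alpha)}=p^*\overline{\alpha}$, applying the definition of $\widetilde{\tau}$ term by term yields $\widetilde{\tau}(\widetilde{\xi})=p^*(J_E^k e)\otimes p^*\overline{\alpha}\otimes\sum_i\overline{R_j^* c_i}\,1^{m-i}j^i$. Comparing with $\widetilde{\tau(\xi)}_{u^*}=p^*(J_E^k e)\otimes u^{-1}(J_H^m\sigma)\otimes p^*\overline{\alpha}$, the entire proposition reduces to the purely $H$-theoretic identity $u^{-1}(J_H^m\sigma)=\sum_i\overline{R_j^* c_i}\,1^{m-i}j^i$.

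The hard part is this last identity, and I would prove it first for $m=1$ and then propagate it to all $m$ by symmetric powers. For $m=1$ I would use that the frame induces $J_H=u\circ j_0\circ u^{-1}$, so $u^{-1}(J_H\sigma)=j_0(u^{-1}\sigma)$, and evaluate $j_0$ as left multiplication by $j$ on $\mathbb{H}=\mathbb{C}\oplus\mathbb{C}j$; on the other side I would compute $R_j^*$ on the coefficients through the equivariance $\widetilde{\sigma}_{u^*j}=\rho_1(j)\widetilde{\sigma}_{u^*}=\widetilde{\sigma}_{u^*}\cdot j$, which is right multiplication by $j$, followed by conjugation. A direct check shows both sides equal $-\overline{c_1}\,1+\overline{c_0}\,j$. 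For general $m$ the three operations $J_H^m$, $u^{-1}\colon S^mH\to S^m\mathbb{H}$ and the frame change $\rho_m(j)$ are the $m$-th symmetric powers of their $m=1$ counterparts, while the rebuilding $\widetilde{\sigma}\mapsto\sum_i\overline{R_j^*(\cdot)}_i\,1^{m-i}j^i$ is multiplicative on scalar coefficients; hence the $m=1$ identity extends multiplicatively across $S^m\mathbb{H}$. The anti-linearity bookkeeping of $J_H$ against the conjugation built into $\widetilde{\tau}$ is the only delicate point, and the $m=1$ computation is exactly where that balance must be struck.

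Granting the identity, the remaining claims are formal. The equality $\widetilde{\tau}(\widetilde{\xi})=\widetilde{\tau(\xi)}$ holds, and since its right-hand side is the lift of a genuine section over $M$ it lies in $\widetilde{\mathcal{A}}^q_m(\wedge^k E\otimes S^m\mathbb{H})$, so $\widetilde{\tau}$ is an endomorphism there; matching the coefficient of $1^{m-i}j^i$ gives $\widetilde{\tau}(\widetilde{\xi}_i)=\widetilde{\tau(\xi)}_i$. That $\widetilde{\tau}$ preserves $\widetilde{\mathcal{A}}^q_{(m-i,i)}(\wedge^k E)$ I would also verify directly: for $\eta$ of degree $(m-i,i)$ and $c\in\textrm{GL}(1,\mathbb{C})$, using $cj=j\overline{c}$ and $R_j\circ R_c=R_{cj}=R_{\overline{c}}\circ R_j$ one computes $(R_c)^*\bigl(J_E^k\overline{R_j^*\eta}\bigr)=c^{m-i}\overline{c}^{i}\,J_E^k\overline{R_j^*\eta}$. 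Finally, for the real-structure assertion I would invoke the stated relation $\widetilde{\tau}^2=(-1)^k R_{-1}^*$ together with $\rho_m(-1)=(-1)^m$, so that $R_{-1}^*$ acts as $(-1)^m$ on both sheaves and $\widetilde{\tau}^2=(-1)^{k+m}$, which equals $\id$ precisely when $k+m$ is even.
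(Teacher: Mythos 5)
Your proof is correct, and it reaches the same conclusions as the paper's, but it organizes the argument differently. The paper first proves the endomorphism property of $\widetilde{\mathcal{A}}^q_m(\wedge^k E\otimes S^m\mathbb{H})$ by a direct equivariance computation for a general element $a=z+jw$ of $\textrm{GL}(1,\mathbb{H})$ (using $R_a^*R_j^*=R_j^*R_{a'}^*$ with $a'=j^{-1}aj$), and only afterwards establishes $\widetilde{\tau}(\widetilde{\xi})=\widetilde{\tau(\xi)}$ by expanding both sides in the bases $1^{m-i}j^i$ and $u^{m-i}(ju)^i$ and matching the signs $\rho_m(j)(1^{m-i}j^i)=(-1)^ij^{m-i}1^i$ against $J_H^m(u^{m-i}(ju)^i)=(-1)^i(ju)^{m-i}u^i$; you invert this order, proving the compatibility $\widetilde{\tau}(\widetilde{\xi})=\widetilde{\tau(\xi)}$ first via the pointwise $H$-factor identity $u^{-1}(J_H^m\sigma)=\sum_i\overline{R_j^*c_i}\,1^{m-i}j^i$ (established for $m=1$ and propagated by functoriality of symmetric powers, which works because each basis element $1^{m-i}j^i$ is a product of vectors fixed by coefficient conjugation), and then deducing invariance of $\widetilde{\mathcal{A}}^q_m(\wedge^k E\otimes S^m\mathbb{H})$ from the surjectivity of the lift correspondence $\xi\mapsto\widetilde{\xi}$ rather than checking equivariance directly. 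Your route buys a cleaner conceptual reduction (one rank-one identity plus symmetric powers) at the price of leaning on the earlier isomorphism $\mathcal{A}^q(\wedge^kE\otimes S^mH)\cong\widetilde{\mathcal{A}}^q_m(\wedge^kE\otimes S^m\mathbb{H})$, whereas the paper's check is self-contained; your treatments of the $(m-i,i)$-sheaf and of the real-structure claim via $\widetilde{\tau}^2=(-1)^kR_{-1}^*$ and $\rho_m(-1)=(-1)^m$ coincide with the paper's. One small omission: membership in $\widetilde{\mathcal{A}}^q_{(m-i,i)}(\wedge^kE)$ requires not only the $\textrm{GL}(1,\mathbb{C})$-weight you verify but also that $\widetilde{\tau}(\eta)$ remains a polynomial of degree $m$ along the fibers; this is immediate from $(R_j^*f)(u_0^*(z+jw))=f(u_0^*(-\bar{w}+j\bar{z}))$, which the paper notes explicitly, so it is a one-line gap rather than a flaw in the approach.
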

\begin{proof} 
Let $\widetilde{\xi}$ be an element of $\widetilde{\mathcal{A}}^q_m(S^m\mathbb{H})$. 
Then $R^*_{z+jw}\widetilde{\tau}(\widetilde{\xi})=R^*_{z+jw}(\overline{R_j^*\widetilde{\xi}})=\overline{R^*_{z+jw}R_j^*\widetilde{\xi}}
=\overline{R^*_jR_{\bar{z}+j\bar{w}}^*\widetilde{\xi}}=\overline{R^*_j\rho_m(\bar{z}+j\bar{w})\widetilde{\xi}}=\rho_m(z+jw)\overline{R^*_j\widetilde{\xi}}$. 
Hence $\widetilde{\tau}(\widetilde{\xi})$ is in $\widetilde{\mathcal{A}}^q_m( S^m\mathbb{H})$. 
Let $f$ be a polynomial of degree $(m-i,i)$ along fiber.  
Since $(R_j^*f)(u_0^*(z+jw))=f(u_0^*(-\bar{w}+j\bar{z}))$, 
$\widetilde{\tau}(f)$ is a polynomial of $z,w,\bar{z},\bar{w}$ of degree $m$ 
Furthermore, $(R_c)^*\widetilde{\tau}(f)=(R_c)^*\overline{R_j^*f}
=\overline{R_j^*R^*_{\bar{c}}f}=\overline{R_j^*(\bar{c}^{m-i}c^{i}f)}
=c^{m-i}\bar{c}^{i}\overline{R_j^*f}=c^{m-i}\bar{c}^{i}\widetilde{\tau}(f)$ for $c\in \textrm{GL}(1,\mathbb{C})$. 
Hence $\widetilde{\tau}(f)$ is in $\widetilde{\mathcal{A}}^q_{(m-i,i)}$. 
By the bundle map $J^k_E : \wedge^k E\to \wedge^k E$, 
$\widetilde{\tau}$ is extended to endomorphisms of $\widetilde{\mathcal{A}}^q_m(\wedge^k E \otimes S^m\mathbb{H})$ and $\widetilde{\mathcal{A}}^q_{(m-i,i)}(\wedge^k E)$. 
For $\widetilde{\xi}=\sum_i \widetilde{\xi}_i\otimes 1^{m-i}j^i$, 
\begin{equation*}\label{s3.1.5eq5}
\widetilde{\tau}(\widetilde{\xi})=\sum_i J_E^k \overline{R_j^*\widetilde{\xi}_i}\otimes 1^{m-i}j^i
=\sum_i J_E^k \overline{\widetilde{\xi}_i}\otimes \rho_m(j)1^{m-i}j^i=\sum_i J_E^k \overline{\widetilde{\xi}_i}\otimes (-1)^ij^{m-i}1^i. 
\end{equation*}
On the other hand, $\xi$ is given by $\xi=\sum_i \xi_i\otimes u^{m-i}(ju)^i$ for a frame $u$ of $H$. 
Then $\tau (\xi)=\sum_i J_E^k \overline{\xi_i}\otimes J_H^m(u^{m-i}(ju)^i)=\sum_i J_E^k \overline{\xi_i}\otimes (-1)^i(ju)^{m-i}u^i$.
Thus $\widetilde{\tau(\xi)}=\widetilde{\tau}(\widetilde{\xi})$. 
By the definition, $\widetilde{\tau}(\widetilde{\xi})_i=J_E^k \overline{R_j^*\widetilde{\xi}_i}=\widetilde{\tau}(\widetilde{\xi}_i)$. 
Hence $\widetilde{\tau}(\widetilde{\xi}_i)=\widetilde{\tau(\xi)}_i$. 
It follows from $\widetilde{\tau}^2=(-1)^kR^*_{-1}$ that $\widetilde{\tau}^2=(-1)^{k+m}\id$ 
on $\widetilde{\mathcal{A}}^q_m(\wedge^k E \otimes S^m\mathbb{H})$ and $\widetilde{\mathcal{A}}^q_{(m-i,i)}(\wedge^k E)$. 
If $k+m$ is even, the anti-$\mathbb{C}$-linear $\widetilde{\tau}$ is real structures 
on $\widetilde{\mathcal{A}}^q_m(\wedge^k E \otimes S^m\mathbb{H})$ and $\widetilde{\mathcal{A}}^q_{(m-i,i)}(\wedge^k E)$. 
It completes the proof. 
\end{proof} 
We also obtain 
\begin{lem}\label{s3.1.5l1} 
Let $\widetilde{\xi}$ be an element of $\widetilde{\mathcal{A}}^q_m(\wedge^k E \otimes S^m\mathbb{H})$. 
Under the representation (\ref{s3.1eq6}), $\widetilde{\xi}$ is $\widetilde{\tau}$-invariant 
if and only if $\widetilde{\xi}_i$ is $\widetilde{\tau}$-invariant for each $i$, 
and $\widetilde{\xi}_i=(-1)^{m-i}J_E\overline{\widetilde{\xi}_{m-i}}$. $\hfill\Box$
\end{lem}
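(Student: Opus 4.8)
The plan is to expand $\widetilde{\tau}(\widetilde{\xi})$ in the basis $\{1^m,1^{m-1}j,\dots,j^m\}$ of $S^m\mathbb{H}$ and compare coefficients with the expansion (\ref{s3.1eq6}) of $\widetilde{\xi}$. The first step is to reduce $\widetilde{\tau}$-invariance of $\widetilde{\xi}$ to invariance of its coefficients. By Proposition~\ref{s3.1.5p1} the $i$-th coefficient of $\widetilde{\tau}(\widetilde{\xi})$ is precisely $\widetilde{\tau}(\widetilde{\xi}_i)$, where $\widetilde{\tau}$ on the right denotes the induced endomorphism of $\widetilde{\mathcal{A}}^q_{(m-i,i)}(\wedge^k E)$. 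Since the $1^{m-i}j^i$ are linearly independent, $\widetilde{\tau}(\widetilde{\xi})=\widetilde{\xi}$ holds if and only if $\widetilde{\tau}(\widetilde{\xi}_i)=\widetilde{\xi}_i$ for every $i$, which is exactly the first asserted condition that each $\widetilde{\xi}_i$ be $\widetilde{\tau}$-invariant.

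The second step is to rewrite the per-coefficient invariance $\widetilde{\tau}(\widetilde{\xi}_i)=\widetilde{\xi}_i$ as the stated symmetry between $\widetilde{\xi}_i$ and $\widetilde{\xi}_{m-i}$. The key is the equivariance relation
\begin{equation}
R_j^*\widetilde{\xi}_i=(-1)^{m-i}\widetilde{\xi}_{m-i},
\end{equation}
which I would obtain by applying $(R_j)^*\widetilde{\xi}=\rho_m(j)\widetilde{\xi}$ to (\ref{s3.1eq6}). As the basis vectors $1^{m-i}j^i$ are constant along the fibre, the left-hand side has $i$-th coefficient $R_j^*\widetilde{\xi}_i$; on the right-hand side one computes $\rho_m(j)(1^{m-i}j^i)=(-1)^i1^ij^{m-i}$ using $\rho^*(-j)h=hj$ and $j^2=-1$, and reindexing $\ell=m-i$ matches the coefficient of $1^{m-\ell}j^\ell$, giving the displayed identity (this is the same computation already carried out in (\ref{s3.1.5eq5})). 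Substituting it into $\widetilde{\tau}(\widetilde{\xi}_i)=J_E^k\overline{R_j^*\widetilde{\xi}_i}$ yields $\widetilde{\tau}(\widetilde{\xi}_i)=(-1)^{m-i}J_E^k\overline{\widetilde{\xi}_{m-i}}$, so $\widetilde{\xi}_i$ is $\widetilde{\tau}$-invariant exactly when $\widetilde{\xi}_i=(-1)^{m-i}J_E^k\overline{\widetilde{\xi}_{m-i}}$.

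Combining the two steps gives the chain of equivalences: $\widetilde{\xi}$ is $\widetilde{\tau}$-invariant $\iff$ each $\widetilde{\xi}_i$ is $\widetilde{\tau}$-invariant $\iff$ $\widetilde{\xi}_i=(-1)^{m-i}J_E^k\overline{\widetilde{\xi}_{m-i}}$ for all $i$, and asserting both conditions together is the content of the lemma. The only genuinely computational point, and the place I expect to be most careful, is the equivariance identity above: one must correctly evaluate the $\textrm{GL}(1,\mathbb{H})$-action $\rho_m(j)$ on the symmetric basis and then keep track of the interplay between the reindexing $i\leftrightarrow m-i$ and the signs produced both by $\rho_m(j)$ and by the complex conjugation in $\widetilde{\tau}$. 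Everything else is a direct coefficient comparison supplied by Proposition~\ref{s3.1.5p1}.
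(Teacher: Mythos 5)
Your proof is correct and follows essentially the same route as the paper: reduce $\widetilde{\tau}$-invariance of $\widetilde{\xi}$ to invariance of each coefficient $\widetilde{\xi}_i$ (the coefficient identity $\widetilde{\tau}(\widetilde{\xi})_i=\widetilde{\tau}(\widetilde{\xi}_i)$ from Proposition~\ref{s3.1.5p1}), then convert $\widetilde{\tau}(\widetilde{\xi}_i)=\widetilde{\xi}_i$ into the symmetry $\widetilde{\xi}_i=(-1)^{m-i}J_E^k\overline{\widetilde{\xi}_{m-i}}$ via the $R_j$-equivariance $(R_j)^*\widetilde{\xi}=\rho_m(j)\widetilde{\xi}$ and the evaluation $\rho_m(j)(1^{m-i}j^i)=(-1)^i1^ij^{m-i}$, which is exactly the computation recorded in (\ref{s3.1.5eq5}). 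The sign bookkeeping and the reindexing $i\leftrightarrow m-i$ in your displayed identity match the paper's.
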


Let $\widetilde{\mathcal{A}}^q_m(\wedge^k E \otimes S^m\mathbb{H})^{\widetilde{\tau}}$ and 
$\widetilde{\mathcal{A}}^q_{(m,0)}(\wedge^k E)^{\widetilde{\tau}}$ denote 
the sheaves of $\widetilde{\tau}$-invariant elements of $\widetilde{\mathcal{A}}^q_m(\wedge^k E \otimes S^m\mathbb{H})$ and 
$\widetilde{\mathcal{A}}^q_{(m,0)}(\wedge^k E)$, respectively. 
Corollary~\ref{s3.1c2} and Proposition~\ref{s3.1.5p1} imply the following corollary : 
\begin{cor}\label{s3.1.5c1} 
$\mathcal{A}^q(\wedge^kE\otimes S^mH)^{\tau}\cong \widetilde{\mathcal{A}}^q_m(\wedge^k E\otimes S^m\mathbb{H})^{\widetilde{\tau}}
\cong \widetilde{\mathcal{A}}^q_{(m,0)}(\wedge^k E)^{\widetilde{\tau}}$ 
by $\xi\mapsto \widetilde{\xi} \mapsto \widetilde{\xi}_0$. $\hfill\Box$
\end{cor}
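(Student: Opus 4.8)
The plan is to treat Corollary~\ref{s3.1.5c1} as a purely formal consequence of the two isomorphisms already supplied by Corollary~\ref{s3.1c2} together with the equivariance of the real structures recorded in Proposition~\ref{s3.1.5p1}; the only work is to check that these isomorphisms intertwine $\tau$ and $\widetilde{\tau}$ and then to restrict them to the fixed-point subsheaves. Write $\Phi$ for the isomorphism $\xi\mapsto\widetilde{\xi}$ of $\mathcal{A}^q(\wedge^kE\otimes S^mH)$ onto $\widetilde{\mathcal{A}}^q_m(\wedge^kE\otimes S^m\mathbb{H})$, and $\Psi$ for the isomorphism $\widetilde{\xi}\mapsto\widetilde{\xi}_0$ of $\widetilde{\mathcal{A}}^q_m(\wedge^kE\otimes S^m\mathbb{H})$ onto $\widetilde{\mathcal{A}}^q_{(m,0)}(\wedge^kE)$, both furnished by Corollary~\ref{s3.1c2}.

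First I would read off the equivariance of $\Phi$ directly from Proposition~\ref{s3.1.5p1}: the identity $\widetilde{\tau}(\widetilde{\xi})=\widetilde{\tau(\xi)}$ is exactly $\Phi\circ\tau=\widetilde{\tau}\circ\Phi$. For $\Psi$ I would extract the $1^m$-coefficient. Because $\widetilde{\tau}$ is defined coefficientwise, sending $\beta\otimes 1^{m-i}j^i$ to $J_E^k\,\overline{R_j^*\beta}\otimes 1^{m-i}j^i$, the $1^m$-coefficient of $\widetilde{\tau}(\widetilde{\xi})$ comes only from the $i=0$ term and equals $J_E^k\,\overline{R_j^*\widetilde{\xi}_0}=\widetilde{\tau}(\widetilde{\xi}_0)$; in the notation of Proposition~\ref{s3.1.5p1} this is the instance $i=0$ of $\widetilde{\tau}(\widetilde{\xi}_i)=\widetilde{\tau(\xi)}_i$. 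Since $\Psi$ is precisely the extraction of this coefficient, and $\widetilde{\tau}$ preserves the bidegree $(m,0)$ (again by Proposition~\ref{s3.1.5p1}), this says $\Psi\circ\widetilde{\tau}=\widetilde{\tau}\circ\Psi$.

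With both intertwining relations in place the conclusion is automatic: an isomorphism of sheaves that commutes with two anti-$\mathbb{C}$-linear maps carries the fixed-point subsheaf of its source isomorphically onto that of its target. Hence $\Phi$ restricts to $\mathcal{A}^q(\wedge^kE\otimes S^mH)^{\tau}\cong\widetilde{\mathcal{A}}^q_m(\wedge^kE\otimes S^m\mathbb{H})^{\widetilde{\tau}}$ and $\Psi$ restricts to $\widetilde{\mathcal{A}}^q_m(\wedge^kE\otimes S^m\mathbb{H})^{\widetilde{\tau}}\cong\widetilde{\mathcal{A}}^q_{(m,0)}(\wedge^kE)^{\widetilde{\tau}}$, and composing these gives the asserted chain through $\xi\mapsto\widetilde{\xi}\mapsto\widetilde{\xi}_0$.

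Since every ingredient is already available, there is no serious obstacle; the one point deserving care is confirming that taking the $1^m$-coefficient commutes with $\widetilde{\tau}$, which as noted is immediate from the coefficientwise definition of $\widetilde{\tau}$. I would also remark that the statement is vacuous when $k+m$ is odd, since then $\widetilde{\tau}^2=(-1)^{k+m}\id=-\id$ forces all three fixed-point sheaves to vanish and the isomorphisms are those of the zero sheaf; the genuine content lies in the case $k+m$ even, where $\tau$ and $\widetilde{\tau}$ are honest real structures.
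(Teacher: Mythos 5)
Your proposal is correct and follows exactly the paper's route: the paper derives Corollary~\ref{s3.1.5c1} by combining the isomorphisms of Corollary~\ref{s3.1c2} with the equivariance identities $\widetilde{\tau}(\widetilde{\xi})=\widetilde{\tau(\xi)}$ and $\widetilde{\tau}(\widetilde{\xi}_i)=\widetilde{\tau(\xi)}_i$ of Proposition~\ref{s3.1.5p1}, then restricts to the fixed-point subsheaves, which is precisely your argument with the routine intertwining checks written out.
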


\subsection{Canonical 1-form on $P(H^*)$}
We define a $p^{-1}(E)\otimes \mathbb{H}$-valued 1-form $\widetilde{\theta}$ on $P(H^*)$ as 
\[
\widetilde{\theta}_{u^*}(v)=u^{-1}(p_*(v))
\]
for $v\in T_{u^*}P(H^*)$ at $u^*$. 
The 1-form $\widetilde{\theta}$ is called \textit{the canonical 1-form} on $P(H^*)$. 
The identity map $\id_{TM}$ of $TM$ is regarded as an element $\theta$ of $\mathcal{A}^1(E\otimes H)$. 
The canonical form $\widetilde{\theta}\in \widetilde{\mathcal{A}}^1_1(E\otimes \mathbb{H})$ is the lift of $\theta \in \mathcal{A}^1(E\otimes H)$. 
We define $p^{-1}(E)$-valued 1-forms $\widetilde{\theta}_0$ and $\widetilde{\theta}_1$ on $P(H^*)$ as 
\[
\widetilde{\theta}=\widetilde{\theta}_0+\widetilde{\theta}_1 j.
\]
Then $\widetilde{\theta}_0\in \widetilde{\mathcal{A}}^{1}_{(1,0)}(E)$ and $\widetilde{\theta}_1\in \widetilde{\mathcal{A}}^{1}_{(0,1)}(E)$. 
Since $\theta=\id_{TM}$ is $\tau$-invariant, 
the lift $\widetilde{\theta}$ is $\widetilde{\tau}$-invariant. 
Lemma~\ref{s3.1.5l1} implies that $\widetilde{\theta}_0$ and $\widetilde{\theta}_1$ are $\widetilde{\tau}$-invariant, and $\widetilde{\theta}_1=J_E\overline{\widetilde{\theta}_0}$. 

Let $A$ denote the connection form of $P(H^*)$. 
We consider $H^*$ as the right $\mathbb{C}$-bundle, that is, the Lie algebra $gl(1,\mathbb{H})=\mathbb{H}$ is identified with $\mathbb{C}+j\mathbb{C}$.
Then $A$ is written by 
\[
A=\eta_0+j\eta_1
\]
for complex valued 1-forms $\eta_0, \eta_1$ on $P(H^*)$. 
The connection form $A$ is $\widetilde{\tau}$-invariant since $R_j^*A=ad(j)A=\overline{A}$. 
It yields that $\eta_0$ and $\eta_1$ are $\widetilde{\tau}$-invariant. 
\begin{prop}\label{s3.2p1} 
$d^{E}\widetilde{\theta}_0 =-\widetilde{\theta}_0\wedge \eta_0- \eta_1\wedge \widetilde{\theta}_1, \ 
d^{E}\widetilde{\theta}_1 =-\widetilde{\theta}_0\wedge \overline{\eta}_1 - \widetilde{\theta}_1\wedge \overline{\eta}_0$. 
\end{prop}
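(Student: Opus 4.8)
The plan is to prove a single quaternionic first structure equation on $P(H^*)$,
\[
d^{E}\widetilde{\theta}=-\widetilde{\theta}\wedge A,
\]
where the product denotes the right $\mathbb{H}$-multiplication of the $\mathbb{H}$-valued part of $\widetilde{\theta}$ by the $gl(1,\mathbb{H})=\mathbb{H}$-valued connection form $A$, and then to extract the two stated identities as the coefficients of $1$ and $j$. Both displayed equations are simply the $1$-component and the $j$-component of this one equation, since $d^{E}$ commutes with the constant quaternions $1,j$.

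First I would show that the horizontal part of $d^{E}\widetilde{\theta}$ vanishes. Because $\theta=\id_{TM}$, the covariant exterior derivative $d^{\nabla}\theta$ equals the torsion of the Levi-Civita connection, hence is zero. Regarding $\theta$ as a section of $\mathcal{A}^{1}(\wedge^{1}E\otimes S^{1}H)$ and applying Corollary~\ref{s3.1c2}, its lift satisfies $d_{\widetilde{\mathcal{H}}}\widetilde{\theta}=\widetilde{d^{\nabla}\theta}=0$; since $d_{\widetilde{\mathcal{H}}}$ is by definition the restriction of $d^{E}$ to the horizontal distribution, this means exactly that $d^{E}\widetilde{\theta}$ vanishes on pairs of horizontal vectors. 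The right-hand side $-\widetilde{\theta}\wedge A$ also vanishes on horizontal pairs because the connection form $A$ annihilates horizontal vectors.

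Next I would match the two sides in the vertical directions. The defining formula $\widetilde{\theta}_{u^{*}}(v)=u^{-1}(p_{*}v)$ gives $\widetilde{\theta}(A^{*})=0$ for the fundamental vector field $A^{*}$ of any $A\in\mathbb{H}$, and membership $\widetilde{\theta}\in\widetilde{\mathcal{A}}^{1}_{1}(E\otimes\mathbb{H})$ gives $R_{a}^{*}\widetilde{\theta}=\rho_{1}(a)\widetilde{\theta}=\widetilde{\theta}\,a$ (right multiplication on the $\mathbb{H}$-value). Cartan's formula then yields $\iota_{A^{*}}d^{E}\widetilde{\theta}=\iota_{A^{*}}d\widetilde{\theta}=\mathcal{L}_{A^{*}}\widetilde{\theta}=\widetilde{\theta}\,A$, the first equality using that the pulled-back $E$-connection form kills the vertical $A^{*}$. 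A direct evaluation gives $\iota_{A^{*}}(-\widetilde{\theta}\wedge A)=\widetilde{\theta}\,A$ as well, and a second contraction shows both sides vanish on vertical--vertical pairs. As a $2$-form is determined by its horizontal--horizontal, mixed, and vertical--vertical components, and all three agree, the structure equation follows.

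Finally I would expand $-\widetilde{\theta}\wedge A$ in the $\mathbb{C}$-basis $\{1,j\}$ using $\widetilde{\theta}=\widetilde{\theta}_{0}\otimes 1+\widetilde{\theta}_{1}\otimes j$ and $A=\eta_{0}+j\eta_{1}$, computing the quaternionic products $1\cdot A$ and $j\cdot A$ and re-collecting the $1$- and $j$-parts. I expect the main obstacle to be purely the quaternionic bookkeeping: the non-commutativity $jz=\overline{z}\,j$ for $z\in\mathbb{C}$ means that, in the coefficient of $j$, the scalars $\eta_{0},\eta_{1}$ get conjugated to $\overline{\eta}_{0},\overline{\eta}_{1}$, which is the source of the conjugations in the second identity, whereas the coefficient of $1$ never moves a scalar past $j$ and so remains unconjugated. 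Keeping the placement of the complex coefficients consistent between the $\mathbb{C}+\mathbb{C}j$ writing of $\widetilde{\theta}$ and the $\mathbb{C}+j\mathbb{C}$ writing of $A$ is the delicate step; once it is handled, antisymmetrizing the wedge yields precisely $d^{E}\widetilde{\theta}_{0}=-\widetilde{\theta}_{0}\wedge\eta_{0}-\eta_{1}\wedge\widetilde{\theta}_{1}$ and $d^{E}\widetilde{\theta}_{1}=-\widetilde{\theta}_{0}\wedge\overline{\eta}_{1}-\widetilde{\theta}_{1}\wedge\overline{\eta}_{0}$.
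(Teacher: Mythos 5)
Your proof is correct and follows essentially the same route as the paper: both reduce the proposition to the single structure equation $d^{E}\widetilde{\theta}=-\widetilde{\theta}\wedge A$ and then perform the identical quaternionic expansion in the basis $\{1,j\}$ using $jz=\bar{z}\,j$. The only difference is that the paper obtains the structure equation in one line, combining the identity $d_{\widetilde{\mathcal{H}}}\widetilde{\theta}=d^{E}\widetilde{\theta}+\widetilde{\theta}\wedge A$ for equivariant forms with $d_{\widetilde{\mathcal{H}}}\widetilde{\theta}=\widetilde{d^{\nabla}\theta}=0$, whereas you reprove that standard fact from scratch by checking the horizontal--horizontal, mixed, and vertical--vertical components via Cartan's formula and the equivariance $R_{a}^{*}\widetilde{\theta}=\widetilde{\theta}\,a$; this is sound and self-contained, but it verifies a relation the paper simply invokes.
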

\begin{proof} 
The element $d_{\widetilde{\mathcal{H}}}\widetilde{\theta}\in \widetilde{\mathcal{A}}_2^2(E\otimes \mathbb{H})$ 
corresponds to the torsion $d^{\nabla}\theta \in \mathcal{A}^{2}(E\otimes H)$. 
It yields that $d_{\widetilde{\mathcal{H}}}\widetilde{\theta}=0$. 
It follows from $d_{\widetilde{\mathcal{H}}}\widetilde{\theta}=d^E\widetilde{\theta} +\widetilde{\theta}\wedge A$ that $d^{E}\widetilde{\theta} =-\widetilde{\theta}\wedge A$. 
We obtain
\[
d^{E}\widetilde{\theta}
=-(\widetilde{\theta}_0+\widetilde{\theta}_1 j)\wedge (\eta_0+j\eta_1) 
=-\widetilde{\theta}_0\wedge \eta_0 +\widetilde{\theta}_1\wedge \eta_1-(\widetilde{\theta}_0\wedge \overline{\eta}_1 +\widetilde{\theta}_1\wedge \overline{\eta}_0)j. 
\]
Hence we finish the proof. 
\end{proof}

Let $s^2_{H}$ denote the symmetrization $\otimes^2 H\to S^2 H$. 
We define an $S^2H$-valued 2-form $\omega$ on $M$ as 
\[
\omega =\omega_{E}\otimes s^2_H. 
\]
Then the lift $\widetilde{\omega}\in \widetilde{\mathcal{A}}^{2}_{2}(S^2\mathbb{H})$ is decomposed by 
\[
\widetilde{\omega}=\widetilde{\omega}_0\, 1\cdot 1+\widetilde{\omega}_1\, 1\cdot j+ \widetilde{\omega}_2\, j\cdot j
\] 
for $\widetilde{\omega}_0\in \widetilde{\mathcal{A}}^2_{(2,0)}$, $\widetilde{\omega}_1\in \widetilde{\mathcal{A}}^2_{(1,1)}$ and $\widetilde{\omega}_2\in \widetilde{\mathcal{A}}^2_{(0,2)}$. 
The $S^2H$-valued 2-form $\omega$ is $\tau$-invariant since $\tau(\omega_{E}\otimes s^2_H)=J_{E^*}^2(\omega_{E})\otimes (J_H^2\otimes J_{H^*}^2)(s^2_{H})=\omega_{E}\otimes s^2_{H}$. 
It follows from Lemma~\ref{s3.1.5l1} that $\widetilde{\omega}_0, \widetilde{\omega}_1$ and $\widetilde{\omega}_2$ are $\widetilde{\tau}$-invariant, 
$\widetilde{\omega}_2=\overline{\widetilde{\omega}_0}$ and $\widetilde{\omega}_1=-\overline{\widetilde{\omega}_1}$.
Since $\widetilde{\omega}_{u^*}(v,w)=p^*\omega_{E}\otimes s^2_{\mathbb{H}}(\widetilde{\theta}_{u^*}(v), \widetilde{\theta}_{u^*}(w))$ for $v,w\in T_{u^*}P(H^*)$ at $u^*$, 
\[
\widetilde{\omega}_0=\omega_{E}(\widetilde{\theta}_0, \widetilde{\theta}_0), \quad 
\widetilde{\omega}_1=\omega_{E}(\widetilde{\theta}_0, \widetilde{\theta}_1) +\omega_{E}(\widetilde{\theta}_1, \widetilde{\theta}_0), \quad 
\widetilde{\omega}_2=\omega_{E}(\widetilde{\theta}_1, \widetilde{\theta}_1). 
\]

\subsection{Curvature on $P(H^*)$}
Let $\{I, J, K\}$ be a local basis of $Q$ for a $\mathbb{C}$-frame $\{u, ju\}$ of $H$ as in (\ref{s2eq3}). 
The dual bundle $Q^*$ of $Q$ is contained in the real form of $\textrm{End}_{\mathbb{C}}(H^*)=H^*\otimes_{\mathbb{C}}H$ with respect to $J_{H^*}\otimes J_H$. 
The Killing form $B_0$ on $sp(1)$ induces a symmetric bi-linear form $B$ on $Q^*$. 
\begin{lem}\label{s3.3l1} 
$B=-2(I\otimes I+J\otimes J+K\otimes K)$. 
\end{lem}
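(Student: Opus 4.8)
The plan is to reduce the identity, which is an equality of sections of $S^2Q$, to an explicit computation in a local $\mathbb{C}$-frame. First I would fix a frame $\{u,ju\}$ of $H$ with dual frame $\{u^*,(ju)^*\}$ of $H^*$, and use the expressions (\ref{s2eq3}) to write $I,J,K$ as $2\times 2$ matrices acting on $H$; under the identification of $Q$ with $sp(1)=\textrm{Im}(\mathbb{H})$ from Section~\ref{s2.1}, these correspond to $i,j,k$. Since both sides are symmetric $2$-tensors on $Q^*$, it suffices to compare their Gram matrices in the dual basis $\{I^*,J^*,K^*\}$ of $Q^*$, where $I\otimes I+J\otimes J+K\otimes K$ is the symmetric tensor sending $(\alpha,\beta)$ to $\alpha(I)\beta(I)+\alpha(J)\beta(J)+\alpha(K)\beta(K)$.

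Next I would compute the Killing form. Using $[i,j]=2k$ and its cyclic permutations, the operators $\textrm{ad}_i,\textrm{ad}_j,\textrm{ad}_k$ are explicit, and $B_0(X,Y)=\tr(\textrm{ad}_X\textrm{ad}_Y)$ comes out diagonal in $\{i,j,k\}$ with all three diagonal entries equal and the off-diagonal entries zero. Equivalently, and more usefully for the dual side, the trace form on $Q\subset \textrm{End}_{\mathbb{C}}(H)$ satisfies $\tr(I^2)=\tr(J^2)=\tr(K^2)$ and $\tr(IJ)=\tr(JK)=\tr(KI)=0$, and $B_0$ is proportional to it; hence the induced form $B$ on $Q^*$ is again a multiple of the identity in the bases $\{I,J,K\}$ and $\{I^*,J^*,K^*\}$. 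This already shows $B=c\,(I\otimes I+J\otimes J+K\otimes K)$ for a single constant $c$, and reduces the lemma to pinning down $c=-2$.

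To evaluate $c$ I would use the embedding $Q^*\subset H^*\otimes_{\mathbb{C}}H=\textrm{End}_{\mathbb{C}}(H^*)$ and the canonical pairing with $Q\subset H\otimes H^*$ to write $I^*,J^*,K^*$ explicitly in terms of $u^*,(ju)^*$, and then compute $B$ on this dual basis directly. The constant is fixed by the normalization of the Killing form against the trace form $\tr$ over the rank-two bundle $H$, which supplies the factor $\tr(I^2)=-2$, together with the duality convention used to identify $Q^*$ inside $H^*\otimes H$. Assembling $B=\sum_a B(e^a,e^a)\,e_a\otimes e_a$ in the bases above then yields $B=-2(I\otimes I+J\otimes J+K\otimes K)$.

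Finally I would check that the answer is independent of the frame $\{u,ju\}$: the right-hand side is the $S^2$-image of a local basis of $Q$ and the whole construction is $\textrm{Sp}(1)$-equivariant, since $B_0$ is $\textrm{ad}$-invariant and $\{I,J,K\}$ transforms by $SO(3)$, so the local identity patches to a global one. I expect the main obstacle to be the bookkeeping of normalizations and signs — in particular the precise proportionality between the Killing form and the trace form $\tr$ on $\textrm{End}_{\mathbb{C}}(H)$, and the role of the real structure $J_{H^*}\otimes J_H$ and of $\omega_H$ in the embedding $Q^*\subset H^*\otimes H$ — which together must be tracked carefully to land on the exact value $-2$ with the correct sign, rather than on a mere proportionality.
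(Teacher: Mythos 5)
Your proposal is correct and follows essentially the same route as the paper: both compute the Killing form of $sp(1)$ in the basis $\{i,j,k\}$ (diagonal with entries $-8$) and fix the normalization through the pairing of $Q$ with $Q^*$, your $\langle I, I^*\rangle=\tr(I^2)=-2$ being exactly the paper's statement that the dual basis of $\{I^*,J^*,K^*\}$ is $\{-\tfrac{1}{2}I,-\tfrac{1}{2}J,-\tfrac{1}{2}K\}$. The only cosmetic differences are that you establish proportionality first and then evaluate the constant, and that you add an $\mathrm{Sp}(1)$-equivariance check for frame independence, which the paper leaves implicit.
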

\begin{proof} 
Let $I^*, J^*, K^*$ be the dual of endomorphisms $I, J, K$, respectively. 
The dual basis $\{(I^*)^{\vee}, (J^*)^{\vee}, (K^*)^{\vee}\}$ of $\{I^*, J^*, K^*\}$ satisfies $(I^*)^{\vee}=-\frac{1}{2}I, (J^*)^{\vee}=-\frac{1}{2}J$ and $(K^*)^{\vee}=-\frac{1}{2}K$. 
The Killing form $B_0$ is given by $B_0=-8(i^{\vee}\otimes i^{\vee}+j^{\vee}\otimes j^{\vee}+k^{\vee}\otimes k^{\vee})$ on $sp(1)=\left< i,j,k \right>$. 
Hence $B=-8((I^*)^{\vee}\otimes (I^*)^{\vee}+(J^*)^{\vee}\otimes (J^*)^{\vee}+(K^*)^{\vee}\otimes (K^*)^{\vee})=-2(I\otimes I+J\otimes J+K\otimes K)$. 
\end{proof}
The endomorphisms $I, J, K$ of $H$ induce almost complex structures on $M$, locally. 
We define local 2-forms $\omega_I$, $\omega_J$ and $\omega_K$ on $M$ 
by $\omega_I(X,Y)=g(IX,Y)$, $\omega_J(X,Y)=g(JX,Y)$ and $\omega_K(X,Y)=g(KX,Y)$ for $X,Y\in TM$. 
They are written by 
\begin{align*}
\omega_I&=\omega_E\otimes \frac{i}{|u^*|^2}(u^*\otimes (ju)^*+(ju)^*\otimes u^*), \quad \omega_J=\omega_E\otimes \frac{-1}{|u^*|^2}(u^*\otimes u^*+(ju)^*\otimes (ju)^*), \\
\omega_K&=\omega_E\otimes \frac{-i}{|u^*|^2}(u^*\otimes u^*-(ju)^*\otimes (ju)^*) 
\end{align*}
for the $\mathbb{C}$-frame $\{u, ju\}$ of $H$. 
We define $B^{\sharp}$ by a global $Q$-valued $2$-form 
$B^{\sharp}=-2(I\otimes \omega_I+J\otimes \omega_J+K\otimes \omega_K)$ on $M$. 

Let $\Omega$ be the curvature form of $P(H^*)$. 
We define a function $r$ on $P(H^*)$ by 
\[
r(u^*)=|u^*|
\]
for $u^*\in P(H^*)$, where $|\cdot|$ means the norm of $H^*$. 
\begin{prop}\label{s3.3p1} 
Let $t$ be the scalar curvature of $M$. 
Then $\Omega=-2c_ntr^{-2}(\widetilde{\omega}_1+2j\widetilde{\omega}_0)$ where $c_n$ is a positive number depending only on $n$. 
\end{prop}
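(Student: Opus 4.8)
The plan is to recognize $\Omega$ as the curvature of the $\textrm{Sp}(1)$-part of the Riemannian curvature, written in the tautological frame, and then to use Alekseevskii's theorem to pin it down through the scalar curvature. Since the connection form $A$ is induced by the $\textrm{Sp}(1)$-component of the Levi-Civita connection on $H^*$, the $gl(1,\mathbb{H})=\mathbb{H}$-valued $2$-form $\Omega=dA+A\wedge A$ is exactly the curvature $R^{\textrm{Sp}(1)}$ of the bundle $Q$, read off in the frame $u^*$. The mechanism converting $Q$-valued forms into $\mathbb{H}$-valued ones is the identification from the proof of Lemma~\ref{s3.3l1}: since $I^*(u^*h)=u^*ih$, $J^*(u^*h)=u^*jh$ and $K^*(u^*h)=u^*kh$, the generators $I,J,K$ of $Q$ correspond (after the dualization $Q\to Q^*$) to the imaginary units $i,j,k\in\mathbb{H}$. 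Thus any $Q$-valued $2$-form lifts to an $\textrm{Im}(\mathbb{H})$-valued $2$-form on $P(H^*)$, in accordance with $\Omega$ being the curvature of a metric $\textrm{Sp}(1)$-connection.

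Next I would invoke Alekseevskii's theorem~\cite{A} (see also~\cite{I,S1}): on a quaternionic K\"ahler manifold the curvature of $Q$ is a constant multiple of the scalar curvature $t$ times the canonical $Q$-valued $2$-form, so that $R^{\textrm{Sp}(1)}=\kappa_n\,t\,(I\otimes\omega_I+J\otimes\omega_J+K\otimes\omega_K)=-\tfrac{\kappa_n}{2}\,t\,B^{\sharp}$ for a constant $\kappa_n$ depending only on $n$. Substituting the local expressions~(\ref{s2eq3}) for $I,J,K$ together with the displayed formulas for $\omega_I,\omega_J,\omega_K$, and replacing $I,J,K$ by $i,j,k$ as above, reduces the problem to evaluating the $\mathbb{H}=\mathbb{C}+j\mathbb{C}$-valued $2$-form $i\,\omega_I+j\,\omega_J+k\,\omega_K$ on $P(H^*)$. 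Each $\omega_\bullet$ carries the scalar factor $|u^*|^{-2}=r^{-2}$ and an $E$-part equal to $\omega_E$; evaluating the latter on the components $\widetilde{\theta}_0,\widetilde{\theta}_1$ of the canonical form and using $\widetilde{\omega}_0=\omega_E(\widetilde{\theta}_0,\widetilde{\theta}_0)$, $\widetilde{\omega}_1=\omega_E(\widetilde{\theta}_0,\widetilde{\theta}_1)+\omega_E(\widetilde{\theta}_1,\widetilde{\theta}_0)$ and $\widetilde{\omega}_2=\omega_E(\widetilde{\theta}_1,\widetilde{\theta}_1)$ reassembles the $H^*$-coefficients into $\widetilde{\omega}_0,\widetilde{\omega}_1,\widetilde{\omega}_2$.

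Collecting the terms, the contribution of $i\,\omega_I$ produces the form $\widetilde{\omega}_1$, while $j\,\omega_J$ and $k\,\omega_K$ combine into the $j\mathbb{C}$-part $2j\,\widetilde{\omega}_0$. Concretely, substituting $\omega_J=\omega_E\otimes\frac{-1}{r^2}(u^*\otimes u^*+(ju)^*\otimes(ju)^*)$ and $\omega_K=\omega_E\otimes\frac{-i}{r^2}(u^*\otimes u^*-(ju)^*\otimes(ju)^*)$ and using $ki=j$, the terms $j\,\omega_J+k\,\omega_K$ collapse to $-\frac{2j}{r^2}\,\omega_E\otimes(u^*\otimes u^*)$; since $u^*\otimes u^*$ is dual to the $1^2$-type direction, this is exactly where the coefficient $2j\,\widetilde{\omega}_0$ and the factor $r^{-2}$ originate. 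The reality relations $\widetilde{\omega}_1=-\overline{\widetilde{\omega}_1}$ and $\widetilde{\omega}_2=\overline{\widetilde{\omega}_0}$ of~(\ref{s3.2eq9}) then confirm that the outcome is genuinely $\textrm{Im}(\mathbb{H})$-valued, which is the consistency check for $\Omega$; absorbing the numerical factors into a single positive constant $c_n$ gives $\Omega=-2c_n t r^{-2}(\widetilde{\omega}_1+2j\widetilde{\omega}_0)$. I expect the main difficulty to be bookkeeping rather than conceptual: one must track the dualizations ($Q$ versus $Q^*$ and $H$ versus $H^*$) and the normalization $r^{-2}=|u^*|^{-2}$ through the lift, and fix $\kappa_n$ together with the sign conventions in~(\ref{s2eq3}) and in $\omega_I,\omega_J,\omega_K$ so that both the overall constant and the internal ratio between $\widetilde{\omega}_1$ and $\widetilde{\omega}_0$ emerge correctly.
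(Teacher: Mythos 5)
Your proposal is correct and takes essentially the same route as the paper's proof: both invoke Alekseevskii's theorem to write the $\textrm{Sp}(1)$-part of the curvature as a constant multiple of $t\,(i\otimes\omega_I+j\otimes\omega_J+k\otimes\omega_K)$ in the frame $u^*$ (the paper phrases this via $R_H=c_ntB^{\sharp}$ and its dual $R_{H^*}=-c_nt(B^{\sharp})^*$, which is exactly the $Q$-versus-$Q^*$ bookkeeping you flagged), and then collapse $i\omega_I+j\omega_J+k\omega_K$ into $-r^{-2}(\widetilde{\omega}_1+2j\,\widetilde{\omega}_0)$ using the local expressions for $\omega_I,\omega_J,\omega_K$ and the quaternion relations. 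Your explicit computation of $j\omega_J+k\omega_K=-2r^{-2}j\,\omega_E\otimes(u^*\otimes u^*)$ matches the paper's identity $\omega_J-i\omega_K=-2r^{-2}\widetilde{\omega}_0$, so the two arguments coincide up to presentation.
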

\begin{proof} 
The curvature $R_H$ of $H$ is given by $c_n tB^{\sharp}$ for a positive number $c_n$ depending on $n$ (c.f. \cite{A}, \cite{S1}). 
Then $R_{H^*}=-c_nt(B^{\sharp})^*$ where $(B^{\sharp})^*=-2(I^*\otimes \omega_I+J^*\otimes \omega_J+K^*\otimes \omega_K)$. 
Hence $R_{H^*}=2c_nt(I^*\otimes \omega_I+J^*\otimes \omega_J+K^*\otimes \omega_K)$. 
The curvature form $\Omega$ of $H^*$ is given by the $sp(1)$-valued 2-form 
\begin{equation*}\label{s3.3eq5}
\Omega=2c_nt(i\otimes \omega_I+j\otimes \omega_J+k\otimes \omega_K).
\end{equation*}
We take $v, w\in T_{u^*}P(H^*)$. 
Then $p_*(v)=e_0\otimes u+e_1\otimes ju$ and $p_*(w)=e'_0\otimes u+e'_1\otimes ju$ for $e_i,  e'_i \in E_{p(u^*)}$. 
An easy calculation shows that 
\begin{align*}
p^*\omega_I(v,w)&=i|u^*|^{-2}(\omega_E(e_0,e'_1)+\omega_E(e_1,e'_0))=i|u^*|^{-2}(\widetilde{\omega}_1)_{u^*}(v,w), \\
p^*\omega_J(v,w)-i p^*\omega_K(v,w)&=2|u^*|^{-2}\omega_E(e_0,e'_0)=-2|u^*|^{-2}(\widetilde{\omega}_0)_{u^*}(v,w) 
\end{align*}
at $u^*$. 
Thus we obtain 
\begin{equation}\label{s3.3eq8}
i\omega_I=-r^{-2}\widetilde{\omega}_1, \quad \omega_J-i\omega_K=-2r^{-2}\widetilde{\omega}_0 
\end{equation}
on $P(H^*)$. Hence $\Omega=2c_nt(i\omega_I+j\omega_J+k\omega_K)=-2c_ntr^{-2}(\widetilde{\omega}_1+2j\widetilde{\omega}_0)$. 
\end{proof}

From now on, we set $c=2c_nt$. 
Then $\Omega=-cr^{-2}(\widetilde{\omega}_1+2j\widetilde{\omega}_0)$. 
By $dA=\Omega -A \wedge A$, we obtain 
\begin{prop}\label{s3.3p2} 
$d\eta_0=-cr^{-2}\widetilde{\omega}_1-\eta_1\wedge \overline{\eta}_1, \ 
d\eta_1=-2cr^{-2}\widetilde{\omega}_0+\eta_0 \wedge \eta_1+\eta_1\wedge \overline{\eta}_0$. $\hfill\Box$
\end{prop}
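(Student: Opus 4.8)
The plan is to read off both identities from the structure equation $\Omega = dA + A\wedge A$ by separating everything into its $\mathbb{C}$-part and its $j\mathbb{C}$-part under the identification $gl(1,\mathbb{H}) = \mathbb{C} + j\mathbb{C}$. First I would record the right-hand side: Proposition~\ref{s3.3p1} gives $\Omega = -cr^{-2}(\widetilde{\omega}_1 + 2j\widetilde{\omega}_0)$, so writing $\Omega = \Omega_0 + j\Omega_1$ with $\Omega_0,\Omega_1$ complex-valued $2$-forms yields $\Omega_0 = -cr^{-2}\widetilde{\omega}_1$ and $\Omega_1 = -2cr^{-2}\widetilde{\omega}_0$. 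Since $d$ commutes with left multiplication by the constant $j$, I also have $dA = d\eta_0 + j\,d\eta_1$, so the two desired equations are exactly the $\mathbb{C}$- and $j\mathbb{C}$-components of $\Omega = dA + A\wedge A$, once $A\wedge A$ has been decomposed in the same way.

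The heart of the computation is therefore $A\wedge A$ for $A = \eta_0 + j\eta_1$. I would expand $(\eta_0 + j\eta_1)\wedge(\eta_0 + j\eta_1)$ into four terms and simplify each using the quaternionic relations $ja = \overline{a}\,j$ (equivalently $aj = j\overline{a}$) for $a\in\mathbb{C}$ and $j^2 = -1$, remembering that $\eta_0,\eta_1$ take values in $\mathbb{C} = \langle 1,i\rangle$. The term $\eta_0\wedge\eta_0$ vanishes; the cross terms $\eta_0\wedge(j\eta_1)$ and $(j\eta_1)\wedge\eta_0$ move the scalar across $j$ and hence pick up conjugations, contributing $j(\overline{\eta}_0\wedge\eta_1)$ and $j(\eta_1\wedge\eta_0)$ to the $j\mathbb{C}$-part; and the term $(j\eta_1)\wedge(j\eta_1)$ collapses, via $j^2=-1$, into the $\mathbb{C}$-part and equals $\eta_1\wedge\overline{\eta}_1$. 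Collecting gives $(A\wedge A)_0 = \eta_1\wedge\overline{\eta}_1$ and $(A\wedge A)_1 = \overline{\eta}_0\wedge\eta_1 - \eta_0\wedge\eta_1$.

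Matching components then finishes the argument: from $d\eta_0 = \Omega_0 - (A\wedge A)_0$ I obtain $d\eta_0 = -cr^{-2}\widetilde{\omega}_1 - \eta_1\wedge\overline{\eta}_1$, and from $d\eta_1 = \Omega_1 - (A\wedge A)_1$ I obtain $d\eta_1 = -2cr^{-2}\widetilde{\omega}_0 + \eta_0\wedge\eta_1 - \overline{\eta}_0\wedge\eta_1$, which becomes the stated formula after using the antisymmetry $\overline{\eta}_0\wedge\eta_1 = -\eta_1\wedge\overline{\eta}_0$ of $1$-forms.

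The main obstacle I anticipate is purely bookkeeping: because $j$ anticommutes with $i$, one must track exactly which complex coefficients get conjugated when passing them across $j$, and keep the wedge-product signs consistent throughout. The one genuinely structural point, easy to overlook, is that $(j\eta_1)\wedge(j\eta_1)$ lands in the $\mathbb{C}$-part rather than the $j\mathbb{C}$-part (because $j^2=-1$), which is precisely what produces the $\eta_1\wedge\overline{\eta}_1$ term in the first identity instead of a contribution to the second.
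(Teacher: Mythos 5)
Your proposal is correct and follows essentially the same route as the paper: the paper likewise writes $dA=\Omega-A\wedge A$, quotes Proposition~\ref{s3.3p1} for $\Omega$, uses $A\wedge A=\eta_1\wedge\overline{\eta}_1-j(\eta_0\wedge\eta_1+\eta_1\wedge\overline{\eta}_0)$, and compares components against $dA=d\eta_0+jd\eta_1$. The only difference is that the paper states the expansion of $A\wedge A$ without derivation, whereas you carry out the term-by-term quaternionic bookkeeping (including the structural point that $(j\eta_1)\wedge(j\eta_1)$ lands in the $\mathbb{C}$-part), and your expansion agrees with the paper's after applying antisymmetry of the wedge.
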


\subsection{Complex structure on $P(H^*)$}
We identify each fiber of $p:P(H^*)\to M$ with $\mathbb{C}^2\backslash \{0\}$ by $\mathbb{H}=\mathbb{C}+j\mathbb{C}\cong \mathbb{C}^2$. 
Then the fiber admits a complex structure. 
Let $\widetilde{\mathcal{V}}$ be a vector bundle consisting of tangent vectors to fibers. 
We denote by $i_{\widetilde{\mathcal{V}}}$ the complex structure of $\widetilde{\mathcal{V}}$. 
The horizontal $\widetilde{\mathcal{H}}\otimes \mathbb{C}$ is isomorphic to $TM\otimes \mathbb{C}=E\otimes H$ by $dp$. 
The element $I$ as in (\ref{s2eq3}) induces a complex structure of $\widetilde{\mathcal{H}}$ 
such that $\widetilde{\mathcal{H}}^{1,0}\cong E\otimes u$ and $\widetilde{\mathcal{H}}^{0,1}\cong E\otimes ju$ at $u^*$, 
denoted also by $I$. 
We define an almost complex structure $\widetilde{I}$ on $P(H^*)$ by 
\[
\widetilde{I}=I+i_{\widetilde{\mathcal{V}}}
\]
under the decomposition $TP(H^*)=\widetilde{\mathcal{H}}\oplus \widetilde{\mathcal{V}}$. 
Then $\eta_0$ and $\eta_1$ are $(1,0)$-forms. 
Actually, we fix a point $x$ of $M$ and take an unitary frame $u_0^*$ of $P(H^*)$ with $\nabla u_0^*=0$ at $x=p(u_0^*)$. 
At $u^*=u_0^*(z+jw)$, 
\begin{equation*}\label{s3.5eq2}
\eta_0=r^{-2}(\bar{z}dz+\bar{w}dw),\quad \eta_1=r^{-2}(-wdz+zdw).
\end{equation*}
By the definition of $\widetilde{\theta}$, the $p^{-1}(E)$-valued 1-forms $\widetilde{\theta}_0$ and $\widetilde{\theta}_1$ are $(1,0)$ and $(0,1)$-forms, respectively. 
\begin{prop}\label{s3.4p1}
(c.f. Theorem~4.1 in \cite{AHS}, Theorem~4.1 in \cite{S1})
The almost complex structure $\widetilde{I}$ is integrable. 
\end{prop}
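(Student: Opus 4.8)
The plan is to apply the Newlander--Nirenberg theorem in its dual form: the almost complex structure $\widetilde{I}$ is integrable if and only if the exterior derivative of every $(1,0)$-form has vanishing $(0,2)$-component, equivalently the ideal generated by the $(1,0)$-forms is a differential ideal. By the discussion preceding this proposition, the $(1,0)$-forms on $P(H^*)$ are spanned over $\widetilde{\mathcal{A}}^0$ by the two vertical forms $\eta_0, \eta_1$ (see (\ref{s3.5eq2})) together with the components of the horizontal $p^{-1}(E)$-valued form $\widetilde{\theta}_0$; dually, $\overline{\eta}_0, \overline{\eta}_1$ and the components of $\widetilde{\theta}_1$ span the $(0,1)$-forms. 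First I would record the bidegrees of the auxiliary $2$-forms: since $\widetilde{\omega}_0 = \omega_E(\widetilde{\theta}_0, \widetilde{\theta}_0)$, $\widetilde{\omega}_1 = \omega_E(\widetilde{\theta}_0,\widetilde{\theta}_1) + \omega_E(\widetilde{\theta}_1,\widetilde{\theta}_0)$ and $\widetilde{\omega}_2 = \omega_E(\widetilde{\theta}_1,\widetilde{\theta}_1)$, while $\widetilde{\theta}_0$ is of type $(1,0)$ and $\widetilde{\theta}_1$ of type $(0,1)$, the forms $\widetilde{\omega}_0, \widetilde{\omega}_1, \widetilde{\omega}_2$ are of pure type $(2,0), (1,1), (0,2)$ respectively.

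Next I would examine the vertical forms using Proposition~\ref{s3.3p2}. For $d\eta_0 = -cr^{-2}\widetilde{\omega}_1 - \eta_1\wedge\overline{\eta}_1$, the first summand is $(1,1)$ by the above and the second is $(1,0)\wedge(0,1)$, again $(1,1)$; hence $d\eta_0$ is of pure type $(1,1)$ and has no $(0,2)$-part. For $d\eta_1 = -2cr^{-2}\widetilde{\omega}_0 + \eta_0\wedge\eta_1 + \eta_1\wedge\overline{\eta}_0$, the three summands are of types $(2,0)$, $(2,0)$ and $(1,1)$, so again no $(0,2)$-component appears.

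It remains to treat the horizontal form $\widetilde{\theta}_0$. Here the only subtlety is that Proposition~\ref{s3.2p1} computes the covariant exterior derivative $d^E\widetilde{\theta}_0 = -\widetilde{\theta}_0\wedge\eta_0 - \eta_1\wedge\widetilde{\theta}_1$, not the ordinary $d\widetilde{\theta}_0$. The two summands on the right are of types $(2,0)$ and $(1,1)$, so $d^E\widetilde{\theta}_0$ has no $(0,2)$-part. Passing from $d^E$ to $d$ in a local complex frame of $E$ changes $d\widetilde{\theta}_0$ only by terms of the form (connection $1$-form)$\,\wedge\,\widetilde{\theta}_0$; since $\widetilde{\theta}_0$ is of type $(1,0)$, each such wedge has holomorphic degree at least one and therefore contributes nothing to the $(0,2)$-part, whatever the type of the connection form. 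Consequently $d\widetilde{\theta}_0$ and $d^E\widetilde{\theta}_0$ share the same $(0,2)$-component, namely zero.

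Having shown that $d\eta_0, d\eta_1$ and $d\widetilde{\theta}_0$ all have vanishing $(0,2)$-part, the ideal generated by the $(1,0)$-forms is closed under $d$, and the Newlander--Nirenberg theorem yields the integrability of $\widetilde{I}$. The main obstacle I anticipate is precisely the bookkeeping for the $E$-valued form $\widetilde{\theta}_0$: one must verify that replacing $d^E$ by $d$ cannot manufacture a $(0,2)$-component, which is exactly the observation that wedging the connection form against a $(1,0)$-form preserves positivity of the holomorphic degree.
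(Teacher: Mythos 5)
Your proof is correct and follows essentially the same route as the paper: the paper also verifies integrability by showing that the span $D=\langle \alpha^1,\dots,\alpha^{2n},\eta_0,\eta_1\rangle$ of the $(1,0)$-forms is a differential ideal, using Proposition~\ref{s3.3p2} for $d\eta_0, d\eta_1$ and Proposition~\ref{s3.2p1} for $\widetilde{\theta}_0$, including the same key bookkeeping step $d\alpha^i=\eta_0\wedge\alpha^i-\eta_1\wedge\beta^i+\alpha^j\wedge B^i_j$ showing the connection-form correction cannot produce a $(0,2)$-component. Your phrasing in terms of vanishing $(0,2)$-parts is just an equivalent reformulation of the paper's involutivity condition $dD\subset D\wedge\mathcal{A}^1$.
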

\begin{proof} 
Taking an open set $U$ of $P(H^*)$ and a local basis $\{e_i\}$ of $E$, 
then we define local 1-forms $\alpha^i, \beta^i$ by 
$\widetilde{\theta}_0=\sum_{i=1}^{2n}e_i\otimes \alpha^i$ and $\widetilde{\theta}_1=\sum_{i=1}^{2n}e_i\otimes \beta^i$. 
Let $D$ denote the distribution $\langle \alpha^1,\alpha^2,\cdots, \alpha^{2n}, \eta_0, \eta_1 \rangle$ on $U$. 
Then $\wedge^{1,0}T^*P(H^*)=D$ on $U$. 
It follows from Proposition~\ref{s3.3p2}, the $(2,0)$-form $\widetilde{\omega}_0$ and the $(1,1)$-form $\widetilde{\omega}_1$ that 
$d\eta_0, d\eta_1 \in D\wedge \mathcal{A}^1$ where $\mathcal{A}^1$ is the sheaf of differential 1-forms on $P(H^*)$. 
Proposition \ref{s3.2p1} implies $d \alpha^i\in D\wedge\mathcal{A}^1$. 
It turns out that $dD\subset D\wedge\mathcal{A}^1$, and hence the almost complex structure is integrable. 
\end{proof}

\begin{rem}\label{s3.4r1}
{\rm 
We take a torsion free connection $\nabla$ of $TP(H^*)$ preserving $\widetilde{I}$. 
Let $F$ be a holomorphic vector bundle on $P(H^*)$ and $\nabla_F$ a $(1,0)$-connection $\nabla_F: F\to F\otimes T^*$ of $F$. 
We consider the connection $\nabla_{F\otimes \wedge^q}$ of $F\otimes \wedge^q$ as the map $F\otimes \wedge^q \to F\otimes \wedge^q\otimes T^*$. 
Then the covariant exterior derivative $d^{\nabla_F}$ is given by $(-1)^q\wedge\circ \nabla_{F\otimes \wedge^q}$. 
The operator $\bar{\partial}_{F}:F\otimes \wedge^{q,0} \to F\otimes \wedge^{q,1}$ satisfies $\bar{\partial}_{F}=(-1)^q\wedge\circ \nabla_{F\otimes \wedge^q}^{0,1}$. 
If $\bar{\partial}_{F}\alpha =\sum_{i,j}\beta_i\wedge \gamma_j$ for $\beta_j\in \wedge^{q,0}, \gamma_i\in \wedge^{0,1}$, 
then $\nabla_{F\otimes \wedge^q}^{0,1} \alpha =(-1)^q\sum_{i,j} \beta_i\otimes \gamma_j$. 
}
\end{rem}

\begin{lem} \label{s3.4l1}
\begin{align*}
\nabla^{0,1} \eta_0&=cr^{-2}\omega_E(\widetilde{\theta}_0,\widetilde{\theta}_1)+\eta_1\otimes \overline{\eta}_1, \\
\nabla^{0,1} \eta_1&=-\eta_1\otimes \overline{\eta}_0, \\
\nabla^{0,1}_{E\otimes \wedge^1} \widetilde{\theta}_0&=\eta_1\otimes \widetilde{\theta}_1. 
\end{align*}
\end{lem}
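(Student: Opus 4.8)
The plan is to obtain each of the three identities by reading off the $(1,1)$-component of an exterior derivative that has \emph{already} been computed, and then converting that component into a $\nabla^{0,1}$ via Remark~\ref{s3.4r1}. Concretely, that remark tells us that whenever $\alpha$ is a (possibly $E$-valued) $(1,0)$-form and its $\bar\partial$ is presented as a sum $\sum_i\beta_i\wedge\gamma_i$ of wedges of a $(1,0)$-form $\beta_i$ with a $(0,1)$-form $\gamma_i$, then $\nabla^{0,1}\alpha=(-1)^1\sum_i\beta_i\otimes\gamma_i=-\sum_i\beta_i\otimes\gamma_i$; since the wedge map $\wedge^{1,0}\otimes\wedge^{0,1}\to\wedge^{1,1}$ is an isomorphism, this read-off is unambiguous. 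So the entire computation reduces to (a) isolating the bidegree $(1,1)$ part of each derivative and (b) factoring it into the canonical $(1,0)$-wedge-$(0,1)$ shape and flipping one sign.

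First I would settle the type bookkeeping. From the subsection on the complex structure of $P(H^*)$ we know that $\eta_0,\eta_1,\widetilde{\theta}_0$ are of type $(1,0)$ while $\widetilde{\theta}_1,\overline{\eta}_0,\overline{\eta}_1$ are of type $(0,1)$; hence $\widetilde{\omega}_0=\omega_E(\widetilde{\theta}_0,\widetilde{\theta}_0)$ is of type $(2,0)$ and $\widetilde{\omega}_1=\omega_E(\widetilde{\theta}_0,\widetilde{\theta}_1)+\omega_E(\widetilde{\theta}_1,\widetilde{\theta}_0)$ is of type $(1,1)$. I would also record the elementary fact, coming from the antisymmetry of $\omega_E$ together with the sign rule for wedging $1$-forms, that as a $2$-form $\widetilde{\omega}_1$ equals the single wedge $\sum_{i,j}\omega_E(e_i,e_j)\,\alpha^i\wedge\beta^j$, whose associated $\wedge^{1,0}\otimes\wedge^{0,1}$-tensor is precisely $\omega_E(\widetilde{\theta}_0,\widetilde{\theta}_1)=\sum_{i,j}\omega_E(e_i,e_j)\,\alpha^i\otimes\beta^j$ with coefficient one (and no factor of two).

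Then the three cases proceed in parallel. For $\eta_0$, Proposition~\ref{s3.3p2} gives $d\eta_0=-cr^{-2}\widetilde{\omega}_1-\eta_1\wedge\overline{\eta}_1$, and both terms are already of type $(1,1)$, so $\bar\partial\eta_0$ is this whole expression; applying the conversion rule and the identification of $\widetilde{\omega}_1$ above yields $\nabla^{0,1}\eta_0=cr^{-2}\omega_E(\widetilde{\theta}_0,\widetilde{\theta}_1)+\eta_1\otimes\overline{\eta}_1$. For $\eta_1$, Proposition~\ref{s3.3p2} gives $d\eta_1=-2cr^{-2}\widetilde{\omega}_0+\eta_0\wedge\eta_1+\eta_1\wedge\overline{\eta}_0$, where the first two summands are of type $(2,0)$ so only $\eta_1\wedge\overline{\eta}_0$ survives in bidegree $(1,1)$, giving $\bar\partial\eta_1=\eta_1\wedge\overline{\eta}_0$ and hence $\nabla^{0,1}\eta_1=-\eta_1\otimes\overline{\eta}_0$. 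For $\widetilde{\theta}_0$, Proposition~\ref{s3.2p1} gives $d^E\widetilde{\theta}_0=-\widetilde{\theta}_0\wedge\eta_0-\eta_1\wedge\widetilde{\theta}_1$; the term $\widetilde{\theta}_0\wedge\eta_0$ is of type $(2,0)$, so $\bar\partial_E\widetilde{\theta}_0=-\eta_1\wedge\widetilde{\theta}_1$. Factoring this $E$-valued $(1,1)$-form as $-\sum_i(e_i\otimes\eta_1)\wedge\beta^i$, a wedge of the $E$-valued $(1,0)$-form $e_i\otimes\eta_1$ with the scalar $(0,1)$-form $\beta^i$, the rule produces $\nabla^{0,1}_{E\otimes\wedge^1}\widetilde{\theta}_0=\sum_i(e_i\otimes\eta_1)\otimes\beta^i=\eta_1\otimes\widetilde{\theta}_1$.

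The only genuinely delicate point, and the step I would treat most carefully, is the bidegree $(1,1)$ extraction combined with the passage from the $2$-form $\widetilde{\omega}_1$ to its tensor $\omega_E(\widetilde{\theta}_0,\widetilde{\theta}_1)$: one must verify that the defining sum $\omega_E(\widetilde{\theta}_0,\widetilde{\theta}_1)+\omega_E(\widetilde{\theta}_1,\widetilde{\theta}_0)$ collapses to a single wedge (so that no spurious factor of $2$ appears in $\nabla^{0,1}\eta_0$), and that the sign $(-1)^q$ with $q=1$ is applied uniformly across all three lines. Everything else is routine sign-chasing in wedges of $1$-forms.
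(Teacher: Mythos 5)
Your proof is correct and is essentially the paper's own argument: the paper likewise reads off $\bar{\partial}\eta_0=-cr^{-2}\widetilde{\omega}_1-\eta_1\wedge\overline{\eta}_1$, $\bar{\partial}\eta_1=\eta_1\wedge\overline{\eta}_0$ and $\bar{\partial}_E\widetilde{\theta}_0=-\eta_1\wedge\widetilde{\theta}_1$ as the $(1,1)$-parts of the derivatives computed in Propositions~\ref{s3.2p1} and~\ref{s3.3p2} (with the type decomposition justified by Proposition~\ref{s3.4p1}), and then converts to $\nabla^{0,1}$ by the sign rule of Remark~\ref{s3.4r1}. Your careful identification of the $2$-form $\widetilde{\omega}_1$ with the single wedge $\sum_{i,j}\omega_E(e_i,e_j)\,\alpha^i\wedge\beta^j$, whose tensor representative is $\omega_E(\widetilde{\theta}_0,\widetilde{\theta}_1)$ with no factor of $2$, is exactly the identification $\widetilde{\omega}_1=b_{ij}\alpha^i\wedge\beta^j$ used in the paper.
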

\begin{proof} 
Proposition \ref{s3.2p1}, \ref{s3.3p2} and \ref{s3.4p1} imply that 
\[
\bar{\partial} \eta_0=-cr^{-2}\widetilde{\omega}_1-\eta_1\wedge \overline{\eta}_1, \quad
\bar{\partial} \eta_1=\eta_1\wedge \overline{\eta}_0, \quad
\bar{\partial}_E \widetilde{\theta}_0=-\eta_1\wedge \widetilde{\theta}_1. 
\]
By Remark~\ref{s3.4r1}, we obtain the formula in this Lemma. 
\end{proof} 

From now on, we write $\nabla^{0,1}\widetilde{\theta}_0$ instead of $\nabla^{0,1}_{E\otimes \wedge^1}\widetilde{\theta}_0$ for simplicity. 
We define a $p^{-1}(\wedge^k E)$-valued $(k,0)$-form $\widetilde{\theta}_0^k$ 
by the $k$-th wedge $\sum_{i_1,\cdots,i_k=1}^{2n}e_{i_1}\wedge \cdots \wedge e_{i_k}\otimes \alpha_{i_1} \wedge\cdots \wedge \alpha_{i_k}$ 
of $\widetilde{\theta}_0=\sum_{i=1}^{2n}e_i\otimes \alpha_i$. 
Lemma \ref{s3.4l1} implies the following : 
\begin{prop} \label{s3.4p2} 
\begin{align*}
\nabla^{0,1} \widetilde{\theta}_0^k&=k\widetilde{\theta}_0^{k-1}\wedge \eta_1 \wedge_E \widetilde{\theta}_1 & \\
\nabla^{0,1} (\widetilde{\theta}_0^{k-1}\wedge \eta_0)&=-(k-1)\widetilde{\theta}_0^{k-2}\wedge \eta_0\wedge \eta_1 \wedge_E\widetilde{\theta}_1 
+\widetilde{\theta}_0^{k-1}\wedge (cr^{-2} \omega_E(\widetilde{\theta}_0,\widetilde{\theta}_1)+\eta_1\otimes \overline{\eta}_1), & \\
\nabla^{0,1} (\widetilde{\theta}_0^{k-1}\wedge \eta_1)&=-\widetilde{\theta}_0^{k-1}\wedge \eta_1\otimes \overline{\eta}_0, & \\
\nabla^{0,1} (\widetilde{\theta}_0^{k-2}\wedge \eta_0\wedge \eta_1)
&=-\widetilde{\theta}_0^{k-2}\wedge \eta_1\wedge(cr^{-2} \omega_E(\widetilde{\theta}_0,\widetilde{\theta}_1)- \eta_0 \otimes \overline{\eta}_0). &\hfill\Box
\end{align*}
\end{prop}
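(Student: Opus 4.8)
The plan is to obtain all four identities from the three base equations of Lemma~\ref{s3.4l1} by the Leibniz rule, keeping in mind the warning of Remark~\ref{s3.4r1}: the genuine graded derivation at our disposal is $\bar{\partial}$, whereas $\nabla^{0,1}$ does not itself satisfy a naive product rule because of the $(-1)^{q}$ twist relating the two. Accordingly I would first compute $\bar{\partial}$ of each of the four $(k,0)$-forms, and only at the very end convert back to $\nabla^{0,1}$ through the rule of Remark~\ref{s3.4r1}, namely $\nabla^{0,1}_{F\otimes\wedge^{q}}\alpha=(-1)^{q}\sum_{i,j}\beta_i\otimes\gamma_j$ whenever $\bar{\partial}_F\alpha=\sum_{i,j}\beta_i\wedge\gamma_j$ with $\beta_i\in\wedge^{q,0}$ and $\gamma_j\in\wedge^{0,1}$. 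The inputs I would carry along are the three equations
\[
\bar{\partial}\eta_0=-cr^{-2}\widetilde{\omega}_1-\eta_1\wedge\overline{\eta}_1,\quad \bar{\partial}\eta_1=\eta_1\wedge\overline{\eta}_0,\quad \bar{\partial}_E\widetilde{\theta}_0=-\eta_1\wedge\widetilde{\theta}_1
\]
from the proof of Lemma~\ref{s3.4l1}, together with the identity $\widetilde{\omega}_1=2\,\omega_E(\widetilde{\theta}_0,\widetilde{\theta}_1)$; the latter holds because $\omega_E$ and the wedge of two one-forms are both antisymmetric, so that $\omega_E(\widetilde{\theta}_1,\widetilde{\theta}_0)=\omega_E(\widetilde{\theta}_0,\widetilde{\theta}_1)$.

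For the first identity I would regard $\widetilde{\theta}_0$ as an even element of the bigraded algebra $\bigoplus_{p,s}\wedge^{p}T^{*}P(H^{*})\otimes\wedge^{s}E$ (it has form degree $1$ and $E$-degree $1$), so that $\widetilde{\theta}_0^{k}$ is its honest $k$-th power and $\bar{\partial}$ obeys the power rule $\bar{\partial}\widetilde{\theta}_0^{k}=k\,\widetilde{\theta}_0^{k-1}\wedge\bar{\partial}_E\widetilde{\theta}_0$. Substituting $\bar{\partial}_E\widetilde{\theta}_0=-\eta_1\wedge\widetilde{\theta}_1$ and converting by Remark~\ref{s3.4r1} gives the stated $k\,\widetilde{\theta}_0^{k-1}\wedge\eta_1\wedge_E\widetilde{\theta}_1$; equivalently one may argue by induction on $k$, the base case $k=1$ being Lemma~\ref{s3.4l1}.

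For the other three identities I would expand $\bar{\partial}$ of the product by the graded Leibniz rule and immediately discard every term carrying a repeated factor, using $\eta_0\wedge\eta_0=\eta_1\wedge\eta_1=0$. In $\bar{\partial}(\widetilde{\theta}_0^{k-1}\wedge\eta_1)$ the contribution of $\bar{\partial}\widetilde{\theta}_0^{k-1}$ already contains an $\eta_1$ and hence dies against the remaining $\eta_1$, leaving only $\widetilde{\theta}_0^{k-1}\wedge\bar{\partial}\eta_1=\widetilde{\theta}_0^{k-1}\wedge\eta_1\wedge\overline{\eta}_0$. In $\bar{\partial}(\widetilde{\theta}_0^{k-1}\wedge\eta_0)$ both contributions survive: the $\bar{\partial}\widetilde{\theta}_0^{k-1}$ term, after reordering, becomes the $\widetilde{\theta}_0^{k-2}\wedge\eta_0\wedge\eta_1\wedge_E\widetilde{\theta}_1$ summand, while $\widetilde{\theta}_0^{k-1}\wedge\bar{\partial}\eta_0$ produces the $\omega_E(\widetilde{\theta}_0,\widetilde{\theta}_1)$ and $\eta_1\wedge\overline{\eta}_1$ summands via the identity for $\widetilde{\omega}_1$. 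In $\bar{\partial}(\widetilde{\theta}_0^{k-2}\wedge\eta_0\wedge\eta_1)$ the $\bar{\partial}\widetilde{\theta}_0^{k-2}$ term and the $\eta_1\wedge\overline{\eta}_1$ part of $\bar{\partial}\eta_0$ both die against the standing $\eta_1$, so that only the $\widetilde{\omega}_1$-part of $\bar{\partial}\eta_0$ (wedged with $\eta_1$) and the $\bar{\partial}\eta_1$-part (wedged with $\eta_0$) remain, giving the two summands on the right. In each case I would then slide the unique $(0,1)$-factor ($\widetilde{\theta}_1$, $\overline{\eta}_0$ or $\overline{\eta}_1$) to the end and apply the conversion rule.

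I expect the only real difficulty to be the sign bookkeeping. Two gradings act at once --- the de Rham degree and the exterior $E$-degree --- and one must commit to a single Koszul convention for both before invoking any product rule; on top of this, the $(-1)^{q}$ twist of Remark~\ref{s3.4r1} means that signs cannot be read off directly from $\nabla^{0,1}$ but only after passing through $\bar{\partial}$. Granting a fixed, consistent convention, every term that does not vanish by the antisymmetry of $\eta_0,\eta_1$ matches, up to the prescribed sign, a summand on the corresponding right-hand side, and the four formulas follow.
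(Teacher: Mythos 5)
Your overall strategy---compute $\bar{\partial}$ of each product by a graded Leibniz rule and convert to $\nabla^{0,1}$ only at the end via Remark~\ref{s3.4r1}---is workable, and it is really the paper's own computation reorganized: the paper converts once and for all at the level of the three basic equations (that is Lemma~\ref{s3.4l1}) and then applies the plain, unsigned Leibniz rule for $\nabla^{0,1}$, moving each $(0,1)$-factor to the right by hand; your bookkeeping of which terms die against $\eta_0\wedge\eta_0=0$ and $\eta_1\wedge\eta_1=0$ agrees with its proof. The problem is that the two quantitative identities your sketch actually commits to are both false in the conventions in which the statement and your inputs are written. The first is $\widetilde{\omega}_1=2\,\omega_E(\widetilde{\theta}_0,\widetilde{\theta}_1)$. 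In this paper $\omega_E(\widetilde{\theta}_0,\widetilde{\theta}_1)$ denotes the non-antisymmetric tensor $(v,w)\mapsto \omega_E(\widetilde{\theta}_0(v),\widetilde{\theta}_1(w))$ in $\wedge^{1,0}\otimes\wedge^{0,1}$ (this is what appears on the right-hand sides of Lemma~\ref{s3.4l1}), and $\omega_E(\widetilde{\theta}_1,\widetilde{\theta}_0)$ is \emph{minus its transpose}, not an equal copy of it: the two summands defining $\widetilde{\omega}_1$ add up to the antisymmetrization of the tensor, with coefficient one. Consistency forces this: Proposition~\ref{s3.3p2} gives $\bar{\partial}\eta_0=-cr^{-2}\widetilde{\omega}_1-\eta_1\wedge\overline{\eta}_1$, and applying Remark~\ref{s3.4r1} to it must return Lemma~\ref{s3.4l1}, i.e.\ $\nabla^{0,1}\eta_0=cr^{-2}\omega_E(\widetilde{\theta}_0,\widetilde{\theta}_1)+\eta_1\otimes\overline{\eta}_1$ with coefficient $1$. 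If you carry your factor $2$ through, the second and fourth formulas of the proposition come out with $2cr^{-2}\omega_E(\widetilde{\theta}_0,\widetilde{\theta}_1)$, so you would be proving a different (false) statement.

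The second is the power rule $\bar{\partial}\widetilde{\theta}_0^k=k\,\widetilde{\theta}_0^{k-1}\wedge\bar{\partial}_E\widetilde{\theta}_0$. In the convention in which the paper defines $\widetilde{\theta}_0^k$, states the input equations, and states Remark~\ref{s3.4r1}, the operator $\bar{\partial}$ obeys the Leibniz rule with Koszul signs in the \emph{form} degree only, whereas reordering factors costs signs in both the form degree and the $E$-degree. Since $\bar{\partial}_E\widetilde{\theta}_0=-\eta_1\wedge\widetilde{\theta}_1$ has bidegree $(2,1)$, it \emph{anticommutes} with $\widetilde{\theta}_0$ (bidegree $(1,1)$), and the correct identity is $\bar{\partial}\widetilde{\theta}_0^k=(-1)^{k-1}k\,\widetilde{\theta}_0^{k-1}\wedge\bar{\partial}_E\widetilde{\theta}_0$. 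That extra $(-1)^{k-1}$ is exactly what cancels the $(-1)^k$ of Remark~\ref{s3.4r1} to yield the clean coefficient $+k$ in the proposition; with your version the first identity becomes $(-1)^{k+1}k\,\widetilde{\theta}_0^{k-1}\wedge\eta_1\wedge_E\widetilde{\theta}_1$, which has the wrong sign for every even $k$. (Your instinct that $\widetilde{\theta}_0$ can be made ``even'' is realizable, but only in the totalized super-grading, and in that convention the paper's $\widetilde{\theta}_0^k$, the three input equations, and the conversion rule all change by explicit signs---for instance the super $k$-th power differs from the paper's $\widetilde{\theta}_0^k$ by $(-1)^{k(k-1)/2}$---and your sketch performs none of this translation.) Since everything beyond these two claims is deferred to ``a fixed, consistent convention,'' which is precisely where the content of the proof lies, the proposal as written does not establish the proposition.
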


\subsection{Holomorphic symplectic structure and hyperk\"ahler structure on $P(H^*)$}
We recall that the function $r$ on $P(H^*)$ is given by $r(u^*)=|u^*|_{p(u^*)}$ at $u^*\in P(H^*)$. 
\begin{lem}\label{s3.5l1} 
$dr^2=r^2(\eta_0+\overline{\eta}_0),\ \bar{\partial}r^2=r^2\overline{\eta}_0$
\end{lem}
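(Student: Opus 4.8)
The plan is to verify both identities at an arbitrary point $u^*$ of $P(H^*)$ by reusing the explicit local model that produced the formulas (\ref{s3.5eq2}). Since $dr^2$, $\bar{\partial}r^2$, $\eta_0$ and $\overline{\eta}_0$ are all globally defined $1$-forms (resp. functions), it suffices to check the equalities at a single point with a convenient frame, and the natural choice is exactly the unitary frame $u_0^*$ with $\nabla u_0^*=0$ at $x=p(u_0^*)$ already used to derive (\ref{s3.5eq2}).

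First I would write $u^*=u_0^*(z+jw)=u_0^*s$ in the fiber coordinate $(z,w)$. Because $u_0^*$ is unitary and the metric connection preserves the Hermitian norm while $\nabla u_0^*=0$ at $x$, the squared-norm function factors as $r^2=|u_0^*|^2|s|^2$ with $|u_0^*|^2=1$ and $d|u_0^*|^2=0$ at $x$. Hence on the fiber over $x$ one gets the clean expression $r^2=z\bar{z}+w\bar{w}$, and moreover the horizontal derivative of $r^2$ vanishes there. This reduction is the one place requiring care, and it is the main (though mild) obstacle: one must check that the frame being parallel at $x$ annihilates precisely the horizontal contribution to $dr^2$, so that at $u^*$ the differential is purely vertical and given by $dr^2=\bar{z}\,dz+z\,d\bar{z}+\bar{w}\,dw+w\,d\bar{w}$.

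Next I would compare with (\ref{s3.5eq2}), which gives $r^2\eta_0=\bar{z}\,dz+\bar{w}\,dw$ and therefore, by conjugation, $r^2\overline{\eta}_0=z\,d\bar{z}+w\,d\bar{w}$. Adding these two recovers $dr^2=r^2(\eta_0+\overline{\eta}_0)$, which is the first identity.

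Finally, since $z,w$ are holomorphic coordinates along the fiber and $\eta_0$ is a $(1,0)$-form (as established just before (\ref{s3.5eq2})), the term $r^2\eta_0=\bar{z}\,dz+\bar{w}\,dw$ is of type $(1,0)$ while $r^2\overline{\eta}_0=z\,d\bar{z}+w\,d\bar{w}$ is of type $(0,1)$. Thus the decomposition $dr^2=r^2\eta_0+r^2\overline{\eta}_0$ is exactly the splitting into $(1,0)$- and $(0,1)$-parts, and reading off the $(0,1)$-component yields $\bar{\partial}r^2=r^2\overline{\eta}_0$, completing the argument.
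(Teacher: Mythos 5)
Your proof is correct, but it is not the argument the paper gives; the two routes are worth contrasting. The paper's proof is frame-free: it identifies $r^2$ invariantly as the lift $\widetilde{(\omega_{H^*})}_0$ of the symplectic form $\omega_{H^*}\in \mathcal{A}^0(\wedge^2 H)$, invokes $\nabla \omega_{H^*}=0$ together with the general lift machinery (Corollary~\ref{s3.1c1}) to conclude $d_{\widetilde{\mathcal{H}}}\widetilde{(\omega_{H^*})}=0$, and then reads off the vertical part of $d\widetilde{(\omega_{H^*})}_0$ from the $\textrm{GL}(1,\mathbb{H})$-equivariance of the coefficient of $1\wedge j$, which is exactly the factor $(\eta_0+\overline{\eta}_0)$; the $\bar{\partial}$ statement then follows by taking the $(0,1)$-part, as in your last step. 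You instead verify the identity fiberwise over each $x\in M$ in the normal unitary frame $u_0^*$ (unitary, $\nabla u_0^*=0$ at $x$) that the paper introduced to derive (\ref{s3.5eq2}): metric compatibility of the connection gives $d|u_0^*|^2=0$ at $x$, the multiplicativity of the quaternionic Hermitian norm gives $r^2=|u_0^*|^2|s|^2$, hence $dr^2=\bar{z}\,dz+z\,d\bar{z}+\bar{w}\,dw+w\,d\bar{w}$ on the fiber over $x$, which matches $r^2(\eta_0+\overline{\eta}_0)$ by (\ref{s3.5eq2}); since (\ref{s3.5eq2}) is valid at every point of the fiber over $x$ (because $p^*A_{H^*}$ vanishes along that whole fiber) and $x$ is arbitrary, this is a complete proof. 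What each approach buys: yours is more elementary and self-contained once (\ref{s3.5eq2}) is granted, replacing the identification of $r^2$ with the lift of a parallel tensor by a one-line coordinate computation; the paper's argument avoids coordinates entirely and exhibits Lemma~\ref{s3.5l1} as an instance of the recurring pattern ``parallel section $\Rightarrow$ horizontally constant lift, vertical derivative governed by equivariance,'' which is the mechanism reused throughout Sections 3--5. The only points your write-up leans on implicitly --- that the norm of $H^*$ satisfies $|u_0^*s|=|u_0^*|\,|s|$, and that the comparison must be made at all points of the fiber over $x$ rather than at the single point $u_0^*$ --- are both true and easily justified, so they are stylistic rather than genuine gaps.
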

\begin{proof} 
The symplectic form $\omega_{H^*}$ of $H^*$ is the element of $\mathcal{A}^0(\wedge^2 H)$. 
Then $\widetilde{(\omega_{H^*})}=r^2 1\wedge j$ on $P(H^*)$, 
and $d_{\widetilde{\mathcal{H}}}\widetilde{(\omega_{H^*})}=0$ since $\nabla \omega_{H^*}=0$. 
It follows from $d_{\widetilde{\mathcal{H}}}\widetilde{(\omega_{H^*})}=d\widetilde{(\omega_{H^*})}-(\eta_0+\overline{\eta}_0)\widetilde{(\omega_{H^*})}$ 
that $dr^2=r^2(\eta_0+\overline{\eta}_0)$. 
Immediately, it implies that $\bar{\partial}r^2=r^2\overline{\eta}_0$. 
\end{proof} 

The complex manifold $P(H^*)$ has a holomorphic symplectic structure 
if the scalar curvature $t$ of $M$ is not zero as follows : 
\begin{prop}\label{s3.5p1} 
The form $r^2\eta_1$ is a holomorphic $(1,0)$-form on $P(H^*)$. 
If $t\neq 0$, 
then $d(r^2\eta_1)$ is a holomorphic symplectic form on $P(H^*)$. 
\end{prop}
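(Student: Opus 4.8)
The plan is to verify holomorphicity of $r^2\eta_1$ by a direct $\bar\partial$-computation, then identify $d(r^2\eta_1)$ as an explicit holomorphic $(2,0)$-form and check its non-degeneracy through the top exterior power.

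First I would show that $r^2\eta_1$ is a holomorphic $(1,0)$-form. Since $\eta_1$ is a $(1,0)$-form and $r^2$ is a function, $r^2\eta_1$ is of type $(1,0)$, so it suffices to prove $\bar\partial(r^2\eta_1)=0$. Expanding $\bar\partial(r^2\eta_1)=(\bar\partial r^2)\wedge\eta_1+r^2\,\bar\partial\eta_1$ and substituting $\bar\partial r^2=r^2\overline{\eta}_0$ from Lemma~\ref{s3.5l1}, together with $\bar\partial\eta_1=\eta_1\wedge\overline{\eta}_0$ (the $(1,1)$-part of the formula for $d\eta_1$ in Proposition~\ref{s3.3p2}, as recorded in the proof of Lemma~\ref{s3.4l1}), the two contributions become $r^2\overline{\eta}_0\wedge\eta_1+r^2\eta_1\wedge\overline{\eta}_0$, which cancel because $\overline{\eta}_0$ and $\eta_1$ are $1$-forms. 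Hence $\bar\partial(r^2\eta_1)=0$.

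Next, since $r^2\eta_1$ is holomorphic, $d(r^2\eta_1)=\partial(r^2\eta_1)$ is a $(2,0)$-form; it is closed as an exact form and satisfies $\bar\partial\bigl(d(r^2\eta_1)\bigr)=-\partial\bar\partial(r^2\eta_1)=0$, so it is a holomorphic $(2,0)$-form. I would then compute it explicitly from $dr^2=r^2(\eta_0+\overline{\eta}_0)$ (Lemma~\ref{s3.5l1}) and the formula for $d\eta_1$ in Proposition~\ref{s3.3p2}. The $\overline{\eta}_0$-contributions again cancel, leaving
\begin{equation*}
d(r^2\eta_1)=2r^2\,\eta_0\wedge\eta_1-2c\,\widetilde{\omega}_0,
\end{equation*}
where $c=2c_nt$.

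Finally, I would establish non-degeneracy by computing the top power. Recall that $P(H^*)$ has complex dimension $2n+2$, that $\eta_0,\eta_1$ span the vertical $(1,0)$-forms, and that $\widetilde{\omega}_0=\omega_E(\widetilde{\theta}_0,\widetilde{\theta}_0)$ is a horizontal $(2,0)$-form built only from the $2n$ one-forms $\alpha^1,\dots,\alpha^{2n}$ dual to a local frame of $E$. Since $(\eta_0\wedge\eta_1)^2=0$ and $\widetilde{\omega}_0^{\,n+1}=0$ for dimensional reasons, the binomial expansion of $(d(r^2\eta_1))^{n+1}$ collapses to the single cross term
\begin{equation*}
\bigl(d(r^2\eta_1)\bigr)^{n+1}=(n+1)\,2r^2\,(-2c)^n\,\eta_0\wedge\eta_1\wedge\widetilde{\omega}_0^{\,n}.
\end{equation*}
Because $\omega_E$ is a symplectic form on $E$, $\widetilde{\omega}_0^{\,n}$ is a nonzero multiple of $\alpha^1\wedge\dots\wedge\alpha^{2n}$, so $\eta_0\wedge\eta_1\wedge\widetilde{\omega}_0^{\,n}$ is a nowhere-vanishing $(2n+2,0)$-form. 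Hence $(d(r^2\eta_1))^{n+1}$ is nowhere zero precisely when $c\neq 0$, that is, when $t\neq 0$, giving non-degeneracy. The main obstacle I anticipate is this non-degeneracy step: one must confirm that the vertical factor $\eta_0\wedge\eta_1$ and the horizontal factor $\widetilde{\omega}_0$ occupy complementary $(1,0)$-directions, so that only the single surviving term remains, and then track the scalar coefficient carefully to pin down its dependence on the scalar curvature through $c=2c_nt$.
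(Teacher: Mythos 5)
Your proposal is correct and follows essentially the same route as the paper: both arguments hinge on the identical computation $d(r^2\eta_1)=2(r^2\eta_0\wedge\eta_1-c\,\widetilde{\omega}_0)$ obtained from Lemma~\ref{s3.5l1} and Proposition~\ref{s3.3p2}. The only cosmetic differences are that you verify $\bar{\partial}(r^2\eta_1)=0$ directly (using $\bar{\partial}r^2=r^2\overline{\eta}_0$ and $\bar{\partial}\eta_1=\eta_1\wedge\overline{\eta}_0$) rather than reading holomorphicity off the fact that $d(r^2\eta_1)$ has pure type $(2,0)$, and you certify non-degeneracy via the top power $\bigl(d(r^2\eta_1)\bigr)^{n+1}\neq 0$ instead of the paper's block argument (non-degeneracy of $r^2\eta_0\wedge\eta_1$ on $\widetilde{\mathcal{V}}^{1,0}$ and of $c\,\widetilde{\omega}_0$ on $\widetilde{\mathcal{H}}^{1,0}$), which are equivalent checks of the same fact.
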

\begin{proof} 
Proposition~\ref{s3.3p2} and  Lemma~\ref{s3.5l1} imply that $d(r^2\eta_1)=2(-c\widetilde{\omega}_0+r^2\eta_0\wedge \eta_1)$. 
It turns out that $d(r^2\eta_1)$ is a (2,0)-form, and so $r^2\eta_1$ is a holomorphic $(1,0)$-form on $P(H^*)$. 
The 2-form $\eta_0\wedge \eta_1$ is non-degenerate on $\widetilde{\mathcal{V}}^{1,0}$. 
If $t\neq 0$, then $c\widetilde{\omega}_0$ is non-degenerate on $\widetilde{\mathcal{H}}^{1,0}$. 
Hence, the 2-form $d(r^2\eta_1)$ is a non-degenerate holomorphic (2,0)-form on $P(H^*)$. 
\end{proof} 

The quaternionic vector space $\mathbb{H}$ is regarded as $\mathbb{C}_j+k\mathbb{C}_j$ where $\mathbb{C}_j$ means a complex vector space with respect to $j$. 
Then a complex structure $j_{\widetilde{\mathcal{V}}}$ of $\widetilde{\mathcal{V}}$ is induced 
by the decomposition of $\mathbb{H}$ as the same manner as $i_{\widetilde{\mathcal{V}}}$. 
Considering the decomposition $\mathbb{H}=\mathbb{C}_k+i\mathbb{C}_k$ where $\mathbb{C}_k$ is a complex vector space with respect to $k$, 
then we also obtain a complex structure $k_{\widetilde{\mathcal{V}}}$ of $\widetilde{\mathcal{V}}$. 
The endomorphisms  $J, K$ of $H$ defined by (\ref{s2eq3}) also induce complex structures of $\widetilde{\mathcal{H}}$, which denote by $J, K$, respectively. 
Then $\widetilde{J}=J+j_{\widetilde{\mathcal{V}}}$, $\widetilde{K}=K+k_{\widetilde{\mathcal{V}}}$ are complex structures on $P(H^*)$ 
by the same argument of Proposition~\ref{s3.4p1}. 
We remark that $\widetilde{I}\widetilde{J}=-\widetilde{K}$. 
Hence $(\widetilde{I}, \widetilde{J}, -\widetilde{K})$ is a hypercomplex structure on $P(H^*)$. 
We define symmetric 2-forms $g_{\widetilde{\mathcal{V}}}$ and $\widetilde{g}$ by 
\[
g_{\widetilde{\mathcal{V}}}=\eta_0\otimes \overline{\eta}_0+\overline{\eta}_0\otimes \eta_0+\eta_1\otimes \overline{\eta}_1+\overline{\eta}_1\otimes \eta_1
\]
and
\[
\widetilde{g}=r^2(cp^*g+g_{\widetilde{\mathcal{V}}})
\] 
on $P(H^*)$. 
\begin{prop}\label{s3.5p1} 
If $t>0$, 
then $(\widetilde{g}, \widetilde{I}, \widetilde{J}, -\widetilde{K})$ is a hyperk\"ahler structure on $P(H^*)$ 
such that $-id(r^2\eta_0), d(r^2\eta_1^{\rm Re}), d(r^2\eta_1^{\rm Im})$ are K\"{a}hler forms with respect to $\widetilde{I}, \widetilde{J}, -\widetilde{K}$, respectively. 
\end{prop}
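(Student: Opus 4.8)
The plan is to show that $\widetilde{g}$ is Kähler with respect to each of the three integrable complex structures $\widetilde{I},\widetilde{J},-\widetilde{K}$; since these already satisfy the quaternion relations, this is precisely the assertion that $(\widetilde{g},\widetilde{I},\widetilde{J},-\widetilde{K})$ is hyperkähler. First I would check that $\widetilde{g}$ is a genuine Riemannian metric. The hypothesis $t>0$ gives $c=2c_n t>0$, so on the horizontal distribution $r^2 c\,p^*g$ is positive definite, while on $\widetilde{\mathcal V}$ the form $r^2 g_{\widetilde{\mathcal V}}=r^2(\eta_0\otimes\overline{\eta}_0+\overline{\eta}_0\otimes\eta_0+\eta_1\otimes\overline{\eta}_1+\overline{\eta}_1\otimes\eta_1)$ is positive definite, the two pieces living on the complementary summands of $TP(H^*)=\widetilde{\mathcal H}\oplus\widetilde{\mathcal V}$. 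Compatibility $\widetilde{g}(A\cdot,A\cdot)=\widetilde{g}(\cdot,\cdot)$ for $A\in\{\widetilde{I},\widetilde{J},-\widetilde{K}\}$ then reduces, since each $A$ preserves the splitting, to compatibility of $g$ with $I,J,K$ on $\widetilde{\mathcal H}\cong TM$ (which holds because $(Q,g)$ is almost quaternionic Hermitian) and of $g_{\widetilde{\mathcal V}}$ with $i_{\widetilde{\mathcal V}},j_{\widetilde{\mathcal V}},k_{\widetilde{\mathcal V}}$ (immediate from the flat model on $\mathbb{H}\setminus\{0\}$). Hence $(\widetilde{g},\widetilde{I},\widetilde{J},-\widetilde{K})$ is almost hyperhermitian.

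Next I would identify the three fundamental $2$-forms with the asserted exact forms. For $\widetilde{I}$ I use that $\eta_0,\eta_1,\widetilde{\theta}_0$ are of type $(1,0)$, so that the vertical part of $\omega_{\widetilde I}(\cdot,\cdot)=\widetilde{g}(\widetilde{I}\cdot,\cdot)$ is $i r^2(\eta_0\wedge\overline{\eta}_0+\eta_1\wedge\overline{\eta}_1)$ and the horizontal part is $c r^2\,p^*\omega_I$. On the other hand, expanding $d(r^2\eta_0)=dr^2\wedge\eta_0+r^2 d\eta_0$ by means of Lemma~\ref{s3.5l1} and Proposition~\ref{s3.3p2}, and substituting $\widetilde{\omega}_1=-i r^2\,p^*\omega_I$ from~(\ref{s3.3eq8}), yields $-i\,d(r^2\eta_0)=c r^2\,p^*\omega_I+i r^2(\eta_0\wedge\overline{\eta}_0+\eta_1\wedge\overline{\eta}_1)$, which agrees with $\omega_{\widetilde I}$ under the fixed sign convention. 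For the remaining two structures I would invoke the linear-algebra fact that, on a hyperhermitian space, the $\widetilde{I}$-holomorphic $(2,0)$-form matching the metric equals $\omega_{\widetilde J}+i\,\omega_{-\widetilde K}$. Since the first proposition of this subsection already shows that $d(r^2\eta_1)=2(-c\widetilde{\omega}_0+r^2\eta_0\wedge\eta_1)$ is, up to scale, the holomorphic symplectic $(2,0)$-form, it then remains only to fix the normalization by evaluating on the standard frame $\{\widetilde{\theta}_0,\eta_0,\eta_1\}$, after which $\omega_{\widetilde J}=\mathrm{Re}\,d(r^2\eta_1)=d(r^2\eta_1^{\rm Re})$ and $\omega_{-\widetilde K}=\mathrm{Im}\,d(r^2\eta_1)=d(r^2\eta_1^{\rm Im})$, using that $d$ is real and $\eta_1=\eta_1^{\rm Re}+i\eta_1^{\rm Im}$.

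Finally, each of $-i\,d(r^2\eta_0)$, $d(r^2\eta_1^{\rm Re})$, $d(r^2\eta_1^{\rm Im})$ is visibly exact, hence closed, so by the standard criterion that an integrable complex structure together with a compatible Hermitian metric and a closed fundamental form defines a Kähler structure, $\widetilde{g}$ is Kähler with respect to $\widetilde{I}$, $\widetilde{J}$ and $-\widetilde{K}$ simultaneously; combined with the quaternion relations this gives the hyperkähler structure. I expect the main obstacle to be the bookkeeping of signs and normalizations in the second step — in particular confirming that the holomorphic symplectic form $d(r^2\eta_1)$ is normalized so that its real and imaginary parts are exactly the fundamental forms of $\widetilde{J}$ and $-\widetilde{K}$ for the given $\widetilde{g}$, and verifying that positivity genuinely forces $t>0$, since for $t<0$ the same forms would describe a split-signature structure rather than a hyperkähler one.
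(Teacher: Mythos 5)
Your overall strategy is the one the paper uses: identify the three exact (hence closed) $2$-forms with the fundamental forms of $\widetilde{g}$ with respect to $\widetilde{I},\widetilde{J},-\widetilde{K}$, so that K\"ahlerness is automatic. Your treatment of $\widetilde{I}$ — expanding $d(r^2\eta_0)$ via Lemma~\ref{s3.5l1} and Proposition~\ref{s3.3p2}, substituting (\ref{s3.3eq8}), and matching the vertical piece with $g_{\widetilde{\mathcal{V}}}(i_{\widetilde{\mathcal{V}}}\cdot,\cdot)$ — coincides with the paper's proof, and your preliminary checks of positive definiteness (this is where $t>0$ enters) and hermitian compatibility are correct, if left implicit in the paper.

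The gap is in your treatment of $\widetilde{J}$ and $-\widetilde{K}$. From the facts that $d(r^2\eta_1)$ is a nondegenerate $(2,0)$-form and that $\omega_{\widetilde{J}}+i\,\omega_{-\widetilde{K}}$ is also of type $(2,0)$ with respect to $\widetilde{I}$ (the hyperhermitian linear-algebra fact), you cannot conclude that the two agree ``up to scale'': $P(H^*)$ has complex dimension $2n+2\ge 4$, so the space of $(2,0)$-forms at a point has dimension $\binom{2n+2}{2}>1$ and nondegenerate $(2,0)$-forms are far from pairwise proportional. There is therefore no single normalization constant to fix; what must be proved is the full identity of bilinear forms $\omega_{\widetilde{J}}+i\,\omega_{-\widetilde{K}}=d(r^2\eta_1)$, block by block with respect to $TP(H^*)=\widetilde{\mathcal{H}}\oplus\widetilde{\mathcal{V}}$. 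The horizontal block is exactly (\ref{s3.3eq8}), giving $cr^2(\omega_J-i\omega_K)=-2c\widetilde{\omega}_0$; the mixed blocks vanish on both sides; and the vertical block requires computing how $j_{\widetilde{\mathcal{V}}}$ and $k_{\widetilde{\mathcal{V}}}$ act on $\eta_0,\eta_1$ in fiber coordinates, namely $\eta_0\circ j_{\widetilde{\mathcal{V}}}=-\overline{\eta}_1$, $\eta_1\circ j_{\widetilde{\mathcal{V}}}=\overline{\eta}_0$, $\eta_0\circ k_{\widetilde{\mathcal{V}}}=i\overline{\eta}_1$, $\eta_1\circ k_{\widetilde{\mathcal{V}}}=-i\overline{\eta}_0$, which yield $g_{\widetilde{\mathcal{V}}}(j_{\widetilde{\mathcal{V}}}\cdot,\cdot)-i\,g_{\widetilde{\mathcal{V}}}(k_{\widetilde{\mathcal{V}}}\cdot,\cdot)=2\eta_0\wedge\eta_1$. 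This vertical computation is the actual content of the paper's proof for $\widetilde{J}$ and $-\widetilde{K}$, and it is precisely what your phrase ``fix the normalization by evaluating on the standard frame'' elides; once it is supplied, your argument closes and agrees with the paper's.
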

\begin{proof} 
It follows from Proposition~\ref{s3.3p2},  Lemma~\ref{s3.5l1} and the equation (\ref{s3.3eq8}) that 
\begin{align*} 
-id(r^2\eta_0)&=r^2(c\omega_I+i(\eta_0\wedge \overline{\eta}_0+\eta_1\wedge \overline{\eta}_1)), \\
d(r^2\eta_1)&=r^2(c(\omega_J-i\omega_K)+2\eta_0\wedge \eta_1). 
\end{align*}
It is easy to see that $i(\eta_0\wedge \overline{\eta}_0+\eta_1\wedge \overline{\eta}_1)=g_{\widetilde{\mathcal{V}}}(i_{\widetilde{\mathcal{V}}}\cdot, \cdot)$. 
Hence $-id(r^2\eta_0)$ is a K\"{a}hler form with respect to $(\widetilde{g}, \widetilde{I})$. 
We fix $u_0^*\in P(H^*)$ and provide a coordinate $(z,w)$ of fiber such that $u^*=u_0^*(z+jw)$. 
Then $j$ maps $(z,w)$ to $(-\bar{w}, \bar{z})$. 
Since $dz\circ j_{\widetilde{\mathcal{V}}}=-d\bar{w}$ and $dw\circ j_{\widetilde{\mathcal{V}}}=d\bar{z}$, 
we obtain $\eta_0\circ j_{\widetilde{\mathcal{V}}}=-\overline{\eta}_1$ and $\eta_1\circ j_{\widetilde{\mathcal{V}}}=\overline{\eta}_0$. 
By the same manner, $\eta_0\circ k_{\widetilde{\mathcal{V}}}=i\overline{\eta}_1$ and $\eta_1\circ k_{\widetilde{\mathcal{V}}}=-i\overline{\eta}_0$. 
It yields that $g_{\widetilde{\mathcal{V}}}(j_{\widetilde{\mathcal{V}}}\cdot, \cdot)=\eta_0\wedge \eta_1+\overline{\eta}_0\wedge \overline{\eta}_1$ 
and $g_{\widetilde{\mathcal{V}}}(k_{\widetilde{\mathcal{V}}}\cdot, \cdot)=i(\eta_0\wedge \eta_1-\overline{\eta}_0\wedge \overline{\eta}_1)$. 
Thus $g_{\widetilde{\mathcal{V}}}(j_{\widetilde{\mathcal{V}}}\cdot, \cdot)-ig_{\widetilde{\mathcal{V}}}(k_{\widetilde{\mathcal{V}}}\cdot, \cdot)=2\eta_0\wedge \eta_1$. 
It turns out that $d(r^2\eta_1)^{\rm Re}=d(r^2\eta_1^{\rm Re})$ and $d(r^2\eta_1)^{\rm Im}=d(r^2\eta_1^{\rm Im})$ 
are K\"{a}hler forms with respect to $\widetilde{J}$ and $-\widetilde{K}$, respectively. 
Hence we finish the proof. 
\end{proof} 
The hyperk\"ahler structure $(\widetilde{g}, \widetilde{I}, \widetilde{J}, -\widetilde{K})$ induces that on $P(H^*)/\mathbb{Z}_2$. 
This coincides the hyperk\"ahler structure constructed by Swann~\cite{Sw}.

\subsection{Holomorphic $k$-vector fields on $P(H^*)$}
Let $\widetilde{\mathcal{H}}^{1,0}$ and $\widetilde{\mathcal{V}}^{1,0}$ be the vector bundles of horizontal $(1,0)$-vectors and vertical $(1,0)$-vectors on $P(H^*)$. 
Then $\widetilde{\mathcal{H}}^{1,0}=\Ker \eta_0 \cap \Ker \eta_1$. 
There exist fundamental vector fields $\hat{1}, \hat{i},\hat{j}, \hat{k}$ associated with the elements $1,i,j,k$ of Lie algebra $gl(1,\mathbb{H})=\mathbb{H}$. 
We define the complex vector fields $v_0$ and $v_1$ as $v_0=\frac{1}{2}(\hat{1}-i \hat{i})$ and $v_1=\frac{1}{2}(\hat{j}-i \hat{k})$. 
Then $\{ v_0, v_1\}$ is the dual basis of $\{ \eta_0, \eta_1\}$.  
The two vector fields $v_0$ and $v_1$ span the space $\widetilde{\mathcal{V}}^{1,0}$. 
Let $X'$ be a $(1,0)$-vector field on $P(H^*)$. 
Then $X'$ is decomposed into 
\begin{equation}\label{s5.2eq1} 
X'=X'_h+f_0v_0+f_1v_1
\end{equation}
for a horizontal vector field $X'_h$ and functions $f_0, f_1$ on $P(H^*)$. 
\begin{lem}\label{s5.2l1} 
The $(1,0)$-vector field $X'$ is holomorphic if and only if 
\begin{align*}
\mathrm{(i)}&\quad \bar{\partial}(\widetilde{\theta}_0(X'_h))-f_1\widetilde{\theta}_1=0, \\
\mathrm{(ii)}&\quad \bar{\partial}f_0= cr^{-2}\omega_E(\widetilde{\theta}_0(X'_h), \widetilde{\theta}_1)+ f_1\overline{\eta}_1 
\end{align*}
under the decomposition (\ref{s5.2eq1}). 
\end{lem}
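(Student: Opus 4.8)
The plan is to test the holomorphicity of $X'$ against a $(1,0)$-coframe. Recall that a $(1,0)$-vector field is holomorphic precisely when $\mathcal{L}_{X'}$ preserves the type of forms, i.e. $(\mathcal{L}_{X'}\phi)^{0,1}=0$ for every $(1,0)$-form $\phi$. By Proposition~\ref{s3.4p1} the bundle $\wedge^{1,0}T^*P(H^*)$ is spanned by the components of $\widetilde{\theta}_0$ together with $\eta_0$ and $\eta_1$, so it suffices to impose this for $\phi=\widetilde{\theta}_0$ (treated as an $E$-valued form, with $\bar\partial$ replaced by $\bar\partial_E$), $\phi=\eta_0$, and $\phi=\eta_1$. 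Cartan's formula $\mathcal{L}_{X'}\phi=\iota_{X'}d\phi+d(\iota_{X'}\phi)$ together with a count of types gives $(\mathcal{L}_{X'}\phi)^{0,1}=\bar\partial(\iota_{X'}\phi)+\iota_{X'}\bar\partial\phi$, so the criterion reduces to $\bar\partial(\iota_{X'}\phi)=-\iota_{X'}\bar\partial\phi$ for each of these three forms.

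Next I would evaluate the pairings using the decomposition $X'=X'_h+f_0v_0+f_1v_1$ and the duality $\eta_s(v_t)=\delta_{st}$. Since $\widetilde{\theta}_0$ is horizontal, $\iota_{X'}\widetilde{\theta}_0=\widetilde{\theta}_0(X'_h)$, while $\iota_{X'}\eta_0=f_0$ and $\iota_{X'}\eta_1=f_1$; moreover $\iota_{X'}\widetilde{\theta}_1=0$ and $\iota_{X'}\overline{\eta}_s=0$ because $X'$ is of type $(1,0)$ whereas $\widetilde{\theta}_1,\overline{\eta}_s$ are of type $(0,1)$. Substituting the structure equations $\bar\partial_E\widetilde{\theta}_0=-\eta_1\wedge\widetilde{\theta}_1$, $\bar\partial\eta_0=-cr^{-2}\widetilde{\omega}_1-\eta_1\wedge\overline{\eta}_1$ and $\bar\partial\eta_1=\eta_1\wedge\overline{\eta}_0$ from the proof of Lemma~\ref{s3.4l1}, and using $\iota_{X'}\widetilde{\omega}_1=\omega_E(\widetilde{\theta}_0(X'_h),\widetilde{\theta}_1)$, the case $\phi=\widetilde{\theta}_0$ yields (i), the case $\phi=\eta_0$ yields (ii), and the case $\phi=\eta_1$ yields the additional relation $\bar\partial f_1=-f_1\overline{\eta}_0$.

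The crux is then to show that this extra relation is a consequence of (i), so that (i) and (ii) already characterize holomorphicity. Applying $\bar\partial_E$ to (i) and using that the pulled-back bundle $E$ is holomorphic on $P(H^*)$ (hence $\bar\partial_E^2=0$ on sections), the left-hand side vanishes, while $\bar\partial_E(f_1\widetilde{\theta}_1)=(\bar\partial f_1+f_1\overline{\eta}_0)\wedge\widetilde{\theta}_1$, where I read off $\bar\partial_E\widetilde{\theta}_1=-\widetilde{\theta}_1\wedge\overline{\eta}_0$ as the type-$(0,2)$ part of Proposition~\ref{s3.2p1}. Thus $(\bar\partial f_1+f_1\overline{\eta}_0)\wedge\widetilde{\theta}_1=0$, and since the $2n$ components of $\widetilde{\theta}_1$ are pointwise linearly independent (they complete to a $(0,1)$-coframe together with $\overline{\eta}_0,\overline{\eta}_1$), wedging the scalar $(0,1)$-form $\bar\partial f_1+f_1\overline{\eta}_0$ against all of them forces it to vanish when $n\ge 1$. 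I expect this reduction—verifying that the vertical equation arising from $\eta_1$ is redundant given (i)—to be the only delicate point; the remainder is bookkeeping with types and the structure equations already established.
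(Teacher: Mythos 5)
Your proposal is correct and follows the paper's own strategy: both proofs test holomorphicity of $X'$ against the $(1,0)$-coframe $\{\widetilde{\theta}_0,\eta_0,\eta_1\}$, obtain (i), (ii) together with the vertical relation $\bar\partial f_1=-f_1\overline{\eta}_0$, and then discharge that vertical relation by differentiating the $\widetilde{\theta}_0$-equation and invoking the pointwise injectivity of wedging with $\widetilde{\theta}_1$. The only difference is one of formalism: you express holomorphicity through $(\mathcal{L}_{X'}\phi)^{0,1}=\bar\partial(\iota_{X'}\phi)+\iota_{X'}\bar\partial\phi$ and use $\bar\partial_E\bar\partial_E=0$ on the holomorphic bundle $p^{-1}(E)$, whereas the paper writes the same computation as $\phi(\nabla^{0,1}X')=\bar\partial(\phi(X'))-(\nabla^{0,1}\phi)(X')$ for the torsion-free connection of Remark~\ref{s3.4r1} (via Lemma~\ref{s3.4l1}) and uses the vanishing of the $(0,2)$-part of the curvature.
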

\begin{proof} 
The vector field $X'$ is holomorphic if and only if $\nabla^{0,1}X'=0$. 
The equation is equal to $\widetilde{\theta}_0(\nabla^{0,1}X')=0$, 
$\eta_0(\nabla^{0,1}X')=0$ and 
$\eta_1(\nabla^{0,1}X')=0$. 
The first equation induces the third one since 
$\bar{\partial}^{\nabla}(\widetilde{\theta}_0(\nabla^{0,1}X'))
=\eta_1(\nabla^{0,1}X')\wedge \widetilde{\theta}_1+\widetilde{\theta}_0(\Omega_{TP(H^*)}^{(0,2)}(X'))
=\eta_1(\nabla^{0,1}X')\wedge \widetilde{\theta}_1$ 
and the map $\wedge \widetilde{\theta}_1 : \wedge^{0,1} \to p^{-1}(E)\otimes  \wedge^{0,2}$ is injective. 
Lemma~\ref{s3.4l1} implies that 
$\widetilde{\theta}_0(\nabla^{0,1}X') =\bar{\partial}(\widetilde{\theta}_0(X'_h)) - f_1 \widetilde{\theta}_1$ and 
$\eta_0(\nabla^{0,1}X') =\bar{\partial}f_0-cr^{-2}\omega_E(\widetilde{\theta}_0(X'_h), \widetilde{\theta}_1)- f_1\overline{\eta}_1$. 
It turns out that $\nabla^{0,1}X'=0$ is equivalent to the conditions (i) and (ii). 
\end{proof} 

Let $k$ be an integer which is greater than $1$. 
Any $(k,0)$-vector $X'$ is decomposed into 
\begin{equation}\label{s5.3eq1}
X'=X'_h+Y_0\wedge v_0+Y_1\wedge v_1+ Z_0\wedge v_0\wedge v_1
\end{equation}
for $X'_h\in \wedge^k \widetilde{\mathcal{H}}^{1,0}$ and $Y_0, Y_1 \in\wedge^{k-1} \widetilde{\mathcal{H}}^{1,0}$ and $Z_0\in \wedge^{k-2} \widetilde{\mathcal{H}}^{1,0}$. 
\begin{lem}\label{s5.3l2} 
For $2\le k\le 2n$, the $(k,0)$-vector field $X'$ is holomorphic if and only if 
\begin{align*}
\mathrm{(i)}&\quad \bar{\partial}(\widetilde{\theta}_0^k(X'_h))-k^2\widetilde{\theta}_0^{k-1}(Y_1)\wedge_E \widetilde{\theta}_1=0, \\
\mathrm{(ii)}&\quad k^2\bar{\partial}(\widetilde{\theta}_0^{k-1}(Y_0))+k^2(k-1)^2\widetilde{\theta}_0^{k-2}(Z_0)\wedge_E \widetilde{\theta}_1 
- cr^{-2}\omega_E(\widetilde{\theta}_0^k(X'_h), \widetilde{\theta}_1)-k^2\widetilde{\theta}_0^{k-1}(Y_1)\otimes \overline{\eta}_1=0, \\
\mathrm{(iii)}&\quad \bar{\partial}(\widetilde{\theta}_0^{k-1}(Y_1))+\widetilde{\theta}_0^{k-1}(Y_1)\otimes \overline{\eta}_0=0, \\
\mathrm{(iv)}&\quad (k-1)^2\bar{\partial}(\widetilde{\theta}_0^{k-2}(Z_0))+(k-1)^2\widetilde{\theta}_0^{k-2}(Z_0)\otimes \overline{\eta}_0- cr^{-2}\omega_E(\widetilde{\theta}_0^{k-1}(Y_1), \widetilde{\theta}_1)=0 
\end{align*}
under the decomposition (\ref{s5.3eq1}). 
Especially, in the case $k\neq 2n$, $X'$ is holomorphic 
if and only if the equations $(i), (ii), (iv)$ hold. 
\end{lem}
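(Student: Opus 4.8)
The plan is to follow the pattern of Lemma~\ref{s5.2l1}, testing $\nabla^{0,1}X'$ against a spanning family of $(k,0)$-forms adapted to the decomposition (\ref{s5.3eq1}). Since $\widetilde{I}$ is integrable (Proposition~\ref{s3.4p1}), the bundle $\wedge^k\widetilde{\mathcal{H}}^{1,0}\subset\wedge^kT^{1,0}P(H^*)$ sits inside a holomorphic bundle, and $X'$ is holomorphic if and only if $\nabla^{0,1}X'=0$. The $(1,0)$-forms are generated by $\{\alpha^i,\eta_0,\eta_1\}$, so $\wedge^{k,0}T^*P(H^*)$ is spanned by the four families $\widetilde{\theta}_0^k$, $\widetilde{\theta}_0^{k-1}\wedge\eta_0$, $\widetilde{\theta}_0^{k-1}\wedge\eta_1$ and $\widetilde{\theta}_0^{k-2}\wedge\eta_0\wedge\eta_1$. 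Hence $\nabla^{0,1}X'=0$ is equivalent to the vanishing of the four contractions $\widetilde{\theta}_0^k(\nabla^{0,1}X')$, $(\widetilde{\theta}_0^{k-1}\wedge\eta_0)(\nabla^{0,1}X')$, $(\widetilde{\theta}_0^{k-1}\wedge\eta_1)(\nabla^{0,1}X')$ and $(\widetilde{\theta}_0^{k-2}\wedge\eta_0\wedge\eta_1)(\nabla^{0,1}X')$, which will produce (i)--(iv) respectively.

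For each such form $\phi$ I would use the Leibniz identity $\phi(\nabla^{0,1}X')=\bar\partial(\phi(X'))-(\nabla^{0,1}\phi)(X')$ exactly as in the proof of Lemma~\ref{s5.2l1}, then substitute the formulas of Proposition~\ref{s3.4p2} for the $\nabla^{0,1}$ of the four forms together with the values of $\phi(X')$. Here the contraction automatically selects the matching component of (\ref{s5.3eq1}): since $\eta_0,\eta_1$ annihilate the horizontal part and the $\alpha^i$ annihilate $v_0,v_1$, one gets $\widetilde{\theta}_0^k(X')=\widetilde{\theta}_0^k(X'_h)$, while $(\widetilde{\theta}_0^{k-1}\wedge\eta_s)(X')$ is proportional to $\widetilde{\theta}_0^{k-1}(Y_s)$ and $(\widetilde{\theta}_0^{k-2}\wedge\eta_0\wedge\eta_1)(X')$ is proportional to $\widetilde{\theta}_0^{k-2}(Z_0)$, the combinatorics of the $k$-fold wedge $\widetilde{\theta}_0^k$ producing the normalizing factors $k^2$ and $(k-1)^2$. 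Feeding the $(0,1)$-parts from Proposition~\ref{s3.4p2} into these identities and collecting the $\wedge^\bullet E$-valued $(0,1)$-form terms then yields precisely the equations (i)--(iv).

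The final assertion requires showing that (iii) is redundant when $k\neq 2n$, and here I would imitate the opening of the proof of Lemma~\ref{s5.2l1}: apply the covariant exterior operator $\bar\partial^\nabla$ to the quantity $\widetilde{\theta}_0^k(\nabla^{0,1}X')$ of (i). Using the $\bar\partial$-form of Proposition~\ref{s3.4p2} for $\bar\partial_E\widetilde{\theta}_0^k$ (whose relevant term is a multiple of $\eta_1\wedge\widetilde{\theta}_0^{k-1}\wedge_E\widetilde{\theta}_1$) together with the vanishing $\Omega^{(0,2)}_{TP(H^*)}=0$ of the $(0,2)$-curvature guaranteed by integrability, one obtains an identity of the shape
\[
\bar\partial^\nabla\bigl(\widetilde{\theta}_0^k(\nabla^{0,1}X')\bigr)=\mathrm{const}\cdot\bigl((\widetilde{\theta}_0^{k-1}\wedge\eta_1)(\nabla^{0,1}X')\bigr)\wedge_E\widetilde{\theta}_1,
\]
so that once (i) holds the right-hand side vanishes. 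The map $\zeta\mapsto\zeta\wedge_E\widetilde{\theta}_1$ from $\wedge^{k-1}E$-valued $(0,1)$-forms to $\wedge^kE$-valued $(0,2)$-forms is injective exactly when $k\neq 2n$, and this forces condition (iii). I expect this redundancy step to be the main obstacle: one must both pin down the exact constant and sign in the $\bar\partial^\nabla$ identity and establish injectivity of $\wedge_E\widetilde{\theta}_1$, which degenerates when $k=2n$ because $\wedge^{2n}E$ is a line bundle and wedging into the top exterior power can no longer be injective --- precisely the reason the case $k=2n$ must retain (iii) as an independent condition.
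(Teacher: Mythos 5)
Your proposal is correct and follows essentially the same route as the paper's proof: testing $\nabla^{0,1}X'$ against the four families $\widetilde{\theta}_0^k$, $\widetilde{\theta}_0^{k-1}\wedge\eta_0$, $\widetilde{\theta}_0^{k-1}\wedge\eta_1$, $\widetilde{\theta}_0^{k-2}\wedge\eta_0\wedge\eta_1$ via the Leibniz rule and Proposition~\ref{s3.4p2}, with the factors $k^2$, $(k-1)^2$ arising from the contraction combinatorics exactly as you indicate. Your redundancy argument for (iii) — applying $\bar{\partial}^{\nabla}$ to $\widetilde{\theta}_0^k(\nabla^{0,1}X')$, using the vanishing of the $(0,2)$-curvature, and invoking injectivity of $\wedge_E\widetilde{\theta}_1$ on $p^{-1}(\wedge^{k-1}E)\otimes\wedge^{0,1}$ for $1\le k\le 2n-1$ (which indeed fails at $k=2n$ since $\wedge^{2n}E$ is a line bundle) — is precisely the paper's argument.
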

\begin{proof} 
The equation $\nabla^{0,1}X'=0$ is equal to $\widetilde{\theta}_0^k(\nabla^{0,1}X')=0$, 
$(\widetilde{\theta}_0^{k-1}\wedge \eta_0)(\nabla^{0,1}X')=0$, 
$(\widetilde{\theta}_0^{k-1}\wedge \eta_1)(\nabla^{0,1}X')=0$ and 
$(\widetilde{\theta}_0^{k-2}\wedge \eta_0 \wedge \eta_1)(\nabla^{0,1}X')=0$. 
Proposition~\ref{s3.4p2} implies that 
$\widetilde{\theta}_0^k(\nabla^{0,1}X') =\bar{\partial}(\widetilde{\theta}_0^k(X'_h)) - k^2\widetilde{\theta}_0^{k-1}(Y_1)\wedge_E \widetilde{\theta}_1$ 
and $\widetilde{\theta}_0^{k-1}\wedge \eta_1(\nabla^{0,1}X') =k(\bar{\partial}(\widetilde{\theta}_0^{k-1}(Y_1))+\widetilde{\theta}_0^{k-1}(Y_1)\otimes \overline{\eta}_0)$. 
By $\omega_E(\widetilde{\theta}_0^{k}(X'_h), \widetilde{\theta}_1)=k(\widetilde{\theta}_0^{k-1}\wedge \omega_E(\widetilde{\theta}_0,\widetilde{\theta}_1))(X')$, 
we obtain 
\begin{align*} 
&\widetilde{\theta}_0^{k-1}\wedge \eta_0(\nabla^{0,1}X') \\
&=k\bar{\partial}(\widetilde{\theta}_0^{k-1}(Y_0)) +((k-1)\widetilde{\theta}_0^{k-2}\wedge \eta_0\wedge \eta_1 \wedge_E\widetilde{\theta}_1 
-\widetilde{\theta}_0^{k-1}\wedge (cr^{-2} \omega_E(\widetilde{\theta}_0,\widetilde{\theta}_1)+\eta_1\otimes \overline{\eta}_1))(X')\\
&=k\bar{\partial}(\widetilde{\theta}_0^{k-1}(Y_0))+k(k-1)^2\widetilde{\theta}_0^{k-2}(Z_0)\wedge_E \widetilde{\theta}_1 
-\frac{1}{k} cr^{-2}\omega_E(\widetilde{\theta}_0^{k}(X'_h), \widetilde{\theta}_1)-k\widetilde{\theta}_0^{k-1}(Y_1)\otimes \overline{\eta}_1 
\end{align*}
and 
\begin{align*} 
&\widetilde{\theta}_0^{k-2}\wedge \eta_0\wedge \eta_1(\nabla^{0,1}X') \\
&=k(k-1)\bar{\partial}(\widetilde{\theta}_0^{k-2}(Z_0))- kcr^{-2}\widetilde{\theta}_0^{k-2}\wedge \omega_E(\widetilde{\theta}_0, \widetilde{\theta}_1)(Y_1)+k(k-1)\widetilde{\theta}_0^{k-2}(Z_0)\otimes \overline{\eta}_0 \\
&=k(k-1)\bar{\partial}(\widetilde{\theta}_0^{k-2}(Z_0))- \frac{k}{k-1}cr^{-2}\omega_E(\widetilde{\theta}_0^{k-1}(Y_1), \widetilde{\theta}_1)+k(k-1)\widetilde{\theta}_0^{k-2}(Z_0)\otimes \overline{\eta}_0. 
\end{align*}
Therefore, $\nabla^{0,1}X'=0$ if and only if $(i), (ii), (iii), (iv)$ hold. 
Now $\overline{\partial}^{\nabla}(\widetilde{\theta}_0^k(\nabla^{0,1}X'))=(-1)^{k+1}(\widetilde{\theta}_0^{k-1}\wedge \eta_1)(\nabla^{0,1}X')\wedge \widetilde{\theta}_1$. 
The map $\wedge \widetilde{\theta}_1 : p^{-1}(\wedge^{k-1} E)\otimes \wedge^{0,1} \to p^{-1}(\wedge^k E)\otimes \wedge^{0,2}$ is injective in the case $1\le k \le 2n-1$. 
Hence $\widetilde{\theta}_0^k(\nabla^{0,1}X')=0$ implies that 
$(\widetilde{\theta}_0^{k-1}\wedge \eta_1)(\nabla^{0,1}X')=0$ for $1\le k \le 2n-1$. 
It turns out that $\nabla^{0,1}X'=0$ is equivalent to equations $(i)$, $(ii)$ and $(iv)$ in the case $1\le k \le 2n-1$. 
\end{proof} 

From now on, we extend the decomposition (\ref{s5.3eq1}) to the case $k=1$ as $Z_0=0$. 
\begin{prop}\label{s5.3p2} 
Let $k$ be an integer with $1\le k\le 2n-1$. 
If $X'_h\in \wedge^k \widetilde{\mathcal{H}}^{1,0}$ and $Y_1\in \wedge^{k-1} \widetilde{\mathcal{H}}^{1,0}$ satisfy 
\[
\mathrm{(i)}\quad \bar{\partial}(\widetilde{\theta}_0^k(X'_h))-k^2\widetilde{\theta}_0^{k-1}(Y_1)\wedge_E \widetilde{\theta}_1=0, 
\]
then there exist $Y_0$ and $Z_0$ locally such that $X'$ is holomorphic. 
If $X'_h\in \wedge^{2n} \widetilde{\mathcal{H}}^{1,0}$ and $Y_1\in \wedge^{2n-1} \widetilde{\mathcal{H}}^{1,0}$ satisfy $(i)$ and 
\[
\mathrm{(iii)}\quad \bar{\partial}(\widetilde{\theta}_0^{2n-1}(Y_1))+\widetilde{\theta}_0^{2n-1}(Y_1)\otimes \overline{\eta}_0=0, 
\]
then there exist $Y_0$ and $Z_0$ locally such that $X'$ is holomorphic. 
\end{prop}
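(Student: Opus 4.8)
The plan is to reduce the statement, via Lemma~\ref{s5.3l2}, to solving the two inhomogeneous $\bar\partial$-type equations (ii) and (iv) for the unknowns $Y_0$ and $Z_0$ on a small coordinate ball, using the local $\bar\partial$-Poincar\'e (Dolbeault--Grothendieck) lemma with coefficients in the holomorphic bundles $p^{-1}(\wedge^{k-1}E)$ and $p^{-1}(\wedge^{k-2}E)$; these are holomorphic because $\widetilde{I}$ is integrable (Proposition~\ref{s3.4p1}). Since $\widetilde{\theta}_0$ identifies $\wedge^{j}\widetilde{\mathcal{H}}^{1,0}$ with $p^{-1}(\wedge^{j}E)$, it is equivalent to produce the functions $u=\widetilde{\theta}_0^{k-1}(Y_0)$ and $v=\widetilde{\theta}_0^{k-2}(Z_0)$. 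For $1\le k\le 2n-1$ the injectivity of $\wedge\widetilde{\theta}_1$ exploited in Lemma~\ref{s5.3l2} shows that hypothesis (i) already forces (iii), so in the first statement (iii) is available for free, while in the second it is assumed outright (which is what one needs in a boundary situation where that injectivity is lost). The case $k=1$ reduces to Lemma~\ref{s5.2l1} with $Z_0=0$.

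First I would solve (iv) for $v$. The key simplification is Lemma~\ref{s3.5l1}, which gives $\overline{\eta}_0=\bar\partial\log r^2$ and hence $\bar\partial\xi+\xi\,\overline{\eta}_0=r^{-2}\bar\partial(r^2\xi)$ for any section $\xi$; this absorbs every $\overline{\eta}_0$-twist into a power of $r^2$. Writing $g:=\widetilde{\theta}_0^{k-1}(Y_1)$, equation (iv) becomes $\bar\partial(r^2 v)=\tfrac{c}{(k-1)^2}\,\omega_E(g,\widetilde{\theta}_1)$. The right-hand side is $\bar\partial$-closed: using (iii) in the form $\bar\partial g=-g\,\overline{\eta}_0$ together with the $(0,2)$-component $-\widetilde{\theta}_1\wedge\overline{\eta}_0$ of the structure equation for $d^E\widetilde{\theta}_1$ in Proposition~\ref{s3.2p1}, the two contributions to $\bar\partial\,\omega_E(g,\widetilde{\theta}_1)$ cancel. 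The Dolbeault--Grothendieck lemma then yields $r^2v$, hence $v$ and $Z_0$, locally.

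Next, with $v$ fixed, (ii) reads $\bar\partial u=\Phi$, where, setting $f:=\widetilde{\theta}_0^{k}(X'_h)$, one has $\Phi=\tfrac{c}{k^2}r^{-2}\omega_E(f,\widetilde{\theta}_1)+g\,\overline{\eta}_1-(k-1)^2\,v\wedge_E\widetilde{\theta}_1$. There is now no $\overline{\eta}_0$-twist, so local solvability is exactly the closedness $\bar\partial\Phi=0$, and establishing this is the heart of the argument. I would differentiate $\Phi$ termwise using (i) in the form $\bar\partial f=k^2\,g\wedge_E\widetilde{\theta}_1$, the equation (iv) just solved, Lemma~\ref{s3.4l1}, the conjugate of Proposition~\ref{s3.3p2} giving $\bar\partial\overline{\eta}_1=-2cr^{-2}\widetilde{\omega}_2+\overline{\eta}_0\wedge\overline{\eta}_1$, and Lemma~\ref{s3.5l1}. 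All the $\overline{\eta}_0\wedge(\cdot)$ contributions cancel by parity of form degree, and the terms carrying $\widetilde{\omega}_2=\overline{\widetilde{\omega}_0}$ and those carrying $\omega_E(g,\widetilde{\theta}_1)\wedge_E\widetilde{\theta}_1$ must cancel through a contraction identity of the shape $\omega_E(g\wedge_E\widetilde{\theta}_1,\widetilde{\theta}_1)=-\,\omega_E(g,\widetilde{\theta}_1)\wedge_E\widetilde{\theta}_1+(-1)^{k-1}g\,\widetilde{\omega}_2$ together with the normalisation forced by (iv). Granting $\bar\partial\Phi=0$, Dolbeault--Grothendieck produces $u$, hence $Y_0$.

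Assembling $X'=X'_h+Y_0\wedge v_0+Y_1\wedge v_1+Z_0\wedge v_0\wedge v_1$ as in (\ref{s5.3eq1}), the constructed $Y_0,Z_0$ satisfy (ii) and (iv) while (i) holds by hypothesis, so Lemma~\ref{s5.3l2} gives that $X'$ is holomorphic; the second statement follows by the same construction, its hypothesis supplying (iii) directly at the step where (iv) is solved. I expect the main obstacle to be the closedness identity $\bar\partial\Phi=0$ of the third paragraph: it is sign-sensitive and entangles the $E$-contraction combinatorics with the curvature terms $\widetilde{\omega}_0,\widetilde{\omega}_2$. The conceptual reason it must hold is that the scalar equations (i)--(iv) are precisely the components of the single equation $\nabla^{0,1}X'=0$, whose consistency is guaranteed by $\nabla^{0,1}\circ\nabla^{0,1}=(\text{curvature})^{(0,2)}=0$, valid because $\widetilde{I}$ is integrable (Proposition~\ref{s3.4p1}); thus no genuine cohomological obstruction can arise and the termwise cancellation is forced.
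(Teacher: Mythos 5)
Your proposal follows the same skeleton as the paper's proof: derive (iii) from (i) by the injectivity of $\wedge\widetilde{\theta}_1$ (equivalently, $\bar{\partial}(r^2\widetilde{\theta}_0^{k-1}(Y_1))=0$), solve (iv) first by absorbing the $\overline{\eta}_0$-twist into a factor of $r^2$ via Lemma~\ref{s3.5l1} and applying the Dolbeault lemma (your closedness check for (iv) is correct and is exactly the paper's equation (\ref{s5.3eq4})), and then solve (ii) once its right-hand side $\Phi$ is shown to be $\bar{\partial}$-closed. The genuine divergence is in how $\bar{\partial}\Phi=0$ is established. The paper does it by brute force: it reduces the derivative of the right-hand side of (\ref{s5.3eq5}) to the expression (\ref{s5.3eq8}) and checks pointwise that the corresponding element (\ref{s5.3eq9}) of $\wedge^{k-1}E\otimes\wedge^2E^*$ vanishes. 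You instead invoke integrability of $\widetilde{I}$ through $(\nabla^{0,1})^2=\Omega^{(0,2)}=0$. That route is legitimate and can be completed with tools already in the paper: apply $\bar{\partial}^{\nabla}\circ\nabla^{0,1}=0$ to the test field $X''=X'_h+Y_1\wedge v_1+Z_0\wedge v_0\wedge v_1$ (i.e.\ take $Y_0=0$ together with the already-solved $Z_0$), project by $\widetilde{\theta}_0^{k-1}\wedge\eta_0$, and use Proposition~\ref{s3.4p2}; since $(\widetilde{\theta}_0^{k-1}\wedge\eta_0)(\nabla^{0,1}X'')=-k\Phi$, this expresses $\bar{\partial}\Phi$ as a combination of the components $\widetilde{\theta}_0^k(\nabla^{0,1}X'')$, $(\widetilde{\theta}_0^{k-1}\wedge\eta_1)(\nabla^{0,1}X'')$ and $(\widetilde{\theta}_0^{k-2}\wedge\eta_0\wedge\eta_1)(\nabla^{0,1}X'')$, all of which vanish by (i), (iii) and the solved (iv). What your approach buys is that no contraction identity has to be guessed and the cancellation is explained rather than verified; what the paper's buys is a self-contained computation that never mentions curvature.

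Two caveats. First, as written your key step is an assertion: ``consistency of the system, hence no cohomological obstruction'' is not by itself a proof, because consistency of an overdetermined system does not automatically give solvability of a sub-system for the remaining unknowns; the precise move is the test-field argument above, which you should spell out. Second, your back-up explicit identity is wrong: the correct statement, which is the content of (\ref{s5.3eq8})--(\ref{s5.3eq9}), is
\[
\wedge\bigl(\omega_E(\widetilde{\theta}_0^{k-1}(Y_1)\wedge_E\widetilde{\theta}_1,\widetilde{\theta}_1)\bigr)
=2\,\widetilde{\theta}_0^{k-1}(Y_1)\otimes\omega_E(\widetilde{\theta}_1,\widetilde{\theta}_1)
+\omega_E(\widetilde{\theta}_0^{k-1}(Y_1),\widetilde{\theta}_1)\wedge\widetilde{\theta}_1,
\]
not the $(-1)^{k-1}$-weighted version you wrote. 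Since you hedge this and ultimately lean on the integrability argument, it is not fatal, but it must be corrected or deleted in a final write-up.
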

\begin{proof} 
We assume that $X'_h\in \wedge^k \widetilde{\mathcal{H}}^{1,0}$ and $Y_1\in \wedge^{k-1} \widetilde{\mathcal{H}}^{1,0}$ satisfy 
(i) for $1\le k\le 2n-1$, (i) and (iii) for $k=2n$. 
It follows from Proposition~\ref{s3.2p1} and Lemma~\ref{s3.5l1} that $\bar{\partial}(r^{-2}\widetilde{\theta}_1)=0$. 
By taking the derivative $\bar{\partial}$ on (i), 
we obtain $\bar{\partial}(r^2\widetilde{\theta}_0^{k-1}(Y_1))\wedge \widetilde{\theta}_1=0$. 
Since the wedge $\wedge \widetilde{\theta}_1 : p^{-1}(\wedge^{k-1} E)\to p^{-1}(\wedge^k E) \otimes T^*P(H^*)$ is injective for $1\le k\le 2n-1$, 
$\bar{\partial}(r^2\widetilde{\theta}_0^{2n-1}(Y_1))=0$. 
The equation (iii) is equal to $\bar{\partial}(r^2\widetilde{\theta}_0^{2n-1}(Y_1))=0$. 
Hence $\bar{\partial}(r^2\widetilde{\theta}_0^{k-1}(Y_1))=0$ for $1\le k\le 2n$. 
It is easy to see that the condition (iv) in Lemma~\ref{s5.3l2} is equivalent to 
\begin{equation}\label{s5.3eq4}
\bar{\partial}((k-1)^2r^2\widetilde{\theta}_0^{k-2}(Z_0))= c\omega_E(r^2\widetilde{\theta}_0^{k-1}(Y_1), r^{-2}\widetilde{\theta}_1).
\end{equation}
The derivative $\bar{\partial}$ on the right hand side of (\ref{s5.3eq4}) vanishes 
since $\bar{\partial}(r^2\widetilde{\theta}_0^{k-1}(Y_1))=0$ and $\bar{\partial}(r^{-2}\widetilde{\theta}_1)=0$. 
By Dolbeaux's lemma, there exists an element $\zeta \in \mathcal{A}^0_{P(H^*)}(\wedge^{k-2}E)$ 
such that $\bar{\partial}\zeta= c\omega_E(r^2\widetilde{\theta}_0^{k-1}(Y_1), r^{-2}\widetilde{\theta}_1)$. 
The $(k-2)$-th wedge $\widetilde{\theta}_0^{k-2}$ is an isomorphism 
from $\mathcal{A}^0_{P(H^*)}(\wedge^{k-2}\widetilde{\mathcal{H}}^{1,0})$ to $\mathcal{A}^0_{P(H^*)}(\wedge^{k-2}E)$. 
Hence there exists $Z_0 \in \mathcal{A}^0_{P(H^*)}(\wedge^{k-2}\widetilde{\mathcal{H}}^{1,0})$ satisfying (\ref{s5.3eq4}), and (iv). 
In order to find a solution $Y_0$ of the equation (ii) in Lemma~\ref{s5.3l2}, 
we consider the the cases $k\neq 1$ and $k=1$. 
In the case $k\neq 1$, we write (ii) as 
\begin{equation} 
\bar{\partial}(\widetilde{\theta}_0^{k-1}(Y_0))=k^{-2}c\omega_E(\widetilde{\theta}_0^k(X'_h), r^{-2}\widetilde{\theta}_1)
+r^2\widetilde{\theta}_0^{k-1}(Y_1)\otimes r^{-2}\overline{\eta}_1 
-(k-1)^2r^2\widetilde{\theta}_0^{k-2}(Z_0)\wedge_E r^{-2}\widetilde{\theta}_1. \label{s5.3eq5}
\end{equation}
The derivative $\bar{\partial}$ on the right hand side of (\ref{s5.3eq5}) is provided by 
\begin{equation}\label{s5.3eq8}
cr^{-2}\{ \wedge (\omega_E(\widetilde{\theta}_0^{k-1}(Y_1)\wedge_E \widetilde{\theta}_1, \widetilde{\theta}_1))
-2\widetilde{\theta}_0^{k-1}(Y_1)\otimes \omega_E(\widetilde{\theta}_1, \widetilde{\theta}_1)
-\omega_E(\widetilde{\theta}_0^{k-1}(Y_1), \widetilde{\theta}_1)\wedge \widetilde{\theta}_1\}. 
\end{equation}
We take a point $u^*$ of $P(H^*)$ and denote by $e$ the element $\widetilde{\theta}_0^{k-1}(Y_1)_{u^*}$. 
The $(0,1)$-form $\widetilde{\theta}_1$ is given by $\id_E\otimes u^*j$ at the point $u^*$. 
Then (\ref{s5.3eq8}) is written as an element 
\begin{equation}\label{s5.3eq9}
\wedge_{E^*} (\omega_E(e\wedge_E \id_E, \id_E))
-2e\otimes \omega_E(\id_E, \id_E) -\omega_E(e, \id_E)\wedge_{E, E^*} \id_E
\end{equation}
of $\wedge^{k-1}E\otimes \wedge^2 E^*$ by the basis $cr^{-2} u^*j \cdot u^*j$. 
However, the element (\ref{s5.3eq9}) vanishes by the calculation. 
Therefore (\ref{s5.3eq8}) vanishes at each point $u^*\in P(H^*)$. 
It turns out that the derivative $\bar{\partial}$ on the right hand side of (\ref{s5.3eq5}) is zero for $k\neq 1$. 
In the case $k=1$, by the same argument, the derivative $\bar{\partial}$ on the right hand side of (ii) in Lemma~\ref{s5.2l1} is reduced to 
$\wedge_{E^*} (\omega_E(\id_E, \id_E))-2\omega_E(\id_E, \id_E)$ which vanishes. 
Hence, there exists $Y_0\in \mathcal{A}^0_{P(H^*)}(\wedge^{k-1}\widetilde{\mathcal{H}})$ such that (\ref{s5.3eq5}) and (ii) hold for any $1\le k\le 2n$. 
It completes the proof. 
\end{proof}

Lemma~\ref{s5.3l2} and Proposition~\ref{s5.3p2} induce the following 
\begin{thm}\label{s5.3t1} 
Horizontal $k$ and $(k-1)$-vector fields $X'_h, Y_1$ satisfy for $1\le k\le 2n-1$, 
\[
\mathrm{(i)}\quad \bar{\partial}(\widetilde{\theta}_0^k(X'_h))-k^2\widetilde{\theta}_0^{k-1}(Y_1)\wedge_E \widetilde{\theta}_1=0
\]
and for $k=2n$, (i) and 
\[
\bar{\partial}(\widetilde{\theta}_0^{2n-1}(r^2Y_1))=0 
\]
if and only if the $(k,0)$-vector field $X'_h+Y_0\wedge v_0+Y_1\wedge v_1+ Z_0\wedge v_0\wedge v_1$ is holomorphic
for local horizontal $(k-1)$ and $(k-2)$-vector fields $Y_0, Z_0$ on $P(H^*)$. $\hfill\Box$
\end{thm}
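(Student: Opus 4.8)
The plan is to derive Theorem~\ref{s5.3t1} directly from the two preceding results, Lemma~\ref{s5.3l2} and Proposition~\ref{s5.3p2}, once the extra hypothesis at $k=2n$ is rewritten in the form appearing in Lemma~\ref{s5.3l2}. So I first record the elementary observation that condition (iii) of Lemma~\ref{s5.3l2}, namely $\bar{\partial}(\widetilde{\theta}_0^{k-1}(Y_1))+\widetilde{\theta}_0^{k-1}(Y_1)\otimes \overline{\eta}_0=0$, is equivalent to $\bar{\partial}(\widetilde{\theta}_0^{k-1}(r^2Y_1))=0$. Indeed, $\widetilde{\theta}_0^{k-1}(Y_1)$ is a $p^{-1}(\wedge^{k-1}E)$-valued $0$-form, and Lemma~\ref{s3.5l1} gives $\bar{\partial}r^2=r^2\overline{\eta}_0$, so the Leibniz rule yields $\bar{\partial}(r^2\widetilde{\theta}_0^{k-1}(Y_1))=r^2(\bar{\partial}(\widetilde{\theta}_0^{k-1}(Y_1))+\widetilde{\theta}_0^{k-1}(Y_1)\otimes \overline{\eta}_0)$. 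Since $r^2$ is a nowhere-vanishing scalar function we may both absorb it, writing $r^2\widetilde{\theta}_0^{k-1}(Y_1)=\widetilde{\theta}_0^{k-1}(r^2Y_1)$ by $\mathcal{A}^0$-linearity of $\widetilde{\theta}_0^{k-1}$, and divide it out; hence at $k=2n$ condition (iii) is exactly $\bar{\partial}(\widetilde{\theta}_0^{2n-1}(r^2Y_1))=0$, the hypothesis stated in the theorem.

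For the direction from holomorphicity to the stated conditions, suppose $X'=X'_h+Y_0\wedge v_0+Y_1\wedge v_1+Z_0\wedge v_0\wedge v_1$ is a holomorphic $(k,0)$-vector field. Applying Lemma~\ref{s5.3l2} (or Lemma~\ref{s5.2l1} when $k=1$) to this decomposition, holomorphicity forces the listed equations (i)--(iv) to hold; in particular equation (i) holds for $X'_h$ and $Y_1$ for every $1\le k\le 2n$, and at $k=2n$ equation (iii) also holds, which by the first step is precisely $\bar{\partial}(\widetilde{\theta}_0^{2n-1}(r^2Y_1))=0$. This gives the conditions on $(X'_h,Y_1)$ required by the theorem.

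For the converse direction, I invoke Proposition~\ref{s5.3p2}. For $1\le k\le 2n-1$ the proposition constructs, from condition (i) alone, local horizontal fields $Y_0\in\wedge^{k-1}\widetilde{\mathcal{H}}^{1,0}$ and $Z_0\in\wedge^{k-2}\widetilde{\mathcal{H}}^{1,0}$ so that $X'=X'_h+Y_0\wedge v_0+Y_1\wedge v_1+Z_0\wedge v_0\wedge v_1$ is holomorphic. For $k=2n$ the same proposition provides such $Y_0,Z_0$ from (i) together with (iii), and (iii) is again the stated hypothesis by the first step. Combining the two directions with the rewriting of (iii) then yields the asserted equivalence.

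The content here is essentially organizational: the genuine analytic work, namely the Dolbeault-lemma argument producing $Y_0$ and $Z_0$ and the pointwise vanishing of the curvature-type element of $\wedge^{k-1}E\otimes\wedge^2E^*$, is already contained in Proposition~\ref{s5.3p2}. The only place demanding care is the reformulation of condition (iii): one must use that $\widetilde{\theta}_0^{k-1}(Y_1)$ is bundle-valued of form-degree zero, so that the Leibniz rule for $\bar{\partial}(r^2\,\cdot)$ introduces no sign, and that the scalar $r^2$ passes freely through $\widetilde{\theta}_0^{k-1}(\cdot)$. With that identification the clean hypothesis $\bar{\partial}(\widetilde{\theta}_0^{2n-1}(r^2Y_1))=0$ coincides with condition (iii) of Lemma~\ref{s5.3l2} at $k=2n$, and the theorem is immediate.
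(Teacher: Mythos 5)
Your proposal is correct and takes essentially the same route as the paper, which states Theorem~\ref{s5.3t1} as an immediate consequence of Lemma~\ref{s5.3l2} (holomorphicity forces (i), and (iii) at $k=2n$) and Proposition~\ref{s5.3p2} (conditions (i), resp.\ (i) and (iii), produce local $Y_0, Z_0$); the only bridging step, rewriting (iii) as $\bar{\partial}(\widetilde{\theta}_0^{2n-1}(r^2Y_1))=0$ via $\bar{\partial}r^2=r^2\overline{\eta}_0$ from Lemma~\ref{s3.5l1}, is exactly the identification the paper performs inside the proof of Proposition~\ref{s5.3p2}. Your handling of the $k=1$ case through Lemma~\ref{s5.2l1} with $Z_0=0$ also matches the paper's convention.
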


\section{The twistor space $Z$}
The complex structure $\widetilde{I}$ on $P(H^*)$ induces a complex structure $\widehat{I}$ on $Z$ 
since the action of $\textrm{GL}(1,\mathbb{C})$ on $P(H^*)$ is holomorphic. 
Each fiber of the projection $f:Z\to M$ is isomorphic to $\mathbb{C}P^1$. 
We denote by $l$ a line bundle over $Z$ which is the hyperplane bundle on each fiber of $f$. 
If $H$ is not global, then $l$ is not also. However, $l^2$ is globally defined. 

\subsection{Lift of $\mathcal{A}^q(\wedge^kE \otimes S^mH)$ to $Z$}\label{s4.1}
Let $\mathcal{A}^0_Z(l^m)$ be a sheaf of smooth section of the $m$-th tensor product $l^m$ over $Z$. 
We define a sheaf $\widehat{\mathcal{A}}^0(l^m)$ by 
\[
\widehat{\mathcal{A}}^0(l^m)=\{\zeta \in f^{-1}f_*(\mathcal{A}^0_Z(l^m)) \mid \zeta : \text{holomorphic along each fiber of $f$}\}.
\]
In the case $m=0$, $\widehat{\mathcal{A}}^0(l^0)$ is just the sheaf of functions on $Z$ which are constant along each fiber of $f$. 
We write $\widehat{\mathcal{A}}^0(l^0)$ as $\widehat{\mathcal{A}}^0$ for simplicity. 
Let $\widehat{\mathcal{A}}^q(\wedge^k E)$ denote the sheaf of pull-back of $\wedge^k E$-valued $q$-forms on $M$ by $f$. 
We define a sheaf $\widehat{\mathcal{A}}^q(\wedge^k E\otimes l^m)$ as 
\begin{equation*}\label{s3.6eq1}
\widehat{\mathcal{A}}^q(\wedge^k E\otimes l^m)=\widehat{\mathcal{A}}^q(\wedge^k E)\otimes_{\widehat{\mathcal{A}}^0} \widehat{\mathcal{A}}^0(l^m).
\end{equation*}
Since a polynomial of degree $m$ on $\mathbb{C}^2\backslash \{0\}$ induces a holomorphic section of the $m$-th tensor product $l^m$, 
any element $\widetilde{\xi}_0$ of $\widetilde{\mathcal{A}}^q_{(m,0)}$ defines 
an element of $\widehat{\mathcal{A}}^q(l^m)$, which we denote by $\widehat{\xi}$. 
Such an element $\widehat{\xi}$ is called a \textit{lift of $\xi$ to $Z$}. 
The correspondence $\widetilde{\xi}_0\mapsto \widehat{\xi}$ provides the isomorphism 
\begin{equation}\label{s3.6eq2}
\widetilde{\mathcal{A}}^q_{(m,0)}(\wedge^k E)\cong \widehat{\mathcal{A}}^q(\wedge^k E\otimes l^m). 
\end{equation}
It follows from Corollary~\ref{s3.1c2} that  
\begin{cor}\label{s3.6c1} 
$\mathcal{A}^q(\wedge^k E\otimes S^mH)\cong \widehat{\mathcal{A}}^q(\wedge^k E\otimes l^m)$ by the correspondence $\xi\mapsto \widehat{\xi}$. $\hfill\Box$ 
\end{cor}

\subsection{Real structures on $Z$}
A differential $q$-form $\alpha$ on $P(H^*)$ is called of \textit{$\textrm{GL}(1,\mathbb{C})$-order $m$}
if $(R_c)_*\alpha=c^m\alpha$ for any $c\in \textrm{GL}(1,\mathbb{C})$. 
Let $\mathcal{A}^q_{P(H^*), m}(\wedge^k E)$ denote a subsheaf of $\mathcal{A}^q_{P(H^*)}(\wedge^k E)$ whose elements are of $\textrm{GL}(1,\mathbb{C})$-order $m$. 
The anti-$\mathbb{C}$ linear endomorphism $\widetilde{\tau}$ of $\mathcal{A}^q_{P(H^*)}(\wedge^k E)$ as in \S \ref{s3.2} induces that of $\mathcal{A}^q_{P(H^*), m}(\wedge^k E)$. 
If $k+m$ is even, $\widetilde{\tau}$ is a real structure of $\mathcal{A}^q_{P(H^*), m}(\wedge^k E)$. 

Let $\mathcal{A}^q_Z(f^{-1}(\wedge^kE)\otimes l^m)$ be a sheaf of $f^{-1}(\wedge^kE)\otimes l^m$-valued differential $q$-form on $Z$. 
We denote $\mathcal{A}^q_Z(\wedge^kE\otimes l^m)$ the sheaf $f^{-1}f_*\mathcal{A}^q_Z(f^{-1}(\wedge^kE)\otimes l^m)$. 
The sheaf $\mathcal{A}^q_Z(\wedge^kE\otimes l^m)$ is isomorphic to $\mathcal{A}^q_{P(H^*), m}(\wedge^k E)$. 
We define $\widehat{\tau}$ as the endomorphism of $\mathcal{A}^q_Z(\wedge^kE\otimes l^m)$ induced by $\widetilde{\tau}$. 
The right action $R_j$ of $j$ on $P(H^*)$ induces an anti-holomorphic involution of $Z$, and we denote it by $R_{[j]}:Z\to Z$. 
The map $R_{[j]}$ is the antipodal map of each fiber $\mathbb{C}P^1$ of $f$. 
The anti-$\mathbb{C}$ linear endomorphism $\widehat{\tau}$ of $\mathcal{A}^q_Z(\wedge^kE\otimes l^m)$ is given by 
\[
\widehat{\tau}(\beta_Z)=J_E^k\overline{R_{[j]}^*\beta_Z}
\]
for $\beta_Z \in \mathcal{A}^q_Z(\wedge^kE\otimes l^m)$. 
It follows from Proposition~\ref{s3.1.5p1} and the isomorphism (\ref{s3.6eq2}) that  
\begin{prop}\label{s3.8.5p1} 
The map $\widehat{\tau}$ defines an endomorphism of $\widehat{\mathcal{A}}^q(\wedge^kE\otimes l^m)$ 
such that $\widehat{\tau}(\widehat{\xi})=\widehat{\tau(\xi)}$ for $\xi \in \mathcal{A}^q(\wedge^k E \otimes S^mH)$. 
Moreover, $\widehat{\tau}$ is a real structure on $\widehat{\mathcal{A}}^q(\wedge^kE\otimes l^m)$ if $k+m$ is even. $\hfill\Box$
\end{prop}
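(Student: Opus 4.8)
The plan is to obtain the proposition as the descent to $Z$ of the statement already proved for $\widetilde{\tau}$ on $P(H^*)$, namely Proposition~\ref{s3.1.5p1}, along the quotient map $\pi:P(H^*)\to Z$. The key structural fact is that $\widehat{\tau}$ is, by its very definition, the endomorphism induced by $\widetilde{\tau}$ under the isomorphism $\mathcal{A}^q_Z(\wedge^kE\otimes l^m)\cong\mathcal{A}^q_{P(H^*),m}(\wedge^kE)$. Concretely this rests on the identity $\pi\circ R_j=R_{[j]}\circ\pi$ defining $R_{[j]}$, which upon pulling back gives $R_j^*\circ\pi^*=\pi^*\circ R_{[j]}^*$, while $J_E^k$ is a fibrewise bundle map pulled back from $M$ and hence commutes with $\pi^*$. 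Thus, under the above isomorphism, $\widehat{\tau}$ and $\widetilde{\tau}$ are intertwined, and every assertion about $\widehat{\tau}$ translates into the parallel assertion about $\widetilde{\tau}$ supplied by Proposition~\ref{s3.1.5p1}.

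With this in hand I would verify the three claims in turn. First, that $\widehat{\tau}$ preserves the subsheaf $\widehat{\mathcal{A}}^q(\wedge^kE\otimes l^m)$: under Proposition~\ref{s3.6p1} this subsheaf corresponds to $\widetilde{\mathcal{A}}^q_{(m,0)}(\wedge^kE)$, and Proposition~\ref{s3.1.5p1} (the case $i=0$) states precisely that $\widetilde{\tau}$ maps $\widetilde{\mathcal{A}}^q_{(m,0)}(\wedge^kE)$ to itself, so transporting back gives the first assertion. For the identity $\widehat{\tau}(\widehat{\xi})=\widehat{\tau(\xi)}$, I would use that the lift $\xi\mapsto\widehat{\xi}$ factors as $\xi\mapsto\widetilde{\xi}_0\mapsto\widehat{\xi}$ through Corollary~\ref{s3.1c2} and Corollary~\ref{s3.6c1}; since Proposition~\ref{s3.1.5p1} gives $\widetilde{\tau}(\widetilde{\xi}_0)=\widetilde{\tau(\xi)}_0$ and the isomorphism $\widetilde{\mathcal{A}}^q_{(m,0)}(\wedge^kE)\cong\widehat{\mathcal{A}}^q(\wedge^kE\otimes l^m)$ intertwines $\widetilde{\tau}$ with $\widehat{\tau}$, the identity follows at once. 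Finally, $\widehat{\tau}$ is anti-$\mathbb{C}$-linear by construction, and because Proposition~\ref{s3.1.5p1} yields $\widetilde{\tau}^2=(-1)^{k+m}\id$ on $\widetilde{\mathcal{A}}^q_{(m,0)}(\wedge^kE)$, the same relation $\widehat{\tau}^2=(-1)^{k+m}\id$ holds; for $k+m$ even this is $\widehat{\tau}^2=\id$, so $\widehat{\tau}$ is a real structure.

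Since the analytic content was already carried out in Proposition~\ref{s3.1.5p1}, what remains is essentially bookkeeping, and the single point deserving genuine care is the sign in $\widehat{\tau}^2$. Computing directly on a decomposable element $e\otimes\alpha$ gives $\widehat{\tau}^2(e\otimes\alpha)=(J_E^k\circ J_E^k)(e)\otimes(R_{[j]}^*)^2\alpha$, where $(J_E^k)^2=(-1)^k\id_{\wedge^kE}$. Here one must use that although $R_{[j]}$ squares to the identity on $Z$ (as $R_j^2=R_{-1}$ and $-1\in\textrm{GL}(1,\mathbb{C})$ acts trivially on the quotient), its natural lift to the fibrewise hyperplane bundle $l$ squares to $-\id$, so that $(R_{[j]}^*)^2=(-1)^m\id$ on $l^m$. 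The product $(-1)^k(-1)^m=(-1)^{k+m}$ then reproduces the factor coming from Proposition~\ref{s3.1.5p1} and confirms the consistency of the two descriptions of $\widehat{\tau}$. This is the only step where the anti-holomorphic twist of the line bundle genuinely enters, and it is the natural candidate for the main obstacle; everything else is a formal transport across $\pi$.
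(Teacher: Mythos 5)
Your proposal is correct and follows essentially the same route as the paper: both transport the statement through the isomorphism $\widehat{\mathcal{A}}^q(\wedge^kE\otimes l^m)\cong\widetilde{\mathcal{A}}^q_{(m,0)}(\wedge^kE)$ and invoke Proposition~\ref{s3.1.5p1} to get preservation of the subsheaf, the identity $\widehat{\tau}(\widehat{\xi})=\widehat{\tau(\xi)}$, and $\widehat{\tau}^2=\id$ when $k+m$ is even. Your extra verification of the intertwining $R_j^*\circ\pi^*=\pi^*\circ R_{[j]}^*$ and the direct sign computation $(-1)^k(-1)^m$ via the lift of $R_{[j]}$ to $l$ are correct details that the paper leaves implicit, not a different argument.
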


Let $\widehat{\mathcal{A}}^q(\wedge^kE\otimes l^m)^{\widehat{\tau}}$ denote 
the sheaf of $\widehat{\tau}$-invariant elements of $\widehat{\mathcal{A}}^q(\wedge^kE\otimes l^m)$. 
Corollary~\ref{s3.1.5c1} and Proposition~\ref{s3.8.5p1} imply the following corollary : 
\begin{cor}\label{s3.8.5c1} 
$\mathcal{A}^q(\wedge^kE\otimes S^mH)^{\tau}\cong \widehat{\mathcal{A}}^q(\wedge^kE\otimes l^m)^{\widehat{\tau}}$ 
by $\xi\mapsto \widehat{\xi}$. $\hfill\Box$
\end{cor}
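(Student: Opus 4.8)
The plan is to recognize this as a purely formal consequence: an isomorphism of sheaves that intertwines the two real structures automatically restricts to an isomorphism of their fixed-point subsheaves, and both the isomorphism and the intertwining relation have already been supplied by the cited results. First I would invoke Corollary~\ref{s3.6c1}, which gives the unrestricted isomorphism $\mathcal{A}^q(\wedge^kE\otimes S^mH)\cong \widehat{\mathcal{A}}^q(\wedge^kE\otimes l^m)$ via $\xi\mapsto \widehat{\xi}$. What remains is to verify that this map carries $\tau$ to $\widehat{\tau}$.

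The crucial ingredient is the equivariance $\widehat{\tau}(\widehat{\xi})=\widehat{\tau(\xi)}$ recorded in Proposition~\ref{s3.8.5p1}. From it I would deduce that $\xi$ is $\tau$-invariant exactly when $\widehat{\xi}$ is $\widehat{\tau}$-invariant: if $\tau(\xi)=\xi$ then $\widehat{\tau}(\widehat{\xi})=\widehat{\tau(\xi)}=\widehat{\xi}$, and conversely the injectivity of $\xi\mapsto\widehat{\xi}$ turns $\widehat{\tau(\xi)}=\widehat{\tau}(\widehat{\xi})=\widehat{\xi}$ back into $\tau(\xi)=\xi$. Restricting the isomorphism of Corollary~\ref{s3.6c1} to these loci then yields the claimed $\mathcal{A}^q(\wedge^kE\otimes S^mH)^{\tau}\cong \widehat{\mathcal{A}}^q(\wedge^kE\otimes l^m)^{\widehat{\tau}}$.

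An equivalent route factors through $P(H^*)$: Corollary~\ref{s3.1.5c1} already gives $\mathcal{A}^q(\wedge^kE\otimes S^mH)^{\tau}\cong \widetilde{\mathcal{A}}^q_{(m,0)}(\wedge^k E)^{\widetilde{\tau}}$, and since $\widehat{\tau}$ is by construction the endomorphism induced by $\widetilde{\tau}$ under the isomorphism of Proposition~\ref{s3.6p1}, that isomorphism is $\widetilde{\tau}$-$\widehat{\tau}$-equivariant and restricts to $\widetilde{\mathcal{A}}^q_{(m,0)}(\wedge^k E)^{\widetilde{\tau}}\cong \widehat{\mathcal{A}}^q(\wedge^kE\otimes l^m)^{\widehat{\tau}}$; composing gives the same conclusion. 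I expect no genuine obstacle here, since the substantive work---well-definedness of $\widehat{\tau}$, its descent from $\widetilde{\tau}$, and the equivariance relation---was already carried out in Proposition~\ref{s3.8.5p1}. The only thing worth stating explicitly is the elementary fact that restricting an isomorphism to the fixed loci of intertwined anti-linear involutions remains an isomorphism.
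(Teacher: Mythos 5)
Your proposal is correct and matches the paper's own argument: the paper derives this corollary directly from Corollary~\ref{s3.1.5c1} and Proposition~\ref{s3.8.5p1}, which is precisely your second route, and your first route via Corollary~\ref{s3.6c1} plus the equivariance $\widehat{\tau}(\widehat{\xi})=\widehat{\tau(\xi)}$ is the same formal argument of restricting an equivariant isomorphism to fixed loci. Nothing is missing.
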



\subsection{Canonical 1-form on $Z$}
The principal $\textrm{GL}(1,\mathbb{C})$-bundle $\pi: P(H^*)\to Z$ is regarded as the frame bundle of $l^*$. 
An $l^m$-valued differential $(q, q')$-form on $Z$ is induced by a differential $(q, q')$-form on $P(H^*)$ of $\textrm{GL}(1,\mathbb{C})$-order $m$ which is annihilate to vectors along each fiber of $\pi$. 
We define $\widehat{\theta}_0$ and $\widehat{\theta}_1$ as the $f^{-1}(E)\otimes l$-valued $(1,0)$-form and the $f^{-1}(E)\otimes l^{-1}$-valued $(0,1)$-form on $Z$ 
induced by $\widetilde{\theta}_0$ and $r^{-2}\widetilde{\theta}_1$, respectively. 
Let $\eta$ and $\widehat{\omega}$ be the $l^2$-valued $(1,0)$-form and the $l^2$-valued $(2,0)$-form on $Z$ induced by $r^2\eta_1$ and $\widetilde{\omega}_0$, respectively. 
The forms $\widehat{\theta}_0$ and $\widehat{\omega}$ are the lift (as in the section \ref{s4.1}) of $\id \in \mathcal{A}^1(E\otimes H)$ and $\omega=\widehat{\omega}_E\otimes s^2_H \in \mathcal{A}^2(S^2H)$ to $Z$, respectively. 
The forms $\widehat{\theta}_0$, $\widehat{\theta}_1$, $\eta$ and $\widehat{\omega}$ are $\widehat{\tau}$-invariant 
since $\widetilde{\theta}_0$, $\widetilde{\theta}_1$, $\eta_1$, $\widetilde{\omega}_0$ and $r$ are $\widetilde{\tau}$-invariant. 

The line bundle $l$ admits a connection with the connection form $\eta_0$ on the frame bundle $P(H^*)$. 
Let $d^l$ be the covariant exterior derivative. 
Then $d^l$ corresponds to the restriction of $d^E$ to the horizontal $\ker \eta_0$. 
By Proposition \ref{s3.2p1} and \ref{s3.3p2}, we obtain 
\begin{prop}\label{s3.7p1} 
$d^l\widehat{\theta}_0 =-\eta \wedge \widehat{\theta}_1,\ d^l\eta =-2c \widehat{\omega}$. $\hfill\Box$
\end{prop}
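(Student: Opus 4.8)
The plan is to prove both identities by lifting the forms on $Z$ back to their representatives on $P(H^*)$ and applying the structure equations already established in \S3. Recall that $\pi:P(H^*)\to Z$ is a principal $\textrm{GL}(1,\mathbb{C})$-bundle whose connection form is $\eta_0$, and that an $l^m$-valued form on $Z$ is represented by a $\pi$-horizontal form on $P(H^*)$ of $\textrm{GL}(1,\mathbb{C})$-order $m$, i.e.\ one annihilating the fundamental vertical field $v_0$ (for which $\eta_0(v_0)=1$, $\eta_1(v_0)=0$). The principle I will use — the analogue for $l$ of Proposition~\ref{s3.1p1} — is that for such a representative $\beta$ the covariant exterior derivative $d^{l}\widehat{\beta}$ is represented by the $\pi$-horizontal part of $d^{E}\beta$, namely the part containing no factor of $\eta_0$ or $\overline{\eta}_0$. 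The $\eta_0$-terms are exactly the $l$-connection contribution, which I confirm by contracting $d^{E}\beta$ with $v_0$ and checking that the result equals $m\beta$.

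For the first identity I apply this to $\widetilde{\theta}_0$, which has order $1$ and represents $\widehat{\theta}_0$. Proposition~\ref{s3.2p1} gives $d^{E}\widetilde{\theta}_0=-\widetilde{\theta}_0\wedge\eta_0-\eta_1\wedge\widetilde{\theta}_1$. Contracting the right-hand side with $v_0$ returns $\widetilde{\theta}_0$, so $-\widetilde{\theta}_0\wedge\eta_0$ is precisely the order-$1$ connection term and the $\pi$-horizontal remainder is $-\eta_1\wedge\widetilde{\theta}_1$. Since $\eta$ is induced by $r^2\eta_1$ and $\widehat{\theta}_1$ by $r^{-2}\widetilde{\theta}_1$, the $r$-powers cancel in $-\eta_1\wedge\widetilde{\theta}_1=-(r^2\eta_1)\wedge(r^{-2}\widetilde{\theta}_1)$, so this horizontal form represents $-\eta\wedge\widehat{\theta}_1$ on $Z$, which is the first equation.

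For the second identity I apply the same principle to $r^2\eta_1$, which has order $2$ and represents $\eta$. Combining Lemma~\ref{s3.5l1} with Proposition~\ref{s3.3p2} (this is the computation already performed in the proof of Proposition~\ref{s3.5p1}) gives $d(r^2\eta_1)=2(-c\widetilde{\omega}_0+r^2\eta_0\wedge\eta_1)$, where the $\overline{\eta}_0$-contributions have cancelled. Contracting with $v_0$ returns $2r^2\eta_1$, confirming order $2$ and that $2r^2\eta_0\wedge\eta_1$ is the connection term; the $\pi$-horizontal remainder is $-2c\widetilde{\omega}_0$, which represents $-2c\widehat{\omega}$ because $\widehat{\omega}$ is induced by $\widetilde{\omega}_0$. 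This gives the second equation.

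The bookkeeping of the $r$-powers and the bidegrees is routine, and both displayed formulas then drop out of Propositions~\ref{s3.2p1} and~\ref{s3.3p2} and Lemma~\ref{s3.5l1}. I expect the only point requiring genuine care to be the justification of the horizontal-projection principle for $d^{l}$ — that discarding the $\eta_0$-terms is exactly the passage from $d^{E}$ on $P(H^*)$ to the $l$-twisted covariant derivative on $Z$ — which is why I include the contraction-with-$v_0$ check to pin down the order and identify the connection part unambiguously.
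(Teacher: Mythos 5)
Your proposal is correct and follows essentially the same route as the paper's proof: the paper defines $D^l$ as the restriction of $d^E$ to $\ker\eta_0$, asserts that $D^l\widetilde{\theta}_0$ and $D^l(r^2\eta_1)$ represent $d^l\widehat{\theta}_0$ and $d^l\eta$, and then applies Proposition~\ref{s3.2p1}, Proposition~\ref{s3.3p2} and Lemma~\ref{s3.5l1} exactly as you do, arriving at $-\eta_1\wedge\widetilde{\theta}_1=-(r^2\eta_1)\wedge(r^{-2}\widetilde{\theta}_1)$ and $-2c\widetilde{\omega}_0$. Your contraction-with-$v_0$ check is a small extra justification of the horizontal-projection principle that the paper simply takes for granted, but it does not change the substance of the argument.
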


It implies that $\eta$ is a holomorphic 1-form valued with $l^2$. 
\begin{prop}\label{s3.7p2} (c.f. Theorem~4.3 in \cite{S1}) 
If the scalar curvature $t$ is not zero, then $\eta$ is a holomorphic contact form on $Z$ such that $l^2$ is the contact bundle. 
\end{prop}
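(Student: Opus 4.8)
The plan is to verify the two defining conditions for a holomorphic contact form valued in $l^2$: namely that $\eta$ is a nowhere-vanishing holomorphic $l^2$-valued $(1,0)$-form, so that $D:=\Ker\eta\subset T^{1,0}Z$ is a corank-one holomorphic distribution, and that $d^l\eta$ restricts to a non-degenerate $l^2$-valued $2$-form on $D$. That $\eta$ is holomorphic is already recorded in the line preceding the statement ($\bar\partial^l\eta=0$ by Proposition~\ref{s3.7p1}), so the real content is the non-degeneracy.

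First I would identify $D=\Ker\eta$ with the horizontal distribution and check that $\eta$ never vanishes. Lifting to $P(H^*)$, the form $\eta$ corresponds to $r^2\eta_1$. Since $\{v_0,v_1\}$ is dual to $\{\eta_0,\eta_1\}$, and $v_0$ is tangent to the fibers of $\pi:P(H^*)\to Z$ while $v_1$ projects to a generator of the vertical $(1,0)$ tangent space of $f:Z\to M$, the form $\eta_1$ pairs non-trivially with the vertical direction and annihilates the horizontal distribution $\widetilde{\mathcal{H}}^{1,0}=\Ker\eta_0\cap\Ker\eta_1$. Hence on $Z$ the form $\eta$ is nowhere zero and $D=\Ker\eta$ is exactly the rank-$2n$ horizontal distribution, on which $\widehat{\theta}_0$ restricts to an isomorphism $D\xrightarrow{\ \sim\ }f^{-1}(E)\otimes l$.

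Next I would compute $d^l\eta|_D$. Proposition~\ref{s3.7p1} gives $d^l\eta=-2c\,\widehat{\omega}$ with $c=2c_nt$, and $\widehat{\omega}$ is the lift of $\omega=\omega_E\otimes s^2_H$. Evaluating $\widehat{\omega}$ on horizontal $(1,0)$-vectors $X,Y$ yields $\omega_E(\widehat{\theta}_0(X),\widehat{\theta}_0(Y))$ up to the $l^2$-twist; since $\omega_E$ is a non-degenerate symplectic form on the rank-$2n$ bundle $E$ and $\widehat{\theta}_0|_D$ is an isomorphism, $\widehat{\omega}|_D$ is non-degenerate. Because $c_n>0$ and the hypothesis $t\neq0$ force $c\neq0$, the form $d^l\eta|_D=-2c\,\widehat{\omega}|_D$ is non-degenerate. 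Equivalently, $\eta\wedge(d^l\eta)^n$ is a nowhere-vanishing section of $K_Z\otimes l^{2n+2}$, which is precisely the contact condition with contact bundle $l^2$.

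The only genuinely delicate point is the identification of $\widehat{\omega}|_D$ with the symplectic form $\omega_E$, together with the correct bookkeeping of the line-bundle twists; but this is immediate from the definition of $\widehat{\omega}$ as the lift of $\omega_E\otimes s^2_H$ and the fact that $\widehat{\theta}_0$ trivializes the horizontal $(1,0)$-directions. Everything else is formal once holomorphicity and the formula $d^l\eta=-2c\,\widehat{\omega}$ are in hand.
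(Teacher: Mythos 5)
Your proposal is correct and takes essentially the same route as the paper: both arguments rest on Proposition~\ref{s3.7p1} (giving $\bar{\partial}^l\eta=0$ and $d^l\eta=-2c\,\widehat{\omega}$) together with $c=2c_nt\neq 0$, the paper simply asserting $(d^{l}\eta)^n\wedge \eta=(-2c)^n \widehat{\omega}^n\wedge \eta\neq 0$ where you spell out the reason (the kernel of $\eta$ is the horizontal distribution, on which $\widehat{\omega}$ is non-degenerate because $\widehat{\theta}_0$ identifies it with $f^{-1}(E)\otimes l$ and $\omega_E$ is symplectic). The detail you add is precisely the justification the paper leaves implicit, so the two proofs coincide in substance.
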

\begin{proof} 
If $t\neq 0$, then $c\neq 0$, and $(d^l\eta)^n\wedge \eta=(-2c)^n \widehat{\omega}^n\wedge \eta\neq 0$. 
\end{proof} 

The symmetric 2-tensor $\eta_1\otimes \overline{\eta}_1+\overline{\eta}_1\otimes \eta_1$ on $P(H^*)$ is $\textrm{GL}(1,\mathbb{C})$-invariant 
and annihilated by tangent vectors to the fiber of $\pi:P(H^*)\to Z$. 
Let $g_{\widehat{\mathcal{V}}}$ be a real symmetric 2-form on $Z$ such that $\pi^*g_{\widehat{\mathcal{V}}}=\eta_1\otimes \overline{\eta}_1+\overline{\eta}_1\otimes \eta_1$. 
We define a real symmetric 2-form $\widehat{g}$ by 
\[
\widehat{g}=cf^*g+g_{\widehat{\mathcal{V}}}
\]
on $Z$. 
\begin{prop}\label{s3.7p3}  (c.f. Theorem~6.1 in \cite{S1}) 
If $t$ is positive, 
then $(\widehat{g}, \widehat{I})$ is a K\"{a}hler-Einstein structure on $Z$ 
with positive scalar curvature. 
\end{prop}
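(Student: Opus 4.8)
The plan is to verify in turn the three defining properties of a Kähler--Einstein metric: that $\widehat{g}$ is Hermitian with respect to $\widehat{I}$, that its fundamental $2$-form $\Phi(\cdot,\cdot)=\widehat{g}(\widehat{I}\cdot,\cdot)$ is closed, and that its Ricci form is a positive multiple of $\Phi$. Since $\pi^{*}$ is injective on forms and commutes with $d,\partial,\bar\partial$, every identity on $Z$ may be checked after pulling back to $P(H^{*})$, where all the structure equations of \S 3 are available. First I would record $\Phi$: from $f\circ\pi=p$ and the definition of $g_{\widehat{\mathcal V}}$ one has $\pi^{*}\widehat{g}=c\,p^{*}g+\eta_{1}\otimes\overline{\eta}_{1}+\overline{\eta}_{1}\otimes\eta_{1}$, and since $g$ is quaternionic Hermitian and the vertical summand is the round metric of the $\mathbb{C}P^{1}$-fibre, compatibility with $\widehat{I}$ is immediate. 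As $\eta_{1}$ is of type $(1,0)$, this gives $\pi^{*}\Phi=c\,p^{*}\omega_{I}+i\,\eta_{1}\wedge\overline{\eta}_{1}$. Positivity of $\widehat{g}$, hence of $\Phi$, is exactly where the hypothesis $t>0$ enters, through $c=2c_{n}t>0$.

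The Kähler property then follows from a clean potential identity. Using $i\omega_{I}=-r^{-2}\widetilde{\omega}_{1}$ from \eqref{s3.3eq8} together with the structure equation $d\eta_{0}=-cr^{-2}\widetilde{\omega}_{1}-\eta_{1}\wedge\overline{\eta}_{1}$ of Proposition~\ref{s3.3p2}, whose right-hand side is of type $(1,1)$ so that $\bar\partial\eta_{0}=d\eta_{0}$, one computes
\[
\pi^{*}\Phi=-\,i\,\bar\partial\eta_{0}.
\]
By Lemma~\ref{s3.5l1}, $\bar\partial\log r^{2}=\overline{\eta}_{0}$, whence $\eta_{0}=\partial\log r^{2}$ and therefore $\pi^{*}\Phi=i\,\partial\bar\partial\log r^{2}$. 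This form is manifestly real, of type $(1,1)$, and closed, so $\Phi$ is closed on $Z$ and $(\widehat{g},\widehat{I})$ is Kähler with local potential $\log r^{2}$ (the fibre metric of $l$).

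For the Einstein property I would compare the Ricci form $\rho=-i\,\partial\bar\partial\log\det\widehat{g}$ with $\Phi$ via the canonical bundle. The holomorphic contact structure of Proposition~\ref{s3.7p2}, with $\eta$ an $l^{2}$-valued contact form and $d^{l}\eta=-2c\,\widehat{\omega}$, yields a nowhere-vanishing section $\eta\wedge(d^{l}\eta)^{n}$ of $K_{Z}\otimes l^{2n+2}$ and hence a holomorphic identification $K_{Z}^{-1}\cong l^{2n+2}$; thus $\rho$ is the curvature of the Hermitian metric induced by $\widehat{g}$ on $l^{2n+2}$. Splitting $T^{1,0}Z=\widehat{\mathcal H}^{1,0}\oplus\widehat{\mathcal V}^{1,0}$, one has $\wedge^{2n}\widehat{\mathcal H}^{1,0}\cong\wedge^{2n}E\otimes l^{2n}$ and $\widehat{\mathcal V}^{1,0}\cong l^{2}$; because the holonomy lies in $\mathrm{Sp}(n)\subset SL(2n,\mathbb{C})$ the line bundle $\wedge^{2n}E$ is flat, so only the $l^{2n}$- and $l^{2}$-factors contribute curvature. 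Tracking the $r$-weights of $\widehat{\theta}_{0}$ and of the fibre metric then shows that the induced metric on $l^{2n+2}$ is a constant multiple of $(r^{2})^{-(2n+2)}$, so that $\rho=(2n+2)\,i\,\partial\bar\partial\log r^{2}=(2n+2)\,\Phi$. Since $2n+2>0$ and $\Phi$ is positive, $(\widehat{g},\widehat{I})$ is Kähler--Einstein with positive scalar curvature.

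The main obstacle is this last step: the pointwise (not merely cohomological) identity $\rho=(2n+2)\Phi$. The cohomological statement $c_{1}(Z)=(2n+2)c_{1}(l)$ is automatic once $K_{Z}^{-1}\cong l^{2n+2}$, but the Einstein equation demands that $\widehat{g}$ induce on $l^{2n+2}$ precisely $(r^{2})^{-(2n+2)}$ up to a constant. This is where the quaternionic Kähler curvature is indispensable: the flatness of $\wedge^{2n}E$ coming from $\mathrm{Sp}(n)$-holonomy removes the horizontal determinant's contribution, while the normalization $R_{H}=c_{n}tB^{\sharp}$ of Proposition~\ref{s3.3p1}—already encoded in the structure equations of Proposition~\ref{s3.3p2} through $c$—fixes the vertical and mixed $r$-weights so that the $M$-dependence of $\det\widehat{g}$ is pluriharmonic and drops out under $\partial\bar\partial$. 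The difficulty is thus the careful weight bookkeeping rather than any conceptual subtlety, the Hermitian and Kähler properties being formal consequences of the identities of \S 3.
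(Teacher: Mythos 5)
Your proposal is correct, and its skeleton coincides with the paper's: the K\"ahler form $\Phi$ satisfies $\pi^*\Phi=-i\,d\eta_0=i\partial\bar{\partial}\log r^2$, and the Einstein equation comes from the isomorphism $K_Z\cong l^{-2(n+1)}$ provided by the contact form of Proposition~\ref{s3.7p2} together with the fact that the curvature of the hermitian connection $\eta_0$ of $l$ is $d\eta_0$, giving $\rho_{\widehat{g}}=2(n+1)\Phi$. The differences lie in how the two intermediate facts are established. For the K\"ahler step, you recompute $\pi^*\Phi$ directly from Proposition~\ref{s3.3p2}, (\ref{s3.3eq8}) and Lemma~\ref{s3.5l1}; the paper instead invokes the hyperk\"ahler structure $(\widetilde{g},\widetilde{I},\widetilde{J},-\widetilde{K})$ on $P(H^*)$ (itself proved from exactly those identities) and obtains $\pi^*\Phi$ by restricting its K\"ahler form $-i\,d(r^2\eta_0)$ to $\ker\eta_0\cap\ker\overline{\eta}_0$ and dividing by $r^2$. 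For the Einstein step, the paper simply asserts that $\rho_{\widehat{g}}$ is given by $-2(n+1)i\,d\eta_0$ --- that is, it takes for granted precisely the point you isolate as the main obstacle, namely that the metric induced by $\widehat{g}$ on $K_Z^{-1}\cong l^{2n+2}$ is, up to a constant, the natural one of weight $(r^2)^{-(2n+2)}$; this is the content of the citation of Salamon. Your justification of that point --- splitting $K_Z^{-1}\cong\wedge^{2n}E\otimes l^{2n+2}$ via $\widehat{\theta}_0$ and $v$, using that $\mathrm{Sp}(n)$-holonomy makes $\wedge^{2n}E$ flat with a parallel unit-norm section, and tracking $r$-weights --- is sound, and the choice of holomorphic identification $K_Z^{-1}\cong l^{2n+2}$ is immaterial, since two such identifications change the transported metric by $|h|^2$ with $h$ holomorphic and nowhere vanishing, which does not affect curvature. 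Thus your write-up actually supplies more detail than the paper on the Einstein step; the bookkeeping you defer (fibre metric $r^{-2}$ on $l$, horizontal metric $cf^*g$ contributing weight $r^{-2}$ per $E\otimes l$ factor, vertical metric of weight $r^{-4}$ on $\widehat{\mathcal{V}}^{1,0}\cong l^2$) is routine and closes the argument.
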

\begin{proof} 
It follows from $\pi^*(\widehat{g}(\widehat{I}\cdot, \cdot))=-id\eta_0=i\partial \bar{\partial} \log r^2$ 
that $(\widehat{g}, \widehat{I})$ is a K\"{a}hler structure on $Z$. 
Proposition~\ref{s3.7p2} implies that the canonical bundle $K_Z$ of $Z$ is isomorphic to $l^{-2(n+1)}$. 
The Ricci form $\rho_{\widehat{g}}$ with respect to $\widehat{g}$ is given by the 2-form $-2(n+1)id\eta_0$ on $P(H^*)$. 
Hence, $\widehat{g}$ is a K\"{a}hler-Einstein metric on $Z$ of positive scalar curvature. 
\end{proof} 
As in the above proof, $\widehat{g}$ is the Fubini-Study metric on the fiber $\mathbb{C}P^1$ of $f:Z\to M$.

Let $\nabla$ be a torsion free connection on $Z$ such that $\nabla^{0,1}=\bar{\partial}$. 
It follows from Proposition \ref{s3.7p1} that $\nabla^{0,1} \widehat{\theta}_0=\eta\otimes \widehat{\theta}_1$ and $\nabla^{0,1} \eta=0$. 
We define a $f^{-1}(\wedge^k E) \otimes l^k$-valued $(k,0)$-form $\widehat{\theta}_0^k$ by the $k$-th wedge of $\widehat{\theta}_0$. 
Then 
\begin{prop}\label{s3.7p4} 
$\nabla^{0,1} \widehat{\theta}_0^k=k\widehat{\theta}_0^{k-1}\wedge \eta \wedge_E \widehat{\theta}_1,\ 
\nabla^{0,1} (\widehat{\theta}_0^{k-1}\wedge \eta)=0$. $\hfill\Box$ 
\end{prop}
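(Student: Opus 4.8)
The plan is to mimic the computation of Proposition~\ref{s3.4p2}, but now on $Z$ using the much simpler pair of formulas (\ref{s3.7eq3}), namely $\nabla^{0,1}\widehat{\theta}_0=\eta\otimes\widehat{\theta}_1$ and $\nabla^{0,1}\eta=0$, together with the Leibniz rule for $\nabla^{0,1}$ acting on wedge products of $f^{-1}(\wedge^\bullet E)\otimes l^\bullet$-valued forms (as recalled in Remark~\ref{s3.4r1}). The whole argument should reduce to this Leibniz rule plus the identity $\eta\wedge\eta=0$.

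For the first identity, I would expand $\widehat{\theta}_0^k=\widehat{\theta}_0\wedge\cdots\wedge\widehat{\theta}_0$ ($k$ factors) and apply the Leibniz rule, so that $\nabla^{0,1}$ falls on each factor in turn. Each summand is $\widehat{\theta}_0\wedge\cdots\wedge(\eta\otimes\widehat{\theta}_1)\wedge\cdots\wedge\widehat{\theta}_0$, with the distinguished factor in the $s$-th slot. Moving the scalar $(1,0)$-form $\eta$ and the $E$-valued factor $\widehat{\theta}_1$ into standard position produces a sign $(-1)^{k-s}$ exactly as in the proof of Proposition~\ref{s3.4p2}; since $\eta$ is a $(1,0)$-form, these signs cancel against those generated by reordering the $\widehat{\theta}_0$'s, so all $k$ summands coincide and sum to $k\,\widehat{\theta}_0^{k-1}\wedge\eta\wedge_E\widehat{\theta}_1$. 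This is the verbatim analogue of the first display in Proposition~\ref{s3.4p2}, with $\eta_1,\widetilde{\theta}_1$ replaced by $\eta,\widehat{\theta}_1$.

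For the second identity, I would write $\nabla^{0,1}(\widehat{\theta}_0^{k-1}\wedge\eta)=(\nabla^{0,1}\widehat{\theta}_0^{k-1})\wedge\eta+(-1)^{k-1}\widehat{\theta}_0^{k-1}\wedge\nabla^{0,1}\eta$. The second term vanishes immediately because $\nabla^{0,1}\eta=0$ by (\ref{s3.7eq3}). For the first term, the first identity (with $k$ replaced by $k-1$) gives $(\nabla^{0,1}\widehat{\theta}_0^{k-1})\wedge\eta=(k-1)\,\widehat{\theta}_0^{k-2}\wedge\eta\wedge_E\widehat{\theta}_1\wedge\eta$, which carries the factor $\eta\wedge\eta=0$ and hence vanishes. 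Therefore $\nabla^{0,1}(\widehat{\theta}_0^{k-1}\wedge\eta)=0$.

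The only delicate point is the sign and combinatorial bookkeeping in the first identity, but this is not really an obstacle: it is identical to the argument already carried out on $P(H^*)$ in Proposition~\ref{s3.4p2}, and the present case is strictly simpler, since the correction terms appearing there in $\nabla^{0,1}\eta_0$ and $\nabla^{0,1}\eta_1$ collapse to $\nabla^{0,1}\eta=0$ on $Z$. In particular the contact-type curvature contribution $cr^{-2}\omega_E(\widetilde{\theta}_0,\widetilde{\theta}_1)$ that complicated the $P(H^*)$ computation never enters here, which is exactly why the second line of the proposition is homogeneous (equal to zero) rather than producing $\widehat{\omega}$-terms.
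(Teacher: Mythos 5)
Your proposal is correct and follows the paper's own proof essentially verbatim: the paper likewise obtains the first identity by repeating the Leibniz-rule computation of Proposition~\ref{s3.4p2} with $\nabla^{0,1}\widehat{\theta}_0=\eta\otimes\widehat{\theta}_1$, and proves the second by writing $\nabla^{0,1}(\widehat{\theta}_0^{k-1}\wedge\eta)=(\nabla^{0,1}\widehat{\theta}_0^{k-1})\wedge\eta+\widehat{\theta}_0^{k-1}\wedge\nabla^{0,1}\eta=(k-1)(\widehat{\theta}_0^{k-2}\wedge\eta\wedge_E\widehat{\theta}_1)\wedge\eta=0$, i.e.\ exactly your combination of $\nabla^{0,1}\eta=0$ and $\eta\wedge\eta=0$.
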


\subsection{Holomorphic $k$-vector fields on $Z$}
The horizontal bundle $\widetilde{\mathcal{H}}$ induces a bundle $\widehat{\mathcal{H}}$ over the twistor space $Z$ 
since $\widetilde{\mathcal{H}}$ is invariant under the action of $\textrm{GL}(1,\mathbb{C})$. 
Let $\widehat{\mathcal{V}}$ be the bundle of tangent vectors of each fiber of $f:Z\to M$. 
The tangent bundle $TZ\otimes \mathbb{C}$ is decomposed into $TZ\otimes \mathbb{C}=\widehat{\mathcal{H}}\oplus \widehat{\mathcal{V}}$. 
The bundle $\widehat{\mathcal{H}}$ is isomorphic to the pull back bundle $f^{-1}(TM\otimes \mathbb{C})$. 
We call a section of $\wedge^k \widehat{\mathcal{H}}$ a \textit{horizontal $k$-vector field on $Z$}. 
Let $\widehat{\mathcal{H}}^{1,0}$ be a vector bundle of horizontal $(1,0)$-vectors on $Z$. 
The bundle $\widehat{\mathcal{H}}^{1,0}$ is a holomorphic subbundle of $T^{1,0}Z$ since $\widehat{\mathcal{H}}^{1,0}$ is the kernel of the holomorphic form $\eta$. 
The $1$-form $\eta$ provides the map from $\wedge ^k T^{1,0}Z$ to $l^2\otimes \wedge ^{k-1} T^{1,0}Z$, which we also denote by $\eta$. 
Then $\wedge^k \widehat{\mathcal{H}}^{1,0}$ is the kernel of the map $\eta$. 
We denote by $v$ the $l^{-2}$-valued (1,0)-vector field on $Z$ induced by the vector field $r^{-2}v_1$ on $P(H^*)$. 
The $l^{-2}$-valued vector field $v$ is regarded as the dual of $\eta$ since $\eta (v)=1$. 
We remark that $v$ is not holomorphic on whole $Z$ but holomorphic along each fiber. 
Let $X'$ be a $(k,0)$-vector field on $Z$. 
Then $X'$ is given by 
\[
X'=X'_h+Y \wedge v
\]
for $X'_h\in \wedge^k\widehat{\mathcal{H}}^{1,0}$ and $Y\in l^2\otimes \wedge^{k-1}\widehat{\mathcal{H}}^{1,0}$. 
\begin{thm}\label{s5.6t1}
For $1\le k\le 2n-1$, the $(k,0)$-vector field $X'_h+Y \wedge v$ is holomorphic if and only if 
\[
\bar{\partial}^l(\widehat{\theta}_0^k(X'_h))-k^2\widehat{\theta}_0^{k-1}(Y)\wedge_E \widehat{\theta}_1=0.
\]
The $(2n,0)$-vector field $X'_h+Y \wedge v$ is holomorphic if and only if 
\begin{align*}
&\bar{\partial}^l(\widehat{\theta}_0^{2n}(X'_h))-4n^2\widehat{\theta}_0^{2n-1}(Y)\wedge_E \widehat{\theta}_1=0, \\
&\bar{\partial}^l(\widehat{\theta}_0^{2n-1}(Y))=0.
\end{align*}
\end{thm}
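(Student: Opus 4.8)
The plan is to work directly on the twistor space $Z$, mirroring the computation of Lemma~\ref{s5.3l2} but exploiting the much simpler structure equations of Proposition~\ref{s3.7p4}, which hold because $Z$ carries only the single vertical $(1,0)$-direction $v$ (dual to $\eta$, since $\eta(v)=1$) rather than the two directions $v_0,v_1$ present on $P(H^*)$. A $(k,0)$-vector field $X'$ is holomorphic precisely when $\nabla^{0,1}X'=0$ for the connection with $\nabla^{0,1}=\bar\partial$, and since $\widehat{\theta}_0\colon\widehat{\mathcal{H}}^{1,0}\to f^{-1}(E)\otimes l$ and $\eta\colon\widehat{\mathcal{V}}^{1,0}\to l^2$ are isomorphisms, the forms $\widehat{\theta}_0^k$ and $\widehat{\theta}_0^{k-1}\wedge\eta$ jointly recover all components of a $(k,0)$-vector. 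Hence $\nabla^{0,1}X'=0$ is equivalent to the two contracted equations $\widehat{\theta}_0^k(\nabla^{0,1}X')=0$ and $(\widehat{\theta}_0^{k-1}\wedge\eta)(\nabla^{0,1}X')=0$. Writing $X'=X'_h+Y\wedge v$, I would use that $\widehat{\theta}_0$ annihilates $v$ and $\eta$ annihilates $X'_h$, so that $\widehat{\theta}_0^k(X')=\widehat{\theta}_0^k(X'_h)$ and $(\widehat{\theta}_0^{k-1}\wedge\eta)(X')=k\,\widehat{\theta}_0^{k-1}(Y)$.

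For the first contraction, the Leibniz identity $\widehat{\theta}_0^k(\nabla^{0,1}X')=\bar\partial^l(\widehat{\theta}_0^k(X'))-(\nabla^{0,1}\widehat{\theta}_0^k)(X')$ together with the first formula of Proposition~\ref{s3.7p4} gives
\[
\widehat{\theta}_0^k(\nabla^{0,1}X')=\bar\partial^l(\widehat{\theta}_0^k(X'_h))-k^2\,\widehat{\theta}_0^{k-1}(Y)\wedge_E\widehat{\theta}_1,
\]
which reproduces the displayed equation; the factor $k^2$ arises as $k$ (from differentiating the $k$-fold wedge) times $k$ (from $(\widehat{\theta}_0^{k-1}\wedge\eta)(Y\wedge v)=k\,\widehat{\theta}_0^{k-1}(Y)$). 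For the second contraction, the second formula of Proposition~\ref{s3.7p4} asserts $\nabla^{0,1}(\widehat{\theta}_0^{k-1}\wedge\eta)=0$, whence
\[
(\widehat{\theta}_0^{k-1}\wedge\eta)(\nabla^{0,1}X')=k\,\bar\partial^l(\widehat{\theta}_0^{k-1}(Y)).
\]
Thus in every degree, holomorphicity of $X'$ is equivalent to the conjunction of the stated equation and the auxiliary equation $\bar\partial^l(\widehat{\theta}_0^{k-1}(Y))=0$.

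It then remains to show that for $1\le k\le 2n-1$ this auxiliary equation is already forced by the first one, while it must be retained at $k=2n$. The key facts are that $\widehat{\theta}_1$ is $\bar\partial^l$-closed (which descends from $\bar\partial(r^{-2}\widetilde{\theta}_1)=0$ on $P(H^*)$, itself a consequence of Proposition~\ref{s3.2p1} and Lemma~\ref{s3.5l1}) and that the relevant bundles $f^{-1}(\wedge^k E)\otimes l^k$ are holomorphic, so $(\bar\partial^l)^2=0$. Applying $\bar\partial^l$ to the stated equation and using both facts yields $\bar\partial^l(\widehat{\theta}_0^{k-1}(Y))\wedge_E\widehat{\theta}_1=0$; since the map $\wedge_E\widehat{\theta}_1$ on $f^{-1}(\wedge^{k-1}E)$ is injective exactly for $1\le k\le 2n-1$, this forces $\bar\partial^l(\widehat{\theta}_0^{k-1}(Y))=0$, so a single equation suffices. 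At $k=2n$ the target $\wedge^{2n}E$ is one-dimensional and this injectivity fails, which is precisely why $\bar\partial^l(\widehat{\theta}_0^{2n-1}(Y))=0$ must appear as an independent condition.

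I expect the main obstacle to lie not in the algebra, which is short given Proposition~\ref{s3.7p4}, but in two bookkeeping points: first, verifying carefully that the two contractions genuinely detect the full vanishing of $\nabla^{0,1}X'$, using the isomorphism properties of $\widehat{\theta}_0$ and $\eta$ and the one-dimensionality of $\widehat{\mathcal{V}}^{1,0}$; and second, pinning down the exact range of $k$ for which $\wedge_E\widehat{\theta}_1$ is injective, so as to delineate correctly the boundary case $k=2n$ where the auxiliary equation cannot be dropped.
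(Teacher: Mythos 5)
Your proposal is correct and follows essentially the same route as the paper's proof of Theorem~\ref{s5.6t1}: the same reduction of $\nabla^{0,1}X'=0$ to the two contractions by $\widehat{\theta}_0^k$ and $\widehat{\theta}_0^{k-1}\wedge\eta$, the same use of Proposition~\ref{s3.7p4} to evaluate them, and the same injectivity of $\wedge_E\,\widehat{\theta}_1$ for $1\le k\le 2n-1$ to show that the second equation is redundant, with $k=2n$ as the boundary case where it must be kept. The only difference is cosmetic: you derive the redundancy by applying $\bar{\partial}^l$ to the first equation and using $(\bar{\partial}^l)^2=0$ together with $\bar{\partial}^l\widehat{\theta}_1=0$, whereas the paper applies $\overline{\partial}^{\nabla}$ to $\widehat{\theta}_0^k(\nabla^{0,1}X')$ and uses the vanishing of the $(0,2)$-curvature term; these are equivalent computations, and the paper itself uses your version of the argument in the proof of Proposition~\ref{s5.3p2}.
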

\begin{proof} 
The $(k,0)$-vector field $X'$ is holomorphic if and only if $\nabla^{0,1}X'=0$. 
The equation is equal to $\widehat{\theta}_0^k(\nabla^{0,1}X')=0$ and 
$(\widehat{\theta}_0^{k-1}\wedge \eta)(\nabla^{0,1}X')=0$. 
Proposition~\ref{s3.7p4} implies that 
$\widehat{\theta}_0^k(\nabla^{0,1}X') =\bar{\partial}^l(\widehat{\theta}_0^k(X'_h)) - k^2\widehat{\theta}_0^{k-1}(Y)\wedge_E \widehat{\theta}_1$. 
Now $\overline{\partial}^{\nabla}(\widehat{\theta}_0^k(\nabla^{0,1}X'))=(-1)^{k+1}(\widehat{\theta}_0^{k-1}\wedge \eta)(\nabla^{0,1}X')\wedge \widehat{\theta}_1$. 
In the case $1\le k \le 2n-1$, 
the map $\wedge \widehat{\theta}_1 : f^{-1}(\wedge^{k-1}E) \otimes l^{k+1}\otimes \wedge^{0,1} \to f^{-1}(\wedge^k E) \otimes l^k \otimes\wedge^{0,2}$ is injective, 
and the equation $(\widehat{\theta}_0^{k-1}\wedge \eta)(\nabla^{0,1}X')=0$ follows from $\widehat{\theta}_0^k(\nabla^{0,1}X')=0$. 
In the case $k = 2n$, 
$(\widehat{\theta}_0^{2n-1}\wedge \eta)(\nabla^{0,1}X')=2n\bar{\partial}^l(\widehat{\theta}_0^{2n-1}(Y))$. 
Hence we finish the proof. 
\end{proof}

\section{Quaternionic sections}
In this section, we provide a definition of a quaternionic section of $\wedge^k E\otimes S^mH$. 
We show that the lifts of the quaternionic section satisfy some $\bar{\partial}$-equations on $P(H^*)$ and $Z$. 

\subsection{Quaternionic sections of $\wedge^k E\otimes S^mH$}
The Levi-Civita connection of $(M,g)$ induces the covariant derivative $\nabla :\Gamma(\wedge^k E\otimes S^mH) \to \Gamma(\wedge^k E\otimes  S^mH \otimes E^*\otimes H^*)$ 
where $\Gamma(\wedge^k E\otimes S^mH)$ means the space of smooth sections of $\wedge^k E\otimes S^mH$. 
The space $\wedge^k E\otimes S^mH \otimes E^*\otimes H^*$ is isomorphic to $\wedge^k E\otimes  E^* \otimes S^mH\otimes H$ 
by the isomorphism $\omega_{H^*}^{\sharp} : H^* \to H$. 
Moreover, $S^mH\otimes H\cong S^{m+1}H\oplus S^{m-1}H$ by the Clebsch-Gordan decomposition. 
Thus, the covariant derivative $\nabla$ is regarded as 
\[
\nabla : \Gamma(\wedge^k E\otimes S^mH) \to \Gamma(\wedge^k E\otimes E^* \otimes S^{m+1}H) \oplus \Gamma(\wedge^k E\otimes E^*\otimes S^{m-1}H). 
\]
Dirac operator is defined as as the $\wedge^k E\otimes E^*\otimes S^{m+1}H$-part of $\nabla$ (c.f.~\cite{B}) : 
\[
\mathfrak{D}_{\wedge^k E}: \Gamma(\wedge^k E\otimes S^mH) \to \Gamma(\wedge^k E\otimes E^*\otimes S^{m+1}H)
\]
Let $k$ be a positive integer. 
By the restriction of $\otimes^k E\otimes E^*$ to $\wedge^k E\otimes E^*$, 
the trace of $(\otimes^k E) \otimes E^*$ induces the map $\tr : \wedge^k E \otimes E^*\to \wedge^{k-1} E$. 
Let $(\wedge^k E\otimes E^*)_0$ denote the kernel of $\tr: \wedge^k E \otimes E^*\to \wedge^{k-1} E$. 
We rescale the trace map as $\frac{1}{2n-k+1}\tr$, and also denote it by the same notation $\tr$. 
Then the map $\tr : \wedge^k E\otimes E^* \to \wedge^{k-1} E$ have a right inverse $\alpha \mapsto \alpha \wedge \id_E$ 
where $\alpha \wedge \id_E$ is the image of $\alpha \otimes \id_E$ 
by the anti-symmetrization $\wedge^{k-1}E\otimes E\otimes E^*  \to \wedge^k E\otimes E^*$. 
Hence, the bundle $\wedge^k E\otimes E^*$ is decomposed into $(\wedge^k E\otimes E^*)_0$ and $(\wedge^{k-1} E)\wedge \id_E$ : 
\[
\wedge^k E\otimes E^*=(\wedge^k E\otimes E^*)_0 \oplus (\wedge^{k-1} E)\wedge \id_E. 
\]
We define an operator 
\[
\mathfrak{D}_{\wedge^k E}^0: \Gamma(\wedge^k E\otimes S^mH) \to \Gamma((\wedge^k E\otimes E^*)_0\otimes S^{m+1}H)
\]
as the $(\wedge^k E\otimes E^*)_0$-part of $\mathfrak{D}_{\wedge^k E}$. 
\begin{defi}\label{s4.1d1}
{\rm 
Let $m$ be a non-negative integer. 
A section $X$ of $\wedge^k E\otimes S^mH$ is \textit{quaternionic} if $\mathfrak{D}_{\wedge^k E}^0(X)=0$ for $1\le k\le 2n-1$ and 
$\mathfrak{D}_{\wedge^{2n-1} E}\circ \tr\circ \mathfrak{D}_{\wedge^{2n} E}(X)=0$ for $k=2n$. 
}
\end{defi}
Any section $X$ of $\wedge^{2n} E\otimes S^mH$ satisfies $\mathfrak{D}_{\wedge^{2n} E}^0(X)=0$ 
since $(\wedge^{2n} E\otimes E^*)_0= \{0\}$. 

Let $\tau$ be an anti-$\mathbb{C}$-linear endomorphism of $\Gamma((\otimes^kE)\otimes (\otimes^{k'} E^*)\otimes (\otimes^m H)\otimes (\otimes^{m'} H^*)\otimes \wedge^qT^*M)$ as 
\[
\tau(\xi)=(J_E^k \otimes J_{E^*}^{k'} \otimes J_H^m\otimes J_{H^*}^{m'})(v_i)\otimes \overline{\alpha^i}
\]
for $\xi=\sum_i v_i\otimes \alpha^i$ where $\{v_i\}$ is a frame of $(\otimes^kE)\otimes (\otimes^{k'} E^*)\otimes (\otimes^m H)\otimes (\otimes^{m'} H^*)$ and 
$\alpha^i$ is a $q$-form. 
If $k+k'+m+m'$ is even, then $\tau$ is a real structure. 
The covariant derivative $\nabla : \Gamma(\wedge^k E\otimes S^mH) \to \Gamma(\wedge^k E\otimes S^mH\otimes T^*M)$ satisfies $\tau\circ \nabla=\nabla\circ \tau$ 
since the connections of $E$ and $H$ preserve $J_E$ and $J_H$, respectively. 
The maps $\omega_{H^*}^{\sharp} : \Gamma(E^*\otimes H^*) \to \Gamma(E^*\otimes H)$ and 
$s_{H}^{m+1}: \Gamma(\wedge^kE\otimes E^* \otimes S^mH\otimes H) \to \Gamma(\wedge^kE\otimes E^* \otimes S^{m+1}H)$ 
are commutative with $\tau$. 
Since $\mathfrak{D}_{\wedge^k E}$ is provided by $s_{H}^{m+1}\circ \omega_{H^*}^{\sharp}\circ \nabla$, 
the operator $\mathfrak{D}_{\wedge^k E}$ is also commutative with $\tau$. 
The trace map $\tr :\wedge^k E\otimes E^*\to \wedge^{k-1} E$ satisfies $\tr \circ (J_E^k\otimes J_{E^*})=J_E^{k-1}\circ \tr$. 
Hence, the operators $\mathfrak{D}_{\wedge^k E}^0$ and $\mathfrak{D}_{\wedge^{2n-1} E}\circ \tr\circ \mathfrak{D}_{\wedge^{2n} E}$ are commutative with $\tau$. 


\subsection{Lift of quaternionic sections to $P(H^*)$}
The map $\omega_{H^*}^{\sharp}: H^* \to H$ induces 
the isomorphism from $\mathcal{A}^1=\mathcal{A}^0(E^*\otimes H^*)$ to $\mathcal{A}^0(E^*\otimes H)$, 
and denote it also by $\omega_{H^*}^{\sharp}$. 
On $P(H^*)$, $\omega_{H^*}^{\sharp}$ provides a map from $\mathcal{A}^0_{P(H^*)}(\widetilde{\mathcal{H}}^*)$ to $\mathcal{A}^0_{P(H^*)}(E^*\otimes H)$ 
since $\widetilde{\mathcal{H}}^*_{u^*}\cong p^{-1}(E^*\otimes H^*)_{u^*}$ at $u^*\in P(H^*)$. 
The sheaf $\mathcal{A}^0_{P(H^*)}(E^*\otimes H)$ is isomorphic to $\mathcal{A}^0_{P(H^*)}(E^*\otimes \mathbb{H})$ by considering a point $u^*\in P(H^*)$ as a frame of $H^*$. 
Moreover, taking a coefficient of $1\in \mathbb{H}$, we have a map $\mathcal{A}^0_{P(H^*)}(E^*\otimes \mathbb{H}) \to \mathcal{A}^0_{P(H^*)}(E^*)$. 
Therefore, we obtain a map $\mathcal{A}^0_{P(H^*)}(\widetilde{\mathcal{H}}^*)\to \mathcal{A}^0_{P(H^*)}(E^*)$ and denote it by $\widetilde{\omega}_{H^*}^{\sharp}$. 
The $(1,0)$ and $(0,1)$-subspaces $(\widetilde{\mathcal{H}}^*)^{1,0}$ and $(\widetilde{\mathcal{H}}^*)^{0,1}$ of $\widetilde{\mathcal{H}}^*$ are given by 
$E^*\otimes u^*$ and $E^*\otimes u^*j$ at $u^*\in P(H^*)$. 
Since $\omega_{H^*}^{\sharp}(u^*)=-r^2(-u^*j)^*=-r^2ju$ and $\omega_{H^*}^{\sharp}(-u^*j)=r^2(u^*)^*=r^2u$, 
the map $\widetilde{\omega}_{H^*}^{\sharp}: \mathcal{A}^0_{P(H^*)}(\widetilde{\mathcal{H}}^*)\to \mathcal{A}^0_{P(H^*)}(E^*)$ has the kernel $\mathcal{A}^0_{P(H^*)}((\widetilde{\mathcal{H}}^*)^{1,0})$ 
and it is injective on $\mathcal{A}^0_{P(H^*)}((\widetilde{\mathcal{H}}^*)^{0,1})$. 
The restriction of $\widetilde{\omega}_{H^*}^{\sharp}$ to $\widetilde{\mathcal{A}}^1$ is 
the map $\widetilde{\omega}_{H^*}^{\sharp}: \widetilde{\mathcal{A}}^1 \to  \widetilde{\mathcal{A}}^0_{(1,0)}(E^*)$. 
Then $\widetilde{\omega}_{H^*}^{\sharp}$ is regarded as a lift of $\omega_{H^*}^{\sharp}: \mathcal{A}^1\to \mathcal{A}^0(E^*\otimes H)$ to $P(H^*)$. 
By the tensor product of the lift and $\widetilde{\mathcal{A}}^0_{(m,0)}$, we obtain the map 
$\widetilde{\omega}_{H^*}^{\sharp}: \widetilde{\mathcal{A}}^1_{(m,0)} \to \widetilde{\mathcal{A}}^0(E^*)\otimes \widetilde{\mathcal{A}}^0_{(1,0)}\otimes \widetilde{\mathcal{A}}^0_{(m,0)}$
which corresponds to the map $\omega_{H^*}^{\sharp}: \mathcal{A}^1(S^mH)\to \mathcal{A}^0(E^*\otimes H\otimes S^mH)$. 
It follows from $\widetilde{\mathcal{A}}^0_{(1,0)}\otimes \widetilde{\mathcal{A}}^0_{(m,0)}=\widetilde{\mathcal{A}}^0_{(m+1,0)}$ that 
there exists a commutative diagram 
\begin{equation*}\label{s4.4eq3}
\begin{array}{ccccc}
\widetilde{\mathcal{A}}^1_{(m,0)}& \stackrel{\widetilde{\omega}_{H^*}^{\sharp}}{\longrightarrow} 
&\widetilde{\mathcal{A}}^0(E^*)\otimes \widetilde{\mathcal{A}}^0_{(1,0)} \otimes \widetilde{\mathcal{A}}^0_{(m,0)}
& = & \widetilde{\mathcal{A}}^0_{(m+1,0)}(E^*)  \\
\uparrow & & \uparrow & & \uparrow \\
\mathcal{A}^1(S^mH)&\stackrel{\omega_{H^*}^{\sharp}}{\longrightarrow } &\mathcal{A}^0(E^*\otimes H\otimes S^mH)
&\stackrel{s^{m+1}_{H}}{\longrightarrow } &\mathcal{A}^0(E^*\otimes S^{m+1}H). 
\end{array} 
\end{equation*}
By extending the above diagram to $\wedge^kE$-valued $1$-forms, we have 
\begin{equation*}\label{s4.4eq3.5}
\begin{array}{ccccc}
\widetilde{\mathcal{A}}^1_{(m,0)}(\wedge^k E)& \stackrel{\widetilde{\omega}_{H^*}^{\sharp}}{\longrightarrow} & \widetilde{\mathcal{A}}^0_{(m+1,0)}(\wedge^k E\otimes E^*)  \\
\uparrow & & \uparrow  \\
\mathcal{A}^1(\wedge^kE\otimes S^mH)&\stackrel{s^{m+1}_{H}\circ \omega_{H^*}^{\sharp}}{\longrightarrow } &\mathcal{A}^0(\wedge^k E\otimes E^*\otimes S^{m+1}H).
\end{array} 
\end{equation*}

The $\widetilde{\mathcal{H}}^*$-part $d_{\widetilde{\mathcal{H}}}: \mathcal{A}^0_{P(H^*)}(\wedge^k E)\to \mathcal{A}^0_{P(H^*)}(\widetilde{\mathcal{H}}^*\otimes \wedge^k E)$ of the exterior derivative 
induces $d_{\widetilde{\mathcal{H}}}: \widetilde{\mathcal{A}}^0_{(m,0)}(\wedge^k E)\to \widetilde{\mathcal{A}}^1_{(m,0)}(\wedge^k E)$ 
on the subsheaf $\widetilde{\mathcal{A}}^0_{(m,0)}(\wedge^k E)$ of $\mathcal{A}^0_{P(H^*)}(\wedge^k E)$. 
By Corollary~\ref{s3.1c2}, we obtain a commutative diagram 
\begin{equation}\label{s4.4eq4}
\begin{array}{ccccc}
\widetilde{\mathcal{A}}^0_{(m,0)}(\wedge^k E)& \stackrel{d_{\widetilde{\mathcal{H}}}}{\longrightarrow} 
&\widetilde{\mathcal{A}}^1_{(m,0)}(\wedge^k E)& \stackrel{\widetilde{\omega}_{H^*}^{\sharp}}{\longrightarrow} & \widetilde{\mathcal{A}}^0_{(m+1,0)}(\wedge^k E\otimes E^*)  \\
\uparrow & & \uparrow & & \uparrow  \\
\mathcal{A}^0(\wedge^kE\otimes S^mH)&\stackrel{\nabla}{\longrightarrow } 
&\mathcal{A}^1(\wedge^kE\otimes S^mH)&\stackrel{s^{m+1}_{H}\circ \omega_{H^*}^{\sharp}}{\longrightarrow } &\mathcal{A}^0(\wedge^k E\otimes E^*\otimes S^{m+1}H).
\end{array} 
\end{equation}
The operator $d_{\widetilde{\mathcal{H}}}:\widetilde{\mathcal{A}}^0_{(m,0)}(\wedge^k E)\to\widetilde{\mathcal{A}}^1_{(m,0)}(\wedge^k E)$ is decomposed into 
two operators $\partial_{\mathcal{H}}:\widetilde{\mathcal{A}}^0_{(m,0)}(\wedge^k E)\to \mathcal{A}^0_{P(H^*)}((\widetilde{\mathcal{H}}^*)^{1,0}\otimes \wedge^k E)$ 
and $\bar{\partial}_{\widetilde{\mathcal{H}}}:\widetilde{\mathcal{A}}^0_{(m,0)}(\wedge^k E)\to \mathcal{A}^0_{P(H^*)}((\widetilde{\mathcal{H}}^*)^{0,1}\otimes \wedge^k E)$ 
by the decomposition $\widetilde{\mathcal{H}}^*=(\widetilde{\mathcal{H}}^*)^{1,0}\oplus (\widetilde{\mathcal{H}}^*)^{0,1}$. 
The operator $\bar{\partial}_{\widetilde{\mathcal{H}}}$ coincides with $\bar{\partial}$ 
since an element of $\widetilde{\mathcal{A}}^0_{(m,0)}(\wedge^k E)$ is holomorphic along each fiber. 
\begin{prop}\label{s4.4p1} 
$(\widetilde{\mathfrak{D}_{\wedge^kE}\xi})_0=\widetilde{\omega}_{H^*}^{\sharp}(\bar{\partial} \widetilde{\xi}_0)$ for $\xi\in \mathcal{A}^0(\wedge^kE\otimes S^mH)$. 
\end{prop}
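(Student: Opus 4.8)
The plan is to read the identity directly off the commutative diagram (\ref{s4.4eq4}) together with Corollary~\ref{s3.1c2}, after isolating the antiholomorphic part of the horizontal derivative. First I would observe that, by construction, $\mathfrak{D}_{\wedge^k E}$ is precisely the composite $s^{m+1}_{H}\circ \omega_{H^*}^{\sharp}\circ \nabla$ appearing along the bottom row of (\ref{s4.4eq4}), whereas the top row is $\widetilde{\omega}_{H^*}^{\sharp}\circ d_{\widetilde{\mathcal{H}}}$. Since the vertical arrows are the lifts $\xi\mapsto \widetilde{\xi}_0$ supplied by Corollary~\ref{s3.1c2}, commutativity of the square yields at once
\[
(\widetilde{\mathfrak{D}_{\wedge^kE}\xi})_0=\widetilde{\omega}_{H^*}^{\sharp}(d_{\widetilde{\mathcal{H}}}\widetilde{\xi}_0).
\]

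Next I would replace $d_{\widetilde{\mathcal{H}}}\widetilde{\xi}_0$ by $\bar{\partial}\widetilde{\xi}_0$ underneath $\widetilde{\omega}_{H^*}^{\sharp}$. For this I would use the decomposition $d_{\widetilde{\mathcal{H}}}=\partial_{\mathcal{H}}+\bar{\partial}_{\widetilde{\mathcal{H}}}$ along $\widetilde{\mathcal{H}}^*=(\widetilde{\mathcal{H}}^*)^{1,0}\oplus (\widetilde{\mathcal{H}}^*)^{0,1}$ recorded just before the statement, together with the explicit values $\omega_{H^*}^{\sharp}(u^*)=-r^2 ju$ and $\omega_{H^*}^{\sharp}(-u^*j)=r^2 u$. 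These show that $\widetilde{\omega}_{H^*}^{\sharp}$ annihilates $(\widetilde{\mathcal{H}}^*)^{1,0}\otimes \wedge^k E$ and is injective on $(\widetilde{\mathcal{H}}^*)^{0,1}\otimes \wedge^k E$, so that $\widetilde{\omega}_{H^*}^{\sharp}(\partial_{\mathcal{H}}\widetilde{\xi}_0)=0$ and only the $(0,1)$-part survives. Finally, since $\widetilde{\xi}_0\in \widetilde{\mathcal{A}}^0_{(m,0)}(\wedge^k E)$ is holomorphic along each fiber of $p$, the horizontal operator $\bar{\partial}_{\widetilde{\mathcal{H}}}$ agrees with $\bar{\partial}$ on it; hence $\widetilde{\omega}_{H^*}^{\sharp}(d_{\widetilde{\mathcal{H}}}\widetilde{\xi}_0)=\widetilde{\omega}_{H^*}^{\sharp}(\bar{\partial}\widetilde{\xi}_0)$, and the proposition follows.

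Since every ingredient—the factorization of $\mathfrak{D}_{\wedge^k E}$, the diagram (\ref{s4.4eq4}), and the kernel/injectivity behaviour of $\widetilde{\omega}_{H^*}^{\sharp}$—is already in place, I expect no serious obstacle. The only point demanding a little care is the type bookkeeping: one must confirm that the $(1,0)$-component $\partial_{\mathcal{H}}\widetilde{\xi}_0$ of $d_{\widetilde{\mathcal{H}}}\widetilde{\xi}_0$ lands in $(\widetilde{\mathcal{H}}^*)^{1,0}\otimes \wedge^k E$, i.e.\ in the kernel of $\widetilde{\omega}_{H^*}^{\sharp}$, so that discarding it is legitimate; this is exactly the content of the kernel computation above.
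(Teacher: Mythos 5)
Your proposal is correct and follows essentially the same route as the paper: commutativity of the diagram (\ref{s4.4eq4}) gives $(\widetilde{\mathfrak{D}_{\wedge^kE}\xi})_0=\widetilde{\omega}_{H^*}^{\sharp}(d_{\widetilde{\mathcal{H}}}\widetilde{\xi}_0)$, and the identification of $\Ker\widetilde{\omega}_{H^*}^{\sharp}$ with $\mathcal{A}^0_{P(H^*)}((\widetilde{\mathcal{H}}^*)^{1,0}\otimes \wedge^k E)$ lets one replace $d_{\widetilde{\mathcal{H}}}\widetilde{\xi}_0$ by $\bar{\partial}\widetilde{\xi}_0$. Your extra care about $\bar{\partial}_{\widetilde{\mathcal{H}}}=\bar{\partial}$ on $\widetilde{\mathcal{A}}^0_{(m,0)}(\wedge^k E)$ is exactly the observation the paper records immediately before the proposition, so nothing is missing.
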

\begin{proof} 
It follows from $\mathfrak{D}_{\wedge^kE}=s^{m+1}_{H}\circ \omega_{H^*}^{\sharp}\circ \nabla$ and the diagram (\ref{s4.4eq4}) 
that $(\widetilde{\mathfrak{D}_{\wedge^kE}\xi})_0=\widetilde{\omega}_{H^*}^{\sharp}(d_{\widetilde{\mathcal{H}}} \widetilde{\xi}_0)$ 
for $\xi\in \mathcal{A}^0(\wedge^kE\otimes S^mH)$. 
Since the kernel of $\widetilde{\omega}_{H^*}^{\sharp}$ is $\mathcal{A}^0_{P(H^*)}((\widetilde{\mathcal{H}}^*)^{1,0}\otimes \wedge^k E)$, 
$\widetilde{\omega}_{H^*}^{\sharp}(d_{\widetilde{\mathcal{H}}} \widetilde{\xi}_0)=\widetilde{\omega}_{H^*}^{\sharp}(\bar{\partial} \widetilde{\xi}_0)$. 
Hence $(\widetilde{\mathfrak{D}_{\wedge^kE}\xi})_0=\widetilde{\omega}_{H^*}^{\sharp}(\bar{\partial}\widetilde{\xi}_0)$. 
\end{proof}
We denote by $\widetilde{\mathcal{O}}_m(\wedge^k E)$ the kernel of $\bar{\partial}$ on $\widetilde{\mathcal{A}}^0_{(m,0)}(\wedge^k E)$. 
By Proposition~\ref{s4.4p1} and the injectivity of $\widetilde{\omega}_{H^*}^{\sharp}$ on $\mathcal{A}^0_{P(H^*)}((\widetilde{\mathcal{H}}^*)^{0,1}\otimes \wedge^k E)$, 
we obtain an isomorphism 
\[
\Ker \mathfrak{D}_{\wedge^kE}\cong \widetilde{\mathcal{O}}_{m}(\wedge^k E) 
\]
by $\xi \mapsto \widetilde{\xi}_0$. 
We extend Proposition~\ref{s4.4p1} to the following : 
\begin{prop}\label{s4.5p1} 
$\widetilde{(\mathfrak{D}_{\wedge^kE}\xi-\zeta\wedge_E \id_E)}_0
=\widetilde{\omega}_{H^*}^{\sharp}(\bar{\partial}\widetilde{\xi}_0-\widetilde{\zeta}_0\wedge_E r^{-2}\widetilde{\theta}_1)$ 
for $\xi\in \mathcal{A}^0(\wedge^kE\otimes S^mH)$ and $\zeta \in \mathcal{A}^0(\wedge^{k-1} E\otimes S^{m+1}H)$. 
\end{prop}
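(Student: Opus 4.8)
The plan is to use additivity to peel off the correction term and then invoke Proposition~\ref{s4.4p1}. Since the lift $\widetilde{(\cdot)}_0$ is $\mathbb{C}$-linear,
\[
\widetilde{(\mathfrak{D}_{\wedge^kE}\xi-\zeta\wedge_E\id_E)}_0=\widetilde{(\mathfrak{D}_{\wedge^kE}\xi)}_0-\widetilde{(\zeta\wedge_E\id_E)}_0,
\]
and Proposition~\ref{s4.4p1} already identifies the first summand with $\widetilde{\omega}_{H^*}^{\sharp}(\bar{\partial}\widetilde{\xi}_0)$. As $\widetilde{\omega}_{H^*}^{\sharp}$ is additive, the assertion reduces to the single identity
\[
\widetilde{(\zeta\wedge_E\id_E)}_0=\widetilde{\omega}_{H^*}^{\sharp}(\widetilde{\zeta}_0\wedge_E r^{-2}\widetilde{\theta}_1)
\]
for $\zeta\in\mathcal{A}^0(\wedge^{k-1}E\otimes S^{m+1}H)$.

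For the left-hand side I would use that $\id_E\in\mathcal{A}^0(E\otimes E^*)$ carries no $H$-factor: it is $\textrm{GL}(1,\mathbb{H})$-invariant and pulls back to the constant section $\id_E$ of $p^{-1}(E\otimes E^*)$ of fiber-degree $(0,0)$. Hence all of the symmetric $H$-content of $\zeta\wedge_E\id_E$ comes from $\zeta$, so passing to the coefficient of $1^{m+1}$ commutes with the insertion $(\,\cdot\,)\wedge_E\id_E$, which affects only the $E$-factor. This gives $\widetilde{(\zeta\wedge_E\id_E)}_0=\widetilde{\zeta}_0\wedge_E\id_E$, an element of $\widetilde{\mathcal{A}}^0_{(m+1,0)}(\wedge^kE\otimes E^*)$.

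For the right-hand side I would compute at a fixed $u^*\in P(H^*)$. There $\widetilde{\theta}_1=\id_E\otimes u^*j$, so $\widetilde{\zeta}_0\wedge_E r^{-2}\widetilde{\theta}_1$ is a $\wedge^kE$-valued $(0,1)$-form whose $\widetilde{\mathcal{H}}^*$-part lies in $(\widetilde{\mathcal{H}}^*)^{0,1}=E^*\otimes u^*j$. Since $\widetilde{\omega}_{H^*}^{\sharp}$ acts only through $\omega_{H^*}^{\sharp}$ on the $H^*$-factor of the form and then records the coefficient of $1\in\mathbb{H}$, it leaves both $\widetilde{\zeta}_0$ and the wedge on the $E$-factor untouched; thus
\[
\widetilde{\omega}_{H^*}^{\sharp}(\widetilde{\zeta}_0\wedge_E r^{-2}\widetilde{\theta}_1)=\widetilde{\zeta}_0\wedge_E\widetilde{\omega}_{H^*}^{\sharp}(r^{-2}\widetilde{\theta}_1).
\]
Feeding $\omega_{H^*}^{\sharp}(-u^*j)=r^2u$ into the definition of $\widetilde{\omega}_{H^*}^{\sharp}$, the two factors $r^{-2}$ and $r^2$ cancel and one is left with $\widetilde{\omega}_{H^*}^{\sharp}(r^{-2}\widetilde{\theta}_1)=\id_E$; comparing with the previous paragraph yields the desired identity.

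The main obstacle is precisely this last pointwise computation $\widetilde{\omega}_{H^*}^{\sharp}(r^{-2}\widetilde{\theta}_1)=\id_E$: it is where the $\mathbb{H}=\mathbb{C}+j\mathbb{C}$ identifications, the normalization of $\omega_{H^*}^{\sharp}$ on the frame $\{u^*,u^*j\}$, and the weight $r^{\pm2}$ must all be matched so that the factors of $r^2$ cancel and the sign comes out as stated. Everything else is formal once this identity and the fact that $\widetilde{\omega}_{H^*}^{\sharp}$ acts solely on the $H^*$-factor are in place.
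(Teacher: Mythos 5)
Your proposal follows essentially the same route as the paper's proof: reduce by linearity of the lift and Proposition~\ref{s4.4p1} to the single identity
\[
(\widetilde{\zeta\wedge_E \id_E})_0=\widetilde{\omega}_{H^*}^{\sharp}(\widetilde{\zeta}_0\wedge_E r^{-2}\widetilde{\theta}_1),
\]
and then verify it pointwise on a frame, using that $\widetilde{\omega}_{H^*}^{\sharp}$ acts only through the $H^*$-factor together with the normalization of $\omega_{H^*}^{\sharp}$ on $\{u^*,u^*j\}$. The paper does exactly this, establishing $\widetilde{\omega}_{H^*}^{\sharp}(\widetilde{\theta}_1)=r^2\widetilde{\id}_E$, which is your identity $\widetilde{\omega}_{H^*}^{\sharp}(r^{-2}\widetilde{\theta}_1)=\id_E$ in rescaled form; your explicit justification that taking the coefficient of $1^{m+1}$ commutes with $(\cdot)\wedge_E\id_E$ is a point the paper leaves implicit.

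There is, however, one concrete slip, and it sits exactly in the step you yourself single out as the crux. You take $\widetilde{\theta}_1=\id_E\otimes u^*j$ at $u^*$ and then feed in $\omega_{H^*}^{\sharp}(-u^*j)=r^2u$; these two statements together give $\widetilde{\omega}_{H^*}^{\sharp}(r^{-2}\widetilde{\theta}_1)=-\id_E$, the opposite sign of what you assert, so as written your computation does not close. The paper resolves the sign by computing $\widetilde{\theta}_1$ from the lift of $\id_H\in\mathcal{A}^0(H^*\otimes H)$: from $(\widetilde{\id}_H)_{u^*}=u^*\otimes 1+(ju)^*\otimes j$ one gets $(\widetilde{\theta}_1)_{u^*}=\widetilde{\id}_E\otimes (ju)^*=\widetilde{\id}_E\otimes u^*(-j)$, where the relation $(ju)^*=u^*(-j)$ is forced by comparing (\ref{s2eq3}) with (\ref{s2eq4}); only then does $\omega_{H^*}^{\sharp}(u^*(-j))=r^2u$ yield $+r^2\widetilde{\id}_E$. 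So your argument is structurally the paper's argument and becomes correct once $u^*j$ is replaced by $u^*(-j)$ in the frame expression of $\widetilde{\theta}_1$. In mitigation, the subspace statement $(\widetilde{\mathcal{H}}^*)^{0,1}=E^*\otimes u^*j$ that you rely on is stated verbatim in the paper, and the paper itself writes $\widetilde{\theta}_1=\id_E\otimes u^*j$ in the proof of Proposition~\ref{s5.3p2}, so the conventions are easy to misread; but for this proposition the sign is precisely what is being proved, and it needs the computation via the lift of $\id_H$.
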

\begin{proof} 
The canonical form $\widetilde{\theta}$ is given by $\widetilde{\theta}_{u^*}=\widetilde{\id}_E\otimes u^*\otimes 1+\widetilde{\id}_E\otimes (ju)^*\otimes j$ at $u^*\in P(H^*)$. 
Thus $(\widetilde{\theta}_1)_{u^*}=\widetilde{\id}_E\otimes (ju)^*=\widetilde{\id}_E\otimes u^*(-j)$. 
Now $\widetilde{\omega}_{H^*}^{\sharp}(\widetilde{\theta}_1)=r^2\widetilde{\id}_E$ since $\omega_{H^*}^{\sharp}(u^*(-j))=r^2u$. 
Hence we obtain 
\begin{equation}\label{s4.5eq12}
\widetilde{\omega}_{H^*}^{\sharp}(\widetilde{\zeta}_0\wedge_E r^{-2}\widetilde{\theta}_1)
=\widetilde{\zeta}_0\wedge_E \widetilde{\id}_E=(\widetilde{\zeta\wedge_E \id_E})_0. 
\end{equation}
The equation (\ref{s4.5eq12}) and Proposition~\ref{s4.4p1} imply the equation of this Proposition. 
\end{proof}

\begin{prop}\label{s4.5p2} 
Let $\xi$ and $\zeta$ be elements of $\mathcal{A}^0(\wedge^k E \otimes S^mH)$ and $\mathcal{A}^0(\wedge^{k-1} E \otimes S^{m+1}H)$, respectively.  
The element $\xi$ is quaternionic and $\zeta=\tr\circ\mathfrak{D}_{\wedge^kE}(\xi)$ 
if and only if $\bar{\partial}\widetilde{\xi}_0-\widetilde{\zeta}_0\wedge_E r^{-2}\widetilde{\theta}_1=0$ for $1\le k\le 2n-1$, 
and $\bar{\partial}\widetilde{\xi}_0-\widetilde{\zeta}_0\wedge_E r^{-2}\widetilde{\theta}_1=0$, $\bar{\partial}\widetilde{\zeta}_0=0$ for $k=2n$. 
\end{prop}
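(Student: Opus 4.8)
The plan is to push everything down to lifts on $P(H^*)$ using the two preceding propositions and then strip off $\widetilde{\omega}_{H^*}^{\sharp}$ by injectivity. The algebraic input I would isolate first is the splitting $\wedge^k E\otimes E^*=(\wedge^k E\otimes E^*)_0\oplus(\wedge^{k-1}E)\wedge\id_E$: since the rescaled trace is a right inverse of $\alpha\mapsto\alpha\wedge\id_E$, any $\alpha$ decomposes as $\alpha=\bigl(\alpha-(\tr\alpha)\wedge\id_E\bigr)+(\tr\alpha)\wedge\id_E$, with the first summand lying in $(\wedge^k E\otimes E^*)_0$. Applying this to $\alpha=\mathfrak{D}_{\wedge^k E}(\xi)$ (the $S^{m+1}H$-factor being untouched) gives $\mathfrak{D}_{\wedge^k E}^0(\xi)=\mathfrak{D}_{\wedge^k E}(\xi)-(\tr\mathfrak{D}_{\wedge^k E}\xi)\wedge\id_E$. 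Hence, with $\zeta=\tr\mathfrak{D}_{\wedge^k E}(\xi)$, the two conditions ``$\xi$ is quaternionic'' and ``$\zeta=\tr\mathfrak{D}_{\wedge^k E}\xi$'' together are equivalent, for $1\le k\le 2n-1$, to the single equation $\mathfrak{D}_{\wedge^k E}(\xi)=\zeta\wedge\id_E$.

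For $1\le k\le 2n-1$ I would then run the chain of equivalences. The equation $\mathfrak{D}_{\wedge^k E}(\xi)=\zeta\wedge\id_E$ is equivalent, by the lift isomorphism of Corollary~\ref{s3.1c2}(ii) applied to $\wedge^k E\otimes E^*$-valued sections of $S^{m+1}H$, to $\widetilde{(\mathfrak{D}_{\wedge^k E}\xi-\zeta\wedge_E\id_E)}_0=0$; by Proposition~\ref{s4.5p1} this equals $\widetilde{\omega}_{H^*}^{\sharp}\bigl(\bar{\partial}\widetilde{\xi}_0-\widetilde{\zeta}_0\wedge_E r^{-2}\widetilde{\theta}_1\bigr)=0$. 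Since $\bar{\partial}\widetilde{\xi}_0$ and $\widetilde{\zeta}_0\wedge_E r^{-2}\widetilde{\theta}_1$ are both horizontal $(0,1)$-forms (recall $\widetilde{\theta}_1\in\widetilde{\mathcal{A}}^1_{(0,1)}(E)$ and that $\bar{\partial}_{\widetilde{\mathcal{H}}}$ coincides with $\bar{\partial}$ on $\widetilde{\mathcal{A}}^0_{(m,0)}(\wedge^k E)$), their difference lies in $(\widetilde{\mathcal{H}}^*)^{0,1}\otimes\wedge^k E$, the subspace on which $\widetilde{\omega}_{H^*}^{\sharp}$ is injective; so the last equality collapses to $\bar{\partial}\widetilde{\xi}_0-\widetilde{\zeta}_0\wedge_E r^{-2}\widetilde{\theta}_1=0$. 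Every step is reversible, giving the stated equivalence.

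For $k=2n$ I would exploit that $\tr:E^*\otimes\wedge^{2n}E\to\wedge^{2n-1}E$ is an isomorphism, so $(\wedge^{2n}E\otimes E^*)_0=\{0\}$ and $\mathfrak{D}_{\wedge^{2n}E}^0(\xi)=0$ automatically; thus $\zeta=\tr\mathfrak{D}_{\wedge^{2n}E}(\xi)$ by itself is equivalent to $\mathfrak{D}_{\wedge^{2n}E}(\xi)=\zeta\wedge\id_E$, and the same chain as above yields the first equation $\bar{\partial}\widetilde{\xi}_0-\widetilde{\zeta}_0\wedge_E r^{-2}\widetilde{\theta}_1=0$. The remaining quaternionic condition reads $\mathfrak{D}_{\wedge^{2n-1}E}(\zeta)=0$ for the intermediate section $\zeta\in\mathcal{A}^0(\wedge^{2n-1}E\otimes S^{m+1}H)$; applying Proposition~\ref{s4.4p1} to $\zeta$ (with $k$ replaced by $2n-1$ and $m$ by $m+1$) identifies $(\widetilde{\mathfrak{D}_{\wedge^{2n-1}E}\zeta})_0=\widetilde{\omega}_{H^*}^{\sharp}(\bar{\partial}\widetilde{\zeta}_0)$, and injectivity on $(0,1)$-forms turns $\mathfrak{D}_{\wedge^{2n-1}E}(\zeta)=0$ into $\bar{\partial}\widetilde{\zeta}_0=0$. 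Assembling the two reversible equivalences gives the claim in both directions.

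The hard part will be the bookkeeping of the $k=2n$ case: one must recognize that the quaternionic condition there is second order, factor it through $\zeta=\tr\mathfrak{D}_{\wedge^{2n}E}(\xi)$, and apply Proposition~\ref{s4.4p1} at level $2n-1$ rather than $2n$, while also using $(\wedge^{2n}E\otimes E^*)_0=\{0\}$ to obtain the first equation for free. The other point demanding care, in both ranges, is checking that $\bar{\partial}\widetilde{\xi}_0-\widetilde{\zeta}_0\wedge_E r^{-2}\widetilde{\theta}_1$ genuinely lies in the $(0,1)$-part $(\widetilde{\mathcal{H}}^*)^{0,1}\otimes\wedge^k E$, so that the kernel of $\widetilde{\omega}_{H^*}^{\sharp}$ (the $(1,0)$-part) does not interfere and the injectivity step is legitimate.
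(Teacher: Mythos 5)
Your proposal is correct and takes essentially the same route as the paper's own proof: reduce the pair of conditions ``$\xi$ quaternionic and $\zeta=\tr\circ\mathfrak{D}_{\wedge^kE}(\xi)$'' to the single equation $\mathfrak{D}_{\wedge^kE}\xi-\zeta\wedge_E\id_E=0$ via $\tr(\zeta\wedge_E\id_E)=\zeta$ (plus $\mathfrak{D}_{\wedge^{2n-1}E}\zeta=0$ when $k=2n$), then apply Proposition~\ref{s4.5p1} (resp.\ Proposition~\ref{s4.4p1}) together with the injectivity of $\widetilde{\omega}_{H^*}^{\sharp}$ on $(\widetilde{\mathcal{H}}^*)^{0,1}$; your extra checks (that the relevant forms lie in the $(0,1)$-part, and the explicit handling of $(\wedge^{2n}E\otimes E^*)_0=\{0\}$) only make explicit what the paper leaves implicit. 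One cosmetic slip: $\alpha\mapsto\alpha\wedge\id_E$ is a right inverse of $\tr$, not the other way around, though the identity you actually use, $\tr(\beta\wedge\id_E)=\beta$, is the correct one.
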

\begin{proof} 
Since $\tr(\zeta\wedge_E \id_E)=\zeta$ for any $\zeta \in \mathcal{A}^0(\wedge^{k-1} E\otimes S^{m+1}H)$, 
$\xi\in \mathcal{A}^0(\wedge^k E \otimes S^mH)$ is quaternionic and $\zeta=\tr\circ\mathfrak{D}_{\wedge^kE}(\xi)$ 
if and only if $\mathfrak{D}_{\wedge^kE}\xi-\zeta\wedge_E \id_E=0$ and, in addition, $\mathfrak{D}_{\wedge^{k-1}E}\zeta=0$ for $k=2n$. 
By Proposition~\ref{s4.5p1} and the injectivity of $\widetilde{\omega}_{H^*}^{\sharp}$ on $(\widetilde{\mathcal{H}}^*)^{0,1}$, 
$\mathfrak{D}_{\wedge^kE}\xi-\zeta\wedge_E \id_E=0$ is equal to $\bar{\partial}\widetilde{\xi}_0-\widetilde{\zeta}_0\wedge_E r^{-2}\widetilde{\theta}_1=0$. 
Furthermore, Proposition~\ref{s4.4p1} implies that $\mathfrak{D}_{\wedge^{k-1}E}\zeta=0$ is equivalent to $\bar{\partial}\widetilde{\zeta}_0=0$. 
\end{proof}

\subsection{Lift of quaternionic sections to $Z$} 
The sheaf $\mathcal{A}^0_{P(H^*)}(\widetilde{\mathcal{H}}^*)$ is considered as that of 1-forms which are annihilate to vertical vectors of $p:P(H^*)\to M$. 
The map $\widetilde{\omega}_{H^*}^{\sharp}$ maps an element of $\mathcal{A}^0_{P(H^*)}(\widetilde{\mathcal{H}}^*)$ of $\textrm{GL}(1,\mathbb{C})$-order $m$ 
to an element of $\mathcal{A}^0_{P(H^*), m+1}(E^*)$. 
Thus $\widetilde{\omega}_{H^*}^{\sharp}$ induces a map $\widehat{\omega}_{H^*}^{\sharp}: \mathcal{A}^0_{Z}(\widehat{\mathcal{H}}^*\otimes l^m)\to \mathcal{A}^0_{Z}(E^*\otimes l^{m+1})$ on $Z$. 
Since the kernel of $\widehat{\omega}_{H^*}^{\sharp}$ is $\mathcal{A}^0_{Z}((\widehat{\mathcal{H}}^*)^{1,0}\otimes l^m)$, 
$\widehat{\omega}_{H^*}^{\sharp}$ is injective on $\mathcal{A}^0_{Z}((\widehat{\mathcal{H}}^*)^{0,1}\otimes l^m)$. 
We restrict $\widehat{\omega}_{H^*}^{\sharp}$ to $\widehat{\mathcal{A}}^1(l^m)$ and 
and extend the map to $\widehat{\omega}_{H^*}^{\sharp}: \widehat{\mathcal{A}}^1(\wedge^kE\otimes l^m) \to \widehat{\mathcal{A}}^0(\wedge^kE\otimes E^*\otimes l^{m+1})$. 
The derivative $d^l$ induces an operator $d_{\widehat{\mathcal{H}}}^l: \widehat{\mathcal{A}}^0(\wedge^k E\otimes l^m)\to \widehat{\mathcal{A}}^1(\wedge^k E\otimes l^m)$ 
by the restriction to $\widehat{\mathcal{H}}$. 
Then there exists a commutative diagram 
\begin{equation*}\label{s4.6eq2} 
\begin{array}{ccccc}
\widehat{\mathcal{A}}^0(\wedge^kE\otimes l^m)&\stackrel{d_{\widehat{\mathcal{H}}}^l}{\longrightarrow}&
\widehat{\mathcal{A}}^1(\wedge^kE\otimes l^m)& \stackrel{\widehat{\omega}_{H^*}^{\sharp}}{\longrightarrow} & \widehat{\mathcal{A}}^0(\wedge^kE\otimes E^*\otimes l^{m+1}) \\
\uparrow &&\uparrow & & \uparrow  \\
\widetilde{\mathcal{A}}^0_{(m,0)}(\wedge^k E)&\stackrel{d_{\widetilde{\mathcal{H}}}}{\longrightarrow}&
\widetilde{\mathcal{A}}^1_{(m,0)}(\wedge^k E)& \stackrel{\widetilde{\omega}_{H^*}^{\sharp}}{\longrightarrow} & \widetilde{\mathcal{A}}^0_{(m+1,0)}(\wedge^kE\otimes E^*)  \\
\uparrow &&\uparrow & & \uparrow  \\
\mathcal{A}^0(\wedge^k E\otimes S^mH)&\stackrel{\nabla}{\longrightarrow }&\mathcal{A}^1(\wedge^k E\otimes S^mH)
&\stackrel{s^{m+1}_{H}\circ \omega_{H^*}^{\sharp}}{\longrightarrow } &\mathcal{A}^0(\wedge^kE\otimes E^*\otimes S^{m+1}H).
\end{array} 
\end{equation*}
The operator $d_{\widehat{\mathcal{H}}}^l$ is decomposed into two operators 
$\partial_{\widehat{\mathcal{H}}}^l : \widehat{\mathcal{A}}^0(\wedge^kE\otimes l^m)\to \mathcal{A}^0_{Z}((\widehat{\mathcal{H}}^*)^{1,0}\otimes \wedge^k E\otimes l^m)$ 
and $\bar{\partial}_{\widehat{\mathcal{H}}}^l : \widehat{\mathcal{A}}^0(\wedge^kE\otimes l^m)\to \mathcal{A}^0_{Z}((\widehat{\mathcal{H}}^*)^{0,1}\otimes \wedge^k E\otimes l^m)$ 
by $\widehat{\mathcal{H}}^*=(\widehat{\mathcal{H}}^*)^{1,0}\oplus (\widehat{\mathcal{H}}^*)^{0,1}$. 
Then $\bar{\partial}_{\widehat{\mathcal{H}}}^l$ is just the operator $\bar{\partial}^l$ 
since an element of $\widehat{\mathcal{A}}^0(\wedge^kE\otimes l^m)$ is holomorphic along each fiber of $f:Z\to M$. 
By the same proof of Proposition~\ref{s4.4p1}, we obtain 
\begin{prop}\label{s4.6p1}
$\widehat{\mathfrak{D}_{\wedge^kE}\xi} =\widehat{\omega}_{H^*}^{\sharp}(\bar{\partial}^l \widehat{\xi})$ for $\xi\in \mathcal{A}^0(\wedge^kE\otimes S^mH)$. $\hfill\Box$
\end{prop}

We denote by $\widehat{\mathcal{O}}(\wedge^kE\otimes l^m)$ the kernel of $\bar{\partial}^l$ on $\widehat{\mathcal{A}}^0(\wedge^kE\otimes l^m)$. 
Then $\widehat{\mathcal{O}}(\wedge^kE\otimes l^m)$ is a subsheaf of $\widehat{\mathcal{A}}^0(\wedge^kE\otimes l^m)$, 
whose elements are holomorphic sections of $f^{-1}(\wedge^kE)\otimes l^m$. 
Proposition~\ref{s4.6p1} and the injectivity of $\widehat{\omega}_{H^*}^{\sharp}$ on $(\widehat{\mathcal{H}}^*)^{0,1}$ induce an isomorphism 
\begin{equation}\label{s4.6e3} 
\Ker \mathfrak{D}_{\wedge^kE}\cong \widehat{\mathcal{O}}(\wedge^k E\otimes l^m)
\end{equation}
by $\xi \mapsto \widehat{\xi}$. 
We extend Proposition~\ref{s4.6p1} to the following proposition : 
\begin{prop}\label{s4.7p1}
$\widehat{\mathfrak{D}_{\wedge^kE}\xi-\zeta\wedge_E \id_E}=\widehat{\omega}_{H^*}^{\sharp}(\bar{\partial}^l \widehat{\xi}-\widehat{\zeta} \wedge_E \widehat{\theta}_1)$. 
\end{prop}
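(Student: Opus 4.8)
The plan is to mirror the derivation of Proposition~\ref{s4.5p1} from Proposition~\ref{s4.4p1}, replacing the $P(H^*)$-picture by the $Z$-picture and invoking its twistor counterpart Proposition~\ref{s4.6p1} in place of Proposition~\ref{s4.4p1}. Since $\mathfrak{D}_{\wedge^kE}\xi-\zeta\wedge_E\id_E$ is a single section of $\wedge^kE\otimes E^*\otimes S^{m+1}H$ and the lift $\,\widehat{\cdot}\,$ is $\mathbb{C}$-linear, it suffices to produce the twistor analogue of the identity $(\widetilde{\zeta\wedge_E\id_E})_0=\widetilde{\omega}_{H^*}^{\sharp}(\widetilde{\zeta}_0\wedge_E r^{-2}\widetilde{\theta}_1)$ used there, namely
\begin{equation*}
\widehat{\zeta\wedge_E\id_E}=\widehat{\omega}_{H^*}^{\sharp}(\widehat{\zeta}\wedge_E\widehat{\theta}_1),
\end{equation*}
and then add it to Proposition~\ref{s4.6p1}, for $\xi\in\mathcal{A}^0(\wedge^kE\otimes S^mH)$ and $\zeta\in\mathcal{A}^0(\wedge^{k-1}E\otimes S^{m+1}H)$.

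First I would reduce this identity to the single computation $\widehat{\omega}_{H^*}^{\sharp}(\widehat{\theta}_1)=\widehat{\id}_E$. The map $\widehat{\omega}_{H^*}^{\sharp}$ acts only on the one-form factor, which here is supplied entirely by $\widehat{\theta}_1$, so wedging $\widehat{\zeta}$ on commutes with it and $\widehat{\omega}_{H^*}^{\sharp}(\widehat{\zeta}\wedge_E\widehat{\theta}_1)=\widehat{\zeta}\wedge_E\widehat{\omega}_{H^*}^{\sharp}(\widehat{\theta}_1)$; it then remains to identify $\widehat{\omega}_{H^*}^{\sharp}(\widehat{\theta}_1)$. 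Here I would descend the corresponding $P(H^*)$-statement: in the proof of Proposition~\ref{s4.5p1} it was shown that $\widetilde{\omega}_{H^*}^{\sharp}(\widetilde{\theta}_1)=r^2\widetilde{\id}_E$, hence $\widetilde{\omega}_{H^*}^{\sharp}(r^{-2}\widetilde{\theta}_1)=\widetilde{\id}_E$. The form $r^{-2}\widetilde{\theta}_1$ is of $\textrm{GL}(1,\mathbb{C})$-order $-1$ and induces $\widehat{\theta}_1$, while $\widetilde{\omega}_{H^*}^{\sharp}$ raises the $\textrm{GL}(1,\mathbb{C})$-order by one and induces $\widehat{\omega}_{H^*}^{\sharp}$; thus $\widetilde{\id}_E$ is of order $0$ and descends to $\widehat{\id}_E$, giving $\widehat{\omega}_{H^*}^{\sharp}(\widehat{\theta}_1)=\widehat{\id}_E$. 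Tracking the line-bundle factors confirms consistency: $\widehat{\zeta}$ is $f^{-1}(\wedge^{k-1}E)\otimes l^{m+1}$-valued and $\widehat{\theta}_1$ is $f^{-1}(E)\otimes l^{-1}$-valued, so $\widehat{\zeta}\wedge_E\widehat{\theta}_1$ lies in $\widehat{\mathcal{A}}^1(\wedge^kE\otimes l^m)$ and its image under $\widehat{\omega}_{H^*}^{\sharp}$ lands in $\widehat{\mathcal{A}}^0(\wedge^kE\otimes E^*\otimes l^{m+1})$, matching $\widehat{\zeta\wedge_E\id_E}$.

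With the identity in hand, I would conclude by combining it with Proposition~\ref{s4.6p1}: applying $\widehat{\omega}_{H^*}^{\sharp}$ to $\bar{\partial}^l\widehat{\xi}-\widehat{\zeta}\wedge_E\widehat{\theta}_1$ and using $\widehat{\mathfrak{D}_{\wedge^kE}\xi}=\widehat{\omega}_{H^*}^{\sharp}(\bar{\partial}^l\widehat{\xi})$ together with the additivity of the lift yields
\begin{equation*}
\widehat{\omega}_{H^*}^{\sharp}(\bar{\partial}^l\widehat{\xi}-\widehat{\zeta}\wedge_E\widehat{\theta}_1)=\widehat{\mathfrak{D}_{\wedge^kE}\xi}-\widehat{\zeta\wedge_E\id_E}=\widehat{\mathfrak{D}_{\wedge^kE}\xi-\zeta\wedge_E\id_E}.
\end{equation*}
The step carrying the least automatic content—and the one I would write out carefully—is the order bookkeeping that legitimizes descending $\widetilde{\omega}_{H^*}^{\sharp}(r^{-2}\widetilde{\theta}_1)=\widetilde{\id}_E$ to $\widehat{\omega}_{H^*}^{\sharp}(\widehat{\theta}_1)=\widehat{\id}_E$; everything else is linearity and appeals to results already established. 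Alternatively, one could bypass the separate identity and simply transport the equation of Proposition~\ref{s4.5p1} along the isomorphism of Corollary~\ref{s3.6c1} together with the definitions of $\widehat{\theta}_1$ and $\widehat{\omega}_{H^*}^{\sharp}$, arriving at the same conclusion.
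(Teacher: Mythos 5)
Your proposal is correct and follows essentially the same route as the paper: the paper's proof likewise descends the identity $\widetilde{\omega}_{H^*}^{\sharp}(\widetilde{\zeta}_0\wedge_E r^{-2}\widetilde{\theta}_1)=(\widetilde{\zeta\wedge_E \id_E})_0$ from Proposition~\ref{s4.5p1} to $Z$, obtaining $\widehat{\omega}_{H^*}^{\sharp}(\widehat{\zeta}\wedge_E\widehat{\theta}_1)=\widehat{\zeta\wedge_E\id_E}$, and then combines this with Proposition~\ref{s4.6p1}. Your explicit $\textrm{GL}(1,\mathbb{C})$-order bookkeeping just spells out the correspondence the paper invokes implicitly.
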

\begin{proof}
The terms $\widetilde{\omega}_{H^*}^{\sharp}(\widetilde{\zeta}_0\wedge_E r^{-2}\widetilde{\theta}_1)$ and $(\widetilde{\zeta\wedge_E \id_E})_0$ in (\ref{s4.5eq12}) 
correspond to $\widehat{\omega}_{H^*}^{\sharp}(\widehat{\zeta} \wedge_E \widehat{\theta}_1)$ and $\widehat{\zeta \wedge_E \id_E}$, respectively. 
Then $\widehat{\omega}_{H^*}^{\sharp}(\widehat{\zeta} \wedge_E \widehat{\theta}_1) =\widehat{\zeta \wedge_E \id_E}$. 
The equation and Proposition~\ref{s4.6p1} induce this Proposition. 
\end{proof}

As the proof of Proposition~\ref{s4.5p2}, Proposition~\ref{s4.6p1} and~\ref{s4.7p1} imply the following : 
\begin{prop}\label{s4.7p2} 
Let $\xi$ and $\zeta$ be elements of $\mathcal{A}^0(\wedge^k E \otimes S^mH)$ and $\mathcal{A}^0(\wedge^{k-1} E \otimes S^{m+1}H)$, respectively.  
The element $\xi$ is quaternionic and $\zeta=\tr\circ\mathfrak{D}_{\wedge^kE}(\xi)$ if and only if 
$\bar{\partial}^l \widehat{\xi}-\widehat{\zeta} \wedge_E \widehat{\theta}_1=0$ for $1\le k\le 2n-1$, 
$\bar{\partial}^l \widehat{\xi}-\widehat{\zeta} \wedge_E \widehat{\theta}_1=0$ and $\bar{\partial}^l \widehat{\zeta}=0$ for $k=2n$. $\hfill\Box$
\end{prop}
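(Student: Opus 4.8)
The plan is to reduce the asserted biconditional to the two operator identities $\mathfrak{D}_{\wedge^kE}\xi-\zeta\wedge_E\id_E=0$ and (for $k=2n$) $\mathfrak{D}_{\wedge^{2n-1}E}\zeta=0$ on $M$, and then to transport each of these to a $\bar{\partial}^l$-equation on $Z$ by means of Proposition~\ref{s4.6p1} and Proposition~\ref{s4.7p1}. This mirrors the strategy used for Proposition~\ref{s4.5p2}, with the lift to $P(H^*)$ replaced by the lift to $Z$ furnished by Corollary~\ref{s3.6c1}.

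First I would exploit the decomposition $\wedge^k E\otimes E^*=(\wedge^k E\otimes E^*)_0\oplus(\wedge^{k-1}E)\wedge\id_E$ together with the identity $\tr(\zeta\wedge_E\id_E)=\zeta$ to write $\mathfrak{D}_{\wedge^kE}\xi=\mathfrak{D}^0_{\wedge^kE}\xi+(\tr\circ\mathfrak{D}_{\wedge^kE}\xi)\wedge_E\id_E$. For $1\le k\le 2n-1$ this identifies ``$\xi$ quaternionic and $\zeta=\tr\circ\mathfrak{D}_{\wedge^kE}(\xi)$'' with the single equation $\mathfrak{D}_{\wedge^kE}\xi-\zeta\wedge_E\id_E=0$: applying $\tr$ recovers $\zeta=\tr\circ\mathfrak{D}_{\wedge^kE}(\xi)$, while the $(\wedge^k E\otimes E^*)_0$-component is precisely $\mathfrak{D}^0_{\wedge^kE}\xi=0$. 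For $k=2n$ I would invoke $(\wedge^{2n}E\otimes E^*)_0=\{0\}$, so that the equation $\mathfrak{D}_{\wedge^{2n}E}\xi-\zeta\wedge_E\id_E=0$ merely defines $\zeta=\tr\circ\mathfrak{D}_{\wedge^{2n}E}(\xi)$ and holds automatically, while the genuine quaternionic condition $\mathfrak{D}_{\wedge^{2n-1}E}\circ\tr\circ\mathfrak{D}_{\wedge^{2n}E}(\xi)=0$ reads $\mathfrak{D}_{\wedge^{2n-1}E}\zeta=0$.

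Next I would transport these equations to $Z$. Proposition~\ref{s4.7p1} gives $\widehat{\mathfrak{D}_{\wedge^kE}\xi-\zeta\wedge_E\id_E}=\widehat{\omega}_{H^*}^{\sharp}(\bar{\partial}^l\widehat{\xi}-\widehat{\zeta}\wedge_E\widehat{\theta}_1)$; since $\bar{\partial}^l\widehat{\xi}-\widehat{\zeta}\wedge_E\widehat{\theta}_1$ takes values in the $(0,1)$-part $(\widehat{\mathcal{H}}^*)^{0,1}$ on which $\widehat{\omega}_{H^*}^{\sharp}$ is injective, the vanishing of $\mathfrak{D}_{\wedge^kE}\xi-\zeta\wedge_E\id_E$ is equivalent to $\bar{\partial}^l\widehat{\xi}-\widehat{\zeta}\wedge_E\widehat{\theta}_1=0$. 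Likewise Proposition~\ref{s4.6p1} identifies $\mathfrak{D}_{\wedge^{2n-1}E}\zeta=0$ with $\bar{\partial}^l\widehat{\zeta}=0$ by the same injectivity. Combining the two equivalences yields the stated characterisation in both ranges of $k$.

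The only point requiring care, and the analogue of the crucial step in the proof of Proposition~\ref{s4.5p2}, is verifying that the expressions produced on $Z$ genuinely lie in the subbundle $(\widehat{\mathcal{H}}^*)^{0,1}$ where $\widehat{\omega}_{H^*}^{\sharp}$ is injective, so that no implication is lost when passing through $\widehat{\omega}_{H^*}^{\sharp}$. This is immediate here because $\bar{\partial}^l$ produces $(0,1)$-forms and $\widehat{\theta}_1$ is itself a $(0,1)$-form valued in $f^{-1}(E)\otimes l^{-1}$; the residual work is the bookkeeping of matching the $M$-side decomposition with the $Z$-side lift, which is routine given Corollary~\ref{s3.6c1}.
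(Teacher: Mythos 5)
Your proposal is correct and coincides with the paper's intended argument: the paper gives this proposition without a separate proof, remarking only that Propositions~\ref{s4.6p1} and \ref{s4.7p1} imply it ``by the same argument in the proof of Proposition~\ref{s4.5p2}'', and that argument is exactly what you carry out --- reduce the quaternionic condition and $\zeta=\tr\circ\mathfrak{D}_{\wedge^kE}(\xi)$ to $\mathfrak{D}_{\wedge^kE}\xi-\zeta\wedge_E\id_E=0$ (together with $\mathfrak{D}_{\wedge^{2n-1}E}\zeta=0$ when $k=2n$) via the decomposition $\wedge^kE\otimes E^*=(\wedge^kE\otimes E^*)_0\oplus(\wedge^{k-1}E)\wedge\id_E$, then translate both equations to $Z$ using the lift isomorphism and the injectivity of $\widehat{\omega}_{H^*}^{\sharp}$ on the $(0,1)$-part. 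Your bookkeeping of the $k=2n$ case, where $(\wedge^{2n}E\otimes E^*)_0=\{0\}$ makes the first equation a definition of $\zeta$ and the quaternionic condition becomes $\mathfrak{D}_{\wedge^{2n-1}E}\zeta=0$, also matches the paper.
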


\section{Quaternionic $k$-vector fields} 
In this section, we consider a section of $\wedge^k E\otimes S^mH$ in the case $m=k$. 
Such a section is a $k$-vector field on $M$ since $\wedge^k E\otimes S^kH$ is the subbundle of $\wedge^k TM\otimes \mathbb{C}$. 
We call a quaternionic section of $\wedge^kE \otimes S^kH$ as a quaternionic $k$-vector field on $M$. 
We prove that any quaternionic $k$-vector field corresponds to a holomorphic $(k,0)$-vector field on $Z$. 
\subsection{Definition of quaternionic $k$-vector fields}
\begin{defi}\label{s5.1d1}
{\rm 
A section $X$ of $\wedge^k E\otimes S^kH$ is called a \textit{quaternionic $k$-vector field} on $M$ 
if $\mathfrak{D}_{\wedge^k E}^0(X)=0$ for $1\le k\le 2n-1$ and $\mathfrak{D}_{\wedge^{k-1} E}\circ \tr\circ \mathfrak{D}_{\wedge^{k} E}(X)=0$ for $k=2n$. 
If a quaternionic $k$-vector field $X$ is $\tau$-invariant, then $X$ is said to be a \textit{quaternionic real $k$-vector field} on $M$. 
}
\end{defi}
This definition is also valid in quaternionic manifolds. 
We denote by $\mathcal{Q}(\wedge^kE\otimes S^kH)$ the sheaf of quaternionic $k$-vector fields on $M$. 
The sheaf $\mathcal{Q}(\wedge^kE\otimes S^kH)^{\tau}$ of $\tau$-invariant elements of $\mathcal{Q}(\wedge^kE\otimes S^kH)$ is the sheaf of quaternionic real $k$-vector fields on $M$. 

\begin{prop}
A 1-vector field $X$ on $M$ is quaternionic if and only if $X$ preserves the quaternionic structure $Q$, 
that is, $L_XQ\subset Q$ where $L_X$ is the Lie derivative with respect to $X$. 
\end{prop}
\begin{proof}
Let $\nabla$ be the Levi-Civita connection on $M$. 
We define an endomorphism $[\nabla X, I]$ for $I\in Q$ as $\nabla X \circ I-I \circ \nabla X$. 
Since $L_XI=\nabla_X I-[\nabla X, I]$, 
the condition $L_XQ\subset Q$ is equal to $[\nabla X, Q]\subset Q$. 
Under the decomposition $\textrm{End}(TM)=(E\otimes E^*)_0 \otimes Q + \id_E\otimes Q+ E\otimes E^*\otimes \id_H$, 
$[\nabla X, Q]\subset Q$ is equivalent that $\nabla X$ has no component of $(E\otimes E^*)_0 \otimes Q$. 
This condition is written by $\mathfrak{D}_{E}^0(X)=0$ 
since $(E\otimes E^*)_0 \otimes Q\cong (E\otimes E^*)_0 \otimes S^2H$ by $\omega_{H^*}^{\sharp}$. 
\end{proof}

\subsection{Horizontal lift of $k$-vector fields to $P(H^*)$} 
Let $\widetilde{\mathcal{A}}_0(\wedge^k \widetilde{\mathcal{H}})$ denote the sheaf of $\textrm{GL}(1,\mathbb{H})$-invariant horizontal $k$-vector fields on $P(H^*)$. 
The isomorphism $\mathcal{A}^0(\wedge^k TM)\cong \widetilde{\mathcal{A}}_0(\wedge^k \widetilde{\mathcal{H}})$ is given 
by taking the horizontal lift $\widetilde{X}_h$ of a $k$-vector field $X\in \mathcal{A}^0(\wedge^k TM)$. 
From now on, we denote by $\wedge^k E S^kH$ the tensor space $\wedge^k E \otimes S^kH$ for simplicity. 
We define $\widetilde{\wedge^k E S^kH}$ as a subbundle of $\wedge^k \widetilde{\mathcal{H}}$ corresponding to $\wedge^k E S^kH$. 
Let $\widetilde{\mathcal{A}}_0(\widetilde{\wedge^k E S^kH})$ be a subsheaf of $\widetilde{\mathcal{A}}_0(\wedge^k \widetilde{\mathcal{H}})$ 
which consists of the horizontal lift of elements of $\mathcal{A}^0(\wedge^k E S^kH)$. 
Then $\mathcal{A}^0(\wedge^k E S^kH)\cong \widetilde{\mathcal{A}}_0(\widetilde{\wedge^k E S^kH})$ by $X\mapsto \widetilde{X}_h$. 
The $k$-th wedge $\widetilde{\theta}^k_0$ provides a bundle map $\widetilde{\theta}^k_0: \wedge^k \widetilde{\mathcal{H}}\to p^{-1}(\wedge^kE)$ and an isomorphism 
\begin{equation}\label{s5.4eq1}
\widetilde{\mathcal{A}}_0(\widetilde{\wedge^k E S^kH})\cong \widetilde{\mathcal{A}}^0_{(k,0)}(\wedge^k E)
\end{equation} 
such that $\widetilde{\theta}^k_0(\widetilde{X}_h)=\widetilde{X}_0$ for $X\in \mathcal{A}^0(\wedge^k E S^kH)$.

\begin{lem}\label{s5.4l1} 
Let $\widetilde{X}_h$ be an element of $\widetilde{\mathcal{A}}_0(\wedge^k E S^kH)$. 
The $(k,0)$-part $\widetilde{X}_h^{k,0}$ is $\textrm{GL}(1,\mathbb{C})$-invariant and holomorphic along each fiber of $p:P(H^*)\to M$. 
\end{lem}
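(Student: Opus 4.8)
The plan is to treat the two assertions separately, after fixing notation: write $\widetilde{X}_h=\sum_{p+q=k}\widetilde{X}_h^{p,q}$ for the type decomposition of the (complexified) horizontal $k$-vector field with respect to the complex structure $\widetilde{I}$ restricted to $\widetilde{\mathcal{H}}$, so that $\widetilde{X}_h^{k,0}$ is precisely the component lying in $\wedge^k\widetilde{\mathcal{H}}^{1,0}$. The integrability of $\widetilde{I}$ (Proposition~\ref{s3.4p1}) is what makes all the holomorphic language below meaningful.

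For the $\textrm{GL}(1,\mathbb{C})$-invariance I would exploit that the right action $R_c$ of $c\in\textrm{GL}(1,\mathbb{C})$ is holomorphic, which is exactly the property used to descend $\widetilde{I}$ to $Z$, together with the fact that the whole group $\textrm{GL}(1,\mathbb{H})$ preserves the horizontal distribution $\widetilde{\mathcal{H}}$, this being the connection distribution. Hence $(R_c)_{\ast}$ preserves $\widetilde{\mathcal{H}}^{1,0}$ and commutes with the projection $\wedge^k(\widetilde{\mathcal{H}}\otimes\mathbb{C})\to\wedge^k\widetilde{\mathcal{H}}^{1,0}$ onto the $(k,0)$-part. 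Since $\widetilde{X}_h$ is $\textrm{GL}(1,\mathbb{H})$-invariant, applying this equivariant projection yields $(R_c)_{\ast}\widetilde{X}_h^{k,0}=\widetilde{X}_h^{k,0}$ for every $c\in\textrm{GL}(1,\mathbb{C})$. Only the subgroup $\textrm{GL}(1,\mathbb{C})$ survives here, since the full group $\textrm{GL}(1,\mathbb{H})$ rotates $\widetilde{I}$ and so does not preserve the type decomposition.

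For holomorphicity along the fibres of $p$ I would contract with the $(k,0)$-form $\widetilde{\theta}_0^k$. Because $\widetilde{\theta}_0$ is of type $(1,0)$, the form $\widetilde{\theta}_0^k$ annihilates every component $\widetilde{X}_h^{p,q}$ with $q\ge 1$, so that $\widetilde{\theta}_0^k(\widetilde{X}_h^{k,0})=\widetilde{\theta}_0^k(\widetilde{X}_h)=\widetilde{X}_0$ by Lemma~\ref{s5.4l2}. The section $\widetilde{X}_0\in\widetilde{\mathcal{A}}^0_{(k,0)}(\wedge^k E)$ is holomorphic along each fibre, its fibre coefficients being polynomials of degree $(k,0)$ in the fibre coordinate $(z,w)$ and hence holomorphic. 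It therefore suffices to know that $\widetilde{\theta}_0^k$ restricts to a fibrewise isomorphism $\wedge^k\widetilde{\mathcal{H}}^{1,0}\to p^{-1}(\wedge^k E)$ which is holomorphic along the fibres, for then $\widetilde{X}_h^{k,0}=(\widetilde{\theta}_0^k)^{-1}(\widetilde{X}_0)$ inherits the fibrewise holomorphicity.

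The main point, and the only place a computation is really needed, is this last holomorphicity of $\widetilde{\theta}_0$ along the fibres. Here I would invoke Lemma~\ref{s3.4l1}, which gives $\nabla^{0,1}\widetilde{\theta}_0=\eta_1\otimes\widetilde{\theta}_1$; the anti-holomorphic covector factor in this expression is $\widetilde{\theta}_1$, which is a \emph{horizontal} form and therefore vanishes on vectors tangent to the fibre, since $\widetilde{\theta}$ itself kills every vertical vector because $p_{\ast}$ does. Restricting to a fibre $F=p^{-1}(x)$, which is a complex submanifold of $(P(H^{\ast}),\widetilde{I})$ because $\widetilde{\mathcal{V}}$ is $\widetilde{I}$-invariant, this shows $\bar{\partial}_F\widetilde{\theta}_0=0$, so $\widetilde{\theta}_0$, and hence $\widetilde{\theta}_0^k$, is holomorphic along $F$; being a fibrewise linear isomorphism onto the fibrewise-trivial bundle $p^{-1}(\wedge^k E)|_F$, it transports the holomorphic structure as required. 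Thus the expected obstacle, namely controlling how the varying complex structure $\widetilde{I}$ interacts with the $(k,0)$-projection along the twistor fibre, is resolved by the observation that the $\bar{\partial}$-derivative of $\widetilde{\theta}_0$ points only in horizontal directions.
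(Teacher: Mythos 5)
Your treatment of the $\textrm{GL}(1,\mathbb{C})$-invariance is correct and is exactly the paper's argument, and the first half of your holomorphicity argument also matches the paper: $\widetilde{\theta}_0^k(\widetilde{X}_h^{k,0})=\widetilde{\theta}_0^k(\widetilde{X}_h)=\widetilde{X}_0$ is a fibrewise polynomial of degree $(k,0)$, hence fibrewise holomorphic, and since the $(0,1)$-factor $\widetilde{\theta}_1$ in $\nabla^{0,1}\widetilde{\theta}_0=\eta_1\otimes\widetilde{\theta}_1$ is horizontal, $\nabla_v^{0,1}\widetilde{\theta}_0^k=0$ for vertical $v$. The gap is in your final inference: ``being a fibrewise linear isomorphism \dots it transports the holomorphic structure as required.'' The bundle $\wedge^k\widetilde{\mathcal{H}}^{1,0}$ is \emph{not} a holomorphic subbundle of $\wedge^kT^{1,0}P(H^*)$ (the paper's proof explicitly flags this), and the lemma asserts holomorphicity of $\widetilde{X}_h^{k,0}$ as a section of the ambient bundle $\wedge^kT^{1,0}P(H^*)$ along the fibre, i.e.\ $\nabla_v^{0,1}\widetilde{X}_h^{k,0}=0$ for all vertical $v$. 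What your established facts give, via Leibniz, is only $\widetilde{\theta}_0^k(\nabla_v^{0,1}\widetilde{X}_h^{k,0})=\bar{\partial}_v(\widetilde{X}_0)=0$, i.e.\ that $\nabla_v^{0,1}\widetilde{X}_h^{k,0}$ lies in $\Ker \widetilde{\theta}_0^k$ --- and this kernel is large (it contains every $(k,0)$-multivector with a vertical factor), so vanishing does not follow. Transporting the holomorphic structure through the smooth isomorphism $\widetilde{\theta}_0^k|_{\wedge^k\widetilde{\mathcal{H}}^{1,0}}$ produces a holomorphic structure on $\wedge^k\widetilde{\mathcal{H}}^{1,0}|_F$ that has no a priori relation to the ambient one.

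That this inference is not formal can be seen in a toy model: over $F=\mathbb{C}$ take $W=\mathcal{O}^2$, the holomorphic projection $\Phi(a,b)=a$, and the smooth (non-holomorphic) subbundle $U=\{(a,\bar{z}a)\}$ complementary to $\Ker\Phi$; the section $s=(1,\bar{z})$ lies in $U$, $\Phi$ and $\Phi(s)=1$ are holomorphic, yet $s$ is not holomorphic. What rescues the actual situation --- and is the part of the paper's proof you omitted --- is the verification that for a \emph{horizontal} $(k,0)$-field $X'$ and vertical $v$, the remaining components of $\nabla_v^{0,1}X'$ vanish as well: by Leibniz and Proposition~\ref{s3.4p2}, the contractions $(\widetilde{\theta}_0^{k-1}\wedge\eta_0)(\nabla_v^{0,1}X')$, $(\widetilde{\theta}_0^{k-1}\wedge\eta_1)(\nabla_v^{0,1}X')$ and $(\widetilde{\theta}_0^{k-2}\wedge\eta_0\wedge\eta_1)(\nabla_v^{0,1}X')$ are zero, because in each term of $\nabla^{0,1}(\widetilde{\theta}_0^{k-1}\wedge\eta_0)$, $\nabla^{0,1}(\widetilde{\theta}_0^{k-1}\wedge\eta_1)$, $\nabla^{0,1}(\widetilde{\theta}_0^{k-2}\wedge\eta_0\wedge\eta_1)$ either the $(0,1)$-factor is $\widetilde{\theta}_1$ (killed by vertical $v$) or the $(k,0)$-factor contains $\eta_0$ or $\eta_1$ (killed by the horizontal $X'$). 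Note the factors $\overline{\eta}_0,\overline{\eta}_1$ appearing there do \emph{not} vanish on vertical vectors, so this is a genuine computation, not a formality. With these vanishings, $\nabla_v^{0,1}X'$ is both horizontal and in $\Ker\widetilde{\theta}_0^k$, hence zero, which completes the proof.
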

\begin{proof} 
The horizontal $k$-vector field $\widetilde{X}_h$ is $\textrm{GL}(1,\mathbb{C})$-invariant. 
Since the action of $\textrm{GL}(1,\mathbb{C})$ on $P(H^*)$ is holomorphic, 
the $(k,0)$-part $\widetilde{X}_h^{k,0}$ is also $\textrm{GL}(1,\mathbb{C})$-invariant. 
The vector field $\widetilde{X}_h^{k,0}$ is holomorphic along each fiber if and only if 
$\nabla_{v}^{0,1}\widetilde{X}_h^{k,0}$ vanishes for any $v\in \mathcal{A}^0(\widetilde{\mathcal{V}})$. 
We remark that the bundle $\widetilde{\mathcal{H}}^{1,0}$ is not a holomorphic subbundle of $T^{1,0}P(H^*)$. 
The condition $\nabla_{v}^{0,1}\widetilde{X}_h^{k,0}=0$ is equal that $\nabla^{0,1}_v \widetilde{X}_h^{k,0}$ is annihilated by $\widetilde{\theta}_0^k$, $\widetilde{\theta}_0^{k-1}\wedge \eta_0$, 
$\widetilde{\theta}_0^{k-1}\wedge \eta_1$ and $\widetilde{\theta}_0^{k-2}\wedge \eta_0\wedge \eta_1$. 
By Proposition~\ref{s3.4p2}, $\widetilde{\theta}_0^{k-1}\wedge \eta_0 (\nabla^{0,1}_v \widetilde{X}_h^{k,0})$, 
$\widetilde{\theta}_0^{k-1}\wedge \eta_1 (\nabla^{0,1}_v \widetilde{X}_h^{k,0})$ and $\widetilde{\theta}_0^{k-2}\wedge \eta_0\wedge \eta_1 (\nabla^{0,1}_v \widetilde{X}_h^{k,0})$ vanish. 
Furthermore, $\widetilde{\theta}_0^k(\nabla^{0,1}_v \widetilde{X}_h^{k,0})=\bar{\partial}_v(\widetilde{\theta}_0^k(\widetilde{X}_h^{k,0}))
=\bar{\partial}_v(\widetilde{\theta}_0^k(\widetilde{X}_h))=\widetilde{\theta}_0^k(\nabla_{v}^{0,1}\widetilde{X}_h)=0$ 
for any $v\in \mathcal{A}^0(\widetilde{\mathcal{V}})$. 
Hence $\widetilde{X}_h^{k,0}$ is holomorphic along each fiber. 
\end{proof}

We denote by $\widetilde{\mathcal{A}}_0(\wedge^k \widetilde{\mathcal{H}}^{1,0})$ 
the sheaf of horizontal $(k,0)$-vector fields which are $\textrm{GL}(1,\mathbb{C})$-invariant and holomorphic along each fiber of $p:P(H^*)\to M$. 
\begin{prop}\label{s5.4p1} 
The isomorphism $\mathcal{A}^0(\wedge^k E S^kH)\cong \widetilde{\mathcal{A}}_0(\wedge^k \widetilde{\mathcal{H}}^{1,0})$ is given by $X\mapsto \widetilde{X}_h^{k,0}$. 
Moreover, $\widetilde{X}_0=\widetilde{\theta}^k_0(\widetilde{X}_h^{k,0})$ for $X\in \mathcal{A}^0(\wedge^k E S^kH)$. 
\end{prop}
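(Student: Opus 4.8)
The plan is to factor the assignment $X\mapsto \widetilde{X}_h^{k,0}$ as the composition of the isomorphism $X\mapsto \widetilde{X}_h$, i.e. $\mathcal{A}^0(\wedge^k E S^kH)\cong \widetilde{\mathcal{A}}_0(\widetilde{\wedge^k E S^kH})$ established just before Lemma~\ref{s5.4l2}, with the operation $\widetilde{X}_h\mapsto \widetilde{X}_h^{k,0}$ of taking the $(k,0)$-part. By Lemma~\ref{s5.4l1} the $(k,0)$-part $\widetilde{X}_h^{k,0}$ is $\textrm{GL}(1,\mathbb{C})$-invariant and holomorphic along the fibers of $p$, so it lies in $\widetilde{\mathcal{A}}_0(\wedge^k \widetilde{\mathcal{H}}^{1,0})$ and the composite map is well defined. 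It therefore remains to show that taking the $(k,0)$-part gives a bijection $\widetilde{\mathcal{A}}_0(\widetilde{\wedge^k E S^kH})\cong \widetilde{\mathcal{A}}_0(\wedge^k \widetilde{\mathcal{H}}^{1,0})$.

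The tool for this is the bundle map $\widetilde{\theta}_0^k$. First I would record that, because $\widetilde{\theta}_0$ restricts to a fiberwise $\mathbb{C}$-linear isomorphism $\widetilde{\mathcal{H}}^{1,0}\xrightarrow{\cong} p^{-1}(E)$ (this is built into the identification $\widetilde{\mathcal{H}}^{1,0}\cong E\otimes u$ at each $u^*$, on which $\widetilde{\theta}_1$ vanishes), its $k$-th wedge $\widetilde{\theta}_0^k$ is a fiberwise isomorphism $\wedge^k \widetilde{\mathcal{H}}^{1,0}\xrightarrow{\cong} p^{-1}(\wedge^k E)$. Next I would upgrade this to a sheaf isomorphism $\widetilde{\theta}_0^k:\widetilde{\mathcal{A}}_0(\wedge^k \widetilde{\mathcal{H}}^{1,0})\cong \widetilde{\mathcal{A}}^0_{(k,0)}(\wedge^k E)$ by matching the two defining conditions. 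Since $\widetilde{\theta}_0$ has $\textrm{GL}(1,\mathbb{C})$-order $1$, pairing the order-$k$ form $\widetilde{\theta}_0^k$ with a $\textrm{GL}(1,\mathbb{C})$-invariant horizontal $(k,0)$-vector field $X'$ yields a section satisfying $(R_c)^*(\widetilde{\theta}_0^k(X'))=c^k\,\widetilde{\theta}_0^k(X')$, that is, a polynomial of degree $(k,0)$ along the fiber; and holomorphicity along the fibers is preserved in both directions, which is precisely the equivalence ``$X'$ holomorphic along each fiber $\Longleftrightarrow$ $\widetilde{\theta}_0^k(X')$ holomorphic along each fiber'' that comes out of Proposition~\ref{s3.4p2} in the proof of Lemma~\ref{s5.4l1}. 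The fiberwise invertibility noted above supplies the inverse sheaf map.

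With this in hand the argument closes on a commutative triangle. Because $\widetilde{\theta}_0^k$ is a $(k,0)$-form it annihilates every component of $\widetilde{X}_h$ other than $\widetilde{X}_h^{k,0}$, so $\widetilde{\theta}_0^k(\widetilde{X}_h^{k,0})=\widetilde{\theta}_0^k(\widetilde{X}_h)=\widetilde{X}_0$, the last equality being Lemma~\ref{s5.4l2}. Thus the $(k,0)$-part map sits over the identity of $\widetilde{\mathcal{A}}^0_{(k,0)}(\wedge^k E)$ between the two copies of $\widetilde{\theta}_0^k$, one of which is an isomorphism by Lemma~\ref{s5.4l2} and the other by the previous paragraph; hence it is itself an isomorphism $\widetilde{\mathcal{A}}_0(\widetilde{\wedge^k E S^kH})\cong \widetilde{\mathcal{A}}_0(\wedge^k \widetilde{\mathcal{H}}^{1,0})$. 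Composing with $X\mapsto \widetilde{X}_h$ gives the claimed isomorphism $X\mapsto \widetilde{X}_h^{k,0}$, and the displayed identity $\widetilde{X}_0=\widetilde{\theta}_0^k(\widetilde{X}_h^{k,0})$ is exactly the chain of equalities just used.

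The step I expect to be the main obstacle is the bijectivity of the $(k,0)$-part operation itself: a priori it discards the $(k-1,1),\dots,(0,k)$ components of $\widetilde{X}_h$, so it is not obvious that $X$ can be recovered from $\widetilde{X}_h^{k,0}$ alone. The resolution is that $\widetilde{X}_0$, which by Lemma~\ref{s5.4l2} already determines $X$, depends on $\widetilde{X}_h$ only through its $(k,0)$-part via $\widetilde{\theta}_0^k$; so once the fiberwise invertibility of $\widetilde{\theta}_0^k$ on $\wedge^k \widetilde{\mathcal{H}}^{1,0}$ and the matching of the $\textrm{GL}(1,\mathbb{C})$-equivariance and fiber-holomorphicity conditions are in place, both injectivity and surjectivity follow.
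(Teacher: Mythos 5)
Your proposal is correct and follows essentially the same route as the paper: both factor $X\mapsto\widetilde{X}_h^{k,0}$ through the horizontal lift, establish the sheaf isomorphism $\widetilde{\mathcal{A}}_0(\wedge^k\widetilde{\mathcal{H}}^{1,0})\cong\widetilde{\mathcal{A}}^0_{(k,0)}(\wedge^k E)$ via $\widetilde{\theta}_0^k$ by matching the $\textrm{GL}(1,\mathbb{C})$-equivariance and fiber-holomorphicity conditions, and then conclude that the $(k,0)$-part map is an isomorphism because the isomorphism of Lemma~\ref{s5.4l2} factors through it, with the identity $\widetilde{X}_0=\widetilde{\theta}_0^k(\widetilde{X}_h)=\widetilde{\theta}_0^k(\widetilde{X}_h^{k,0})$ closing the triangle. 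This is precisely the paper's argument.
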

\begin{proof} 
Let $X'$ be a horizontal $(k,0)$-vector field on $P(H^*)$. 
As in the proof of Lemma~\ref{s5.4l1}, $\widetilde{\theta}_0^k(\nabla^{0,1}_v X')=\bar{\partial}_v(\widetilde{\theta}_0^k(X'))$ for any $v\in \mathcal{A}^0(\widetilde{\mathcal{V}})$. 
Moreover, 
\begin{equation}\label{s5.4eq2} 
R_c^*(\widetilde{\theta}_0^k(X'))=(R_c^*\widetilde{\theta}_0^k)((R_{c^{-1}})_*X')=c^k\widetilde{\theta}_0^k((R_{c^{-1}})_*X')
\end{equation}
for any $c\in \textrm{GL}(1,\mathbb{C})$. 
Thus the bundle isomorphism $\widetilde{\theta}_0^k: \wedge^k \widetilde{\mathcal{H}}^{1,0}\cong p^{-1}(\wedge^k E)$ induces an isomorphism 
\begin{equation}\label{s5.4eq3} 
\widetilde{\mathcal{A}}_0(\wedge^k \widetilde{\mathcal{H}}^{1,0})\cong \widetilde{\mathcal{A}}^0_{(k,0)}(\wedge^k E).
\end{equation} 
By Lemma~\ref{s5.4l1}, we have a map $\widetilde{\mathcal{A}}_0(\widetilde{\wedge^k E S^kH})\to \widetilde{\mathcal{A}}_0(\wedge^k \widetilde{\mathcal{H}}^{1,0})$. 
This map is the composition of two isomorphisms in (\ref{s5.4eq1}) and (\ref{s5.4eq3}), and it is isomorphic. 
Hence $\mathcal{A}^0(\wedge^k E S^kH)\cong \widetilde{\mathcal{A}}_0(\wedge^k \widetilde{\mathcal{H}}^{1,0})$. 
It follows from the isomorphism (\ref{s5.4eq1}) that $\widetilde{X}_0=\widetilde{\theta}^k_0(\widetilde{X}_h)=\widetilde{\theta}^k_0(\widetilde{X}_h^{k,0})$ for $X\in \mathcal{A}^0(\wedge^k E S^kH)$. 
\end{proof}
Under the irreducible decomposition of $\wedge^k TM$, the horizontal lift of the components except for $\wedge^k E S^kH$ vanish by $\widetilde{\theta}^k_0$. 
Hence, Proposition~\ref{s5.4p1} implies the following : 
\begin{cor}\label{s5.4c1} 
Let $X$ be an element of $\mathcal{A}^0(\wedge^k TM)$. 
The $(k,0)$-part $\widetilde{X}_h^{k,0}$ of the horizontal lift $\widetilde{X}_h$ is $\textrm{GL}(1,\mathbb{C})$-invariant and holomorphic along each fiber of $p:P(H^*)\to M$. 
$\hfill\Box$
\end{cor}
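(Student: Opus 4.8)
The plan is to reduce the statement to Proposition~\ref{s5.4p1} by means of the $\textrm{Sp}(n)\cdot\textrm{Sp}(1)$-irreducible decomposition of the complexified bundle $\wedge^k TM\otimes\mathbb{C}=\wedge^k(E\otimes H)$. This decomposition contains $\wedge^k E\otimes S^k H$ as the unique summand carrying $S^k H$, the remaining summands being of the form $V_\lambda\otimes S^m H$ with $m<k$. Writing a given $X\in\mathcal{A}^0(\wedge^k TM)$ as $X=X_0+X'$, where $X_0$ is the $\wedge^k E\otimes S^k H$-component and $X'$ collects the rest, the $\mathbb{C}$-linearity of the horizontal lift and of the $(k,0)$-projection reduces everything to understanding $\widetilde{(X')}_h^{k,0}$.

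The central observation is that $\widetilde{(X')}_h^{k,0}$ vanishes. Since $\widetilde{\theta}_0$ is a $(1,0)$-form, $\widetilde{\theta}_0^k$ pairs nontrivially only with the $(k,0)$-part of a horizontal multivector, so $\widetilde{\theta}_0^k(\widetilde{(X')}_h)=\widetilde{\theta}_0^k(\widetilde{(X')}_h^{k,0})$. By the decomposition remark preceding the corollary, the horizontal lift of every summand other than $\wedge^k E\otimes S^k H$ is annihilated by $\widetilde{\theta}_0^k$, whence $\widetilde{\theta}_0^k(\widetilde{(X')}_h^{k,0})=0$. But $\widetilde{\theta}_0^k$ restricts to the bundle isomorphism $\wedge^k\widetilde{\mathcal{H}}^{1,0}\cong p^{-1}(\wedge^k E)$ used in the proof of Proposition~\ref{s5.4p1} (see (\ref{s5.4eq3})), and is in particular injective on $(k,0)$-vectors; hence $\widetilde{(X')}_h^{k,0}=0$ and $\widetilde{X}_h^{k,0}=\widetilde{(X_0)}_h^{k,0}$. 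Proposition~\ref{s5.4p1} applied to $X_0\in\mathcal{A}^0(\wedge^k E S^k H)$ then shows directly that $\widetilde{(X_0)}_h^{k,0}$, and therefore $\widetilde{X}_h^{k,0}$, is $\textrm{GL}(1,\mathbb{C})$-invariant and holomorphic along each fiber of $p$.

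The main point to be pinned down is the decomposition remark itself: that $\widetilde{\theta}_0^k$ kills each summand $V_\lambda\otimes S^m H$ with $m<k$. The form $\widetilde{\theta}_0$ extracts, at a frame $u^*$, the horizontal $(1,0)$-component $E\otimes u$, so $\widetilde{\theta}_0^k$ reads off the pure $u^k\in S^k H$ part of a section of $\wedge^k(E\otimes H)$. The lower summands arise through $\omega_H$-contractions in $\wedge^k(E\otimes H)$, and since $\omega_H(u,u)=0$ every such contraction vanishes on the $u^k$ part; this is the one genuinely computational step. Once it is established, no further fiberwise analysis is needed, as the holomorphicity and $\textrm{GL}(1,\mathbb{C})$-invariance are inherited verbatim from Proposition~\ref{s5.4p1} for the surviving component $X_0$.
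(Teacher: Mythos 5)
Your proposal is correct and follows essentially the same route as the paper: the paper's own justification is exactly the remark that, under the irreducible decomposition of $\wedge^k TM$, the horizontal lifts of all summands other than $\wedge^k E\otimes S^kH$ are annihilated by $\widetilde{\theta}_0^k$, whence Proposition~\ref{s5.4p1} yields the corollary. You simply make explicit two steps the paper leaves implicit --- using the injectivity of $\widetilde{\theta}_0^k$ on $\wedge^k\widetilde{\mathcal{H}}^{1,0}$ to conclude that the $(k,0)$-part of the remaining summands vanishes identically, and the skew-symmetry computation (insertions of the bivector dual to $\omega_H$ have no pure $u^k$ component) behind the annihilation --- and both are sound.
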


\subsection{Holomorphic lift of quaternionic $k$-vector fields to $P(H^*)$} 
A horizontal $(k,0)$-vector field $X'$ on $P(H^*)$ is called of \textit{ $\textrm{GL}(1,\mathbb{C})$-order $m$ }
if $(R_{c^{-1}})_*X'=c^mX'$ for any $c\in \textrm{GL}(1,\mathbb{C})$. 
We define $\widetilde{\mathcal{A}}_m(\wedge^k \widetilde{\mathcal{H}}^{1,0})$ as 
the sheaf of horizontal $(k,0)$-vector fields which are of $\textrm{GL}(1,\mathbb{C})$-order $m$ and holomorphic along each fiber of $p:P(H^*)\to M$. 
By the equation (\ref{s5.4eq2}), we obtain an isomorphism 
\begin{equation}\label{s5.5eq1}
\widetilde{\mathcal{A}}_m(\wedge^k \widetilde{\mathcal{H}}^{1,0})\cong \widetilde{\mathcal{A}}^0_{(k+m,0)}(\wedge^k E)
\end{equation}
given by $X'\mapsto \widetilde{\theta}_0^k(X')$. 
Let $\xi$ be an element of $\mathcal{A}^0(\wedge^k E S^{k+m}H)$. 
The lift $\widetilde{\xi}_0$ of $\xi$ to $P(H^*)$ is in $\widetilde{\mathcal{A}}^0_{(k+m,0)}(\wedge^k E)$. 
By the isomorphism (\ref{s5.5eq1}), there exists a unique element $\widetilde{Y}_{\xi}$ of $\widetilde{\mathcal{A}}_m(\wedge^k \widetilde{\mathcal{H}}^{1,0})$ 
such that 
\[
\widetilde{\theta}^k_0(\widetilde{Y}_{\xi})=\widetilde{\xi}_0.
\] 
Using the isomorphism (\ref{s5.5eq1}), we have 
\begin{equation}\label{s5.5eq2}
\mathcal{A}^0(\wedge^k E S^{k+m}H)\cong \widetilde{\mathcal{A}}_m(\wedge^k \widetilde{\mathcal{H}}^{1,0})
\end{equation} 
by $\xi\mapsto \widetilde{Y}_{\xi}$. 
In the case $m=0$, by considering $\xi$ as the $k$-vector field $X$ on $M$, 
the isomorphism $\mathcal{A}^0(\wedge^k E S^kH)\cong \widetilde{\mathcal{A}}_0(\wedge^k \widetilde{\mathcal{H}}^{1,0})$ 
is given by $X\mapsto \widetilde{X}_h^{k,0}$.

\begin{prop}\label{s5.5p2} 
Let $X$ and $\zeta$ be elements of $\mathcal{A}^0(\wedge^k E S^kH)$ and $\mathcal{A}^0(\wedge^{k-1} E S^{k+1}H)$, respectively. 
The $k$-vector field $X$ is quaternionic and $\zeta=\tr\circ \mathfrak{D}_{\wedge^kE}(X)$ if and only if 
there exist $Y_0\in \mathcal{A}^0_{P(H^*)}(\wedge^{k-1} \widetilde{\mathcal{H}}^{1,0})$ and $Z_0\in \mathcal{A}^0_{P(H^*)}(\wedge^{k-2} \widetilde{\mathcal{H}}^{1,0})$ such that 
the $(k,0)$-vector field $\widetilde{X}_h^{k,0}+Y_0\wedge v_0+Y_1\wedge v_1+ Z_0\wedge v_0\wedge v_1$ is holomorphic 
for $Y_1=\frac{1}{k^2r^2}\widetilde{Y}_{\zeta}$. 
\end{prop}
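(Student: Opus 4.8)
The plan is to prove the stated equivalence by showing that \emph{both} sides are equivalent to the same $\bar{\partial}$-conditions on $P(H^*)$ — namely condition (i) of Theorem~\ref{s5.3t1}, together with condition (iii) when $k=2n$ — and then to chain these two equivalences. On the quaternionic side, Proposition~\ref{s4.5p2} (applied with $m=k$) already states that $X$ is quaternionic with $\zeta=\tr\circ\mathfrak{D}_{\wedge^kE}(X)$ if and only if $\bar{\partial}\widetilde{X}_0-\widetilde{\zeta}_0\wedge_E r^{-2}\widetilde{\theta}_1=0$ for $1\le k\le 2n-1$, with the additional equation $\bar{\partial}\widetilde{\zeta}_0=0$ when $k=2n$. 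On the holomorphic side, Theorem~\ref{s5.3t1} states that the existence of local horizontal $Y_0,Z_0$ making $\widetilde{X}_h^{k,0}+Y_0\wedge v_0+Y_1\wedge v_1+Z_0\wedge v_0\wedge v_1$ holomorphic is equivalent to condition (i) (and, for $k=2n$, to $\bar{\partial}(\widetilde{\theta}_0^{2n-1}(r^2Y_1))=0$). Thus the whole task reduces to matching these two systems of equations under the lift identifications.

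First I would record the identifications coming from the horizontal-lift constructions. By Proposition~\ref{s5.4p1} the $(k,0)$-part of the horizontal lift satisfies $\widetilde{X}_0=\widetilde{\theta}_0^k(\widetilde{X}_h^{k,0})$, so I take $X'_h=\widetilde{X}_h^{k,0}$ in Theorem~\ref{s5.3t1}; this is a legitimate horizontal $(k,0)$-vector field by Corollary~\ref{s5.4c1}. For the $(k-1)$-term, I apply Proposition~\ref{s5.5p1} (via Lemma~\ref{s5.5l1}) to $\zeta\in\mathcal{A}^0(\wedge^{k-1}E S^{k+1}H)$ — that is, wedge degree $k-1$ and symmetric excess $m=2$ — obtaining the unique $\widetilde{Y}_\zeta\in\widetilde{\mathcal{A}}_2(\wedge^{k-1}\widetilde{\mathcal{H}}^{1,0})$ with $\widetilde{\theta}_0^{k-1}(\widetilde{Y}_\zeta)=\widetilde{\zeta}_0$. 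Setting $Y_1=\frac{1}{k^2r^2}\widetilde{Y}_\zeta$ as in the statement, I then compute $\widetilde{\theta}_0^{k-1}(Y_1)=\frac{1}{k^2r^2}\widetilde{\zeta}_0$.

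The remaining step is the coefficient bookkeeping. Substituting the last identity into condition (i) of Theorem~\ref{s5.3t1} gives
\[
\bar{\partial}\bigl(\widetilde{\theta}_0^k(X'_h)\bigr)-k^2\,\widetilde{\theta}_0^{k-1}(Y_1)\wedge_E\widetilde{\theta}_1
=\bar{\partial}\widetilde{X}_0-\widetilde{\zeta}_0\wedge_E r^{-2}\widetilde{\theta}_1,
\]
so condition (i) coincides precisely with the first equation of Proposition~\ref{s4.5p2}. For $k=2n$ I would likewise compute $\widetilde{\theta}_0^{2n-1}(r^2Y_1)=\frac{1}{(2n)^2}\widetilde{\zeta}_0$, so that condition (iii), $\bar{\partial}(\widetilde{\theta}_0^{2n-1}(r^2Y_1))=0$, is identical to $\bar{\partial}\widetilde{\zeta}_0=0$. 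Hence the conditions produced by Proposition~\ref{s4.5p2} and those required by Theorem~\ref{s5.3t1} are literally the same equations, and chaining the two ``if and only if'' statements yields the proposition.

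The main obstacle is not conceptual but lies in threading the scaling factors correctly — the $\frac{1}{k^2r^2}$ defining $Y_1$, and the distinction between $\widetilde{\theta}_1$ and $r^{-2}\widetilde{\theta}_1$ — so that the two systems coincide identically rather than merely up to a nonzero factor. One must also invoke Lemma~\ref{s5.5l1} to guarantee that $\widetilde{Y}_\zeta$ exists and is the unique horizontal $(k-1)$-vector field with $\widetilde{\theta}_0^{k-1}(\widetilde{Y}_\zeta)=\widetilde{\zeta}_0$, so that $Y_1$ is a well-defined horizontal vector field of the kind Theorem~\ref{s5.3t1} accepts. Once these constants are pinned down, the equivalence follows by direct substitution.
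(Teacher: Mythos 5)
Your proposal is correct and follows essentially the same route as the paper's own proof: identify $\widetilde{X}_0=\widetilde{\theta}_0^k(\widetilde{X}_h^{k,0})$ via Proposition~\ref{s5.4p1} and $\widetilde{\zeta}_0=\widetilde{\theta}_0^{k-1}(\widetilde{Y}_\zeta)$ via Lemma~\ref{s5.5l1}, then match the $\bar{\partial}$-equations of Proposition~\ref{s4.5p2} with conditions (i) (and, for $k=2n$, (iii)) of Theorem~\ref{s5.3t1} by substituting $Y_1=\frac{1}{k^2r^2}\widetilde{Y}_\zeta$. Your bookkeeping of the factors $k^2$ and $r^{-2}$ agrees exactly with the paper's computation, so nothing further is needed.
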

\begin{proof} 
Proposition~\ref{s5.4p1} implies that $\widetilde{X}_0=\widetilde{\theta}_0^k(\widetilde{X}_h^{k,0})$, 
and $\widetilde{\zeta}_0=\widetilde{\theta}_0^{k-1}(\widetilde{Y}_{\zeta})$. 
Setting $Y_1=\frac{1}{k^2r^2}\widetilde{Y}_{\zeta}$, then we obtain $\widetilde{\zeta}_0=k^2\widetilde{\theta}_0^{k-1}(r^2Y_1)$. 
It follows from Proposition~\ref{s4.5p2} that $X$ is quaternionic and $\zeta=\tr\circ \mathfrak{D}_{\wedge^kE}(X)$ if and only if 
$\bar{\partial}\widetilde{X}_0-\widetilde{\zeta}_0\wedge_E r^{-2}\widetilde{\theta}_1=0$ for $1\le k\le 2n-1$, 
$\bar{\partial}\widetilde{X}_0-\widetilde{\zeta}_0\wedge_E r^{-2}\widetilde{\theta}_1=0$ 
and $\bar{\partial}\widetilde{\zeta}_0=0$ for $k=2n$. 
The condition is equivalent to 
$\bar{\partial}(\widetilde{\theta}_0^k(\widetilde{X}_h^{k,0}))-k^2\widetilde{\theta}_0^{k-1}(Y_1)\wedge_E \widetilde{\theta}_1=0$ for $1\le k\le 2n-1$, 
$\bar{\partial}(\widetilde{\theta}_0^k(\widetilde{X}_h^{k,0}))-k^2\widetilde{\theta}_0^{k-1}(Y_1)\wedge_E \widetilde{\theta}_1=0$ 
and $\bar{\partial}(\widetilde{\theta}_0^{k-1}(r^2Y_1))=0$ for $k=2n$. 
It is equal that there exist $Y_0\in \mathcal{A}^0_{P(H^*)}(\wedge^{k-1} \widetilde{\mathcal{H}}^{1,0})$, $Z_0\in \mathcal{A}^0_{P(H^*)}(\wedge^{k-2} \widetilde{\mathcal{H}}^{1,0})$ 
such that $\widetilde{X}_h^{k,0}+Y_0\wedge v_0+Y_1\wedge v_1+ Z_0\wedge v_0\wedge v_1$ is holomorphic by Theorem~\ref{s5.3t1}. 
\end{proof}

\subsection{Horizontal lift of $k$-vector fields to $Z$} 
We denote by $\widehat{\mathcal{A}}_0(\wedge^k \widehat{\mathcal{H}})$ the sheaf of horizontal smooth $k$-vector fields which are constant along each fiber of $f:Z\to M$. 
For $X\in \mathcal{A}^0(\wedge^kTM)$, the horizontal lift $\widehat{X}_h$ is an element of $\widehat{\mathcal{A}}_0(\wedge^k \widehat{\mathcal{H}})$. 
Conversely, $\widehat{\mathcal{A}}_0(\wedge^k \widehat{\mathcal{H}})$ consists of such elements. 
Hence, we obtain an isomorphism $\mathcal{A}^0(\wedge^kTM)\cong \widehat{\mathcal{A}}_0(\wedge^k \widehat{\mathcal{H}})$ by $X\mapsto \widehat{X}_h$. 
We denote by $\widehat{\mathcal{A}}(\wedge^k \widehat{\mathcal{H}}^{1,0})$ 
the sheaf of horizontal $(k,0)$-vector fields which are holomorphic along each fiber of $f$. 
The vector field $\widehat{X}_h$ and the $(k,0)$-part $\widehat{X}_h^{k,0}$ correspond to $\widetilde{X}_h$ and $\widetilde{X}_h^{k,0}$, respectively. 
Proposition~\ref{s5.4p1} induces the following : 
\begin{prop}\label{s5.7p1} 
The isomorphism $\mathcal{A}^0(\wedge^k E S^kH)\cong \widehat{\mathcal{A}}(\wedge^k \widehat{\mathcal{H}}^{1,0})$ is given by $X\mapsto \widehat{X}_h^{k,0}$. 
Moreover, $\widehat{X}=\widehat{\theta}^k_0(\widehat{X}_h^{k,0})$ for $X\in \mathcal{A}^0(\wedge^k E S^kH)$. $\hfill\Box$
\end{prop}

Corollary~\ref{s5.4c1} implies 
\begin{cor}\label{s5.7c1} 
Let $X$ be an element of $\mathcal{A}^0(\wedge^k TM)$. 
The $(k,0)$-part $\widehat{X}_h^{k,0}$ of the horizontal lift $\widehat{X}_h$ is holomorphic along each fiber of $f:Z\to M$. 
$\hfill\Box$
\end{cor}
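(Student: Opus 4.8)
The plan is to descend Corollary~\ref{s5.4c1} from $P(H^*)$ along the quotient map $\pi:P(H^*)\to Z$, in exact parallel with the passage from Lemma~\ref{s5.4l1} already invoked in the proof of Proposition~\ref{s5.7p1}. First I would recall that, by construction, $\widehat{X}_h^{k,0}$ is the $(k,0)$-vector field on $Z$ induced by $\widetilde{X}_h^{k,0}$ on $P(H^*)$. This descent is legitimate precisely because Corollary~\ref{s5.4c1} guarantees that $\widetilde{X}_h^{k,0}$ is $\textrm{GL}(1,\mathbb{C})$-invariant and horizontal, while the horizontal bundle $\widetilde{\mathcal{H}}$ projects to $\widehat{\mathcal{H}}$. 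Hence $\widehat{X}_h^{k,0}$ is a well-defined section of $\wedge^k\widehat{\mathcal{H}}^{1,0}$ which is $\pi$-related to $\widetilde{X}_h^{k,0}$.

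Next I would transfer the fibrewise holomorphicity. The restriction of $\pi$ to a single fiber of $p$ (which is $\textrm{GL}(1,\mathbb{H})=\mathbb{C}^2\setminus\{0\}$) is the holomorphic submersion $\mathbb{C}^2\setminus\{0\}\to\mathbb{C}P^1$ onto the corresponding fiber $F$ of $f$, since the $\textrm{GL}(1,\mathbb{C})$-action is holomorphic and $\widetilde{I}=I+i_{\widetilde{\mathcal{V}}}$ descends to $\widehat{I}$. Fixing $\hat z\in F$ and $u^*\in\pi^{-1}(\hat z)$, the differential $d\pi$ carries the $p$-vertical tangent space at $u^*$ $\mathbb{C}$-linearly onto the $f$-vertical tangent space at $\hat z$, so every anti-holomorphic vector tangent to $F$ lifts to an anti-holomorphic vector $v$ tangent to the fiber of $p$. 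Since Corollary~\ref{s5.4c1} gives $\nabla^{0,1}_v\widetilde{X}_h^{k,0}=0$ for all such $v$ and $\widehat{X}_h^{k,0}$ is $\pi$-related to $\widetilde{X}_h^{k,0}$, the $\bar{\partial}$-derivative of $\widehat{X}_h^{k,0}$ along $F$ vanishes, which is exactly holomorphicity along each fiber of $f$.

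The step I expect to require the most care is this last translation between the two notions of fibrewise holomorphicity. On $P(H^*)$ the subbundle $\widetilde{\mathcal{H}}^{1,0}$ is \emph{not} holomorphic, so Corollary~\ref{s5.4c1} phrases the condition covariantly as $\nabla^{0,1}_v\widetilde{X}_h^{k,0}=0$, whereas on $Z$ the bundle $\widehat{\mathcal{H}}^{1,0}$ is a genuine holomorphic subbundle of $T^{1,0}Z$ (being $\Ker\eta$ for the holomorphic form $\eta$), so holomorphicity along $F$ is the honest $\bar{\partial}^l$-condition. I would bridge the two by contracting with $\widetilde{\theta}_0^k$: as in the proof of Lemma~\ref{s5.4l1}, a horizontal $(k,0)$-field $X'$ is holomorphic along a $p$-fiber if and only if $\widetilde{\theta}_0^k(X')$ is, and the $\textrm{GL}(1,\mathbb{C})$-equivariance of $\widetilde{\theta}_0^k$ identifies this with the corresponding condition on $Z$ under $\widetilde{\mathcal{A}}^0_{(k,0)}(\wedge^k E)\cong\widehat{\mathcal{A}}^0(\wedge^k E\otimes l^k)$. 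Once this identification is in place the vanishing transfers verbatim, and the only input beyond the irreducible case of Proposition~\ref{s5.7p1} is that Corollary~\ref{s5.4c1} already holds for arbitrary $X\in\mathcal{A}^0(\wedge^k TM)$, the components other than $\wedge^k E\otimes S^kH$ being annihilated by $\widetilde{\theta}_0^k$.
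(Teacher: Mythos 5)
Your proposal is correct and follows essentially the same route as the paper: the paper obtains this corollary precisely by descending Corollary~\ref{s5.4c1} from $P(H^*)$ to $Z$ along $\pi$, in parallel with the proof of Proposition~\ref{s5.7p1}. Your bridge between the two notions of fibrewise holomorphicity---contracting with $\widetilde{\theta}_0^k$ on $P(H^*)$ and with $\widehat{\theta}_0^k$ on $Z$ and using the identification $\widetilde{\mathcal{A}}^0_{(k,0)}(\wedge^k E)\cong\widehat{\mathcal{A}}^0(\wedge^k E\otimes l^k)$---is exactly the mechanism the paper employs in Lemma~\ref{s5.4l1} and Lemma~\ref{s5.7l1}.
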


We consider the holomorphic bundle $\wedge^k \widehat{\mathcal{H}}^{1,0}\otimes l^m$ for a non-negative integer $m$. 
Let $\widehat{\mathcal{A}}(\wedge^k \widehat{\mathcal{H}}^{1,0}\otimes l^m)$ be 
a sheaf of $l^m$-valued horizontal smooth $(k,0)$-vector fields which are holomorphic along each fiber of $f:Z\to M$. 
Let $\widehat{\mathcal{O}}(\wedge^k \widehat{\mathcal{H}}^{1,0}\otimes l^m)$ denote the subsheaf of $\widehat{\mathcal{A}}(\wedge^k \widehat{\mathcal{H}}^{1,0}\otimes l^m)$ 
of holomorphic $l^m$-valued horizontal $(k,0)$-vector fields. 
By the definition of $l$, we obtain the isomorphism 
\begin{equation}\label{s5.7eq1}
\widehat{\mathcal{A}}(\wedge^k \widehat{\mathcal{H}}^{1,0}\otimes l^m)\cong \widetilde{\mathcal{A}}_m(\wedge^k \widetilde{\mathcal{H}}^{1,0}). 
\end{equation}
The isomorphism $\widehat{\theta}_0^k: \wedge^k \widehat{\mathcal{H}}^{1,0} \to f^{-1}(\wedge^k E)\otimes l^k$ is extended to 
$\widehat{\theta}_0^k\otimes (\id_{l})^m : \wedge^k \widehat{\mathcal{H}}^{1,0}\otimes l^m \to f^{-1}(\wedge^k E)\otimes l^{k+m}$. 
\begin{lem}\label{s5.7l1} 
The map $\widehat{\theta}_0^k\otimes (\id_{l})^m$ induces 
the isomorphisms $\widehat{\mathcal{A}}(\wedge^k \widehat{\mathcal{H}}^{1,0}\otimes l^m)\cong \widehat{\mathcal{A}}^0(\wedge^k E\otimes l^{k+m})$ 
and $\widehat{\mathcal{O}}(\wedge^k \widehat{\mathcal{H}}^{1,0}\otimes l^m)\cong \widehat{\mathcal{O}}(\wedge^k E\otimes l^{k+m})$. 
\end{lem}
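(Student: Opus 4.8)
The plan is to deduce both isomorphisms by transporting the corresponding isomorphism on $P(H^*)$ down to $Z$, and then to promote the smooth statement to the holomorphic one by verifying that $\widehat{\theta}_0^k\otimes(\id_l)^m$ is a \emph{holomorphic} bundle isomorphism.

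First I would establish the smooth isomorphism $\widehat{\mathcal{A}}(\wedge^k\widehat{\mathcal{H}}^{1,0}\otimes l^m)\cong\widehat{\mathcal{A}}^0(\wedge^k E\otimes l^{k+m})$ by composing three identifications already available: the isomorphism (\ref{s5.7eq1}) between $\widehat{\mathcal{A}}(\wedge^k\widehat{\mathcal{H}}^{1,0}\otimes l^m)$ and $\widetilde{\mathcal{A}}_m(\wedge^k\widetilde{\mathcal{H}}^{1,0})$ on $P(H^*)$, the isomorphism of Lemma~\ref{s5.5l1} given by $X'\mapsto\widetilde{\theta}_0^k(X')$, and Proposition~\ref{s3.6p1} (applied with $q=0$ and $k+m$ in place of $m$) identifying $\widetilde{\mathcal{A}}^0_{(k+m,0)}(\wedge^k E)$ with $\widehat{\mathcal{A}}^0(\wedge^k E\otimes l^{k+m})$. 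Since $\widehat{\theta}_0$ is by construction the form on $Z$ induced by $\widetilde{\theta}_0$, and the $l^m$-twisting on $Z$ is exactly the $\mathrm{GL}(1,\mathbb{C})$-order $m$ normalization on $P(H^*)$, the composite of these three maps is precisely the morphism induced by $\widehat{\theta}_0^k\otimes(\id_l)^m$.

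Next, for the holomorphic statement, the crucial observation is that $\widehat{\theta}_0$, restricted to $\widehat{\mathcal{H}}^{1,0}$, is a holomorphic homomorphism onto $f^{-1}(E)\otimes l$. Here $\widehat{\mathcal{H}}^{1,0}$, $f^{-1}(E)$ and $l$ are all holomorphic, the first as $\ker\eta\subset T^{1,0}Z$. From (\ref{s3.7eq3}) we have $\nabla^{0,1}\widehat{\theta}_0=\eta\otimes\widehat{\theta}_1$, so evaluating on any $W\in\widehat{\mathcal{H}}^{1,0}=\ker\eta$ gives $(\nabla^{0,1}\widehat{\theta}_0)(W)=\eta(W)\,\widehat{\theta}_1=0$. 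Consequently, for a holomorphic local section $W$ of $\widehat{\mathcal{H}}^{1,0}$ we get $\nabla^{0,1}(\widehat{\theta}_0(W))=(\nabla^{0,1}\widehat{\theta}_0)(W)+\widehat{\theta}_0(\nabla^{0,1}W)=0$, i.e.\ $\widehat{\theta}_0(W)$ is holomorphic, so $\widehat{\theta}_0|_{\widehat{\mathcal{H}}^{1,0}}$ is a holomorphic bundle map. Taking its $k$-th exterior power and tensoring with the identity of $l^m$ shows that $\widehat{\theta}_0^k\otimes(\id_l)^m$ is a holomorphic isomorphism with holomorphic inverse; restricting the smooth isomorphism of the first step to the subsheaves of holomorphic sections then yields $\widehat{\mathcal{O}}(\wedge^k\widehat{\mathcal{H}}^{1,0}\otimes l^m)\cong\widehat{\mathcal{O}}(\wedge^k E\otimes l^{k+m})$.

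The main obstacle is precisely this second step: $\widehat{\theta}_0$ is not $\bar{\partial}$-closed on all of $T^{1,0}Z$ — the obstruction is the term $\eta\otimes\widehat{\theta}_1$ — so one cannot simply assert holomorphicity of the bundle map. The point is that this obstruction is annihilated exactly on the horizontal distribution $\ker\eta$, which is why the argument produces a genuinely holomorphic isomorphism when restricted to $\wedge^k\widehat{\mathcal{H}}^{1,0}\otimes l^m$.
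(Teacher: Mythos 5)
Your proposal is correct, and its essential ingredient is the same as the paper's: everything hinges on $\nabla^{0,1}\widehat{\theta}_0=\eta\otimes\widehat{\theta}_1$ (equation (\ref{s3.7eq3}), or its wedge-power form in Proposition~\ref{s3.7p4}) being annihilated on $\widehat{\mathcal{H}}^{1,0}=\Ker\eta$. The organization, however, is genuinely different. The paper runs one computation at the level of sections: for horizontal $X''$ it shows
$\bar{\partial}^l\bigl(\widehat{\theta}_0^k\otimes(\id_l)^m(X'')\bigr)=\widehat{\theta}_0^k\otimes(\id_l)^m(\nabla^{0,1}X'')$,
and reads off \emph{both} isomorphisms from this single identity --- the $\widehat{\mathcal{O}}$-isomorphism from the full equation, and the $\widehat{\mathcal{A}}$-isomorphism by restricting it to vertical vectors $v\in\widehat{\mathcal{V}}$, so that the fiberwise-holomorphicity conditions on the two sides match. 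You instead (a) obtain the smooth isomorphism by a detour through $P(H^*)$, composing (\ref{s5.7eq1}), Lemma~\ref{s5.5l1} and Proposition~\ref{s3.6p1}; this is legitimate, since the composite is indeed the map induced by $\widehat{\theta}_0^k\otimes(\id_l)^m$ because $\widehat{\theta}_0$ is by definition the descent of the $\textrm{GL}(1,\mathbb{C})$-order-$1$ form $\widetilde{\theta}_0$; and (b) promote to the holomorphic level by showing that $\widehat{\theta}_0|_{\widehat{\mathcal{H}}^{1,0}}$ is a \emph{holomorphic} bundle isomorphism (your Leibniz computation on local holomorphic sections is exactly the paper's computation in the case $k=1$, $m=0$), then invoking functoriality of exterior powers and twists and the fact that a holomorphic isomorphism has holomorphic inverse. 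What your version buys is conceptual reuse: the holomorphicity of the bundle map is isolated once, and the wedge-power bookkeeping of Proposition~\ref{s3.7p4} is replaced by general nonsense; what the paper's version buys is economy, since one computation yields both isomorphisms without appealing to the (true but unproved in the paper) principle that a smooth bundle isomorphism carrying holomorphic sections to holomorphic sections is a holomorphic isomorphism.
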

\begin{proof} 
Let $X''$ be an $l^m$-valued horizontal $(k,0)$-vector field. 
The covariant derivative $\nabla^{0,1} X''$ is a section of $\wedge^k\widehat{\mathcal{H}}^{1,0}\otimes l \otimes \wedge^{0,1}T^*Z $ 
since $\widehat{\mathcal{H}}^{1,0}$ is a holomorphic subbundle of $T^{1,0}Z$. 
It follows from Proposition~\ref{s3.7p4} that 
$\bar{\partial}^l(\widehat{\theta}_0^k\otimes (\id_{l})^m(X''))=\widehat{\theta}_0^k\otimes (\id_{l})^m(\nabla^{0,1} X'')$. 
Thus $\bar{\partial}^l(\widehat{\theta}_0^k\otimes (\id_{l})^m(X''))=0$ if and only if $\nabla^{0,1}X''=0$. 
Hence, we obtain the two isomorphisms in this proposition. 
\end{proof}
We denote $\widehat{\theta}_0^k\otimes (\id_{l})^m$ by $\widehat{\theta}_0^k$ for short. 
Let $\xi$ be an element of $\mathcal{A}^0(\wedge^k E S^{k+m}H)$. 
The lift $\widehat{\xi}$ of $\xi$ to $Z$ is in $\widehat{\mathcal{A}}^0(\wedge^k E\otimes l^{k+m})$. 
Then there exists a unique element $\widehat{Y}_{\xi}$ of $\widehat{\mathcal{A}}(\wedge^k \widehat{\mathcal{H}}^{1,0}\otimes l^m)$ 
such that 
\begin{equation}\label{s5.7eq2}
\widehat{\theta}^k_0(\widehat{Y}_{\xi})=\widehat{\xi}
\end{equation}
by Lemma~\ref{s5.7l1}. 
The isomorphisms in (\ref{s5.5eq2}) and (\ref{s5.7eq1}) yield 
\begin{equation}\label{s5.7eq3}
\mathcal{A}^0(\wedge^k E S^{k+m}H)\cong \widehat{\mathcal{A}}(\wedge^k \widehat{\mathcal{H}}^{1,0}\otimes l^m)
\end{equation} 
by $\xi\mapsto \widehat{Y}_{\xi}$. 
In the case $m=0$, by considering $\xi$ as a $k$-vector field $X$ in $M$, 
the isomorphism $\mathcal{A}^0(\wedge^k E S^kH)\cong \widehat{\mathcal{A}}(\wedge^k \widehat{\mathcal{H}}^{1,0})$ 
is given by $X\mapsto \widehat{X}_h^{k,0}$. 
We consider the operator 
\begin{equation*}
\mathfrak{D}_{\wedge^kE}: \mathcal{A}^0(\wedge^k E S^{k+m}H) \to \mathcal{A}^0(\wedge^k E\otimes E^*\otimes S^{k+m+1}H). 
\end{equation*}
It follows from the isomorphism (\ref{s4.6e3}) that $\Ker \mathfrak{D}_{\wedge^kE} \cong \widehat{\mathcal{O}}(\wedge^k E\otimes l^{k+m})$ by $\xi\mapsto \widehat{\xi}$. 
By Lemma~\ref{s5.7l1}, we obtain the following isomorphism : 
\begin{cor}\label{s5.7c1} 
$\Ker \mathfrak{D}_{\wedge^kE} \cong \widehat{\mathcal{O}}(\wedge^k \widehat{\mathcal{H}}^{1,0}\otimes l^m)$ by $\xi\mapsto \widehat{Y}_{\xi}$. 
$\hfill\Box$
\end{cor}

\subsection{Holomorphic lift of quaternionic $k$-vector fields to $Z$} 
\begin{prop}\label{s5.8p1} 
Let $X$ and $\zeta$ be elements of $\mathcal{A}^0(\wedge^k E S^kH)$ and $\mathcal{A}^0(\wedge^{k-1} E S^{k+1}H)$, respectively. 
The $k$-vector field $X$ is quaternionic and $\zeta=\tr\circ \mathfrak{D}_{\wedge^kE}(X)$ if and only if 
the $(k,0)$-vector field $\widehat{X}_h^{k,0}+Y\wedge v$ is holomorphic for $Y=k^{-2}\widehat{Y}_{\zeta}$. 
\end{prop}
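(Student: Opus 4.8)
The plan is to establish this as the twistor-space counterpart of Proposition~\ref{s5.5p2}, assembling three ingredients already available on $Z$: the characterization of quaternionic sections by a single $\bar{\partial}^l$-equation (Proposition~\ref{s4.7p2}), the dictionary between $k$-vector fields on $M$ and horizontal $(k,0)$-vector fields on $Z$ (Propositions~\ref{s5.7p1} and~\ref{s5.7p2}), and the holomorphy criterion for $(k,0)$-vector fields on $Z$ (Theorem~\ref{s5.6t1}). First I would record the identifications on which everything rests. By Proposition~\ref{s5.7p1}, the section $X$ corresponds to $\widehat{X}_h^{k,0}$ with $\widehat{\theta}_0^k(\widehat{X}_h^{k,0})=\widehat{X}$. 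Since $\zeta\in\mathcal{A}^0(\wedge^{k-1}E\otimes S^{k+1}H)$ is a section of $\wedge^{k-1}E\otimes S^{(k-1)+2}H$, Proposition~\ref{s5.7p2} with $m=2$ produces $\widehat{Y}_\zeta\in\widehat{\mathcal{A}}(\wedge^{k-1}\widehat{\mathcal{H}}^{1,0}\otimes l^2)$ with $\widehat{\theta}_0^{k-1}(\widehat{Y}_\zeta)=\widehat{\zeta}$ by~(\ref{s5.7eq2}). Setting $Y=k^{-2}\widehat{Y}_\zeta$ gives $\widehat{\theta}_0^{k-1}(Y)=k^{-2}\widehat{\zeta}$, hence $k^2\widehat{\theta}_0^{k-1}(Y)=\widehat{\zeta}$; note that the powers of $l$ cancel in the product $Y\wedge v$, so $\widehat{X}_h^{k,0}+Y\wedge v$ is a genuine $(k,0)$-vector field on $Z$.

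With these substitutions in hand, I would feed them into Theorem~\ref{s5.6t1}. For $1\le k\le 2n-1$, the holomorphy of $\widehat{X}_h^{k,0}+Y\wedge v$ is equivalent to $\bar{\partial}^l(\widehat{\theta}_0^k(\widehat{X}_h^{k,0}))-k^2\widehat{\theta}_0^{k-1}(Y)\wedge_E\widehat{\theta}_1=0$, which upon replacing $\widehat{\theta}_0^k(\widehat{X}_h^{k,0})$ by $\widehat{X}$ and $k^2\widehat{\theta}_0^{k-1}(Y)$ by $\widehat{\zeta}$ becomes exactly $\bar{\partial}^l\widehat{X}-\widehat{\zeta}\wedge_E\widehat{\theta}_1=0$. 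By Proposition~\ref{s4.7p2} this last equation holds if and only if $X$ is quaternionic and $\zeta=\tr\circ\mathfrak{D}_{\wedge^kE}(X)$, which closes the equivalence in this range. For $k=2n$, Theorem~\ref{s5.6t1} carries the additional requirement $\bar{\partial}^l(\widehat{\theta}_0^{2n-1}(Y))=0$; since $\widehat{\theta}_0^{2n-1}(Y)=k^{-2}\widehat{\zeta}$, this reads $\bar{\partial}^l\widehat{\zeta}=0$, precisely the extra condition appearing in Proposition~\ref{s4.7p2} for $k=2n$. Thus both directions match term by term in the top-degree case as well. I would also remark that, because the fibre of $f:Z\to M$ is one-dimensional, the decomposition on $Z$ involves the single vertical field $v$ rather than the pair $v_0,v_1$ on $P(H^*)$; this is why the present statement is an unqualified equivalence, with no auxiliary fields $Y_0,Z_0$ to be produced locally as in Proposition~\ref{s5.5p2}.

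I expect the proof to be bookkeeping rather than new analysis: every analytic input — the Dolbeault-type solvability, the injectivity of $\wedge\widehat{\theta}_1$, and the vanishing of the $(0,2)$-obstruction term — has already been absorbed into Theorem~\ref{s5.6t1} and Proposition~\ref{s4.7p2}. The one point demanding care is the consistent tracking of the $l$-weights and of the normalizing constant $k^{-2}$: I must verify that $Y=k^{-2}\widehat{Y}_\zeta$ is the unique element making $k^2\widehat{\theta}_0^{k-1}(Y)$ equal $\widehat{\zeta}$, and that this is the same constant entering Theorem~\ref{s5.6t1}. This is the normalization already fixed on $P(H^*)$ in Proposition~\ref{s5.5p2}, where $Y_1=\frac{1}{k^2r^2}\widetilde{Y}_\zeta$; it descends correctly to $Z$ because $\widehat{Y}_\zeta$ is induced by $\widetilde{Y}_\zeta$ under the isomorphism~(\ref{s5.7eq1}) and the factor $r^{2}$ is exactly the one converting $r^{-2}\widetilde{\theta}_1$ and $r^{-2}v_1$ into $\widehat{\theta}_1$ and $v$. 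Once that matching is checked, the equivalence follows immediately from the chain of iff's above.
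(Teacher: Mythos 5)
Your proposal is correct and follows essentially the same route as the paper's own proof: identify $\widehat{X}=\widehat{\theta}_0^k(\widehat{X}_h^{k,0})$ via Proposition~\ref{s5.7p1}, identify $\widehat{\zeta}=k^2\widehat{\theta}_0^{k-1}(Y)$ via Proposition~\ref{s5.7p2} and (\ref{s5.7eq2}), and then chain the $\bar{\partial}^l$-characterization of quaternionicity (Proposition~\ref{s4.7p2}) with the holomorphy criterion on $Z$ (Theorem~\ref{s5.6t1}), treating the $k=2n$ case by matching the extra condition $\bar{\partial}^l\widehat{\zeta}=0$. Your additional bookkeeping remarks (the cancellation of $l$-weights in $Y\wedge v$ and the descent of the normalization $k^{-2}r^{-2}$ from $P(H^*)$ to $Z$) are accurate but not beyond what the paper's argument already implicitly uses.
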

\begin{proof} 
It follows from Proposition~\ref{s5.7p1} that $\widehat{X}=\widehat{\theta}_0^k(\widehat{X}_h^{k,0})$. 
We set $Y=k^{-2}\widehat{Y}_{\zeta}$. Then $\widehat{\zeta}=k^2\widehat{\theta}_0^{k-1}(Y)$ by (\ref{s5.7eq2}). 
Proposition~\ref{s4.7p2} implies that $X$ is quaternionic and $\zeta=\tr\circ \mathfrak{D}_{\wedge^kE}(X)$ if and only if 
$\bar{\partial}^l \widehat{X}-\widehat{\zeta} \wedge_E \widehat{\theta}_1=0$ for $1\le k\le 2n-1$, 
$\bar{\partial}^l \widehat{X}-\widehat{\zeta} \wedge_E \widehat{\theta}_1=0$ and $\bar{\partial}^l \widehat{\zeta}=0$ for $k=2n$. 
They are written by $\bar{\partial}^l (\widehat{\theta}_0^k(\widehat{X}_h^{k,0}))-k^2\widehat{\theta}_0^{k-1}(Y) \wedge_E \widehat{\theta}_1=0$ for $1\le k\le 2n-1$, 
$\bar{\partial}^l (\widehat{\theta}_0^k(\widehat{X}_h^{k,0}))-k^2\widehat{\theta}_0^{k-1}(Y) \wedge_E \widehat{\theta}_1=0$ and $\bar{\partial}^l (\widehat{\theta}_0^{k-1}(Y))=0$ for $k=2n$. 
The condition is equal that $\widehat{X}_h^{k,0}+Y\wedge v$ is holomorphic for any $k$ by Theorem~\ref{s5.6t1}. 
\end{proof}
Let $\widehat{\mathcal{O}}(\wedge^k T^{1,0}Z)$ be a sheaf of holomorphic $(k,0)$-vector fields defined in the pull-back of open sets on $M$ by $f:Z\to M$. 
\begin{thm}\label{s5.8t1} 
An isomorphism $\mathcal{Q}(\wedge^kE S^kH)\cong \widehat{\mathcal{O}}(\wedge^k T^{1,0}Z)$ is given by 
$X\mapsto \widehat{X}_h^{k,0}+Y\wedge v$ where $Y$ is defined by $k^{-2}\widehat{Y}_{\tr\circ \mathfrak{D}_{\wedge^kE}(X)}$. 
Moreover, $H^0(\mathcal{Q}(\wedge^kE S^kH))$ is isomorphic to the space $H^0(\mathcal{O}(\wedge^k T^{1,0}Z))$ of holomorphic $k$-vector fields on $Z$ by the correspondence. 
\end{thm}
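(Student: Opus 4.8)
The plan is to promote the pointwise equivalence recorded in Proposition~\ref{s5.8p1} to an isomorphism of sheaves, and then to pass to global sections using the direct image along $f$. Write $\Phi$ for the proposed map $X\mapsto \widehat{X}_h^{k,0}+Y\wedge v$, where $Y=k^{-2}\widehat{Y}_{\tr\circ \mathfrak{D}_{\wedge^kE}(X)}$. First I would verify that $\Phi$ is a well-defined $\mathbb{C}$-linear morphism $\mathcal{Q}(\wedge^kE S^kH)\to\widehat{\mathcal{O}}(\wedge^k T^{1,0}Z)$. Well-definedness is precisely the forward direction of Proposition~\ref{s5.8p1}: for quaternionic $X$, setting $\zeta=\tr\circ\mathfrak{D}_{\wedge^kE}(X)$ renders $\widehat{X}_h^{k,0}+Y\wedge v$ holomorphic. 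Linearity is inherited from the constituent maps, each of which is $\mathbb{C}$-linear: $X\mapsto\widehat{X}_h^{k,0}$ by Proposition~\ref{s5.7p2} with $m=0$, the operator $\mathfrak{D}_{\wedge^kE}$ and the trace $\tr$, and $\zeta\mapsto\widehat{Y}_\zeta$ by Proposition~\ref{s5.7p2} with $m=2$.

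Next I would establish bijectivity. Injectivity is immediate from the uniqueness of the decomposition $X'=X'_h+Y\wedge v$ relative to the splitting $TZ\otimes\mathbb{C}=\widehat{\mathcal{H}}\oplus\widehat{\mathcal{V}}$: if $\Phi(X)=0$ then in particular $\widehat{X}_h^{k,0}=0$, and since $X\mapsto\widehat{X}_h^{k,0}$ is an isomorphism by Proposition~\ref{s5.7p2}, we get $X=0$. For surjectivity I would take a holomorphic $(k,0)$-vector field $X'$ on $f^{-1}(U)$ and decompose it as $X'=X'_h+Y\wedge v$. Here $Y$ is recovered by contracting $X'$ with the form $\eta$, which is holomorphic by Proposition~\ref{s3.7p1} since $\bar\partial^l\eta=0$; hence $Y$ is holomorphic, and as $v$ is holomorphic along each fiber so are $Y\wedge v$ and $X'_h=X'-Y\wedge v$. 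By Proposition~\ref{s5.7p2} the horizontal part gives a unique $X\in\mathcal{A}^0(\wedge^k E S^kH)$ with $\widehat{X}_h^{k,0}=X'_h$, and by the same proposition with $m=2$ there is a unique $\zeta\in\mathcal{A}^0(\wedge^{k-1}E S^{k+1}H)$ with $\widehat{Y}_\zeta=k^2Y$. The converse direction of Proposition~\ref{s5.8p1} then forces $X$ to be quaternionic with $\tr\circ\mathfrak{D}_{\wedge^kE}(X)=\zeta$, so $X\in\mathcal{Q}(\wedge^kE S^kH)$ and $\Phi(X)=X'$.

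Finally I would take global sections. Since $\widehat{\mathcal{O}}(\wedge^k T^{1,0}Z)$ is by definition the direct image $f_*\mathcal{O}(\wedge^k T^{1,0}Z)$, we have $H^0(\widehat{\mathcal{O}}(\wedge^k T^{1,0}Z))=H^0(Z,\mathcal{O}(\wedge^k T^{1,0}Z))$, the space of global holomorphic $k$-vector fields on $Z$. Combined with the sheaf isomorphism $\Phi$ this yields the asserted isomorphism $H^0(\mathcal{Q}(\wedge^kE S^kH))\cong H^0(\mathcal{O}(\wedge^k T^{1,0}Z))$.

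The step I expect to be the main obstacle is the surjectivity argument, and within it the verification that both the horizontal component $X'_h$ and the vertical coefficient $Y$ extracted from an arbitrary holomorphic $X'$ are individually holomorphic along the fibers of $f$, since this is exactly what licenses the two applications of Proposition~\ref{s5.7p2}. This rests on the holomorphy of $\eta$ and the fiber-holomorphy of its dual field $v$, both recorded earlier; once these are invoked, what remains is the bookkeeping that matches the uses of Proposition~\ref{s5.7p2} for $X$ (with $m=0$) and for $\zeta$ (with $m=2$) against the holomorphy conditions of Theorem~\ref{s5.6t1}.
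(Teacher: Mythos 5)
Your proof is correct and follows essentially the same route as the paper: the forward map comes from Proposition~\ref{s5.8p1}, surjectivity from decomposing a holomorphic $X'$ as $X'_h+Y\wedge v$ and applying Proposition~\ref{s5.7p2} together with the converse direction of Proposition~\ref{s5.8p1}, and the global statement by passing to sections. The one point where you go beyond the paper is a genuine improvement in rigor: the paper simply asserts that the components $X'_h$ and $Y$ of a holomorphic $X'$ lie in the fiber-holomorphic sheaves $\widehat{\mathcal{A}}(\wedge^k \widehat{\mathcal{H}}^{1,0})$ and $\widehat{\mathcal{A}}(\wedge^{k-1}\widehat{\mathcal{H}}^{1,0}\otimes l^2)$, whereas you justify this by recovering $Y$ as the contraction of $X'$ with the holomorphic form $\eta$ and using that $v$ is holomorphic along fibers, which is exactly what licenses the applications of Proposition~\ref{s5.7p2}.
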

\begin{proof} 
By Proposition~\ref{s5.8p1}, we obtain a map $\mathcal{Q}(\wedge^kE S^kH)\to \widehat{\mathcal{O}}(\wedge^k T^{1,0}Z)$ by 
$X\mapsto \widehat{X}_h^{k,0}+Y\wedge v$. 
Conversely, any element $X'$ of $\widehat{\mathcal{O}}(\wedge^k T^{1,0}Z)$ is written by 
$X'=X'_h+Y\wedge v$ for $X'_h\in \widehat{\mathcal{A}}^0(\wedge^k \widehat{\mathcal{H}}^{1,0})$, $Y\in \widehat{\mathcal{A}}(\wedge^{k-1} \widehat{\mathcal{H}}^{1,0}\otimes l^2)$. 
The isomorphism (\ref{s5.7eq3}) implies that there exist $X\in \mathcal{A}^0(\wedge^k E S^kH)$ and $\zeta\in \mathcal{A}^0(\wedge^{k-1} E S^{k+1}H)$ 
such that $X'_h=\widehat{X}_h^{k,0}$ and $Y=k^{-2}\widehat{Y}_{\zeta}$. 
Since $X'=\widehat{X}_h^{k,0}+Y\wedge v$ is holomorphic, 
$X$ is quaternionic and $\zeta=\tr\circ \mathfrak{D}_{\wedge^kE}(X)$ by Proposition~\ref{s5.8p1}. 
Thus we obtain the isomorphism $\mathcal{Q}(\wedge^kE S^kH)\cong \widehat{\mathcal{O}}(\wedge^k T^{1,0}Z)$ by $X\mapsto \widehat{X}_h^{k,0}+Y\wedge v$. 
If $X$ is a global section of $\mathcal{Q}(\wedge^kE S^kH)$, then $\zeta=\tr\circ \mathfrak{D}_{\wedge^kE}(X)$ and $Y=k^{-2}\widehat{Y}_{\zeta}$ are also global. 
Hence $H^0(\mathcal{Q}(\wedge^kE S^kH))$ is isomorphic to $H^0(\widehat{\mathcal{O}}(\wedge^k T^{1,0}Z))=H^0(\mathcal{O}(\wedge^k T^{1,0}Z))$ by $X\mapsto \widehat{X}_h^{k,0}+Y\wedge v$. 
It completes the proof. 
\end{proof}

\subsection{Holomorphic lift of quaternionic real $k$-vector fields to $Z$}
We denote by $\widetilde{\mathcal{A}}^0_{P(H^*)}(\wedge^k TP(H^*))$ the sheaf $p^{-1}p_*\mathcal{A}^0_{P(H^*)}(\wedge^k TP(H^*))$ on $P(H^*)$. 
An endomorphism $\widetilde{\tau}$ of $\widetilde{\mathcal{A}}^0_{P(H^*)}(\wedge^k TP(H^*))$ is defined as 
\[
\widetilde{\tau}(X')=\overline{(R_{-j})_*X'}
\] 
for $X'\in \widetilde{\mathcal{A}}^0_{P(H^*)}(\wedge^k TP(H^*))$. 
The action of $j$ preserves the horizontal space $\widetilde{\mathcal{H}}$ and it is anti-holomorphic. 
Thus $\widetilde{\tau}$ induces endomorphisms of $\widetilde{\mathcal{A}}_0(\wedge^k \widetilde{\mathcal{H}})$ and $\widetilde{\mathcal{A}}_0(\wedge^k \widetilde{\mathcal{H}}^{1,0})$. 
In fact, $\widetilde{\tau}$ is the complex conjugate on $\widetilde{\mathcal{A}}_0(\wedge^k \widetilde{\mathcal{H}})$. 
The endomorphism $\tau=J_E\otimes J_H$ on $TM$ is extended to $\wedge^k TM$, and it is also the complex conjugate on $\mathcal{A}^0(\wedge^kTM)$. 
Thus $\mathcal{A}^0(\wedge^kTM)^{\tau}\cong \widetilde{\mathcal{A}}_0(\wedge^k \widetilde{\mathcal{H}})^{\widetilde{\tau}}$ is given by 
taking the horizontal lift of real $k$-vector fields of $M$. 
Since $\widetilde{\tau}$ preserves the horizontal $(k,0)$-vector space $\wedge^k \widetilde{\mathcal{H}}^{1,0}$, 
$\widetilde{\mathcal{A}}_0(\wedge^k \widetilde{\mathcal{H}})^{\widetilde{\tau}}\to \widetilde{\mathcal{A}}_0(\wedge^k \widetilde{\mathcal{H}}^{1,0})^{\widetilde{\tau}}$ 
by $\widetilde{X}_h\mapsto \widetilde{X}_h^{k,0}$. 
Hence we have the isomorphism $\mathcal{A}^0(\wedge^kE S^kH)^{\tau}\cong \widetilde{\mathcal{A}}_0(\wedge^k \widetilde{\mathcal{H}}^{1,0})^{\widetilde{\tau}}$ 
by $X\mapsto \widetilde{X}_h^{k,0}$. 

The map $\widetilde{\tau}$ induces an endomorphism of $\widetilde{\mathcal{A}}_m(\wedge^k \widetilde{\mathcal{H}}^{1,0})$ 
since 
\[
(R_{c^{-1}})_*\overline{(R_{-j})_*X'}=\overline{(R_{-j})_*(R_{\bar{c}^{-1}})_*X'}=\overline{(R_{-j})_*\bar{c}^mX'}=c^m\overline{(R_{-j})_*X'}
\]
for $X'\in \widetilde{\mathcal{A}}_m(\wedge^k \widetilde{\mathcal{H}}^{1,0})$. 
Moreover, 
\[
\widetilde{\tau}(\widetilde{\theta}_0^k(X'))=J_E\overline{R_j^*(\widetilde{\theta}_0^k(X'))}
=J_E\overline{(R_j^*\widetilde{\theta}_0^k)}(\overline{(R_{-j})_*X'))}=\widetilde{\tau}(\widetilde{\theta}_0^k)(\widetilde{\tau}(X'))
=\widetilde{\theta}_0^k(\widetilde{\tau}(X'))
\] 
for $X'\in \widetilde{\mathcal{A}}_m(\wedge^k \widetilde{\mathcal{H}}^{1,0})$. 
It yields that $\widetilde{\mathcal{A}}_m(\wedge^k \widetilde{\mathcal{H}}^{1,0})^{\widetilde{\tau}}\cong \widetilde{\mathcal{A}}^0_{(k+m,0)}(\wedge^k E)^{\widetilde{\tau}}$ 
by $X'\mapsto \widetilde{\theta}_0^k(X')$. 
By Corollary~\ref{s3.1.5c1}, we have an $\mathbb{R}$-isomorphism 
\begin{equation}\label{s5.9eq1}
\mathcal{A}^0(\wedge^k E S^{k+m}H)^{\tau}\cong \widetilde{\mathcal{A}}_m(\wedge^k \widetilde{\mathcal{H}}^{1,0})^{\widetilde{\tau}}
\end{equation} 
by $\xi\mapsto \widetilde{Y}_{\xi}$. 
In the case $m=0$, the isomorphism $\mathcal{A}^0(\wedge^k E S^kH)^{\tau}\cong \widetilde{\mathcal{A}}_0(\wedge^k \widetilde{\mathcal{H}}^{1,0})^{\widetilde{\tau}}$ 
is given by $X\mapsto \widetilde{X}_h^{k,0}$. 

We denote by $\widehat{\mathcal{A}}^0_Z(\wedge^k TZ)$ the sheaf $f^{-1}f_*\mathcal{A}^0_Z(\wedge^k TZ)$ on $Z$. 
An endomorphism $\widehat{\tau}$ of $\widehat{\mathcal{A}}^0_Z(\wedge^k TZ)$ is defined by 
\[
\widehat{\tau}(X')=\overline{(R_{[j]})_*X'}
\] 
for $X'\in \widehat{\mathcal{A}}^0_Z(\wedge^k TZ)$. 
Then $\widehat{\tau}$ induces endomorphisms of 
$\widehat{\mathcal{A}}_0(\wedge^k \widehat{\mathcal{H}})$ and $\widehat{\mathcal{A}}(\wedge^k \widehat{\mathcal{H}}^{1,0})$. 
Under isomorphisms $\widehat{\mathcal{A}}_0(\wedge^k \widehat{\mathcal{H}})\cong \widetilde{\mathcal{A}}_0(\wedge^k \widetilde{\mathcal{H}})$ 
and $\widehat{\mathcal{A}}(\wedge^k \widehat{\mathcal{H}}^{1,0}) \cong \widetilde{\mathcal{A}}_0(\wedge^k \widetilde{\mathcal{H}}^{1,0})$, 
the elements $\widehat{\tau}(\widehat{X}_h), \widehat{\tau}(\widehat{X}_h^{k,0})$ correspond to $\widetilde{\tau}(\widetilde{X}_h), \widetilde{\tau}(\widetilde{X}_h^{k,0})$ 
for $X\in \mathcal{A}^0(\wedge^kE S^kH)$, respectively. 
Therefore, $\mathcal{A}^0(\wedge^kE S^kH)^{\tau}\cong \widehat{\mathcal{A}}(\wedge^k \widehat{\mathcal{H}}^{1,0})^{\widehat{\tau}}$ 
by $X\mapsto \widehat{X}_h^{k,0}$. 
Moreover, $\widehat{\tau}$ induces a bundle map of $l^m$, and it is extended to an endomorphism of $\widehat{\mathcal{A}}(\wedge^k \widehat{\mathcal{H}}^{1,0} \otimes l^m)$. 
Then $\widetilde{\mathcal{A}}_m(\wedge^k \widetilde{\mathcal{H}}^{1,0})^{\widetilde{\tau}}\cong \widehat{\mathcal{A}}(\wedge^k \widehat{\mathcal{H}}^{1,0}\otimes l^m)^{\widehat{\tau}}$. 
Using the isomorphism (\ref{s5.9eq1}), we obtain an $\mathbb{R}$-isomorphism 
\begin{equation}\label{s5.9eq2}
\mathcal{A}^0(\wedge^k E S^{k+m}H)^{\tau}\cong \widehat{\mathcal{A}}(\wedge^k \widehat{\mathcal{H}}^{1,0}\otimes l^m)^{\widehat{\tau}}
\end{equation} 
by $\xi\mapsto \widehat{Y}_{\xi}$. 
In the case $m=0$, the isomorphism $\mathcal{A}^0(\wedge^k E S^kH)^{\tau}\cong \widehat{\mathcal{A}}(\wedge^k \widehat{\mathcal{H}}^{1,0})^{\widehat{\tau}}$ 
is given by $X\mapsto \widehat{X}_h^{k,0}$. 
Moreover, 
\[
(\Ker \mathfrak{D}_{\wedge^kE})^{\tau} \cong \widehat{\mathcal{O}}(\wedge^k \widehat{\mathcal{H}}^{1,0}\otimes l^m)^{\widehat{\tau}}
\] 
under the correspondence. 
Corresponding to Proposition~\ref{s5.8p1}, we obtain the following :
\begin{prop}\label{s5.9p3} 
Let $X$ and $\zeta$ be elements of $\mathcal{A}^0(\wedge^k E S^kH)$ and $\mathcal{A}^0(\wedge^{k-1} E S^{k+1}H)$, respectively. 
The $k$-vector field $X$ is quaternionic and real, and $\zeta=\tr\circ \mathfrak{D}_{\wedge^kE}(X)$ if and only if 
the $(k,0)$-vector field $\widehat{X}_h^{k,0}+Y\wedge v$ is holomorphic and $\widehat{\tau}$-invariant for $Y=k^{-2}\widehat{Y}_{\zeta}$. 
\end{prop}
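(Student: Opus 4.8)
The plan is to reduce the statement to the non-real equivalence of Proposition~\ref{s5.8p1} together with the real isomorphisms of Proposition~\ref{s5.9p2}. By Proposition~\ref{s5.8p1}, the conditions that $X$ be quaternionic and $\zeta=\tr\circ\mathfrak{D}_{\wedge^kE}(X)$ are already equivalent to the holomorphicity of $X':=\widehat{X}_h^{k,0}+Y\wedge v$ with $Y=k^{-2}\widehat{Y}_{\zeta}$. Hence it remains only to show that, for such a holomorphic $X'$, the $\tau$-invariance (reality) of $X$ is equivalent to the $\widehat{\tau}$-invariance of $X'$.

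First I would establish that the decomposition $X'=\widehat{X}_h^{k,0}+Y\wedge v$ is $\widehat{\tau}$-equivariant, so that $X'$ is $\widehat{\tau}$-invariant if and only if both $\widehat{X}_h^{k,0}$ and $Y$ are. Recall that $\wedge^k\widehat{\mathcal{H}}^{1,0}$ is the kernel of the $l^2$-valued contraction $\eta:\wedge^k T^{1,0}Z\to l^2\otimes\wedge^{k-1}T^{1,0}Z$ and that $v$ is the $l^{-2}$-valued $(1,0)$-vector field dual to $\eta$, normalized by $\eta(v)=1$. Under this normalization $Y=\eta(X')$ and $\widehat{X}_h^{k,0}=X'-v\wedge Y$. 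The form $\eta$ is $\widehat{\tau}$-invariant (as recorded after the definition of $\widehat{\theta}_0,\widehat{\theta}_1,\eta,\widehat{\omega}$), and since $\widehat{\tau}$ is anti-$\mathbb{C}$-linear and acts compatibly on forms, vectors and on the line bundle $l$, it commutes with contraction by $\eta$; applying this to the real constant section $\eta(v)=1$ of the trivial bundle, and using that $\widehat{\tau}$ preserves the line $\widehat{\mathcal{V}}^{1,0}\otimes l^{-2}$ on which $\eta$ restricts to an isomorphism, gives $\widehat{\tau}(v)=v$. Consequently $\widehat{\tau}(Y)=\eta(\widehat{\tau}(X'))$ and $\widehat{\tau}(\widehat{X}_h^{k,0})=\widehat{\tau}(X')-v\wedge\widehat{\tau}(Y)$, which is the asserted $\widehat{\tau}$-equivariance.

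Next I would match the two pieces with the data on $M$ by Proposition~\ref{s5.9p2}. In the case $m=0$ it gives that $\widehat{X}_h^{k,0}$ is $\widehat{\tau}$-invariant if and only if $X\in\mathcal{A}^0(\wedge^k E S^kH)^{\tau}$, i.e.\ $X$ is real. Applying it with $k$ replaced by $k-1$ and $m=2$ (so that $S^{(k-1)+2}H=S^{k+1}H$) gives that $\widehat{Y}_{\zeta}$, hence $Y=k^{-2}\widehat{Y}_{\zeta}$, is $\widehat{\tau}$-invariant if and only if $\zeta\in\mathcal{A}^0(\wedge^{k-1}E S^{k+1}H)^{\tau}$. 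Finally, since $\zeta=\tr\circ\mathfrak{D}_{\wedge^kE}(X)$ and both $\mathfrak{D}_{\wedge^kE}$ and $\tr$ commute with $\tau$ (established in the discussion of quaternionic sections), $X$ real automatically forces $\zeta$ real. Combining: in the forward direction $X$ real makes both $\widehat{X}_h^{k,0}$ and $Y$ $\widehat{\tau}$-invariant, hence $X'$ is $\widehat{\tau}$-invariant; conversely, $X'$ $\widehat{\tau}$-invariant makes $\widehat{X}_h^{k,0}$ $\widehat{\tau}$-invariant, hence $X$ real. Together with Proposition~\ref{s5.8p1} this yields the equivalence.

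The main obstacle will be the second paragraph, namely verifying that $\widehat{\tau}$ genuinely respects the decomposition; this amounts to checking $\widehat{\tau}(v)=v$ and that $\widehat{\tau}$ commutes with $\eta$-contraction and $v$-wedging on $l^m$-twisted vector fields. The delicacy is bookkeeping the anti-$\mathbb{C}$-linear action of $\widehat{\tau}$ on the line bundle $l$ (equivalently the operation $\overline{R_{[j]}^{*}}$ induced by the antipodal map of the $\mathbb{C}P^1$-fibers) and confirming that the normalization $\eta(v)=1$ is preserved. Once the $\widehat{\tau}$-invariance of $\eta$ and $v$ is in hand, the remainder is formal and reduces to the cited propositions.
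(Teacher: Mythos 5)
Your proposal is correct and follows essentially the same route as the paper's proof: reduce to the reality statement via Proposition~\ref{s5.8p1}, use Proposition~\ref{s5.9p2} to translate $\widehat{\tau}$-invariance of $\widehat{X}_h^{k,0}$ and $Y$ into $\tau$-invariance of $X$ and $\zeta$, note that $\tau$ commutes with $\tr\circ\mathfrak{D}_{\wedge^kE}$, and exploit $\widehat{\tau}(v)=v$ together with the fact that $\widehat{\tau}$ preserves the horizontal--vertical decomposition. The only cosmetic difference is that you derive $\widehat{\tau}(v)=v$ from the $\widehat{\tau}$-invariance of $\eta$ and the normalization $\eta(v)=1$, whereas the paper deduces it directly from $\widetilde{\tau}(v_1)=v_1$ on $P(H^*)$; both are valid.
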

\begin{proof} 
Let $X$ be a quaternionic $k$-vector field on $M$ and $\zeta$ the element $\tr\circ \mathfrak{D}_{\wedge^kE}(X)$. 
It suffices to show that $X$ is real if and only if $\widehat{X}_h^{k,0}+Y\wedge v$ is $\widehat{\tau}$-invariant for $Y=k^{-2}\widehat{Y}_{\zeta}$. 
If $X$ is real, then $\zeta=\tr\circ \mathfrak{D}_{\wedge^kE}(X)$ is $\tau$-invariant. 
By the isomorphism (\ref{s5.9eq2}), $\widehat{X}_h^{k,0}$ and $Y=k^{-2}\widehat{Y}_{\zeta}$ are $\widehat{\tau}$-invariant. 
Now we have $\widehat{\tau}(v)=v$ since $\widetilde{\tau}(v_1)=v_1$. 
Thus $\widehat{X}_h^{k,0}+Y\wedge v$ is also $\widehat{\tau}$-invariant. 
Conversely, we assume that $\widehat{X}_h^{k,0}+Y\wedge v$ is $\widehat{\tau}$-invariant. 
Then $\widehat{\tau}(\widehat{X}_h^{k,0})=\widehat{X}_h^{k,0}$ 
since $\widehat{\tau}$ preserves the decomposition $\wedge^k T^{1,0}Z=\wedge^k \widehat{\mathcal{H}}^{1,0}\oplus (\wedge^{k-1} \widehat{\mathcal{H}}^{1,0})\wedge \widehat{\mathcal{V}}^{1,0}$. 
It follows from (\ref{s5.9eq2}) that $X$ is $\tau$-invariant, that is, real. 
Hence we finish the proof. 
\end{proof}

Proposition~\ref{s5.9p3} implies  
\begin{thm}\label{s5.9t1} 
An $\mathbb{R}$-isomorphism $\mathcal{Q}(\wedge^kE S^kH)^{\tau}\cong \widehat{\mathcal{O}}(\wedge^k T^{1,0}Z)^{\widehat{\tau}}$ is given by 
$X\mapsto \widehat{X}_h^{k,0}+Y\wedge v$ where $Y$ is defined by $k^{-2}\widehat{Y}_{\tr\circ \mathfrak{D}_{\wedge^kE}(X)}$. 
Moreover, $H^0(\mathcal{Q}(\wedge^kE S^kH))^{\tau}\cong H^0(\mathcal{O}(\wedge^k T^{1,0}Z))^{\widehat{\tau}}$ by the correspondence. 
$\hfill\Box$
\end{thm}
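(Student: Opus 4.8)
The plan is to mimic the proof of Theorem~\ref{s5.8t1} verbatim, inserting $\widehat{\tau}$-invariance at each stage and replacing the two main inputs, Proposition~\ref{s5.8p1} and Proposition~\ref{s5.7p2}, by their real counterparts Proposition~\ref{s5.9p3} and Proposition~\ref{s5.9p2}. Concretely, the assignment $X\mapsto\widehat{X}_h^{k,0}+Y\wedge v$ with $Y=k^{-2}\widehat{Y}_{\tr\circ\mathfrak{D}_{\wedge^kE}(X)}$ is already the isomorphism of Theorem~\ref{s5.8t1}, so the only new content is that this map and its inverse carry $\tau$-invariance to $\widehat{\tau}$-invariance and back. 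The analytic work has been done; what remains is bookkeeping of the real structures.

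First I would check that the forward map restricts to the real subsheaves. Given a quaternionic real $k$-vector field $X\in\mathcal{Q}(\wedge^kE S^kH)^{\tau}$, set $\zeta=\tr\circ\mathfrak{D}_{\wedge^kE}(X)$ and $Y=k^{-2}\widehat{Y}_{\zeta}$. Since $\tau$ commutes with $\mathfrak{D}_{\wedge^kE}$ and with $\tr$, the section $\zeta$ is $\tau$-invariant. Then Proposition~\ref{s5.9p3} shows directly that $\widehat{X}_h^{k,0}+Y\wedge v$ is holomorphic and $\widehat{\tau}$-invariant, hence lies in $\widehat{\mathcal{O}}(\wedge^k T^{1,0}Z)^{\widehat{\tau}}$.

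Next I would construct the inverse. Any $X'\in\widehat{\mathcal{O}}(\wedge^k T^{1,0}Z)^{\widehat{\tau}}$ decomposes as $X'=X'_h+Y\wedge v$ with $X'_h\in\widehat{\mathcal{A}}^0(\wedge^k\widehat{\mathcal{H}}^{1,0})$ and $Y\in\widehat{\mathcal{A}}(\wedge^{k-1}\widehat{\mathcal{H}}^{1,0}\otimes l^2)$, exactly as in the proof of Theorem~\ref{s5.8t1}. Since $\widehat{\tau}$ preserves the splitting $\wedge^k T^{1,0}Z=\wedge^k\widehat{\mathcal{H}}^{1,0}\oplus(\wedge^{k-1}\widehat{\mathcal{H}}^{1,0})\wedge\widehat{\mathcal{V}}^{1,0}$ (already used in Proposition~\ref{s5.9p3}) and $\widehat{\tau}(v)=v$, the $\widehat{\tau}$-invariance of $X'$ forces $X'_h$ and $Y$ to be $\widehat{\tau}$-invariant separately. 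Proposition~\ref{s5.9p2} then yields $\tau$-invariant $X\in\mathcal{A}^0(\wedge^k E S^kH)^{\tau}$ and $\zeta\in\mathcal{A}^0(\wedge^{k-1}E S^{k+1}H)^{\tau}$ with $X'_h=\widehat{X}_h^{k,0}$ and $Y=k^{-2}\widehat{Y}_{\zeta}$; and since $X'$ is holomorphic and $\widehat{\tau}$-invariant, Proposition~\ref{s5.9p3} identifies $X$ as a quaternionic real $k$-vector field with $\zeta=\tr\circ\mathfrak{D}_{\wedge^kE}(X)$. This produces the sheaf isomorphism $\mathcal{Q}(\wedge^kE S^kH)^{\tau}\cong\widehat{\mathcal{O}}(\wedge^k T^{1,0}Z)^{\widehat{\tau}}$.

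Finally, as in Theorem~\ref{s5.8t1}, I would pass to global sections: if $X$ is global then so are $\zeta=\tr\circ\mathfrak{D}_{\wedge^kE}(X)$ and $Y=k^{-2}\widehat{Y}_{\zeta}$, and hence $\widehat{X}_h^{k,0}+Y\wedge v$; conversely a global $\widehat{\tau}$-invariant holomorphic $k$-vector field produces a global $\tau$-invariant $X$. This gives $H^0(\mathcal{Q}(\wedge^kE S^kH))^{\tau}\cong H^0(\mathcal{O}(\wedge^k T^{1,0}Z))^{\widehat{\tau}}$, using $H^0(\widehat{\mathcal{O}}(\wedge^k T^{1,0}Z))^{\widehat{\tau}}=H^0(\mathcal{O}(\wedge^k T^{1,0}Z))^{\widehat{\tau}}$. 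I expect no genuine obstacle, since the only point requiring care is the compatibility of $\widehat{\tau}$ with the decomposition and with the maps $\widehat{Y}_{(\cdot)}$ and $\widehat{\theta}_0^k$, all of which are supplied by Proposition~\ref{s5.9p2} and the invariance $\widehat{\tau}(v)=v$.
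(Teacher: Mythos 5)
Your proposal is correct and is essentially the paper's own proof: the paper disposes of Theorem~\ref{s5.9t1} with the single remark ``By the same argument as in the proof of Theorem~\ref{s5.8t1}, we obtain,'' and your write-up is exactly that argument with Proposition~\ref{s5.8p1} and Proposition~\ref{s5.7p2} replaced by Propositions~\ref{s5.9p3} and~\ref{s5.9p2}, plus the bookkeeping of $\widehat{\tau}$-invariance (the splitting being $\widehat{\tau}$-stable and $\widehat{\tau}(v)=v$) that the paper has already set up in Proposition~\ref{s5.9p3}.
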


\subsection{Example}
Let $M$ be the $n$-dimensional quaternionic projective space $\mathbb{H}P^n$. 
Then $P(H^*)=\mathbb{C}^{2n+2}\backslash \{0\}$ as a complex manifold. 
The twistor space $Z$ is the $(2n+1)$-dimensional complex projective space $\mathbb{C}P^{2n+1}$. 
We consider the space $H^0(\mathcal{Q}(\wedge^kE S^kH))$ of quaternionic $k$-vector fields on $\mathbb{H}P^n$. 
We denote by $(z_0, z_1,\dots, z_{2n+1})$ the standard coordinate of $\mathbb{C}^{2n+2}$. 
The vector field $v_0$ associated with the action of ${\rm GL}(1, \mathbb{C})$ on $P(H^*)$ is written by 
\[
v_0=z_0\frac{\partial}{\partial z_0}+z_1\frac{\partial}{\partial z_1}+\dots+z_{2n+1}\frac{\partial}{\partial z_{2n+1}} 
\]
on $\mathbb{C}^{2n+2}\backslash\{0\}$. 
Let $\widetilde{V}_k$ denote the space of ${\rm GL}(1, \mathbb{C})$-invariant holomorphic $k$-vector fields on $\mathbb{C}^{2n+2}\backslash \{0\}$. 
Then 
\[
\widetilde{V}_k=\Bigl\{\sum a_{i_1 \cdots i_k j_1\cdots j_k}z_{i_1}\cdots z_{i_k}\frac{\partial}{\partial z_{j_1}}\wedge \cdots \wedge \frac{\partial}{\partial z_{j_k}} \Bigm| a_{ijkl}\in \mathbb{C}\Bigr\} 
\]
We regard the coefficient $(a_{i_1 \cdots i_k j_1\cdots j_k})$ as an element of $\otimes^k\mathbb{C}^{2n+2}\otimes \otimes^k(\mathbb{C}^{2n+2})^*$. 
Then $(a_{i_1 \cdots i_k j_1\cdots j_k})$ is in $S^k\mathbb{C}^{2n+2}\otimes \wedge^k(\mathbb{C}^{2n+2})^*$. 
We define $S^k\otimes \wedge^k$ as the projection from $\otimes^k gl(2n+2, \mathbb{C})\cong \otimes^k\mathbb{C}^{2n+2}\otimes \otimes^k(\mathbb{C}^{2n+2})^*$ 
to $S^k\mathbb{C}^{2n+2}\otimes \wedge^k(\mathbb{C}^{2n+2})^*$. 
Then $\widetilde{V}_k\cong S^k\otimes \wedge^k(\otimes^k gl(2n+2, \mathbb{C}))$. 
The space $H^0(\mathcal{O}(\wedge^k T^{1,0}Z))$ of holomorphic $k$-vector fields on $\mathbb{C}P^{2n+1}$ is identified with 
the quotient space $\widetilde{V}_k/\widetilde{V}_{k-1}\wedge v_0$, 
where $\widetilde{V}_{k-1}\wedge v_0$ is the subspace of $\widetilde{V}_k$ consisting of 
$a_{i_1 \cdots i_{k-1} j_1\cdots j_{k-1}}z_{i_1}\cdots z_{i_{k-1}}\frac{\partial}{\partial z_{j_1}}\wedge \cdots \wedge \frac{\partial}{\partial z_{j_{k-1}}}\wedge v_0$ (c.f. \S 5.1 in \cite{MN2}): 
\[
H^0(\mathcal{O}(\wedge^k T^{1,0}Z))\cong \widetilde{V}_k/\widetilde{V}_{k-1}\wedge v_0
\]
The space $\widetilde{V}_{k-1}\wedge v_0$ is isomorphic to $S^k\otimes \wedge^k(\otimes^{k-1} gl(2n+2, \mathbb{C})\otimes \textrm{Id})$. 
Theorem~\ref{s5.8t1} implies that 
\[
H^0(\mathcal{Q}(\wedge^kE S^kH))\cong S^k\otimes \wedge^k(\otimes^k gl(2n+2, \mathbb{C}))/ S^k\otimes \wedge^k(\otimes^{k-1} gl(2n+2, \mathbb{C})\otimes \textrm{Id}) 
\]
and  
\[
\dim_{\mathbb{C}} H^0(\mathcal{Q}(\wedge^kE S^kH))= \sum_{i=0}^k(-1)^{k+i}{}_{2n+i+1}C_i\ {}_{2n+2}C_i. 
\]
The real structure $\tau$ is associated with the action of $j$ on $\mathbb{H}^{n+1}\backslash \{0\}\cong \mathbb{C}^{2n+2}\backslash \{0\}$. 
Since $\widetilde{V}_k^{\widehat{\tau}}\cong S^k\otimes \wedge^k(\otimes^k gl(n+1, \mathbb{H}))$, 
\[
H^0(\mathcal{Q}(\wedge^kE S^kH))^{\tau}\cong S^k\otimes \wedge^k(\otimes^k gl(n+1, \mathbb{H}))/ S^k\otimes \wedge^k(\otimes^{k-1} gl(n+1, \mathbb{H})\otimes \textrm{Id}) 
\]
by Theorem~\ref{s5.9t1}. 

Especially, in the case $\mathbb{H}P^1\cong S^4$, 
\begin{align*}
H^0(\mathcal{Q}(E H))^{\tau}&\cong gl(2, \mathbb{H})/ \textrm{Id} \cong sl(2, \mathbb{H}), \\
H^0(\mathcal{Q}(\wedge^2E S^2H))^{\tau}&\cong S^2\otimes \wedge^2(\otimes^2 gl(2, \mathbb{H}))/ S^2\otimes \wedge^2(gl(2, \mathbb{H})\otimes \textrm{Id}) 
\end{align*}
and $\dim_{\mathbb{R}} H^0(\mathcal{Q}(E H))^{\tau}= 15$, $\dim_{\mathbb{R}} H^0(\mathcal{Q}(\wedge^2 E S^2H))^{\tau}= 45$. 

\vspace{\baselineskip}
\noindent
\textbf{Acknowledgements}. 
The first named author is supported by Grant-in-Aid for Young Scientists (B) $\sharp$17K14187 from JSPS.



\end{document}